\documentclass[11pt]{article}
\usepackage[utf8]{inputenc}
\usepackage[margin=1in]{geometry}
\usepackage{amssymb, amsmath, amsthm}
\usepackage{mathtools}
\usepackage{graphicx}
\usepackage{enumitem}
\usepackage{tikz}
\usetikzlibrary{arrows.meta,calc,decorations.markings}
\usepackage{mathrsfs}
\usepackage{comment}
\usepackage[T1]{fontenc}
\usepackage{hyperref}
\usepackage{stmaryrd}

\theoremstyle{plain}
\newtheorem{lemma}{Lemma}[section]

\newtheorem{proposition-definition}[lemma]{Proposition-Definition}

\newtheorem{theorem}[lemma]{Theorem}
\newtheorem{proposition}[lemma]{Proposition}

\newtheorem{definition}[lemma]{Definition}

\newtheorem{theorem-definition}[lemma]{Theorem-Definition}
\newtheorem{remark}[lemma]{Remark}
\newtheorem{example}[lemma]{Example}

\renewcommand{\P}{\mathcal P}

\newcommand{\D}{\mathcal{ D}}

\newcommand{\Z}{\mathbb Z}

\newcommand{\R}{\mathbb R}
\newcommand{\C}{\mathbb C}
\newcommand{\N}{\mathbb N}
\newcommand{\E}{\mathbb E}
\newcommand{\Y}{\mathbb Y}

\newcommand{\vecx}{\vec{x}}
\newcommand{\MM}{\mathbf M}
\newcommand{\pmu}{\pmb{\mu}}

\newcommand{\M}{\mathcal M}

\newcommand{\la}{\lambda}
\renewcommand{\d}{\textrm d}
\DeclareMathOperator{\Cov}{Cov}
\newcommand{\WBPolyBlockText}[4]{%
    \draw[thick,fill=white] #1;
    \node at (#2,#3) {$#4$};
}

\newcommand{\WBxA}{1.20}
\newcommand{\WBxB}{2.55}
\newcommand{\WBxM}{3.85}
\newcommand{\WBxTOne}{4.45}
\newcommand{\WBxTTwo}{5.15}
\newcommand{\WBxTau}{5.70}
\newcommand{\WBxAfterTau}{5.98}
\newcommand{\WBxPost}{6.38}
\newcommand{\WBxEnd}{7.25}

\newcommand{\WBC}{2.08}
\newcommand{\WBLow}{1.25}
\newcommand{\WBComp}{0.83}
\newcommand{\WBMidTop}{2.65}
\newcommand{\WBPreTop}{2.60}

\newcommand{\WBPanelBack}{%
    \draw[thick] (0,0) -- (\WBxEnd,0);
}

\newcommand{\WBPanelFront}{%
    \draw[thick]
        (0.00,0.00)
        .. controls (0.24,0.46) and (0.82,2.20) .. (1.20,2.76)
        .. controls (1.58,3.22) and (2.10,2.95)
            .. (\WBxB,\WBPreTop)
        .. controls (3.00,2.42) and (3.43,2.10)
            .. (\WBxM,\WBC)
        .. controls (4.13,2.18) and (4.25,2.69)
            .. (\WBxTOne,2.78)
        .. controls (4.72,2.88) and (4.98,2.73)
            .. (\WBxTTwo,\WBMidTop)
        .. controls (5.38,2.52) and (5.52,2.20)
            .. (\WBxTau,\WBC)
        .. controls (5.91,1.82) and (6.17,1.48)
            .. (\WBxPost,\WBLow)
        .. controls (6.62,1.00) and (7.03,0.24)
            .. (\WBxEnd,0.00);

    \draw[thick] (\WBxM,0) -- (\WBxM,4.05);

    \draw[thick] (\WBxTau,0) -- (\WBxTau,0.10);

    \node at (1.10,3.28) {$\mathcal D_1$};
    \node at (6.15,3.18) {$\WBRightDomainLabel$};

    \node[anchor=north] at (\WBxM,-0.04) {$m$};
    \node[anchor=north] at (\WBxTau,-0.04) {$\tau$};
}

\newcommand{\WBRect}[4]{%
    \draw[thick,fill=white] (#1,#3) rectangle (#2,#4);
}

\newcommand{\WBBlock}[5]{%
    \WBRect{#1}{#2}{#3}{#4}%
    \node at ($(#1,#3)!0.5!(#2,#4)$) {$#5$};
}

\newcommand{\WBBlockXiRight}[4]{%
    \WBRect{#1}{#2}{#3}{#4}%
    \node[anchor=west] at ($(\WBxM,#3)!0.5!(\WBxM,#4)+(0.14,0)$) {$x_i$};
}

\newcommand{\WBPanelTag}[1]{%
    \node[anchor=north] at ({\WBxEnd/2},-0.58) {#1};
}

\long\def\WBPair#1#2#3#4{%
\begin{center}
\noindent\resizebox{0.6\textwidth}{!}{%
\begin{tikzpicture}[
    scale=0.95,
    line cap=round,
    line join=round,
    every node/.style={font=\large}
]

\begin{scope}[xshift=0cm]
    \def\WBRightDomainLabel{\mathcal D_2}
    \WBPanelBack
    #1
    \WBPanelFront
    \WBPanelTag{#3}
\end{scope}

\draw[thick,<->,>=Stealth]
    (7.78,1.55) -- (9.03,1.55);

\begin{scope}[xshift=9.35cm]
    \def\WBRightDomainLabel{\mathcal D_2}
    \WBPanelBack
    #2
    \WBPanelFront
    \WBPanelTag{#4}
\end{scope}

\end{tikzpicture}%
}
\end{center}
}

\newcommand{\WBxTail}{6.855}
\newcommand{\WBSmall}{0.57}

\newcommand{\WBMark}[2]{%
    \draw[thick] (#1,0) -- (#1,0.10);
    \node[anchor=north,font=\normalsize] at (#1,-0.20) {$#2$};
}

\newcommand{\WBTriplePanelTag}[1]{%
    \node[anchor=north] at ({\WBxEnd/2},-1.02) {#1};
}

\long\def\WBTriple#1#2#3#4#5#6{%
\begin{center}
\noindent\resizebox{0.8\textwidth}{!}{%
\begin{tikzpicture}[
    scale=0.95,
    line cap=round,
    line join=round,
    every node/.style={font=\large}
]

\begin{scope}[xshift=0cm]
    \def\WBRightDomainLabel{\mathcal D_2}
    \WBPanelBack
    #1
    \WBPanelFront
    \WBTriplePanelTag{#4}
\end{scope}

\draw[thick,<->,>=Stealth]
    (7.52,1.55) -- (8.17,1.55);

\begin{scope}[xshift=8.45cm]
    \def\WBRightDomainLabel{\mathcal D_2}
    \WBPanelBack
    #2
    \WBPanelFront
    \WBTriplePanelTag{#5}
\end{scope}

\draw[thick,<->,>=Stealth]
    (15.97,1.55) -- (16.62,1.55);

\begin{scope}[xshift=16.90cm]
    \def\WBRightDomainLabel{\mathcal D_2}
    \WBPanelBack
    #3
    \WBPanelFront
    \WBTriplePanelTag{#6}
\end{scope}

\end{tikzpicture}%
}
\end{center}
}

\title{Central Limit Theorem for continuous $N$-particle systems in fixed- and high-temperature: single level}

\date{}

\author{Jiaming Xu\thanks{Department of Mathematics, The Ohio State University,
Columbus, OH 43210, USA. Email:
\href{mailto:jxu0800@gmail.com}{\texttt{jxu0800@gmail.com}}}}

\newtheoremstyle{named}{}{}{\itshape}{}{\bfseries}{.}{.5em}{\thmnote{#3}#1}
\theoremstyle{named}

\begin{document}

\maketitle

\begin{abstract}
We characterize central limit theorems for polynomial linear statistics of
continuous $N$-particle systems on $\mathbb R$, in both the fixed-temperature
regime $\theta=\beta/2>0$ and the high-temperature regime
$N\theta\to\gamma>0$. In each regime, we give necessary and sufficient
conditions in terms of the coefficients of the logarithmic Bessel generating
function, together with an explicit combinatorial formula for the limiting
covariance. At fixed temperature, this formula recovers the classical
second-order free moment--cumulant relation, up to the factor $1/\theta$;
at high temperature, it yields new second-order free $\gamma$-cumulants
and their covariance relations. The proof combines moment extraction through Dunkl operators with a topological expansion of the logarithmic
multivariate Bessel function indexed by ribbon graphs on  surfaces. This expansion gives a topological interpretation
of the free-cumulants: only genus zero contributes at fixed temperature,
whereas arbitrarily high genera contribute at high temperature. As applications, we prove that $\beta$-addition
preserves the CLT in both regimes and determine its limiting covariance
from the input free cumulants, and establish a suitably rescaled
transition from the high-temperature covariance to its fixed-temperature
counterpart as $\gamma\to\infty$.
\end{abstract}

\tableofcontents
\section{Introduction}

\subsection{Overview}

Consider a basic problem for a system of $N$ particles
on $\mathbb R$: describing its global limit and the fluctuations around
that limit as $N\to\infty$. The law of large numbers (LLN) characterizes
the limiting empirical distribution of the particles, while the central
limit theorem (CLT) describes the Gaussian fluctuations of global
observables around their means. Such a particle system can be viewed as
the spectrum of a Hermitian random matrix, so that the same problem asks
for the limiting eigenvalue distribution and the fluctuations of traces
of test functions of the matrix.
Throughout this paper, we restrict ourselves to polynomial test functions, so that the LLN and CLTs are formulated in terms of joint moments.

This question has two natural extensions. The first is to vary the strength
of the interaction between particles. The logarithmic repulsion in the
classical invariant matrix ensembles leads to the notion of a
\emph{$\beta$-ensemble}, or log-gas, with arbitrary inverse temperature
$\beta>0$, extending the classical values $\beta=1,2,4$. Its joint
density on $\mathbb R^N$ takes the form
\[
\frac{1}{Z_{N,\beta,V}}
\exp\!\left(-\frac{\beta N}{2}\sum_{i=1}^N V(a_i)\right)
\prod_{1\le i<j\le N}|a_i-a_j|^\beta,
\]
where $V$ is a confining potential and $Z_{N,\beta,V}$ is the normalizing
constant; see Forrester \cite{For10} for a systematic account.
Starting with Johansson
\cite{Joh98}, a substantial theory of global fluctuations has developed;
see, among many others, \cite{DE06,BG13,Shc13,BLS18,LLW19}.
Fluctuations at mesoscopic scales have also been studied extensively,
including for general and circular $\beta$-ensembles \cite{BL18,Lam21};
see also \cite{BMP22} for optimal local laws and logarithmically correlated
Gaussian fluctuations.

The second extension is to consider addition: how do the global limit and
the fluctuations behave when two independent random matrices are added?
{The classical result of Voiculescu \cite{Voi91} identifies free additive
convolution as the operation governing the limiting spectra of broad
classes of independent matrices, establishing a fundamental connection
between random matrices and free probability. First-order free cumulants
linearize this operation and provide the free-probabilistic analogue of
classical cumulants; see \cite{NS} for a systematic account.} At the fluctuation level, Mingo and Speicher developed
second-order freeness in \cite{MS06}, with Gaussian and Wishart matrices as fundamental
examples. To characterize the  covariances of the Gaussian fluctuation
under the addition, Collins, Mingo, \'{S}niady, and Speicher
\cite{CMSS07} introduced the second-order free cumulants and gave the explicit
covariance--cumulant relations; see also \cite{MP13,MS}. In this way, second-order free
probability extends the role of free convolution from limiting spectral
measures to their fluctuations, and makes it possible to study the latter
through quantities that are additive under matrix addition.

For $\beta$-ensembles, it is also natural to let the interaction weaken
as the number of particles grows, and to ask how the fixed-temperature
theory changes as one approaches the  case $\beta=0$, where particles are independent.
This leads to the \emph{high-temperature regime}, where $\theta=\beta/2$
tends to zero with $N\theta\to\gamma>0$. Varying $\gamma$ gives an
interpolation between these two situations, after suitable rescaling;
already for the Gaussian ensemble, the limiting empirical measures
interpolate between the Gaussian and semicircle laws \cite{ABG12}.
At the level of fluctuations, the change is equally substantial:
normalized global statistics of the classical ensembles fluctuate on the
$N^{-1/2}$ scale rather than the fixed-temperature $N^{-1}$ scale.
High-temperature CLTs have been established for Gaussian ensembles and
more general log-gases, see e.g \cite{NT18,DGM24,MM25}. From the additive
viewpoint, Benaych-Georges, Cuenca, and Gorin \cite{BCG} introduced
$\gamma$-cumulants and a $\gamma$-convolution interpolating between
classical and free convolution. Their LLN theory therefore raises the
question of whether a similar interpolation exists for the CLT, and what
should replace the classical second-order free cumulants in this regime.

The present paper brings together these perspectives by giving a
characterization of the CLT for general continuous $N$-particle systems
in both the fixed- and high-temperature regimes. Under the analytic
assumptions stated below, we show that the CLT is equivalent to suitable
asymptotic conditions on the coefficients of the logarithmic Bessel
generating function, and express the limiting covariance explicitly in
terms of the corresponding free cumulants. A principal application is
\emph{$\beta$-addition}, a binary operation on $N$-particle systems that
extends the eigenvalue law of the sum of independent invariant matrices
beyond the classical values of $\beta$, see \cite{GM,BCG}. We prove that if
both inputs satisfy a CLT in either regime, then their $\beta$-addition
satisfies a CLT in the same regime, and its limiting covariance is
explicitly determined by the first- and second-order free cumulants of
the inputs.

An important feature of this characterization is that it does not assume
a log-gas density, which makes it suitable for studying operations such
as $\beta$-addition whose output need not remain in the usual class of
log-gases. Much of the fluctuation theory for general $\beta$-ensembles
has been developed through analytic approaches, including loop
equations, master operators, and Stein's method, see \cite{Joh98,BG13,BLS18,LLW19,DGM24}. Here, by working with
the Bessel generating function, we obtain a criterion that applies beyond
prescribed log-gas densities and is compatible with addition, while also
giving a combinatorial description of the covariance through cumulants
and weighted lattice paths.

From the free-probabilistic viewpoint, our starting point is symmetric
functions and Lukasiewicz paths, rather than an operator-algebraic model
or annular non-crossing partitions, used in the classical literature. The resulting combinatorics gives a
different description of the same second-order structure at fixed
temperature: we show that our covariance formula agrees, up to the
factor $1/\theta$, with the classical formula of \cite{CMSS07}.
Its usefulness becomes more apparent at high temperature, where it
provides new second-order free $\gamma$-cumulants and their covariance
relations, extending the first-order theory of \cite{BCG}. While a
high-temperature analogue is not immediate from the classical
descriptions, it arises naturally from
the same symmetric-function and lattice-path formulation.

These results also connect free cumulants with the idea of \emph{topological expansions}.  \cite{BGF20,BCGFLS21} interpreted the
first- and second-order free moment--cumulant relations through ordinary
and fully simple maps, relating their generating series by monotone
Hurwitz numbers, which count the number of {branched coverings of orientable closed surfaces}. In a different direction, \cite{GGPN14} obtained an expansion  of the HCIZ integral in terms of {monotone Hurwitz numbers}. This is directly related to our
setting: the multivariate Bessel function we use in this text specializes to the HCIZ integral
at $\theta=1$ and extends it to general $\theta>0$. Our work, together with the
companion LLN paper \cite{CX}, gives a symmetric-function description of
free cumulants through the Bessel topological expansion, derived from
a combinatorial expansion of ribbon graphs on surfaces in \cite{CD}.
It also reveals a distinction between the two temperature regimes:
genus zero governs the fixed-temperature limits, whereas contributions
of arbitrary genus enter at high temperature. We discuss this
perspective and the role of the expansion in Section~\ref{sec:methodology}.

\subsection{Main results}

The main object of this work has a natural representation-theoretic origin.
For $\theta=1/2,1,2$, let $G_\theta(N)$ be the orthogonal group $O(N)$,
the unitary group $U(N)$, and the compact symplectic group $Sp(N)$,
respectively.  For real $N$-tuples $a$ and
$\vec x$, write $A=\operatorname{diag}(a)$ and
$X=\operatorname{diag}(\vec x)$, and consider the orbital integral
\[
    B_a(\vec x;\theta)
    =\int_{G_\theta(N)}
      \exp\!\bigl(\operatorname{tr}(AUXU^*)\bigr)\,\d U,
\]
where $\d U$ is normalized Haar measure.\footnote{In the quaternionic
case, the trace is one half of the trace in the standard complex
representation.}
For $\theta=1$, this is the Harish-Chandra--Itzykson--Zuber (HCIZ)
integral \cite{HC57,IZ80}; the other two cases are its orthogonal and
symplectic analogues.  Replacing $\vec x$ by $\mathrm i\vec x$ gives the
Fourier kernel averaged over a matrix orbit.  Such integrals are central
to harmonic analysis and representation theory, arising as spherical
functions on symmetric spaces of Euclidean type \cite{Hel00}, and play an
important role in matrix models of mathematical physics \cite{IZ80}.

The integral is symmetric separately in $a$ and $\vec x$, and is unchanged
when the two tuples are exchanged.  It admits an expansion in symmetric
polynomials parameterized by $\theta$ \cite{Ok-Ol}, the so-called Jack polynomial, which naturally
extends it to every $\theta>0$.  The resulting
\emph{multivariate Bessel function} $B_a(\vec x;\theta)$ is entire in
$(a,\vec x)\in\C^{2N}$ and normalized by $B_a(0^N;\theta)=1$.
For more details of its analytic properties, see the surveys
\cite{Ros03,Ank17} and Section~\ref{sec:mbf} below.

These functions provide a powerful characterization of a class of continuous particle systems in integrable probability,
notably for the $\beta$-corners processes \cite{Cue21} and $\beta$-addition
\cite{BCG}. The Bessel-function approach has also been used to establish global
scaling limits \cite{BCG,Yao21,X,CX} and local edge limits
\cite{KX24,GXZ24}.
Their large-$N$ asymptotics also enter asymptotic
representation theory and the study of branching-graph boundaries:
examples include the classification of invariant measures on infinite
Hermitian matrices \cite{OV96} and its extension to the orbital beta
process \cite{AN21}.

We consider the following class of $N$-particle objects. For each $N$, let $\mu_N$ be either an exponentially decaying probability measure on ordered real $N$-tuples, or a compactly supported real symmetric generalized function on $\mathbb R^N$ with
\[
    \langle\mu_N,1\rangle=N!.
\]
In the first case, $\E_{\mu_N}$ is the usual expectation; in the second, it denotes the normalized pairing
\[
    \E_{\mu_N}[f]:=\frac{1}{N!}\langle\mu_N,f\rangle.
\]
Thus $\E_{\mu_N}[1]=1$ in either case. For a given $\mu_N$, we define its \emph{Bessel generating function} as
\[
    G_{N,\theta}(\vec x)
    :=G_\theta(\vec x;\mu_N)
    :=\E_{\mu_N}[B_a(\vec x;\theta)].
\]
The BGF is a natural analog of the characteristic function/Laplace transform for the $N$-particle systems on $\R^N$, and captures all information about the distribution $\mu_N$. While the moments of a random variable can be extracted by derivatives, due to the fact that the Fourier kernel $\exp(tx)$ diagonalizes $\frac{d}{dt}$, the Bessel function is also eigenfunction of certain difference-differential operators, see Section \ref{sec:dunkl}.
Moreover, there is a collection of additive quantities of a random variable, namely the (classical) cumulants, which are bijective to the moments and are encoded by the Taylor coefficients of
the logarithm of its Laplace transform. This motivates our study of the logarithmic BGF.
Since $G_{N,\theta}$ is holomorphic near $0^N$ and
$G_{N,\theta}(0^N)=1$, its logarithm is well defined near the origin and
vanishes at $0^N$.
\begin{definition}\label{def:clambda}
For any partition $\lambda$ such that $|\lambda|\le N$, let $c_\lambda=c_\lambda(N,\theta)$ denote the
coefficient of $p_\lambda(\vec x)$ in the Taylor expansion of
$\log G_{N,\theta}(\vec x)$ at $\vec x=0^N$; equivalently,
\[
    \log G_{N,\theta}(\vec x)
    =\sum_{|\lambda|\le N}c_\lambda p_\lambda(\vec x)+O(\|x\|^{N+1}).
\]
\end{definition}

For the coordinate tuple $a=(a_1,\ldots,a_N)$, consider the polynomial observables
\begin{equation*}
    p_k^N:=\frac1N\sum_{i=1}^N a_i^k,
    \qquad
    \widetilde p_k^N:=\frac1N\sum_{i=1}^N
       \left(\frac{a_i}{N}\right)^k.
\end{equation*}
The second normalization is the natural one when the particles themselves
have size $N$, as in the fixed-temperature framework considered here.

All expectations below mean normalized moment pairings with $\mu_N$. Covariances and classical joint cumulants are defined algebraically from these moments; in particular,
\[
    \Cov_{\mu_N}(f,g)=\E_{\mu_N}[fg]-\E_{\mu_N}[f]\E_{\mu_N}[g].
\]
No positivity is imposed on this bilinear form.

\medskip
\noindent\textbf{Fixed temperature.}

\begin{definition}[Fixed-temperature CLT-satisfaction]\label{def:fixedtempclt}
Fix $\theta>0$. We say that $\{\mu_N\}_{N\ge1}$ \emph{satisfies a CLT in the fixed-temperature regime} if there exist real numbers $\{m_k\}_{k\ge1}$ and symmetric real numbers $\{\alpha_{k,l}\}_{k,l\ge1}$ such that:
\begin{itemize}
    \item For every $k\ge1$,
    \[
        \lim_{N\to\infty}\E_{\mu_N}[\widetilde p_k^N]=m_k.
    \]
    \item For every $s\ge1$ and $k_1,\ldots,k_s\ge1$,
    \[
        \lim_{N\to\infty}N^s\E_{\mu_N}\!\left[
            \prod_{j=1}^s\bigl(\widetilde p_{k_j}^N
                -\E_{\mu_N}[\widetilde p_{k_j}^N]\bigr)
        \right]
        =\sum_{\pi}\prod_{\{i,j\}\in\pi}\alpha_{k_i,k_j}.
    \]
    Here the sum is over all pair partitions $\pi$ of $\{1,\ldots,s\}$ and is zero when $s$ is odd.
\end{itemize}
\end{definition}

The two directions give the concise result below.  The covariance formula
appears later in Theorem \ref{thm:fixedtempclt}.

\begin{theorem}[Fixed-temperature characterization]\label{thm:intro-fixed-iff}
Under the standing assumptions above, fix $\theta>0$. The sequence $\{\mu_N\}$ satisfies a fixed-temperature CLT in the sense of Definition~\ref{def:fixedtempclt}
if and only if there exist constants $\kappa_d$ and symmetric constants
$\kappa_{r,s}=\kappa_{s,r}$ such that, for every $d,r,s\ge1$ and every
partition $\lambda$ with $\ell(\lambda)\ge3$,
\[
    \frac{d}{N}c_{(d)}\longrightarrow\theta^{1-d}\kappa_d,
    \qquad
    (1+\mathbf1_{r=s}) rs\,c_{(r,s)}\longrightarrow
       \theta^{1-r-s}\kappa_{r,s},
    \qquad
    c_\lambda\longrightarrow0.
\]
The limiting means are determined by the first-order free cumulants
$\{\kappa_d\}$, and the limiting covariance is determined explicitly by
$\{\kappa_d\}$ and the second-order free cumulants
$\{\kappa_{r,s}\}$ through Eqn.~\eqref{eq_fixedtempcov}.
\end{theorem}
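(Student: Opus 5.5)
The plan is to extract joint moments of $\{\widetilde p_k^N\}$ from the Bessel generating function with Dunkl operators, and then to show that the limits of the joint cumulants are governed by exactly the three families of coefficients in the statement. Since the Dunkl operators $\mathcal D_i$ have $B_a(\vec x;\theta)$ as a joint eigenfunction with eigenvalues $a_i$, we have
\[
\E_{\mu_N}\Big[\prod_{j=1}^s \sum_i a_i^{k_j}\Big]=\Big[\prod_{j=1}^s\Big(\sum_i\mathcal D_i^{k_j}\Big)G_{N,\theta}\Big](0^N).
\]
I will write $G_{N,\theta}=\exp(F_N)$ with $F_N=\sum_\lambda c_\lambda p_\lambda$, and expand the action of $\sum_i\mathcal D_i^{k}$ on $\exp(F_N)$. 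Each application of $\mathcal D_i$ produces either a derivative $\partial_i F_N$, or a derivative acting on a previously created factor, or a divided difference $\theta(1-s_{ij})/(x_i-x_j)$. Following the paper's approach (Section~\ref{sec:methodology}), the terms are organized as weighted Lukasiewicz-type lattice paths. Each path records which coefficients $c_\lambda$ are picked up and how many free summation indices (hence powers of $N$) survive the evaluation at $0^N$.

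\textbf{Step 1 (power counting).} I will show that, after the normalization $N^{-k}$ in $\widetilde p_k^N$ and the prefactor $N^s$ for the centered $s$-th moment, a term involving coefficients $c_{\lambda^{(1)}},\dots,c_{\lambda^{(m)}}$ scales like $\prod_t N^{1-\ell(\lambda^{(t)})}\,|c_{\lambda^{(t)}}|$ times the order of the path. This reduces to counting how many index sums are free, and it is here that the topological (genus) expansion enters: only genus zero contributes at leading order. The normalization $\tfrac dN c_{(d)}=O(1)$ makes one-row coefficients contribute $O(1)$ to means, and $c_{(r,s)}=O(1)$ contributes at order $N^{-1}\cdot N^{-1}$, matching the covariance scale $N^{-2}$. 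Partitions with $\ell(\lambda)\ge3$ contribute only through connected joint cumulants of order $\ge3$.

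\textbf{Step 2 (sufficiency).} Assuming the three convergences, I will compute the leading-order terms. The mean $\E[\widetilde p_k^N]$ converges to a polynomial in $\theta^{1-d}\kappa_d$; the $\theta$-powers combine to give the free moment--cumulant formula for $m_k$. The covariance $N^2\Cov(\widetilde p_k^N,\widetilde p_l^N)$ converges to a path sum involving the $\kappa_d$ and a single $\kappa_{r,s}$, plus a universal term produced by the divided differences coupling the two operators; this sum is \eqref{eq_fixedtempcov}. For $s\ge3$, the connected cumulants $N^s\kappa(\widetilde p_{k_1},\dots,\widetilde p_{k_s})$ are shown to be bounded by terms involving either some $c_\lambda$ with $\ell(\lambda)\ge3$, which tend to zero, or a negative power of $N$. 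The moment--cumulant relation for classical cumulants then yields the Wick formula in Definition~\ref{def:fixedtempclt}.

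\textbf{Step 3 (necessity).} For the converse I will invert these triangular relations by induction on degree. The leading path in $\E[\widetilde p_d^N]$ that involves $c_{(d)}$ appears with nonzero coefficient, namely $\tfrac dN c_{(d)}\theta^{d-1}$ up to normalization, and all other terms involve lower $c_{(d')}$. Hence convergence of the means forces convergence of $\tfrac dN c_{(d)}$. Similarly, $N^2\Cov(\widetilde p_r,\widetilde p_s)$ contains $(1+\mathbf 1_{r=s})rs\,c_{(r,s)}\theta^{r+s-1}$ plus terms of lower total degree, which gives the second family. Finally, the top-degree joint cumulant of order $\ell$ isolates $c_\lambda$ with $\ell(\lambda)=\ell$ linearly, plus previously controlled terms, and its vanishing gives $c_\lambda\to0$.

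The main obstacle is Step~1: obtaining the uniform power-counting bound, and in particular the vanishing of higher connected cumulants. Before any assumption on $c_\lambda$ is available (needed in the necessity direction), one must control all paths uniformly. I would first try to prove a lemma stating that each Dunkl path's contribution equals $N^{\#\text{free indices}}$ times a $\theta$-polynomial, with the number of free indices bounded via an Euler characteristic count of the associated ribbon graph.
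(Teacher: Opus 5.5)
Your treatment of the means and the covariance follows the paper's route for that part of the ``if'' direction: Dunkl actions on $e^{F_{N,\theta}}$, organized into Lukasiewicz paths with block functions. The gap is in everything you route through Step~1, namely the vanishing of joint cumulants of order $s\ge3$ and the entire ``only if'' direction.

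A term-by-term power count of Dunkl paths cannot give $N^s\mathfrak c_s(\widetilde p_{k_1}^N,\ldots,\widetilde p_{k_s}^N)\to0$. The individual terms are not small: they are larger than the cumulant and vanish only in combination. This is already visible at $s=2$. After the exact $E(m)=0$ cancellation, the zero-Type~V configurations (Groups~A and~B) contribute at orders $N$ and $1$ to $N^2\Cov(\widetilde p_m^N,\widetilde p_n^N)$. They disappear only through the pair/triple matchings of Section~\ref{sec:cancellation1}. There, even the $O(1)$ residues of pair~1 and triple~5 must be cancelled against triples~14 and~12, using label counts $N-1$ versus $N-1-\mathbf 1_{\{r(t)\le C\}}$. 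For $s\ge3$, such cancellations would have to be organized across $s$ orders of $N$. A lemma bounding the free indices of each path by an Euler characteristic controls the sizes of terms, not these cancellations. Note also that the genus expansion in this paper is not an expansion of Dunkl paths; it is the constellation expansion of $\ln B_a(\vec x;\theta)$ in Theorem~\ref{thm:keyexpansion}.

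Step~3 is also not triangular as you describe. Already in degree two,
\[
\E[\widetilde p_2^N]=N^{-2}\bigl[2c_{(2)}\bigl(1+\theta(N-1)\bigr)+2c_{(1,1)}+c_{(1)}^2\bigr],
\qquad
N^2\Cov(\widetilde p_1^N,\widetilde p_1^N)=\frac{2c_{(2)}}{N}+2c_{(1,1)}.
\]
So the mean involves a same-degree two-row coefficient, and you cannot isolate $c_{(2)}/N$ without an a priori bound on $c_{(1,1)}$. In general you face a coupled system in all $c_\lambda$ with $\lambda\vdash d$, plus nonlinear remainders with $N$-dependent coefficients. Its uniform invertibility in the right scaling is exactly what is missing. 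Separately, the diagonal factor in the covariance is $\theta^{r+s-2}$, not $\theta^{r+s-1}$; this is what produces the $1/\theta$ in \eqref{eq_fixedtempcov}.

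The missing idea is to run the relation the other way. Write $\ln G_{N,\theta}=\ln\E[\exp(\ln B_a(\vec x;\theta))]$ and expand in classical joint cumulants. Each $c_\lambda$ then becomes an explicit finite sum of products of deterministic coefficients $C_{\mu,\nu}$ of $\ln B_a$, times joint cumulants of products of normalized power sums. Theorem~\ref{thm:keyexpansion} gives $C_{\mu,\nu}=O(N^{2-\ell(\nu)})$ with a genus-zero limit (Lemma~\ref{lem:onlyiffixedtemp}). Lemma~\ref{lem:leonov} reduces cumulants of products to cumulants of the atomic power sums. Necessity is then a direct forward estimate from the CLT hypothesis, with no inversion at all.

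For the higher cumulants in the ``if'' direction, the same identity at fixed total degree $d$ is a linear system for
\[
K_\rho=N^{\ell(\rho)}\mathfrak c_{\ell(\rho)}(\widetilde p_{\rho_1}^N,\ldots,\widetilde p_{\rho_{\ell(\rho)}}^N),\qquad \rho\vdash d,\ \ell(\rho)\ge3.
\]
Its matrix is asymptotically diagonal. Up to a combinatorial factor, the diagonal entries are $\prod_i C_{(\lambda_i),(\lambda_i)}/N$, which converge to nonzero limits. The nonlinear part is handled by induction on $d$, as in Section~\ref{sec:vanishing-higher-cumulants}.
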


The first limit is the fixed-temperature LLN criterion of \cite{CX}.  The new
content at second order is that the two-row BGF coefficients are exactly the
second-order free cumulants, while the decay of all longer coefficients is
equivalent to the vanishing of higher classical cumulants.  In particular,
the condition is intrinsic to the BGF and does not require the particles to be
eigenvalues of an actual random matrix.

\medskip
\noindent\textbf{High temperature.}

\begin{definition}[High-temperature CLT-satisfaction]\label{Definition_CLT_sat}
Let $\theta=\theta_N\to0$ and $N\theta_N\to\gamma>0$. We say that $\{\mu_N\}_{N\ge1}$ \emph{satisfies a CLT in the high-temperature regime} if there exist real numbers $\{m_k\}_{k\ge1}$ and symmetric real numbers $\{\alpha_{k,l}\}_{k,l\ge1}$ such that:
\begin{itemize}
    \item For every $k\ge1$,
    \[
        \lim_{N\to\infty}\E_{\mu_N}[p_k^N]=m_k.
    \]
    \item For every $s\ge1$ and $k_1,\ldots,k_s\ge1$,
    \[
        \lim_{N\to\infty}N^{s/2}\E_{\mu_N}\!\left[
            \prod_{j=1}^s\bigl(p_{k_j}^N
                -\E_{\mu_N}[p_{k_j}^N]\bigr)
        \right]
        =\sum_{\pi}\prod_{\{i,j\}\in\pi}\alpha_{k_i,k_j},
    \]
    with the same pair-partition convention as in Definition~\ref{def:fixedtempclt}.
\end{itemize}
\end{definition}

The different fluctuation scale is reflected exactly in the scaling of the BGF
coefficients.

\begin{theorem}[High-temperature characterization]\label{thm:intro-high-iff}
Under the standing assumptions above, let $\theta=\theta_N\to0$ and $N\theta_N\to\gamma>0$. The sequence $\{\mu_N\}$ satisfies a high-temperature CLT in the sense of Definition~\ref{Definition_CLT_sat} if and only if there exist
constants $\kappa_d^\gamma$ and symmetric constants
$\kappa_{r,s}^\gamma=\kappa_{s,r}^\gamma$ such that, for every
$d,r,s\ge1$ and every partition $\lambda$ with $\ell(\lambda)\ge3$,
\[
    d\,c_{(d)}\longrightarrow\kappa_d^\gamma,
    \qquad
    N(1+\mathbf{1}_{r=s})rs\,c_{(r,s)}\longrightarrow\kappa_{r,s}^\gamma,
    \qquad
    N^{\ell(\lambda)/2}c_\lambda\longrightarrow0.
\]
The limiting means are determined by the first-order free $\gamma$-cumulants
$\{\kappa_d^\gamma\}$, and the limiting covariance is determined explicitly by
$\{\kappa_d^\gamma\}$ and the second-order free $\gamma$-cumulants
$\{\kappa_{r,s}^\gamma\}$ through Eqn.~\eqref{eq_hightempcov}.
\end{theorem}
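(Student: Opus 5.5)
The plan mirrors the fixed-temperature argument, using Dunkl operators to extract moments from the BGF. The basic identity is that $B_a(\vec x;\theta)$ is a joint eigenfunction of the Dunkl operators $\mathcal D_i$ with eigenvalue $a_i$. Hence for power sums $P_k=\sum_i \mathcal D_i^k$ we have
\[
\E_{\mu_N}\Bigl[\prod_j \sum_i a_i^{k_j}\Bigr]=\Bigl(\prod_j P_{k_j}\Bigr) G_{N,\theta}(\vec x)\Big|_{\vec x=0}.
\]
I would write $G=\exp(F)$ with $F=\log G_{N,\theta}=\sum_\lambda c_\lambda p_\lambda$. Applying $P_{k_1}\cdots P_{k_s}$ to $e^F$ and evaluating at $0$ gives a finite polynomial in the $c_\lambda$, with coefficients polynomial in $N$ and $\theta$. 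The joint cumulants of $Np^N_{k_1},\dots,Np^N_{k_s}$ are then the connected part of this expansion. They are obtained from the Dunkl action on $e^F$ by the usual moment--cumulant inversion, or equivalently by acting with $P_{k_j}$ on the Taylor expansion of $F$ at $0$.

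The first step is the high-temperature power counting. Each application of $\mathcal D_i=\partial_i+\theta\sum_{j\ne i}\frac{1-s_{ij}}{x_i-x_j}$ either differentiates, which lowers the degree and hits one $p_r$ factor, or applies the divided-difference term. The divided difference merges or splits power sums and produces a factor $\theta\cdot(\text{number of indices})\sim N\theta\to\gamma$. These moves can be recorded as weighted Łukasiewicz-type lattice paths; equivalently, one uses the genus expansion of $\log B$ through ribbon graphs. Under $\theta\sim\gamma/N$ the topological expansion carries no extra $N$-suppression of higher genus, which is why all genera appear.

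From the counting I would show the following for the $s$-th joint cumulant of $\{p^N_{k_j}\}$:
\begin{itemize}
\item it equals $N^{1-s}$ times a sum over contributions that each use some coefficients $c_\lambda$;
\item a term involving $c_\lambda$ with $\ell(\lambda)=\ell$, together with single-row coefficients, comes with weight $N^{\ell-s}$, up to $O(1)$ polynomial factors in $\gamma$.
\end{itemize}
With the normalization $N^{s/2}$, this gives three regimes:
\begin{itemize}
\item for $s=1$, the mean is a polynomial in $d\,c_{(d)}$, which is the LLN of \cite{CX} in the $\gamma$ form with $\gamma$-cumulants $\kappa^\gamma_d$;
\item for $s=2$, the covariance is $N^{-1}\bigl(\text{poly in }d c_{(d)}\bigr)+\text{poly}\cdot c_{(r,s)}$, so $N\,c_{(r,s)}$ is exactly the right scaling, and this yields formula \eqref{eq_hightempcov};
\item for $s\ge3$, the scaled cumulant $N^{s/2}\kappa_s$ is a linear combination of $N^{\ell(\lambda)/2}c_\lambda$ over $\ell(\lambda)=s$, plus lower-order terms built from coefficients already controlled. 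These lower-order terms must be shown to vanish by the inductive hypothesis together with the power counting.
\end{itemize}

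For sufficiency, Gaussianity of the limit (Wick's formula in Definition~\ref{Definition_CLT_sat}) is equivalent to the vanishing of $N^{s/2}$ times the joint cumulants for $s\ge3$. This follows from the expansion once the counting shows that each term carries a net negative power of $N$.

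For necessity, I would use a triangularity argument. Among the terms of the $s$-th cumulant with leading $N$-power, the dependence on the highest-degree coefficients $c_\lambda$ with $|\lambda|=\sum k_j$ and $\ell(\lambda)=s$ should be through an invertible (upper-triangular) linear map. For the leading path in which every Dunkl operator differentiates, the coefficient is $\prod_j k_j$ times a combinatorial factor. Inducting on $|\lambda|$ and then on $\ell(\lambda)$ recovers the convergence of $d\,c_{(d)}$, of $N c_{(r,s)}$, and the decay of $N^{\ell/2}c_\lambda$.

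The main obstacle is the sharp power counting in $N$ for the divided-difference terms at high temperature. One must show that they never produce a larger power than claimed, so that cross-terms of order $N^{\ell-s}$ do not spoil either direction. This requires tracking how $\frac{1-s_{ij}}{x_i-x_j}$ acts on products $p_\lambda$ evaluated at $0$. I would first try to import the ribbon-graph genus bookkeeping and the path bounds from \cite{CX}, extended from a single $P_k$ to products of $P_{k_j}$.
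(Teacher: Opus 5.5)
Your plan has a genuine gap. It sits at the step you defer as ``the main obstacle'', and what is missing there is a cancellation your plan never produces, not merely sharp power counting.

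The Dunkl expansion gives the joint moments $\mathbb{E}[\prod_j p_{k_j}(a)]$ at order $N^s$. The theorem, however, requires $\mathfrak c_s(p_{k_1}(a),\ldots,p_{k_s}(a))=o(N^{s/2})$ for $s\ge3$. The LLN path bounds you plan to import control only leading-order moments, and that is exactly the part that cancels. Even for $s=2$ the paper needs the configuration-and-cancellation analysis of Section~\ref{sec:dunklaction}.

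Your shortcut, that cumulants are obtained ``equivalently by acting with $\mathcal P_{k_j}$ on the Taylor expansion of $F$'', is false. $\mathcal P_k$ is not a derivation, and $\log G_{N,\theta}$ is not a cumulant generating function of the power sums. Already
\[
\mathbb{E}[p_2(a)]=N\bigl(c_{(1)}^2+2c_{(2)}(1+\theta(N-1))+2c_{(1,1)}\bigr),
\]
whereas $\mathcal P_2F|_{\vec x=0}$ misses the term $Nc_{(1)}^2$.

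The same identity refutes your weight rule. $\mathbb{E}[p_2^N]$ contains $2c_{(1,1)}$ with weight $N^0$, not $N^{\ell-s}=N$. Likewise, $\Cov(p_1^N,p_2^N)$ contains $6c_{(1,1,1)}$ with weight $N^0$, not $N$. Taken literally for $\ell(\lambda)>s$, your rule would give $N^{\ell(\lambda)/2}c_\lambda$ a coefficient $N^{(\ell(\lambda)-s)/2}\to\infty$ in $N^{s/2}\mathfrak c_s(p^N_{k_1},\ldots,p^N_{k_s})$, and neither direction would close. The correct saturation at $N^0$ is precisely what needs proof, and nothing in the proposal supplies it.

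The idea you are missing is to run the relation the other way, as the paper does. Write $\log G_{N,\theta}=\log\mathbb{E}[\exp(\log B_a(\vec x;\theta))]$ and expand
\[
\log B_a=\sum_{\mu,\nu}C_{\mu,\nu}N^{-\ell(\mu)}p_\mu(a)p_\nu(\vec x).
\]
The classical cumulant expansion then expresses each $c_\lambda$ as an explicit finite combination of products of $C$'s with joint cumulants of the $p^N_k$.

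All the cancellation is then packaged in Lemma~\ref{lem:onlyifhightemp}: $N^{\ell(\nu)-1}C_{\mu,\nu}$ is bounded and convergent. The lemma rests on three things:
\begin{enumerate}
\item the connected constellation expansion of $\log B_a$, where $0\le\nu_\rho\le2g$ keeps $N^{-2g}(\alpha-1)^{\nu_\rho}=O(1)$ even though $\alpha\sim N/\gamma$;
\item absolute convergence of the genus series for large $u=N\theta$;
\item extension to all $u>0$ via the Laurent expansion in $N^{-1}$ and the identity theorem.
\end{enumerate}
You invoke ribbon graphs only as a bookkeeping of Dunkl moves, which is not their role.

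With this lemma, necessity follows from Leonov--Shiryaev estimates. Vanishing of higher cumulants follows by inverting, at each total degree, a linear system on $\{\lambda:\ell(\lambda)\ge3\}$. That system is asymptotically diagonal with diagonal entries $\prod_iC_{(\lambda_i),(\lambda_i)}$. The nonlinear remainder is controlled by the connectedness bound $\sum_{B\in\pi,\,|B|\ge2}|B|\ge r$ together with induction on total degree.

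Finally, ``this yields formula \eqref{eq_hightempcov}'' is not a derivation. You must show three things: only $Conf_1(m,n)$ and $Conf_2(m,n)$ survive; their zero-derivative parts cancel pairwise; and each nonempty set of derivative times in $P(E,\vec q)$ contributes $\gamma^{|D(E)|-k}I^{(j)}_k(E,\vec q;\gamma)$.
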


\begin{remark}\label{rem:moment-clt-positivity}
Definitions~\ref{def:fixedtempclt} and~\ref{Definition_CLT_sat} are entirely in terms of moments and do not require $(\alpha_{k,l})_{k,l\ge1}$ to be positive semidefinite. By the moment--cumulant relations, their centered-moment conditions are equivalent to convergence of the second joint cumulants to $\alpha_{k,l}$ and vanishing of all joint cumulants of order at least three, under the corresponding scaling.

If each normalized $N$-particle generalized function is a probability measure, so that $\E_{\mu_N}$ is integration against that measure, every finite principal submatrix of $(\alpha_{k,l})$ is a limit of covariance matrices of the centered and scaled power sums, and is therefore positive semidefinite. Hence these second moments define a centered Gaussian family, possibly degenerate, whose joint moments are given by the same Wick formula. In this case, our CLT is precisely convergence in finite-dimensional moments to that Gaussian family.
\end{remark}

This theorem is the fluctuation-level counterpart of the high-temperature LLN
criterion of \cite{BCG}.  The first-order free $\gamma$-cumulants linearize the
$\gamma$-convolution introduced there.  The second-order array in
Theorem~\ref{thm:intro-high-iff} supplies the additional data needed for
the high-temperature CLT, and Section~8 shows that it is additive under the $\beta$-addition.  

Theorems~\ref{thm:intro-fixed-iff} and \ref{thm:intro-high-iff} have the same
three-level architecture, but their scalings are not formal substitutions for
one another.  At fixed temperature the one-row coefficients have order $N$ and
the two-row coefficients have order one.  At high temperature the corresponding
orders are one and $N^{-1}$.  This change is precisely what converts
$N^{-1}$ fluctuations of normalized global statistics into the usual
mean-field scale $N^{-1/2}$.

\subsection{Applications}

Our if-and-only-if characterizations reduce fixed- and high-temperature
CLTs to the corresponding coefficient asymptotics of logarithmic BGFs.
Whenever these transforms can be described explicitly, the criteria give
direct access to fluctuations and their limiting covariance. We give three
applications of such characterization.

First, the classical Gaussian and Laguerre $\beta$-ensembles possess a rich
integrable structure, including explicit BGFs. Their fixed-temperature CLTs
are well known; see, for example, \cite{Joh98,DE06}. Applying our
high-temperature criterion to these explicit transforms gives
CLTs and explicit limiting covariances directly; see
Section~\ref{sec:gaussian-laguerre-high-temp}.

Second, consider $N$-particle sequences $a(N)$ and $b(N)$ whose laws are
compactly supported real symmetric generalized functions, normalized as in
Section~\ref{sec:bgf}, following \cite[Section~2.4]{BCG}. We regard them as the
eigenvalues of virtual random matrices. Their $\beta$-addition
$c(N)=a(N)+_\theta b(N)$, where $\beta=2\theta$, is defined by
\[
    G^c_{N,\theta}(\vec x)
       =G^a_{N,\theta}(\vec x)G^b_{N,\theta}(\vec x).
\]
When the inputs are probability measures and $\theta=1/2,1,2$, this is precisely the eigenvalue law of the sum of
independent orthogonally, unitarily, or symplectically invariant matrices,
respectively, and hence naturally extends classical matrix addition, see e.g
\cite{GM, BCG}.
For general $\theta>0$, the output is defined as a generalized function;
that it is a probability measure whenever the inputs are probability
measures remains conjectural.
The product identity makes logarithmic BGF coefficients additive.
Consequently, if both inputs satisfy a CLT in either temperature regime,
then so does their sum in the same regime.
Its limiting covariance is explicit in the first- and second-order free
cumulants of the inputs; see Theorem~\ref{thm:betaadditionclt}.

Third, the high-temperature theory interpolates between the classical
theory at $\theta=0$ and the free-probability theory at fixed temperature.
At the LLN level, Benaych-Georges, Cuenca, and Gorin established the
$\gamma\to\infty$ transition to free cumulants and free convolution
\cite[Section~8.2]{BCG}. We obtain the corresponding transition at the
fluctuation level: after appropriately rescaling the first- and
second-order free $\gamma$-cumulants and the covariance, the
high-temperature formulas converge to their fixed-temperature counterparts; see
Theorem~\ref{thm:gamma-to-fixed-transition}.

Beyond these applications, one can extend the Gaussian fluctuation to two-dimensional particle systems, and study the limiting Gaussian fields. For a broad class of two-dimensional random surface models, global height fluctuations are expected to be described by the Gaussian free field (GFF). This picture is supported by results for random tilings and their discrete log-gas marginals \cite{Ken08,BGG17,BuG19,BoGG26}, for the six-vertex model in certain parameter regimes \cite{DKLM26}, and for anisotropic $(2+1)$-dimensional KPZ growth \cite{BF14}. On the random matrix side, the single-level fluctuation of log-gases at fixed-temperature can be viewed as a one-dimensional projection of a GFF, and the corners process of Wigner, Wishart matrices and the Jacobi ensembles in two dimension are also shown to be GFF
\cite{Joh98,Bor14,DP18,BG15}. A natural multilevel extension of a
one-dimensional $\beta$-ensemble is its $\beta$-corners process; see
\cite{Cue21}. By \cite[Proposition~4.1]{Cue21}, the BGF formalism is compatible with the interlacing
projections of these processes, suggesting
that our characterization can be extended to fixed- and high-temperature
CLTs for general two-dimensional corners processes. We will pursue this
direction in subsequent work.

\subsection{Methodology and topological expansion}\label{sec:methodology}

{The covariance in the \emph{if} direction is obtained by applying Dunkl
operators to the BGF and expanding their action into weighted sums of
Lukasiewicz paths with associated block functions. Such a moment-method
approach using Dunkl operators was used for LLNs in \cite{BCG,Yao21,X,CX}
and for edge limits in \cite{KX24,GXZ24}.
At the fluctuation scale, the leading moment terms cancel, and the
covariance is determined by contributions that are invisible to the LLN.
We organize the terms into configurations and track their cancellations
to isolate the surviving weights. Unlike the edge calculations, here the
moment degrees remain fixed, and we evaluate these weights directly,
without passing to a Brownian limit of the paths. The surviving
configurations give the explicit covariance formulas in
Eqns.~\eqref{eq_fixedtempcov} and \eqref{eq_hightempcov}. A separate
combinatorial argument in Section~\ref{sec:comb} identifies the
fixed-temperature formula with the second-order free-cumulant formula of
Collins--Mingo--\'{S}niady--Speicher \cite{CMSS07}, up to a factor $1/\theta$.}

The \emph{only if} direction and the vanishing of higher classical
cumulants in the \emph{if} direction use a different tool: a topological
expansion of the multivariate Bessel function. To recover the coefficients of
$\log G_{N,\theta}$ from particle moments, we write
\[
    \log G_{N,\theta}(\vec x)
    =\log\E\!\left[\exp\!\left(\log B_a(\vec x;\theta)\right)\right]
\]
and expand the right-hand side in classical joint cumulants. {The required input is an explicit expansion of $\log B_a$. The expansion
itself was obtained in the companion work \cite{CX} from the Jack-series
identities of Chapuy--Do\l\k{e}ga \cite{CD} and used there to study LLNs;
we recall it in Theorem~\ref{thm:keyexpansion}. Here we use it to relate
BGF coefficients to joint cumulants at the fluctuation scale, in both
directions of the CLT characterization.} It is indexed by connected
\emph{constellations}, a class of ribbon graphs embedded in surfaces,
allowing twisted ribbons and hence non-orientable surfaces. Each term
contains a weight
\[
    b^{\nu_\rho},\qquad b=\theta^{-1}-1=2/\beta-1,
\]
where $\nu_\rho$ is a nonnegative integer measuring non-orientability and
vanishes precisely for orientable constellations. The expansion packages
the joint moment information into a single formula, in which the power of $N$ of each term is controlled by the topology of the corresponding surface.

{Combining this formula with the classical cumulant expansion expresses
the BGF coefficients in terms of joint cumulants of the power sums and
gives the estimates needed for necessity. For sufficiency, we use the
same relation in reverse. At each fixed total degree $d$, after the
appropriate normalization, the coefficients indexed by partitions of
length at least three form a linear system for the higher joint
cumulants of that degree. Its matrix is asymptotically diagonal with
nonzero diagonal limits. The remaining terms are either controlled by
the LLN and covariance limits or are products of cumulants of smaller
total degree. Induction on the total degree then proves that all higher
classical joint cumulants vanish. Together with the covariance
calculation, this gives a necessary and sufficient fluctuation criterion
and a topological interpretation of second-order free cumulants and
their high-temperature counterparts.}

The idea of topological expansion has its root in gauge theory \cite{Hoo74,Cic82}, and has a long tradition in random matrix
theory. For GUE and GOE, the expansion comes directly from the matrix structure:
expanding traces into a polynomial of matrix entries and applying Wick's formula turns
moments into sums over polygon gluings, equivalently \emph{maps} on orientable surfaces;
non-orientable surfaces occur in the real case. At finite $N$, topology of the underlying surface of the map organizes an
exact moment expression, and increasing the genus lowers its order in
$N$. Consequently, only genus zero contributes to the fixed-temperature
LLN and CLT.

The picture changes under the edge scaling, which captures the local limit of the largest eigenvalues: the powers in the moment
method grow with $N$, and the growing number of maps compensates for
the suppression of higher genus. Contributions from all genera must
therefore be retained. See \cite{Oko00} for the edge calculation of GUE, and 
\cite{Sod07,Sod09,Sod10,FS10,Sod14} for GOE and more general real symmetric matrices.

A distinct source of topological expansion is the \emph{group structure} appearing in the corresponding random matrix object. For Haar-distributed compact
groups, Haar integration and Weingarten calculus replace Wick pairing, see  \cite{MPud19,MPud24}. {A symmetric-group counterpart appears in Okounkov's study of random
partitions \cite{Oko00}, where moments under the Plancherel measure are
expressed as weighted sums over branched coverings of surfaces.}

For general $\beta$, the combinatorial expansions relevant here come
instead from \emph{Jack series}. \cite{LC09} related the Gaussian
$\beta$-ensemble to a map series, where each term is weighted by a measure of non-orientability as in Section \ref{sec:topexpansion}, and
 \cite{CD} generalizes the expansion by introducing the more involved notion of constellations. In this setting, the expansion is extracted from the
combinatorics of symmetric functions, without requiring a realization by
concrete matrix or group structure. Recently, 
 such perspective was used in \cite{XuJack26} to study the soft edge limits of the
Jack--Plancherel process and their relation to asymptotic
$\beta$-weighted map enumeration.

{All the above approaches treat} $\beta>0$ as a fixed parameter or implicitly take $\beta=1,2,4$. Returning to the Bessel expansion, the high-temperature regime studied in this text introduces
a different reason for retaining all genera, even though the degrees of
the moments remain fixed. When $\theta\sim\gamma/N$, we have
$b\sim N/\gamma$. The genus-dependent factor $N^{-2g}b^{\nu_\rho}$ then
has order $N^{-2g+\nu_\rho}$, and the bound $0\le\nu_\rho\le2g$ allows
leading contributions with $\nu_\rho=2g$, instead of just $g=0$. We control this
all-genus expansion and {establish} the resulting coefficient estimates for
every $\gamma>0$; see Lemma~\ref{lem:onlyifhightemp}. This contrasts with
the genus-zero description at fixed temperature and gives a topological
interpretation of both the classical second-order free cumulants and
their high-temperature counterparts. In particular, topological
expansions of Jack series carry probabilistic information beyond the
classical matrix models.

\begin{table}[!htbp]
\centering
\small
\setlength{\tabcolsep}{4pt}
\renewcommand{\arraystretch}{1.22}
\begin{tabular}{@{}p{0.22\linewidth}p{0.25\linewidth}p{0.25\linewidth}p{0.22\linewidth}@{}}
\hline
\raggedright Object & \raggedright Limit regime &
\raggedright Combinatorial objects & \raggedright Source\tabularnewline
\hline
\raggedright Real/complex Hermitian matrices & \raggedright Fixed-temperature LLN and CLT &
\raggedright Polygon gluings / maps, genus zero &
\raggedright Matrix  structure\tabularnewline
\raggedright Real/complex Hermitian matrices & \raggedright Spectral edge &
\raggedright Polygon gluings / maps, all genera &
\raggedright Matrix  structure\tabularnewline
\raggedright Haar compact groups & \raggedright Finite $N$ expansion and its large $N$ limit &
\raggedright Surfaces with boundary &
\raggedright Group structure\tabularnewline
\raggedright Plancherel measure & \raggedright Edge of a random partition &
\raggedright Branched coverings &
\raggedright Group structure (symmetric group)\tabularnewline
\raggedright Gaussian $\beta$-ensemble & \raggedright Fixed-temperature LLN and CLT &
\raggedright $b$-weighted maps, genus zero &
\raggedright Jack series\tabularnewline
\raggedright Gaussian $\beta$-ensemble & \raggedright Finite $N$ expansion and edge limit &
\raggedright $b$-weighted maps, all genera &
\raggedright Jack series\tabularnewline
\raggedright Continuous $N$-particles & \raggedright Fixed-temperature LLN and CLT &
\raggedright Constellations, genus zero &
\raggedright Jack series (multivariate Bessel functions)\tabularnewline
\raggedright Continuous $N$-particles & \raggedright High-temperature LLN and CLT &
\raggedright Constellations, all genera &
\raggedright Jack series (multivariate Bessel functions)\tabularnewline
\hline
\end{tabular}
\caption{Sources and limiting regimes of the topological expansions discussed above.
``All genera'' means that the leading asymptotics are not restricted to genus zero.}
\label{tab:topological-expansions}
\end{table}

\begin{samepage}
The paper is organized as follows. Section~2 recalls the backgrounds in symmetric functions and topological expansion. Sections~\ref{sec:luk-cov} and
\ref{sec:dunklaction} state and prove the covariance formulas.
Section~\ref{sec:only-if} proves the \emph{only if} direction, and
Section~\ref{sec:vanishing-higher-cumulants} completes the \emph{if}
direction by proving the vanishing of higher classical cumulants.
Section~\ref{sec:comb} gives the second-order free-probabilistic
identification. Section~8 presents the applications.
\par
\end{samepage}

\medskip
\noindent\textbf{Notation.}
Throughout the paper, $N$ is a positive integer tending to infinity.  We work
with $N$ variables $x_1,\ldots,x_N$ and write $\vec x=(x_1,\ldots,x_N)$.  The
partial derivatives are denoted by
\[
    \partial_i:=\frac{\partial}{\partial x_i},
    \qquad i=1,\ldots,N,
\]
and $\sigma_{i,j}$ denotes the operator that interchanges $x_i$ and $x_j$.
The \emph{fixed-temperature regime} means that $\theta>0$ is fixed while
$N\to\infty$.  The \emph{high-temperature regime} means that
$\theta=\theta_N\to0$ and $N\theta_N\to\gamma$ for some $\gamma>0$.
For a real sequence $u_N$, its \emph{$N$-degree} is the smallest
$s\in\Z\cup\{-\infty\}$ such that
$\limsup_{N\to\infty}N^{-s}|u_N|<\infty$.

\subsection*{Acknowledgements}
We thank Maciej Dołęga, Cesar Cuenca and Vadim Gorin for helpful discussions. Some of the discussions took place during our stay in the Institut Mittag-Leffler in 2024 fall, and we are grateful to the institute and the organizers of the program ``Random Matrices and Scaling Limits".
\subsection*{Statement of AI use}
Most of the math ingredients in this text were figured out before 2025 spring, and the rest were also generated later by the human author. During the preparation of this text, we used ChatGPT 5.5, 5.6 and 6 for literature search, error detection and improving the exposition. In particular, LLM  pointed out the useful reference for the Leonov–Shiryaev
formula in Lemma \ref{lem:leonov}.
\section{Preliminaries}

In this section, we collect the necessary background on multivariate Bessel functions and Bessel generating functions. Sections \ref{sec:dunkl}--\ref{sec:bgf} contain classical results, while Section \ref{sec:topexpansion} recalls a more recent expansion from \cite{CD} and \cite{CX}.

\subsection{Dunkl operators}\label{sec:dunkl}

The Dunkl operators, introduced in \cite{D}, are defined by
\[
\D_i^{N,\theta} := \partial_i + \theta\sum_{j\colon j\ne i} \frac{1}{x_i-x_j}(1-\sigma_{i,j}),\qquad i=1,\dots,N.
\]
They pairwise commute, $\D_i^{N,\theta}\D_j^{N,\theta} = \D_j^{N,\theta}\D_i^{N,\theta}$, for all $i,j$, and their symmetric joint eigenfunctions, discussed in the next subsection, will play an important role in this paper.
We can view $\D_i^{N,\theta}$ as operators acting on holomorphic functions on domains of $\C^N$ that are symmetric with respect to permutations of the coordinates.
We will also use the symmetric operators
\[
\P_k^{N,\theta} := \sum_{i=1}^N{ \Big(\D_i^{N,\theta}\Big)^k },\qquad k=1,2,\cdots.
\]
For simplicity, we drop the superscripts $N,\theta$, so that $\D_i^{N,\theta}, \P_k^{N,\theta}$ will simply be denoted $\D_i, \P_k$.

\subsection{Multivariate Bessel functions}\label{sec:mbf}

\begin{theorem}[\cite{O}]\label{thm:bessel_existence}
    Let $a_1,\dots,a_N\in\C$ be arbitrary. There exists a unique holomorphic function $F(x_1,\dots,x_N)$, normalized by $F(0^N)=1$, that is symmetric with respect to its variables and solves the differential equations
    \[
    \P_k F(x_1,\dots,x_N) = \left( \sum_{i=1}^N{a_i^k} \right)F(x_1,\dots,x_N),\quad\textrm{for all }k\ge 1.
    \]
    If we denote this solution by $B_{(a_1,\dots,a_N)}(x_1,\dots,x_N;\theta)$, then the map $(a,x)\mapsto B_a(x;\theta)$ can be extended to an entire function on $2N$ variables.
\end{theorem}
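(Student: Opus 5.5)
The strategy is to construct $F$ as an explicit series in Jack polynomials and then show that the eigenvalue equations pin it down uniquely. Concretely, I would take
\[
F(\vec x)=\sum_{\lambda}\frac{P_\lambda(a;\theta)\,P_\lambda(\vec x;\theta)}{P_\lambda(1^N;\theta)\,H(\lambda)},
\]
with $H(\lambda)$ the appropriate hook-type normalization. This is the $q\to1$, rescaled limit of the Macdonald/Jack interpolation construction (Okounkov--Olshanski).

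\textbf{Step 1 (uniqueness).} This step is formal. Write $F=\sum_{n\ge0}F_n$ with $F_n$ homogeneous symmetric of degree $n$. Each Dunkl operator $\D_i$ lowers degree by one and preserves polynomials, so $\P_k$ maps degree-$n$ symmetric polynomials to degree-$(n-k)$ symmetric polynomials.
\begin{itemize}
\item Using $k=1$, the equation reads $\P_1F_{n+1}=p_1(a)F_n$ for all $n$.
\item The key lemma is that a homogeneous symmetric polynomial $f$ with $\P_kf=0$ for all $k$ must be a constant. I would prove this by pairing with the Dunkl (Fischer) inner product $\langle f,g\rangle=[f(\D)g](0)$. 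With respect to this form, multiplication by $p_k$ is adjoint to $\P_k$, and the form is positive definite for $\theta>0$. If $\P_kf=0$ for all $k$ and $\deg f>0$, then $f$ is orthogonal to $p_k\cdot(\text{anything})$, hence to every positive-degree symmetric polynomial, so $f=0$.
\item Given two solutions, their difference at the lowest nonvanishing degree is killed by every $\P_k$. By the lemma it is constant, and the constant is $0$ by the normalization $F(0^N)=1$.
\end{itemize}
Uniqueness therefore holds among formal power series.

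\textbf{Step 2 (existence).} I would use the Jack eigen-structure. The operators $\P_k$ act on $P_\lambda(\vec x;\theta)$ by lowering formulas (a Pieri-type rule) whose coefficients are values of shifted symmetric functions. Concretely, I would verify the identity $\P_k\, B = p_k(a)\,B$ coefficientwise: applying $\P_k$ to $P_\lambda(\vec x)$ produces terms $P_\mu(\vec x)$ with $\mu\subset\lambda$, $|\lambda|-|\mu|=k$. The resulting sum over $\lambda\supset\mu$ of $P_\lambda(a)$ times the coefficients must collapse to $p_k(a)P_\mu(a)$ times the normalization. This is the binomial formula for Jack polynomials, the Okounkov--Olshanski shifted-Jack identity. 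An alternative route is to define $F$ via the Dunkl intertwining operator $V_\theta$, as $F=V_\theta[e^{\langle a,\cdot\rangle}]$ symmetrized. Then the eigen-equation follows from $\D_iV_\theta=V_\theta\partial_i$, because $\sum_i\partial_i^k e^{\langle a,x\rangle}=p_k(a)e^{\langle a,x\rangle}$ and the symmetrization commutes with $\P_k$.

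\textbf{Step 3 (analyticity).} Holomorphy and joint entirety in $(a,x)$ come from bounds on the terms of the series. I would bound $|P_\lambda(a)P_\lambda(x)|/(P_\lambda(1^N)H(\lambda))$ by $C^{|\lambda|}\max|a|^{|\lambda|}\max|x|^{|\lambda|}/|\lambda|!$-type estimates, using positivity of Jack coefficients in the monomial basis. Alternatively, via $V_\theta$, I would use Rösler's positive integral representation, which yields $|F|\le e^{\max|a|\,\sum|x_i|}$-type bounds directly. In either version the series converges locally uniformly on $\C^{2N}$, so the limit is entire. Symmetry in $x$ is built in, and $F(0)=1$ holds since only $\lambda=\emptyset$ survives at $x=0$.

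The main obstacle is the existence identity in Step 2, namely making the Pieri/binomial coefficients of $\P_k$ on Jack polynomials match $p_k(a)$ exactly. I would lean on the intertwining-operator route, where the identity is immediate, and push the difficulty into proving the existence and positivity of $V_\theta$ for $\theta>0$ (Dunkl's theorem).
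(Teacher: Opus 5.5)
The paper does not prove this statement. It quotes it from Opdam \cite{O}, then imports the Jack expansion \eqref{eq_BesselJack} from Okounkov--Olshanski. Your outline is a legitimate alternative route. It is complete modulo Dunkl's theorems (existence of $V_\theta$ and positivity of the Fischer pairing for $\theta>0$) and R\"osler's positivity of $V_\theta$. None of these depends on Opdam, so there is no circularity.

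The routes differ as follows. Opdam's argument treats the eigen-system directly and allows complex multiplicity off a singular set, so no positivity enters and one also gets holomorphic dependence on $\theta$. Your argument is tied to $\theta>0$. In exchange, your uniqueness step is short and purely formal: the lowest-degree component of a difference of two solutions is killed by every $\mathcal P_k$, and the Fischer pairing then forces it to vanish. Existence by symmetrizing the Dunkl kernel $E_\theta(\vec x,a)=V_\theta[e^{\langle a,\cdot\rangle}](\vec x)$ is also immediate, since $w\mathcal D_iw^{-1}=\mathcal D_{w(i)}$ gives $w\mathcal P_kw^{-1}=\mathcal P_k$. R\"osler's representation additionally yields the explicit bound $|B_a(\vec x;\theta)|\le\exp\bigl(\max_{w\in S_N}\langle w\vec x,\Re a\rangle\bigr)$ for real $\vec x$. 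After exchanging $a$ and $\vec x$, this is exactly the estimate behind the holomorphy domain $\{|\Re x_i|<R\}$ in Proposition-Definition~\ref{def:bgf}.

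Three details need tightening.

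\textbf{Complex coefficients in uniqueness.} For complex $a$ the components $F_n$ have complex coefficients. Positivity of the real bilinear form does not give $\langle f,f\rangle=0\Rightarrow f=0$, because a complex bilinear form can have isotropic vectors. Instead pair $f$ with its coefficientwise conjugate $\overline f$, which is also symmetric of positive degree. Writing $f=u+\mathrm{i}v$ with $u,v$ real, you get $0=\langle\overline f,f\rangle=\langle u,u\rangle+\langle v,v\rangle$, so $f=0$.

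\textbf{Joint entirety.} R\"osler's measures are indexed by real points, so they do not give joint entirety on $\C^{2N}$ directly. Add one step. Each term $V_\theta(\langle\cdot,a\rangle^n)(\vec x)$ is a polynomial bounded by $(|\vec x||a|)^n$ for real $\vec x$. A Bernstein-type inequality along complex lines loses only a factor $C^n$ for complex $\vec x$ in a ball, and the $1/n!$ absorbs it. The same locally uniform convergence justifies applying $\mathcal D_iV_\theta=V_\theta\partial_i$ termwise to the exponential series.

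\textbf{The Jack series.} Keep the Jack-series version out of the existence argument. Your Step~3 bound does hold for that series. Monomial positivity gives $|P_\lambda(z)|\le\|z\|_\infty^{|\lambda|}P_\lambda(1^N)$, and $\sum_{|\lambda|=n}P_\lambda/H(\lambda)=p_1^n/n!$ then gives the bound $e^{N\|a\|_\infty\|\vec x\|_\infty}$. However, identifying the series with the solution still requires the binomial formula, which is exactly what the paper outsources to \cite{Ok-Ol}. That identification is not needed for the theorem as stated.
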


Indeed, $B_a(\vecx;\theta)$ has a more explicit expansion in terms of Jack polynomials as follows (see~\cite[Sec.~4]{Ok-Ol}):
\begin{equation}\label{eq_BesselJack}
    B_a(\vecx;\theta) = \sum_{\ell(\la)\le N}\frac{\prod_{(i,j)\in\la}\big((\la_i-j)+\theta(\la_j'-i)+\theta\big)}{\prod_{(i,j)\in\la}\big((\la_i-j)+\theta(\la_j'-i) + 1\big) \big(N\theta+j-1-\theta(i-1)\big)} \,J_\la(\vecx;\theta)J_\la(a;\theta).
\end{equation}

\subsection{Bessel generating functions}\label{sec:bgf}

Following \cite{BCG}, let $\M_N$ be the set of Borel probability measures on ordered real $N$-tuples $a_1\le\dots\le a_N$ and, for any $R>0$, let $\M_N^R\subset\M_N$ be the subset of measures $\mu$ such that
\begin{equation}\label{eq:technical}
\int_{a_1\le\dots\le a_N} e^{NR\max_i|a_i|}\,\mu(\d a_1,\dots,\d a_N) < \infty.
\end{equation}

\begin{proposition-definition}[Lemma~2.9 of~\cite{BCG}]\label{def:bgf}
    If $\mu\in\M_N^R$ for some $R>0$, then the integral
    \[
    G_\theta(x_1,\dots,x_N;\mu) := \int_{ a_1\le\dots\le a_N} B_{( a_1,\dots, a_N)}(x_1,\dots,x_N;\theta)\mu(\d a_1,\dots,\d a_N)
    \]
    defines a holomorphic function in the domain
    \[
    \{ (x_1,\dots,x_N)\in\C^N \colon\ |\Re x_i|<R,\ i=1,2,\dots,N \},
    \]
    that is called the \textbf{Bessel generating function} of $\mu$.
\end{proposition-definition}

The measures in $\M_N^R$ will be called \emph{$R$-exponentially decaying}.
We also let $\M_N^{>0} := \bigcup_{R>0}{\M_N^R}$, and call all measures in $\M_N^{>0}$ \emph{exponentially decaying}.
Any $\mu\in\M_N^{>0}$ admits a Bessel generating function $G_\theta(x_1,\dots,x_N;\mu)$ that is a holomorphic function in a neighborhood of the origin.
Following \cite[Section~2.4]{BCG}, we also allow $\mu$ to be a compactly supported real symmetric generalized function on $\mathbb R^N$ with $\langle\mu,1\rangle=N!$. Here a generalized function is a continuous linear functional on compactly supported smooth test functions, and symmetry means invariance under coordinate permutations. Compact support permits pairing with any smooth function. In this case, set
\[
    \E_\mu[f]:=\frac{1}{N!}\langle\mu,f\rangle,
    \qquad
    G_\theta(\vec x;\mu)
    :=\frac{1}{N!}\langle\mu,B_{\bullet}(\vec x;\theta)\rangle.
\]
The BGF is entire by \cite[Proposition~2.13]{BCG}. Together with exponentially decaying probability measures, these are the $N$-particle objects considered in our general limit statements. In both cases, $\E_\mu[1]=1$.

In either case, the normalization and symmetry of the Bessel function give
\[
G_\theta(0^N;\mu)=1,
\]
and $G_\theta(x_1,\dots,x_N;\mu)$ is symmetric with respect to $x_1,\dots,x_N$.

\begin{proposition}[Propositions~2.11 and~2.13 of~\cite{BCG}]\label{prop:action_dunkl}
Let $s\ge1$, $k_1,\ldots,k_s\ge1$, and let $\mu$ be either an exponentially decaying probability measure or a compactly supported symmetric generalized function normalized as above. Then
\[
    \left(\prod_{i=1}^s\P_{k_i}\right)G_\theta(\vec x;\mu)
    \Big|_{\vec x=0}
    =\E_\mu\!\left[\prod_{i=1}^s p_{k_i}(a)\right],
\]
where the right-hand side is the corresponding normalized moment pairing.
\end{proposition}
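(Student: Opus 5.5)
The identity follows from the eigenfunction property of the multivariate Bessel function in Theorem~\ref{thm:bessel_existence}, applied inside the expectation. For fixed $a\in\R^N$, $B_a(\vec x;\theta)$ is symmetric in $\vec x$ and satisfies $\P_k B_a(\vec x;\theta)=p_k(a)B_a(\vec x;\theta)$, where $p_k(a)=\sum_i a_i^k$. Each $\P_k$ maps symmetric holomorphic functions to symmetric holomorphic functions, since the Dunkl operators commute and $\P_k$ commutes with permutations. Iterating therefore gives
\[
\Bigl(\prod_{i=1}^s\P_{k_i}\Bigr)B_a(\vec x;\theta)=\prod_{i=1}^s p_{k_i}(a)\,B_a(\vec x;\theta).
\]
Evaluating at $\vec x=0$ and using $B_a(0^N;\theta)=1$ yields $\prod_i p_{k_i}(a)$. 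It then remains to interchange $\prod\P_{k_i}$ with the integration, or with the pairing, against $\mu$.

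\emph{Generalized function case.} Here $\mu$ is compactly supported, so the pairing is continuous with respect to uniform convergence of finitely many derivatives in $a$ on a fixed compact set $K$. Fix $\vec x$ in a small ball around $0$. The Dunkl operator $\D_i$ acts on holomorphic $F$ by
\[
\frac{F-\sigma_{ij}F}{x_i-x_j}=\int_0^1 (\partial_i-\partial_j)F\bigl(\vec x+t(x_j-x_i)(e_i-e_j)\bigr)\,dt.
\]
Hence $\prod\P_{k_i}$ is an integral-differential operator whose output is controlled by finitely many $\vec x$-derivatives of $F$ on a slightly larger polydisc. Since $(a,\vec x)\mapsto B_a(\vec x;\theta)$ is entire, all mixed derivatives in $(a,\vec x)$ are jointly continuous on compacts. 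Differentiation in $\vec x$ therefore commutes with the pairing by the usual argument for distributions depending holomorphically on a parameter: difference quotients converge in $C^\infty(K)$. The same holds for the averaged divided-difference integrals, by the integral representation above. This gives the statement.

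\emph{Exponentially decaying probability measure case.} Here we need a dominating bound. The main obstacle is a growth estimate of the form
\[
|\partial_{\vec x}^\alpha B_a(\vec x;\theta)|\le C_\alpha(1+\max|a_i|)^{|\alpha|}\,e^{\max_i|a_i|\sum_j|\Re x_j|}
\]
for $\vec x$ near $0$, or any bound that is polynomial in $a$ times $e^{NR'\max|a_i|}$ with $R'<R$. I would obtain it from the known bound $|B_a(\vec x;\theta)|\le e^{\max_i|\Re x_i|\sum_j|a_j|}$, which follows from positivity of the Dunkl intertwiner (R\"osler) together with the fact that $B_a(\vec x)$ is an average of $e^{\langle a,y\rangle}$ over $y$ in the convex hull of permutations of $\vec x$. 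Cauchy estimates on a polydisc of small radius $\varepsilon$ then give derivative bounds at the cost of a factor $e^{N\varepsilon\max|a_i|}$, which is absorbed by~\eqref{eq:technical}. With this bound, dominated convergence justifies differentiating under the integral, and likewise for the divided differences via their integral representation. Both cases thus reduce to the eigenvalue identity, proving the proposition.
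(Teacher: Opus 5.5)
Your proposal is correct and is essentially the argument behind Propositions~2.11 and~2.13 of \cite{BCG}, which the paper cites without reproducing a proof: apply the eigenrelation of Theorem~\ref{thm:bessel_existence} inside the expectation or pairing, and justify the interchange by R\"osler's exponential bound together with \eqref{eq:technical} for measures, and by compact support and finite order for generalized functions. One small fix concerns the source of your bound: R\"osler's representation must be used with the real tuple $a$ in the convex-hull role, i.e.\ $B_a(\vec x;\theta)$ is an average of $e^{\langle y,\vec x\rangle}$ over $y$ in the convex hull of the permutations of $a$. That version holds for complex $\vec x$ and gives exactly your bound $|B_a(\vec x;\theta)|\le e^{\max_i|\Re x_i|\sum_j|a_j|}$, whereas the version with $y$ ranging over permutations of $\vec x$ requires $\vec x$ to be real.
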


\subsection{Maps on closed surfaces and a topological expansion}\label{sec:topexpansion}

In this section, we recall a finite-$N$ expansion of $\ln B_a(\vec{x};\theta)$ in terms of maps on closed surfaces, obtained from the generating function of Chapuy--Do\l\k{e}ga \cite{CD} in \cite[Section 5]{CX}. It is used in both fixed- and high-temperature regimes in Section~\ref{sec:only-if}. We give the needed definitions below and reproduce the proof of Theorem~\ref{thm:keyexpansion} in Appendix~\ref{app:topological-expansion}.

A closed surface is a compact surface without boundary; all surfaces here are connected. For a cellularly embedded graph (that is, an embedding whose complementary regions are open disks), the Euler characteristic is
\[
\chi=V-E+F=2-2g.
\]
Here $V,E,F$ count vertices, edges and complementary regions, called faces. Using our convention of genus, on an orientable surface, $g=0,1,2,\ldots$, while on a non-orientable surface with $h\ge1$ crosscaps, $g=h/2$. A surface is determined up to homeomorphism by its orientability and this value of $g$.

\begin{definition}\label{def:map}
    
A \textbf{map} $\MM$ is a cellular embedding of a finite graph, possibly with multiple edges and loops, on a closed (orientable or non-orientable) surface. We identify embeddings related by a homeomorphism preserving the graph and any specified colors and root.
\end{definition}

\begin{definition}\label{def:const}
Let $k\ge 1$ be an integer. A $\mathbf{k}$-\textbf{constellation} $\MM$ is a map, equipped with a coloring of its vertices with colors from the set $\{0, 1,\dots, k\}$, such that the following conditions are satisfied.

\smallskip

    $\bullet$ Each vertex of color 0 is only connected to vertices of color 1.

\smallskip

    $\bullet$ Each vertex of color $k$ is only connected to vertices of color $k-1$.

\smallskip

    $\bullet$ For $1\le i\le k-1$, each corner at a vertex of color $i$ separates vertices of colors $i-1$ and $i+1$.

\smallskip

\noindent Finally, we also color all corners with the same colors as their corresponding vertices.
\end{definition}

The \textbf{size} of $\MM$, denoted by $|\MM|$, is the number of corners of color $0$ in $\MM$.
Note that the number of edges of any $k$-constellation $\MM$ is $k\cdot|\MM|$.
There are exactly $|\MM|$ edges between each successive pair of colors: this is immediate for colors $0,1$ and then follows successively from the alternating condition at intermediate vertices.

We say that $\MM$ is \textbf{connected} if the underlying graph is connected, and $\MM$ is \textbf{orientable/non-orientable} if its underlying surface is orientable/non-orientable. The genus of $\MM$, denoted $g(\MM)$, is the genus of the underlying surface.

An \emph{oriented corner} is a corner together with one of the two local directions around its incident vertex; we may encode this choice by a sign $\epsilon\in\{-1,+1\}$. 
We say that $\MM$ is \emph{rooted} if it has a distinguished oriented corner of color $0$, which is called the \emph{root of $\MM$}.

By definition, in any $k$-constellation $\MM$, each vertex of color $k$ is connected only to vertices of color $k-1$.
Thus, we can add a new vertex of color $k+1$ for each corner of color $k$ and join it with an edge to the corresponding vertex of color $k$ to obtain a $(k+1)$-constellation of the same size.
This size-preserving map from $k$-constellations to $(k+1)$-constellations preserves rooted, connected constellations. Let $C_{d,\bullet}^{(k)} $ be the set of rooted, connected $k$-constellations of size $d$; the previous discussion describes inclusion maps
\[
i_k\colon C_{d,\bullet}^{(k)} \hookrightarrow C_{d,\bullet}^{(k+1)},\quad k\ge 1.
\]

A rooted, connected, infinite constellation of size $d$ is represented by a rooted, connected $k_0$-constellation for some finite $k_0\ge1$, followed by the leaf extensions $i_{k_0},i_{k_0+1},\ldots$. Two representatives describe the same object if their extensions agree at some common level. In other words,
\[
C_{d,\bullet}^{(\infty)}=\varinjlim_k C_{d,\bullet}^{(k)}.
\]
Note that the initial level $k_0$ is arbitrary, and each extension adds $d$ leaves and $d$ edges, without changing the faces, the color-$0$ vertices, the root, or the underlying surface.

\begin{definition}\label{def:dataconstellation}

For $\MM\in C_{d,\bullet}^{(\infty)}$, let $F(\MM)$ be its set of faces, read from a sufficiently large finite representative. We record the following data:

    $\bullet$ For any $f\in F(\MM)$, denote by $\deg(f)$ the number of corners of color $0$ in $f$. Also, denote by $\pmb{\mu_1}(\MM)$ the partition with parts $\deg(f)$, $f\in F(\MM)$.

\smallskip

    $\bullet$ Denote by $V_0(\MM)$ the set of vertices of $\MM$ of color $0$, and $v_0(\MM):=|V_0(\MM)|$.

\smallskip

    $\bullet$ 
For any $v\in V_0(\MM)$, denote by $\deg(v)$ the number of incident edges.\footnote{Note that $\deg(v)$ is not necessarily the number of vertices adjacent to $v$, because multiple edges are allowed.} Also, denote by $\pmb{\mu_2}(\MM)$ the partition with parts $\deg(v)$, $v\in V_0(\MM)$.

\smallskip

    $\bullet$ For any $i\ge 1$, denote by $v_i(\MM)$ the number of vertices of $\MM$ of color $i$.
    Consider the sequence $$\eta(\MM):=\big(|\MM|-v_i(\MM)\big)_{i\ge 1}.$$
We say that $\MM$ is \textbf{normal} if $v_1(\MM)\le v_2(\MM)\le\cdots$.
Since $v_i(\MM)\le|\MM|$, for all $i$, normality is equivalent to the fact that $\eta(\MM)$ is a partition.
\end{definition}

\noindent By definition, $|\pmb{\mu_{1}}(\MM)|=|\pmb{\mu_{2}}(\MM)|=|\MM|$.

The parts of each partition are listed in decreasing order. In particular, $\ell(\pmb{\mu_1})$ is the number of faces and $\ell(\pmb{\mu_2})=v_0$ is the number of color-$0$ vertices. Our face degree counts color-$0$ corners, rather than the full boundary length; each such corner contributes once to each of the two partitions. The sequence $\eta$ has finite support by the leaf-extension construction, and $|\eta|=\sum_{i\ge1}(d-v_i)$.

\begin{proposition}\cite[Eqn. (5)]{CD}\label{prop:euler}
    For any $\MM\in C_{d,\bullet}^{(\infty)}$ of genus $g$, we have
\begin{equation}
    |\eta(\MM)|-\ell\big(\pmb{\mu_1}(\MM)\big) -\ell\big(\pmb{\mu_2}(\MM)\big) = 2g-2.
\end{equation}
\end{proposition}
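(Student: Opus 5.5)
The plan is to deduce the identity from Euler's formula $V-E+F=2-2g$ applied to a finite representative of $\MM$. Fix a rooted connected $k$-constellation $\MM_k$ representing $\MM$, with $k$ large enough that $v_i(\MM)=d$ for all $i>k$; this is possible because $\eta$ has finite support. I will first check that the left-hand side does not depend on the choice of $k$. A leaf extension $i_k$ adds one new color-$(k+1)$ vertex for each of the $d$ corners of color $k$. It also adds $d$ edges and leaves faces, color-$0$ vertices and the surface unchanged. Meanwhile $\eta_{k+1}=d-v_{k+1}=0$ for the new level. Hence both sides are stable under extension, and it suffices to prove the identity for $\MM_k$.

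For $\MM_k$ I count cells. The edges number $E=kd$, as noted after Definition~\ref{def:const}. The vertices number $V=v_0+\sum_{i=1}^k v_i$. The faces number $F=\ell(\pmb{\mu_1})$, and $v_0=\ell(\pmb{\mu_2})$. The condition $v_i(\MM)=d$ for $i>k$ gives
\[
|\eta|=\sum_{i=1}^k (d-v_i)=kd-\sum_{i=1}^k v_i .
\]
Substituting into Euler's formula gives
\[
2-2g=\ell(\pmb{\mu_2})+\sum_{i=1}^k v_i-kd+\ell(\pmb{\mu_1})=\ell(\pmb{\mu_2})+\ell(\pmb{\mu_1})-|\eta|,
\]
which is the claimed identity $|\eta|-\ell(\pmb{\mu_1})-\ell(\pmb{\mu_2})=2g-2$. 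The genus convention $g=h/2$ for non-orientable surfaces makes $\chi=2-2g$ valid in both cases.

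The main point needing care is the identification $F=\ell(\pmb{\mu_1})$. Every face must contain at least one color-$0$ corner, so that faces are counted correctly by the partition of color-$0$ corner counts. I would argue this from the local structure. Walking along a face boundary, consecutive corners have colors that change by $\pm1$. At a corner of color $i$ with $1\le i\le k-1$, the boundary passes from color $i-1$ to color $i+1$ or back. Corners of color $k$ turn around leaves or color-$k$ vertices and send the walk back down. Hence the colors along the boundary go monotonically up to $k$ and back down, and so must reach color $0$. Therefore every face has $\deg(f)\ge1$, and the faces are in bijection with the parts of $\pmb{\mu_1}$.

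Cellularity of the embedding is part of Definition~\ref{def:map}. Connectedness of $\MM$ ensures the surface is connected. With these two facts, Euler's formula applies and the proof is complete.
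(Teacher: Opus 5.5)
Your proof is correct and is the paper's argument: apply Euler's formula $V-E+F=2-2g$ to a finite level-$k$ representative, with $E=kd$, $V=\ell(\pmb{\mu_2})+\sum_{i=1}^k v_i$, $F=\ell(\pmb{\mu_1})$ and $|\eta|=kd-\sum_{i=1}^k v_i$. You also check that every face contains a color-$0$ corner, so that $F=\ell(\pmb{\mu_1})$ and not merely $F\ge\ell(\pmb{\mu_1})$; the paper leaves this implicit, and your monotone-color argument for it is sound.
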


\begin{proof}
    Take a large enough $k$, so that $\eta_i(\MM)=0$, for all $i\ge k$. 
Take a finite $k$-constellation representing $\MM$ at such a level; the counts below refer to this finite representative. It follows that the number of edges of $\MM$ is $k\cdot |\MM|$, the number of vertices of $\MM$ is $\sum_{i=0}^k{v_i(\MM)} = \sum_{i=1}^k{v_i(\MM)}+\ell(\pmb{\mu_2}(\MM))$, and the number of faces of $\MM$ is $\ell(\pmb{\mu_1}(\MM))$. Since $E-V-F=2g-2$, the result follows.
\end{proof}

 The exponent $\nu_\rho(\MM,c)$ is a nonnegative integer
statistic of the rooted map $(\MM,c)$ measuring its
non-orientability. It is defined by a recursive edge-deletion
procedure; see \cite[Sections~3 and~6.1]{CD} for details. It satisfies the following two properties:
\[
0\le \nu_\rho(\MM,c)\le 2g(\MM),
\qquad
\nu_\rho(\MM,c)=0\quad\Longleftrightarrow\quad\MM\text{ is orientable}.
\]

\begin{theorem}\cite[Theorem 5.6]{CX}\label{thm:keyexpansion}
    
Let $n\in\N$, $\theta>0$, and $\alpha=\theta^{-1}$. Suppose $N>\max(\alpha,1)n$, equivalently $N>n$ and $N\theta>n$. For each $a\in\C^N$, define the logarithm  $\ln B_a(\vec{x};\theta)$ by $\ln(1+x)=\sum_{r=1}^{\infty}\frac{(-1)^{r-1}}{r}x^r$. Its Taylor expansion through total degree $n$ in $\vec{x}$ is
    \begin{equation}\label{eq_keyexpansion}
    \begin{split}
        \ln\Big(B_{a}(\vecx;\theta)\Big)&\\
=\sum_{d=1}^{n}\frac{1}{d}&\ \sum_{g\in \frac{1}{2}\Z_{\ge 0}}\sum_{(\mathbf{M},c)\in C^{(\infty)}_{d,\bullet}, \ \mathbf{M}\ \text{normal},\ g(\mathbf{M})=g}\alpha^{d-1}(\alpha-1)^{\nu_{\rho}(\mathbf{M},c)}\prod_{f\in F(\mathbf{M})}\frac{p_{\deg(f)}(a/N)}{N}\\
        &\times\prod_{v\in V_{0}(\mathbf{M})}\frac{p_{\deg(v)}(\vec{x})}{N}\cdot N^{2-2g}\cdot f_{\eta(\mathbf{M})}\Big(\big\{(-1)^k\big\}_{k\ge 1}\Big)+O\big(\|x\|^{n+1}\big),
        \end{split}
    \end{equation}
where $f_{\eta}(e_1,e_2,\dots)$ is the polynomial that expresses the monomial symmetric function $m_{\eta}$ in terms of the elementary symmetric polynomials $e_1,e_2,\dots$, and $\MM$ in the inner sum ranges over all rooted, connected, normal, infinite constellations of size $d$. 
The series over $g$ on the right-hand side, with the finite sums at each genus performed first, converges absolutely coefficientwise. 
\end{theorem}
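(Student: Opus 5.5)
The natural route is to derive \eqref{eq_keyexpansion} from the Chapuy--Do\l\k{e}ga generating function for $b$-weighted constellations \cite{CD}, specialized to the Jack expansion \eqref{eq_BesselJack} of $B_a(\vec x;\theta)$. Write $\alpha=\theta^{-1}$. In \eqref{eq_BesselJack}, the numerator factor over hook-type quantities combines with $J_\lambda(\vec x;\theta)J_\lambda(a;\theta)$ into the standard normalized Cauchy kernel $J_\lambda(\vec x)J_\lambda(a)/j_\lambda$, up to a power of $\alpha$ per box. The remaining box weight is
\[
\prod_{(i,j)\in\lambda}\frac{1}{N\theta+j-1-\theta(i-1)}
=\alpha^{|\lambda|}\prod_{(i,j)\in\lambda}\frac{1}{N+c_\alpha(i,j)},
\qquad c_\alpha(i,j)=\alpha(j-1)-(i-1).
\]
This is the $\alpha$-content product that \cite{CD} allow, namely $\prod_{\square}G(c_\alpha(\square))$ for a series $G$.

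The key step is to expand $1/(N+c)=N^{-1}\prod_{k\ge1}(1-c/N)^{\,?}$. More usefully, write
\[
\frac{N}{N+c}=\sum_{r\ge0}(-c/N)^r=\prod_{k\ge1}(1+u_k c)\Big|_{\text{specialization}}.
\]
Here the $u_k$ are formal variables specialized so that $e_k(u)=(-1)^kN^{-k}$, since $\sum_r(-c/N)^r$ is the generating series $\sum_k e_k(u)c^k$ with $e_k=(-N)^{-k}$. This is a valid specialization of the infinite-variable case in \cite{CD}, which uses infinitely many colors. The $b$-deformed Chapuy--Do\l\k{e}ga theorem then gives $\log$ of the resulting series as a sum over rooted connected constellations with infinitely many colors. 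Each such constellation carries:
\begin{itemize}
\item the weight $b^{\nu_\rho}$ with $b=\alpha-1$;
\item $p_{\pmb{\mu_1}}(a)\,p_{\pmb{\mu_2}}(\vec x)$ from faces and color-$0$ vertices;
\item the color weights, which collect to the monomial $m_{\eta(\MM)}(u)$ after restricting to normal constellations (the ordering $v_1\le v_2\le\cdots$ is the symmetrization that turns $\prod_i u_i^{d-v_i}$ into $m_\eta$);
\item the factor $1/d$ from rooting.
\end{itemize}
Substituting $e_k(u)=(-1)^kN^{-k}$ yields $f_\eta(\{(-1)^k\})N^{-|\eta|}$. Rescaling $a\mapsto a/N$ and inserting $N^{-1}$ per face and per color-$0$ vertex produces the total power $N^{|\pmb{\mu_1}|-\ell(\pmb{\mu_1})-\ell(\pmb{\mu_2})-|\eta|}$ times $N^{|\pmb{\mu_1}|}$ corrections. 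By Proposition~\ref{prop:euler} this equals $N^{2-2g}$ after accounting for $|\pmb{\mu_1}|=d$. The bookkeeping of the $\alpha$-powers gives $\alpha^{d-1}$, as in \eqref{eq_keyexpansion}.

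The main obstacle is analytic rather than combinatorial: justifying the geometric expansion and the absolute convergence over genus. For $|\lambda|\le n$, every content satisfies $|c_\alpha|\le\max(\alpha,1)(n-1)<N$. This holds exactly under the hypothesis $N>\max(\alpha,1)n$, so $N/(N+c)$ expands absolutely. Truncating at total degree $n$ in $\vec x$ involves only $|\lambda|\le n$, and taking the logarithm via $\ln(1+x)$ is a finite operation at each degree. For fixed $d$, the constellation sum at genus $g$ is then a finite sum, and summing over $g$ reproduces the absolutely convergent expansions of the finitely many content products. Hence the genus series converges absolutely coefficientwise. I would check this by comparing with the majorant obtained by replacing $-c/N$ with $|c|/N$ and all signs by absolute values.
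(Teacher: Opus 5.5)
Your proposal follows the paper's proof in Appendix~\ref{app:topological-expansion}. You specialize the Chapuy--Do\l\k{e}ga expansion at $G(z)=(1+z/N)^{-1}$ (i.e.\ $g_k=e_k=(-N)^{-k}$) and $t=(N\theta)^{-1}$, convert the powers of $N$ into $N^{2-2g}$ via Proposition~\ref{prop:euler}, and get absolute convergence because each coefficient is the $1/N$-expansion of a rational function whose poles are at contents of modulus below $\max(\alpha,1)n$, with each genus contributing exactly one power of $N$. Two small repairs are needed:
\begin{itemize}
\item Your displayed intermediate exponent is wrong as written. It should be $\ell(\pmb{\mu_1})+\ell(\pmb{\mu_2})-|\eta(\MM)|$. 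The factor $\theta t^d=\alpha^{d-1}N^{-d}$ is cancelled in $N$ by the $N^{d}$ from $a\mapsto a/N$, and the $N^{-1}$ per face and per color-$0$ vertex is compensated by $N^{\ell(\pmb{\mu_1})+\ell(\pmb{\mu_2})}$.
\item You assert without justification that each fixed-genus sum is finite. The paper's argument is that Euler's relation gives $|\eta(\MM)|<2d+2g$, so every such $\MM$ is a $k$-constellation with $k=2n+2g$, and there are finitely many of size $d$.
\end{itemize}
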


We reproduce the proof of Theorem \ref{thm:keyexpansion} in Appendix~\ref{app:topological-expansion} using the same argument\footnote{\cite[Theorem 5.6]{CX} used a different basis $\prod_{f\in F(\mathbf{M})}\frac{p_{\deg(f)}(\vec{x})}{N}\prod_{v\in V_{0}(\mathbf{M})}\frac{p_{\deg(v)}(a/N)}{N}$, which does not affect the result since $B_a(x;\theta)=B_{x}(a;\theta)$.} in \cite[Theorem 5.6]{CX}.

\section{Lukasiewicz paths and covariance--cumulant formulas}\label{sec:luk-cov}

We divide the proofs of Theorems~\ref{thm:fixedtempclt} and
\ref{thm:hightempclt} into the \emph{if} and \emph{only if} directions.
Sections~\ref{sec:luk-cov}, \ref{sec:dunklaction}, and
\ref{sec:vanishing-higher-cumulants} establish the \emph{if} direction.  The
present section develops the Lukasiewicz-path framework and states the limiting
covariance formulas; Section~\ref{sec:dunklaction} proves these formulas, while
Section~\ref{sec:vanishing-higher-cumulants} proves the vanishing of the higher
joint cumulants.  The \emph{only if} direction is proved in
Section~\ref{sec:only-if}.

Recall from Definition \ref{def:clambda} that \[
\ln G_{\theta,N}(\vec{x}) = \sum_{\la\in\Y}c_\la p_\la(\vec{x}).
\]
We start by recalling the moment LLN results in the fixed- and high-temperature regimes. Throughout the statements below, $\mu_N$ satisfies the standing assumptions of Section~\ref{sec:bgf}, and all convergence is understood in terms of its normalized moment functional. The moment--cumulant formulas in the two regimes, together with the two covariance--cumulant formulas in Theorems \ref{thm:fixedtempclt} and \ref{thm:hightempclt}, fall into a unified combinatorial framework in terms of Lukasiewicz paths.

\begin{definition}\label{def:luk}
A Lukasiewicz path of length $k$ is a function $E:\{0,1,2,\cdots,k\}\rightarrow \Z_{\ge 0}$, such that $E(0)=E(k)=0$, and for $t=1,2,...,k$, $E(t)-E(t-1)\in \Z_{\ge -1}$.
    For $k\in \Z_{\ge 1}$, denote the set of Lukasiewicz paths of length $k$ by $L(k)$.
\end{definition}

\begin{theorem}\label{thm:lln}\cite[Theorem 1.1]{CX}
    In the fixed-temperature regime, the LLN holds, i.e., \begin{align*}
    \lim_{N\rightarrow\infty}{\E_{\mu_N}\left[\prod_{j=1}^s\tilde{p}_{k_j}^{N}\right]} = \prod_{j=1}^s{m_{k_j}},
    \end{align*} if and only if for all $d\in \Z_{\ge 1}$,
    \begin{equation}\label{eq_llnfixed}
        \begin{split}
            \frac{d}{N}c_{(d)}&\rightarrow \theta^{1-d}\kappa_{d};\\
            \frac{1}{N^{\ell(\lambda)}}c_{\lambda}&\rightarrow 0\quad\quad \text{for}\ \ell(\lambda)\ge 2
        \end{split}
    \end{equation}
    when $N\rightarrow\infty$. Moreover, the limiting moments $m_k$, $k\ge1$, are given in terms of the free cumulants $\kappa_{l}$'s defined as above, by the equalities
    \begin{equation}
        m_{k}=\sum_{E\in L(k)}\prod_{B_{i}\in E}\kappa_{|B_{i}|},
    \end{equation}
    where $|B_i|$ is one plus the size of the $i$th nonnegative increment of $E$.
\end{theorem}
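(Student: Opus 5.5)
The statement is the fixed-temperature LLN criterion of \cite{CX}. I would prove it by the two tools already set up: Dunkl-operator moment extraction for the \emph{if} direction, and the topological expansion of Theorem~\ref{thm:keyexpansion} for the \emph{only if} direction.

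\emph{If direction.} By Proposition~\ref{prop:action_dunkl}, $\E_{\mu_N}[\prod_j p_{k_j}(a)]=\prod_j\P_{k_j}G_{N,\theta}|_{\vec x=0}$. Write $G=\exp(\sum_\lambda c_\lambda p_\lambda)$. I would expand $\P_k G=\sum_i \D_i^k G$ step by step. Each application of $\D_i$ either differentiates the exponent, producing a factor $\partial_i\ln G$, or acts on a previously produced polynomial factor. The divided-difference terms $\theta\sum_j (1-\sigma_{ij})/(x_i-x_j)$ act on monomials in $x_i$ and lower the degree by one while producing a factor of order $N\theta$. Recording the degree of the accumulated polynomial in $x_i$ after each step gives a Lukasiewicz path:
\begin{itemize}
\item an up-step of size $m-1$ corresponds to pulling out $\partial_i$ of $c_{(m)}p_m$, contributing $m c_{(m)}x_i^{m-1}\approx N\theta^{1-m}\kappa_m x_i^{m-1}$;
\item a down-step corresponds to a divided difference, contributing a factor of $N\theta$.
\end{itemize}
After dividing by $N^{k+1}$ (from $\widetilde p_k$), a path whose blocks have sizes $|B_i|$ receives weight $\prod\kappa_{|B_i|}$ times $\theta$-powers. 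These $\theta$-powers cancel, because a block of size $m$ uses $m-1$ down-steps. All remaining contributions are lower order:
\begin{itemize}
\item terms with $\ell(\lambda)\ge2$ are lower order by the assumption $c_\lambda=o(N^{\ell(\lambda)})$;
\item terms where a derivative hits the symmetric part, or a divided difference mixes with other variables non-leadingly, are lower order by counting free summation indices.
\end{itemize}
For products $\prod_j\P_{k_j}$, cross-interactions between different $\P$'s lose at least one factor of $N$, which gives factorization.

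\emph{Only if direction.} Assume the moment LLN. Apply Theorem~\ref{thm:keyexpansion} with $\ln G=\ln\E\exp(\ln B_a)$ and expand in joint cumulants of the random coefficients of $\ln B_a$. These coefficients are polynomials in $p_k(a/N)$. Factorization of moments forces all joint cumulants of order $\ge2$ of the $\widetilde p_k^N$ to be $o(1)$. The term with $\lambda=(d)$ then comes from one connected constellation with a single color-$0$ vertex. Its leading order $N^{2-2g}\cdot N^{-1}\cdot N^{-\ell(\mu_1)}\cdots$ is maximized at genus $0$ (the $b$-weights are bounded since $\theta$ is fixed). Hence $\frac dN c_{(d)}$ converges to a polynomial in the limiting moments $m_k$, which we invert to define $\kappa_d$. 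For $\ell(\lambda)\ge2$, Proposition~\ref{prop:euler} bounds each term by $N^{\ell(\lambda)}$ times either a cumulant of order $\ge2$ or a product of means. The genus-$0$ mean terms must cancel exactly, which I would verify using the consistency of the same expansion applied to a deterministic $a$ (where $\ln G=\ln B_a$ itself has $c_\lambda=o(N^{\ell(\lambda)})$ as $a$ varies along a converging empirical measure).

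\emph{Identifying the moment--cumulant formula.} The if direction produces $m_k=\sum_{E\in L(k)}\prod\kappa_{|B_i|}$. Since this relation is triangular in the $\kappa$'s, it is invertible, so the $\kappa_d$ are uniquely determined.

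\emph{Main obstacle.} The hardest step is controlling the subleading Dunkl terms uniformly, in particular the divided-difference actions involving the symmetric function $\ln G$ in many variables. I would handle this by first proving that a symmetric polynomial of degree $r$ with $p$-coefficients of $N$-degree at most $\ell$ stays in a graded class under $\D_i$, and then counting $N$-degrees within that class.
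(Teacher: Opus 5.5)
Your architecture is the one behind this result. The paper imports the theorem from \cite{CX} without proof, but Sections~\ref{sec:dunklaction} and~\ref{sec:only-if} run the same two arguments at the fluctuation scale. Sufficiency is the Dunkl/Lukasiewicz-path moment method, with the power-sum factors of $U_i^{(\lambda)}$ forcing derivative actions as you describe. Necessity is the topological expansion combined with the classical cumulant expansion. Your sufficiency sketch and your treatment of $c_{(d)}$ are consistent with that route. The step that fails as written is the case $\ell(\lambda)\ge2$ of the only-if direction.

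You assert that genus-zero terms involving only means appear at order $N^{\ell(\lambda)}$ and must cancel exactly. You propose to verify this on deterministic $a$, using that $c_\lambda=o(N^{\ell(\lambda)})$ for $\ln B_a$ along converging empirical measures. For $\mu_N=\delta_{a(N)}$, that is exactly the only-if implication you are proving, so the argument is circular as stated. More importantly, no cancellation is needed, because the order estimate is wrong. In the cumulant expansion
\[
 c_\lambda=\sum_{r=1}^{\ell(\lambda)}\frac1{r!}\sum_{\nu^{(1)}\sqcup\cdots\sqcup\nu^{(r)}=\lambda}\ \sum_{\mu^{(1)},\ldots,\mu^{(r)}}\Bigl(\prod_{j=1}^{r}C_{\mu^{(j)},\nu^{(j)}}\Bigr)\,\mathfrak c_r\bigl(Z_{\mu^{(1)}},\ldots,Z_{\mu^{(r)}}\bigr),\qquad Z_\mu=\frac{p_\mu(a/N)}{N^{\ell(\mu)}},
\]
Lemma~\ref{lem:onlyiffixedtemp} gives $\prod_jC_{\mu^{(j)},\nu^{(j)}}=O(N^{2r-\ell(\lambda)})$, because $\sum_j\ell(\nu^{(j)})=\ell(\lambda)$. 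Your uniform bound $N^{\ell(\lambda)}$ is attained only at $r=\ell(\lambda)$.

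The only terms without a cumulant of order at least two are those with $r=1$, namely $C_{\mu,\lambda}\,\E[Z_\mu]$. Their coefficient is $O(N^{2-\ell(\lambda)})=o(N^{\ell(\lambda)})$. For every $r\ge2$, Lemma~\ref{lem:leonov} writes $\mathfrak c_r$ of the products as a sum over connected set partitions. Each summand contains an atomic joint cumulant of order at least two, which is $o(1)$ under the LLN hypothesis, and every other factor is bounded. Therefore
\[
 N^{-\ell(\lambda)}c_\lambda=\sum_{r<\ell(\lambda)}O\bigl(N^{2r-2\ell(\lambda)}\bigr)+O(1)\cdot o(1)\longrightarrow0,
\]
with no cancellation. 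This is the same bookkeeping as in the proof of Theorem~\ref{thm:fixedtemponlyif}. Once made non-circular, your deterministic check reduces to this coefficient bound applied to $\mu_N=\delta_a$, which gives $c_\lambda=O(N^{2-\ell(\lambda)})$ directly. Replace the cancellation paragraph by this count.
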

\begin{theorem}\cite[Theorem 3.8 and 3.10]{BCG}
    In the high-temperature regime, the LLN holds, i.e., \begin{align*}
    \lim_{N\rightarrow\infty}{\E_{\mu_N}\left[\prod_{j=1}^sp_{k_j}^{N}\right]} = \prod_{j=1}^s{m_{k_j}},
    \end{align*} if and only if for all $d\in \Z_{\ge 1}$,
    \begin{equation}\label{eq_llnhigh}
        \begin{split}
            d\cdot c_{(d)}&\rightarrow\kappa^{\gamma}_{d};\\
    c_{\lambda}&\rightarrow 0\quad\quad \text{for}\ \ell(\lambda)\ge 2
        \end{split}
    \end{equation}
    when $N\rightarrow\infty$. Moreover, the limiting moments $m_k$, $k\ge1$, are given in terms of the $\gamma$-cumulants $\kappa_{l}^{\gamma}$ defined as above, by the equalities
    \begin{equation}
        m_{k}=\sum_{E\in L(k)}\prod_{t:\ E(t)-E(t-1)=-1}(\gamma+E(t-1))\prod_{B_{i}\in E}\kappa^\gamma_{|B_{i}|},
    \end{equation}
    where $|B_{i}|$ are defined in the same way as above.
\end{theorem}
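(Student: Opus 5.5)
The plan is to prove the two directions separately. The \emph{if} direction is a Dunkl-operator moment computation organized by Lukasiewicz paths. The \emph{only if} direction inverts this relation degree by degree. Throughout, $\theta=\theta_N$ with $N\theta_N\to\gamma$, and by Proposition~\ref{prop:action_dunkl},
\[
\E_{\mu_N}\Bigl[\prod_j p_{k_j}^N\Bigr]=N^{-s}\Bigl(\prod_j\P_{k_j}\Bigr)G_{N,\theta}\Big|_{\vec x=0}.
\]

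For the \emph{if} direction, first take $s=1$. By symmetry $\P_k G=\sum_i\D_i^kG$, and the evaluation at $0$ equals $N\,\D_1^kG|_{0}$ up to symmetrization. I would write $G=\exp(\sum_\lambda c_\lambda p_\lambda)$ and apply $\D_1$ step by step to functions of the form $x_1^m\cdot(\text{symmetric part})\cdot G$. Each application of $\D_1$ does one of two things.
\begin{itemize}
\item It hits the exponent through $\partial_1$, producing $d\,c_{(d)}x_1^{d-1}$. This is an up-step of size $d-2$, hence $E(t)-E(t-1)\ge-1$, and it contributes the factor $\kappa^\gamma_{d}$ once the block containing it closes.
\item It lowers the current power $x_1^{m}$ by one. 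The $\partial_1$ part gives a factor $m$. The difference part gives $\theta\sum_{j\ne1}(x_1^m-x_j^m)/(x_1-x_j)$, whose leading part at $x=0$ is $\theta(N-1)x_1^{m-1}$ plus terms involving other variables. Combining the two, the weight of a down-step is $\gamma+E(t-1)$ in the limit, where $E(t-1)$ records the current height, i.e.\ the $x_1$-degree.
\end{itemize}
The remaining contributions all vanish in the limit: those involving $c_\lambda$ with $\ell(\lambda)\ge2$ vanish by \eqref{eq_llnhigh}, and those where other variables $x_j$ survive carry extra factors $\theta=O(1/N)$ without a compensating sum over $j$. Summing over paths gives the stated formula for $m_k$.

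For products with $s\ge2$, I would apply the operators successively and show that any term linking two different $\P_{k_j}$ is subleading. Such a term arises either from one exponent derivative shared between two operators, which requires a coefficient $c_\lambda$ with $\ell(\lambda)\ge2$, or from a difference operator acting across the variables used by two operators, which costs a factor $\theta\cdot O(1)$ without the factor $N$. In both cases the joint moment factorizes in the limit.

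For the \emph{only if} direction, I would induct on the degree $d$. At degree $d$, the expansion above expresses $\E[p_d^N]$ as $d\,c_{(d)}$ times a nonzero limiting coefficient, namely the single-block path, plus a polynomial in lower-degree coefficients and $O(1)$-controlled terms. Convergence of the moments therefore forces $d\,c_{(d)}$ to converge, which defines $\kappa^\gamma_d$ recursively. To obtain $c_\lambda\to0$ for $\ell(\lambda)\ge2$, I would write $\log G=\log\E[\exp(\ln B_a)]$ and expand in joint cumulants of power sums, using Theorem~\ref{thm:keyexpansion}. The coefficient $c_\lambda$ is then a linear combination of joint cumulants of $\ell(\lambda)$ or fewer power sums $p_k(a/N)$, which are $N^{-1}$-normalized through the face weights. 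The high-temperature LLN with factorization forces every joint cumulant of order at least two to be $o(1)$.

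The main obstacle is the genus bookkeeping. With $\alpha-1=b\sim N/\gamma$, the factor $N^{2-2g}b^{\nu_\rho}$ does not decay in $g$, so all genera with $\nu_\rho=2g$ contribute at the same order. I would first try to bound the constellation sums at fixed degree uniformly over genus, using the absolute convergence stated in Theorem~\ref{thm:keyexpansion} together with $\nu_\rho\le2g$ and Proposition~\ref{prop:euler}. The goal is to show that each $c_\lambda$ with $\ell(\lambda)\ge2$ is $O(1)\cdot(\text{joint cumulants})+o(1)$, which tends to $0$.
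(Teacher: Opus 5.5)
The paper does not prove this statement. It quotes \cite{BCG} and uses \cite[Proposition~5.12]{X} to rewrite the BCG weights as a sum over non-crossing partitions, i.e.\ over $L(k)$. Your \emph{if} direction is the standard Dunkl-operator path expansion, with the same bookkeeping as in Section~\ref{sec:dunklaction}, and it is fine in outline, with one correction. When $\partial_1$ hits the exponent, it multiplies the current monomial by $d\,c_{(d)}x_1^{d-1}$. This is an increment of $d-1\ge0$ (a flat step when $d=1$), and the factor $d\,c_{(d)}\to\kappa^\gamma_d$ is collected at that step. With your ``$d-2$'', a $d=1$ step would become a down step and the block sizes would not match $|B_i|$. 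Note also that creating an $x_j$-block is $O(1)$, thanks to its $N-1$ labels. What makes such blocks negligible is the forced closing jump, which has a fixed label and costs $\theta$.

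The genuine gap is in the \emph{only if} direction, which hinges on the estimate of Lemma~\ref{lem:onlyifhightemp}. Let $C_{\mu,\nu}$ be the coefficient of $p_\mu(a)N^{-\ell(\mu)}p_\nu(\vec x)$ in $\ln B_a(\vec x;\theta)$. You need $C_{\mu,\nu}=O(N^{1-\ell(\nu)})$, with $N^{\ell(\nu)-1}C_{\mu,\nu}$ convergent, when $N\theta\to\gamma$. In this regime the observables are $p_k(a)/N$, not $p_k(a/N)$; the extra $N^{-d}$ goes into the coefficients, which produces the exponent $1-2g+\nu_\rho-\ell(\nu)\le1-\ell(\nu)$.

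Granting this estimate, your cumulant argument closes as in Theorem~\ref{thm:hightemponlyif}. The $r=1$ terms are $O(N^{1-\ell(\lambda)})$. The $r\ge2$ terms are $O(1)$ coefficients times cumulants of products, which are $o(1)$ by Lemma~\ref{lem:leonov} and the LLN. Moreover $c_{(d)}=\sum_{\mu\vdash d}C_{\mu,(d)}\E\bigl[\prod_i p^N_{\mu_i}\bigr]$ converges directly.

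However, your route to the estimate does not deliver it. Theorem~\ref{thm:keyexpansion} requires $N\theta>n$, so for every degree $d>\gamma$ it is unavailable for large $N$. Even where it applies, absolute convergence at each fixed $N$ gives no bound uniform in $N$. The terms with $\nu_\rho=2g$ have size about $u^{-2g}$, where $u=N\theta$. A genus-uniform majorant, built from $|f_\eta|\le(8e)^{|\eta|}$ and the count of genus-$g$ constellations, therefore converges only for large $u$, e.g.\ $u>8ed^{2d}$.

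The missing idea is analytic continuation in $u$. At fixed degree the coefficient is a rational function of $N^{-1}$ and $u$, so $C_{\mu,\nu}=\sum_{k\le K}N^kh_k(u)$ with each $h_k$ holomorphic on $\{\Re u>0\}$. The large-$u$ bound forces $h_k=0$ for $k>1-\ell(\nu)$ on a real half-line, hence on the whole half-plane by the identity theorem. Convergence then follows from $u\to\gamma$.

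A smaller point: your induction that solves for $c_{(d)}$ from $\E[p_d^N]$ is out of order. The quantity $\E[p_d^N]=\D_1^dG_{N,\theta}\big|_{\vec x=0}$ contains every $c_\lambda$ with $\lambda\vdash d$ and $\ell(\lambda)\ge2$, linearly with $O(1)$ coefficients. For example, $\E[p_2^N]=c_{(1)}^2+2c_{(2)}(1+\theta(N-1))+2c_{(1,1)}$. Those coefficients must be shown to vanish first, and once the lemma is available the induction is unnecessary.
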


\begin{remark}
    The condition given in Eqn.~\eqref{eq_llnhigh} is a reformulation of \cite[Theorem 3.8]{BCG}. The combinatorial formula in \cite{BCG} is a weighted sum over partitions which are not necessarily non-crossing, but \cite[Proposition 5.12]{X} identifies it with a weighted sum over non-crossing partitions, which is equivalent to Eqn.~\eqref{eq_llnhigh} by the well-known bijection between non-crossing partitions and Lukasiewicz paths, see e.g., \cite[Proposition 9.8]{NS}.
\end{remark}

To formulate our next results, we introduce some further notions related to the configurations of Lukasiewicz paths.

For $m,n\in \Z_{\ge 1}$, consider Lukasiewicz paths of length $m+n$, with a marked point at time $m$, such that each path is a function $E(t):\{0,1,2,...,m+n\}\rightarrow \Z_{\ge 0}$ with $E(0)=E(m+n)=0$.  We denote the set of such paths by $\tilde{L}(m,n)$, and its subset where $E(m)>0$ by $L(m,n)$.  We extend the domain of $E$ to $[0,m+n]$ by taking $E(t)=E(\lfloor t\rfloor)$, and still denote the new function by $E(t)$, which is c\`adl\`ag by definition, with discontinuity points only at a subset of $\{1,2,...,m+n\}$.

For $E\in L(m,n)$, let $\{q_{i}(t)\}_{i=1}^{\infty}:[0,m+n]\rightarrow \Z_{\ge 0}$ be a collection of nonnegative integer-valued functions, c\`adl\`ag except for $i=1,2$ at $t=m$, that are identically zero for all but finitely many $i$'s. Let $q(t)=\sum_{i=1}^{\infty}q_{i}(t)$.  We call $t$ the time variable, and  $\{q_{i}\}_{i=1,2,...}$ the \emph{block functions of $E$}, if the following properties hold:

\begin{itemize}
    \item  Each $q_{i}(t)$ has discontinuity points only at a subset of $\{1,2,...,m+n\}$, and the sets of left-discontinuity points of the $q_i$'s are all disjoint.
    \item $q(t)\le E(t)$ for all $t\in [0,m+n]$.
    \item $q_{1}(t)=0$ for $t\in [0,m]$, and $q_1(m+)=E(m)-q(m)$. $q_{2}(t)=0$ for $t\in (m,m+n]$.

    \item  At each left-discontinuity point $t$ of $q_{i}$ for some $i$, there are two cases:

    (1). If  $E(t)-E(t-)=-1$, then we say that $q_{i}$ has a \emph{jump} at $t$, and let $h=q_{i}(t)-q_{i}(t-)$ be the \emph{jump size} of $q_{i}$ at $t$. Moreover,
    \begin{equation}
        \begin{split}
            &|h| \le |E(t-)-q(t-)-q_i(t-)-\frac{1}{2}(\text{sgn}(h)+1)|,\\
            &h\cdot (E(t-)-q(t-)-q_i(t-)) > 0.
        \end{split}
    \end{equation}

    (2). If $E(t)-E(t-)\ge 0$, then $$k_{2}:=q_{i}(t)-q_{i}(t-)\in \Z_{\ge 1},$$ and let $k_{1}=E(t)-E(t-)\in \Z_{\ge 0}$.
\end{itemize}

 Similarly, for $k\in \Z_{\ge 1}$, $E\in L(k)$, extend the domain of $E$ in the same way so that $E(t):[0,k]\rightarrow \Z_{\ge 0}$ becomes a c\`adl\`ag function with integer-valued discontinuity points. Introduce $\{q_{i}(t)\}_{i=1}^{\infty}:[0,k]\rightarrow\Z_{\ge 0}$ in a similar way, and  we call $\{q_{i}\}_{i=1,2,...}$ the block functions of $E$ if the following holds:

\begin{itemize}
    \item Each $q_{i}(t)$ has discontinuity points only at a subset of $\{1,2,\ldots,k\}$, and the sets of discontinuity points of the $q_i$'s are all disjoint.

    \item $q(t)\le E(t)$ for all $t\in [0,k]$.
    \item $q_{1}(t)=0$ for $t\in [0,k]$.
    \item At each discontinuity point $t$ of $q_{i}$ for some $i\ge 2$, we have that $E(t)-E(t-)=-1$. Again we say that $q_{i}$ has a \emph{jump} at $t$, $t$ is one of the \emph{jump times} of $q_{i}$, and let $h=q_{i}(t)-q_{i}(t-)$ be the \emph{jump height} of $q_{i}$ at $t$. Moreover,
    \begin{equation}
        \begin{split}
            &|h| \le |E(t-)-q(t-)-q_i(t-)-\frac{1}{2}(\text{sgn}(h)+1)|,\\
            &h\cdot (E(t-)-q(t-)-q_i(t-)) > 0.
        \end{split}
    \end{equation}

\end{itemize}

\begin{definition}\label{def:walk-blocks-configuration}

For $E(t)\in L(m,n)$ or $L(k)$, let $D(E)$ be the set of discontinuity points $t$ of $E$ where $E(t)-E(t-)=-1$, and let $U(E)=\{1,2,...,m+n\}\setminus D(E)$ ($\{1,2,...,k\}\setminus D(E)$ resp.) be the collection of times that the path goes up or horizontally.

The data consisting of a path $E(t)\in L(m,n)$ or $L(k)$ and its block functions $(E,\{q_{i}\}_{i=1,2,...})$ form a \emph{walk-block configuration}, which we call a configuration for short; for simplicity of notation, we write it as $(E,\vec{q})$. Given a $(E,\vec{q})$, denote the set of jump times by $J(E,\vec{q})\subset D(E)$, and let $$P(E,\vec{q})=D(E)\setminus J(E,\vec{q}).$$

For $E\in L(m,n)$, denote $C=E(m)$, and let \begin{equation}
    \begin{split}
        &\tau=\inf\{t\ge m:E(t)<C\},\\
        &\bar{\tau}=\sup\{t\in[0,m]:E(t)<C\}.
    \end{split}
\end{equation}
Furthermore, let \begin{equation}
    \begin{split}
        &A(E)=|\{\bar{\tau}+1,...,m\}\cap D(E)|+1,\\
        &B(E)=|\{m+1,...,\tau-1\}\cap D(E)|+1.
    \end{split}
\end{equation}
\end{definition}
\begin{example}\label{ex:tildetauandtau}
    Consider $E\in L(7,4)$, such that $E(1)=1,E(2)=2, E(3)=1, E(4)=4,E(5)=3,E(6)=2, E(7)=2, E(8)=3, E(9)=2, E(10)=1$. Then  $C=2,\bar{\tau}=4,\tau=10$. Moreover, $$\{\bar{\tau}+1,...,m\}\cap D(E)=\{5,6\},\quad\quad \{m+1,...,\tau-1\}\cap D(E)=\{9\},$$
    so $A(E)=3, B(E)=2$.

\end{example}

\begin{figure}[htbp]
\centering
\begin{tikzpicture}[x=1cm,y=0.85cm,>=Stealth,
    every node/.style={font=\small},line cap=round]
\definecolor{walkA}{RGB}{0,91,187}
\definecolor{walkB}{RGB}{195,90,0}
\definecolor{walkC}{RGB}{0,130,100}
\definecolor{walkLeft}{RGB}{116,64,170}
\definecolor{walkRight}{RGB}{190,42,85}

\draw[walkC,densely dashed] (0,2) -- (11.25,2);
\draw[walkC,<->,thick] (-0.65,0) -- (-0.65,2)
    node[midway,left] {$C=2$};
\draw[walkLeft,densely dashed] (4,0) -- (4,4.3);
\draw[black!45,densely dashed] (7,0) -- (7,4.3);
\draw[walkRight,densely dashed] (10,0) -- (10,4.3);

\draw[->,black!55] (-0.12,0) -- (11.65,0) node[right] {$t$};
\draw[->,black!55] (0,-0.08) -- (0,4.55) node[above] {$E(t)$};
\foreach \t in {0,...,11}{
    \draw[black!55] (\t,0) -- (\t,-0.07);
    \node[below=3pt] at (\t,0) {\scriptsize $\t$};
}
\foreach \h in {1,...,4}{
    \draw[black!55] (-0.06,\h) -- (0.06,\h);
    \node[left=3pt] at (0,\h) {\scriptsize $\h$};
}

\foreach \t/\a/\b in {1/0/1,2/1/2,3/2/1,8/2/3,11/1/0}{
    \draw[black!30,densely dotted] (\t,\a) -- (\t,\b);
}
\draw[walkA,thick,densely dotted] (5,4) -- (5,3);
\draw[walkA,thick,densely dotted] (6,3) -- (6,2);
\draw[walkB,thick,densely dotted] (9,3) -- (9,2);

\draw[very thick]
    (0,0) -- (1,0)
    (1,1) -- (2,1)
    (2,2) -- (3,2)
    (3,1) -- (4,1)
    (4,4) -- (5,4)
    (5,3) -- (6,3)
    (6,2) -- (8,2)
    (8,3) -- (9,3)
    (9,2) -- (10,2)
    (10,1) -- (11,1);

\foreach \t/\h in {1/0,2/1,3/2,8/2,11/1}{
    \draw[thick,fill=white] (\t,\h) circle (2pt);
}
\foreach \t/\h in {0/0,1/1,2/2,3/1,7/2,8/3,11/0}{
    \fill (\t,\h) circle (2pt);
}
\foreach \t/\h in {5/4,6/3}{
    \draw[walkA,thick,fill=white] (\t,\h) circle (2.2pt);
}
\foreach \t/\h in {5/3,6/2}{
    \fill[walkA] (\t,\h) circle (2.2pt);
}
\draw[walkB,thick,fill=white] (9,3) circle (2.2pt);
\fill[walkB] (9,2) circle (2.2pt);
\draw[walkLeft,thick,fill=white] (4,1) circle (2.2pt);
\fill[walkLeft] (4,4) circle (2.2pt);
\draw[walkRight,thick,fill=white] (10,2) circle (2.2pt);
\fill[walkRight] (10,1) circle (2.2pt);

\node[walkA] at (3.7,4.85) {$A(E)=1+\#\{5,6\}=3$};
\node[walkB] at (8.6,4.85) {$B(E)=1+\#\{9\}=2$};
\node[walkLeft,below] at (4,-0.55) {$\bar\tau=4$};
\node[black!70,below] at (7,-0.55) {$m=7$};
\node[walkRight,below] at (10,-0.55) {$\tau=10$};
\end{tikzpicture}
\caption{The Lukasiewicz path in Example~\ref{ex:tildetauandtau}.
Solid dots give the function values; open dots give the distinct left limits.
The blue and orange down jumps determine $A(E)$ and $B(E)$, respectively.}
\label{fig:cadlag-path-ABC}
\end{figure}

\begin{definition}\label{def:firstorderconf}

Let $k=m+n$. For $E\in L(k)$, denote $C=E(m)$. We say $(E,\vec{q})$ is a \emph{limiting first-order configuration} if \textbf{$C>0$}, and

(a). $$q_{i}(t)\equiv 0\ \text{for\ all}\ i=1,2,...$$

For $E\in L(m,n)$, we say $(E,\vec{q})$ is a \emph{limiting first-order configuration} if \textbf{$C>0$}, and it is in one of the following three cases:

(b). $q_{i}(t)\equiv 0$ for $i\ge 2$. $q_{1}(t)=C$ on $(m,\tau)$, and $q_{1}(t)=0$ elsewhere.

(c). $q_{i}(t)\equiv 0$ for $i\ne 2$. $q_{2}(t)=C$ on $[s,m]$ for some $s\in \{\bar{\tau}+1,...,m\}\cap D(E)$, and $q_{2}(t)=0$ elsewhere.

(d). $q_{i}(t)\equiv 0$ for $i\ne 3$.  $q_{3}(t)=C$ on $[s,\tau)$ for some $s\in \{\bar{\tau}+1,...,m\}\cap D(E)$, and $q_{3}(t)=0$ elsewhere.

 For $m\ge n, m,n\in \Z_{\ge 1}$, denote the collection of all limiting first-order configurations in cases (a)--(d), whose paths have total length $m+n$, by $Conf_{1}(m,n)$.
 \end{definition}
 One can check by definition that all such configurations satisfy the properties of block functions.

\begin{remark}
    As  we  will see later, configurations in case (a) are put in one-to-one correspondence with configurations in case (b), and configurations in case (c) are put in one-to-one correspondence with configurations in case (d), where the corresponding configurations on two sides share the same Lukasiewicz path $E(t)$.

\end{remark}

\begin{definition}\label{def:secondorderconf}

For $E\in L(m,n)$, we say $(E,\vec{q})$ is a \emph{limiting second-order configuration} if \textbf{$C>0$}, and it is in one of the following two cases:

(e). $q_{i}(t)\equiv 0$ for $i\ne 2$. $q_{2}(t)=C$ on $[s,m]$ for some $s\in \{\bar{\tau},...,m\}\cap U(E)$, and $q_{2}(t)=0$ elsewhere.

(f).  $q_{i}(t)\equiv 0$ for $i\ne 3$. $q_{3}(t)=C$ on $[s,\tau)$ for some $s\in \{\bar{\tau},...,m\}\cap U(E)$, and $q_{3}(t)=0$ elsewhere.

In both cases, we require $E(s)-E(s-)-q(s)\ge0$.

 For $m\ge n, m,n\in \Z_{\ge 1}$, denote the collection of all limiting second-order configurations in cases (e)--(f), whose paths have total length $m+n$, by $Conf_{2}(m,n)$.
 \end{definition}

 Again by definition, all such configurations satisfy the properties of block functions.

 Recall that in classical free probability, the limiting covariance is expressed as a combination of two parts: a polynomial in the $\kappa_d$'s and a polynomial in both the $\kappa_d$'s and the $\kappa_{k_1,k_2}$'s.  Next we define two types of functionals on configurations that correspond to these two parts, respectively.

\begin{definition}\label{def:functional1}
    Given a configuration $(E,\vec{q})\in Conf_{1}(m,n)$, for $l=1,2,...$, let $I^{(1)}_{l}(E,\vec{q})$ be a functional of the configuration, such that
    $$I^{(1)}_{l}(E,\vec{q})=(-1)^{m(\vec{q})}\prod_{B_i\in E}\kappa_{|B_{i}|}\sum_{i_{1}<...<i_{l}}\prod_{j=1}^{l}[E(i_{j}-)-q(i_{j}-)],$$
    where $B_{i}$ gives the size of the up and flat steps in $U(E)$, $m(\vec{q})$ counts the total number of jumps of $q_{i}$'s of negative size and $i_{1},...,i_{l}\in P(E,\vec{q})$.
\end{definition}

\begin{definition}\label{def:functional2}
    Given a configuration $(E,\vec{q})\in Conf_{2}(m,n)$, let $s=s(E,\vec{q})$ be the unique time $t\le m$ where $q(t)-q(t-)=C>0$ (see Definition \ref{def:secondorderconf}), and let $k_{1}(E,\vec{q})=E(s)-E(s-)-q(s)+1$, $k_{2}(E)=C(E)$. For $l=1,2,...$, let $I^{(2)}_{l}(E,\vec{q})$ be a functional of the configuration, such that
    $$I^{(2)}_{l}(E,\vec{q})=(-1)^{m(\vec{q})}\frac{\kappa_{k_{1}(E,\vec{q}),k_{2}(E)}}{k_{2}(E)}\prod_{B_i\in E,\ t(B_{i})\ne s(E,\vec{q})}\kappa_{|B_{i}|}\sum_{i_{1}<...<i_{l}}\prod_{j=1}^{l}[E(i_{j}-)-q(i_{j}-)],$$
    where $B_{i}$ gives the size of the up and flat steps in $U(E)$, $m(\vec{q})$ counts the total number of discontinuity points of $q_{i}$'s, where $q_{i}(t)-q_{i}(t-)<0$, and $i_{1},...,i_{l}\in P(E,\vec{q})$.
\end{definition}

After the preparation above, we state the two main theorems of this paper, which constitute the ``if'' parts of Theorems~\ref{thm:intro-fixed-iff} and~\ref{thm:intro-high-iff}.

\begin{theorem}\label{thm:fixedtempclt}
    In the fixed-temperature regime, if
    for all $d\in \Z_{\ge 1}$, $k_{1},k_{2}\in \Z_{\ge 1}$,
    \begin{equation}\label{eq_cltfixed}
        \begin{split}
            \frac{d}{N}c_{(d)}&\rightarrow \theta^{1-d}\kappa_{d};\\
            k_{1}k_{2}(1+\mathbf{1}_{k_1=k_2})c_{(k_{1},k_{2})}&\rightarrow \theta^{1-k_{1}-k_{2}}\kappa_{k_{1},k_{2}};\\
            c_{\lambda}&\rightarrow 0\quad\quad \text{for}\ \ell(\lambda)\ge 3,
        \end{split}
    \end{equation}
    when $N\rightarrow\infty$, then the CLT holds. Moreover, the limiting covariance $\alpha_{m,n}$ is given in terms of first-order free cumulants $\kappa_{l}$'s and second-order free cumulants $\kappa_{k_{1},k_{2}}$'s defined as above, by the equalities
    \begin{equation}\label{eq_fixedtempcov}
    \begin{split}
        \alpha_{m,n}=&\frac{1}{\theta}\sum_{(E,\vec{q})\in Conf_{1}(m,n)}I^{(1)}_{1}(E,\vec{q})+\frac{1}{\theta}\sum_{(E,\vec{q})\in Conf_{2}(m,n)}I^{(2)}_{1}(E,\vec{q})\\
                    =&\frac{1}{\theta}\sum_{E\in L(m,n)}A(E)B(E)C(E)\prod_{B_i\in E}\kappa_{|B_{i}|}\\
                    &+\frac{1}{\theta}\sum_{E\in L(m,n)}\sum_{s\in \{\bar{\tau},...,m\}\cap U(E)}B(E)\kappa_{k_{1}(E),k_{2}(E)}\prod_{B_i\in E,\ t(B_{i})\ne s}\kappa_{|B_{i}|},
        \end{split}
    \end{equation}
    where $k_{1}(E):=E(s)-E(s-)-C(E)+1$, and $\kappa_{k_{1}(E),k_{2}(E)}:=0$ if $k_{1}(E)\le 0$.
\end{theorem}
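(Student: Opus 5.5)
The proof splits into two parts: the covariance computation, which yields \eqref{eq_fixedtempcov}, and the vanishing of all joint cumulants of order at least three.

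\textbf{Covariance via Dunkl operators.} By Proposition~\ref{prop:action_dunkl},
\[
N^2\Cov(\widetilde p_m^N,\widetilde p_n^N)=N^{-m-n}\bigl(\P_m\P_n G-\P_mG\cdot\P_nG\bigr)\big|_{\vec x=0},
\]
where $G=\exp(\sum_\lambda c_\lambda p_\lambda)$. First I would apply $\P_n$. Writing $\D_i^n G=G\cdot(\text{polynomial in derivatives of }\ln G)$, the action of each factor $\D_i$ is either
\begin{itemize}[leftmargin=*]
\item a derivative hitting $\ln G$, producing a coefficient $c_{(d)}$ or $c_{(r,s)}$ times a monomial (an up or flat step of a Lukasiewicz path), or
\item a derivative hitting an existing monomial, or a difference quotient $\theta\sum_j(1-\sigma_{ij})/(x_i-x_j)$ (a down step).
\end{itemize}
This encodes $\P_nG/G$ as a sum over paths, with block functions $q_i$ recording which monomial factors remain attached to the distinguished variable $x_i$ and which have been split off by the difference operators. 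Next I would apply $\P_m$ to $G\cdot(\P_nG/G)$ and set $\vec x=0$. The terms in which $\P_m$ never touches the factor produced by $\P_n$ reproduce $\P_mG\cdot\P_nG$ and cancel exactly. The surviving terms are those where the two paths interact, and these are concatenated into a single path in $L(m,n)$ with the marked time $m$.

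\textbf{Power counting.} Each $c_{(d)}$ contributes order $N\theta^{1-d}$, each $c_{(r,s)}$ contributes order $1$, the $c_\lambda$ with $\ell(\lambda)\ge3$ are negligible, and each down step gives a factor $\theta(N-\cdot)+(\text{local terms})$. From this count I expect the following leading behaviour:
\begin{itemize}[leftmargin=*]
\item The interacting terms of order $N^{m+n-2}$ are exactly the configurations in $Conf_1$, where the blocks carry a height $C$ across time $m$, and $Conf_2$, where a single $c_{(k_1,k_2)}$ links the two halves.
\item Each unpaired down step at a time $i\in P(E,\vec q)$ contributes $N\theta$ times a leading factor, with one $\theta^{-1}$ left over. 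The subleading correction of that down step, of size $E(i-)-q(i-)$, produces the functionals $I^{(1)}_1$ and $I^{(2)}_1$.
\item Cases (a) and (b), and likewise (c) and (d), come with opposite signs $(-1)^{m(\vec q)}$. Summing them over the admissible $s$ collapses to the weights $A(E)B(E)C(E)$ and $B(E)$, which gives the second line of \eqref{eq_fixedtempcov}.
\end{itemize}
The main obstacle is tracking these cancellations, namely showing that all other block configurations either cancel in pairs or are of lower order. I would handle this by an induction on the time $t$, keeping a leading-order invariant for the coefficient of each monomial at each intermediate stage.

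\textbf{Vanishing of higher cumulants.} I would use Theorem~\ref{thm:keyexpansion}. Writing $\log G=\log\E[\exp(\ln B_a)]$, expand in joint cumulants of the face weights $\prod_f p_{\deg f}(a/N)$. Extracting the coefficient of $p_\lambda(\vec x)$ expresses $c_\lambda$ as a sum of two pieces:
\begin{itemize}[leftmargin=*]
\item a diagonal term, which is a nonzero constant (coming from genus-zero constellations with $\ell(\lambda)$ color-$0$ vertices and single faces) times $N^{\ell(\lambda)}\kappa(\widetilde p_{\lambda_1},\dots,\widetilde p_{\lambda_\ell})$, plus the same type of expression with other partitions of equal degree but lower $N$-order;
\item a remainder made of products of lower-degree cumulants and of means and covariances.
\end{itemize}
Proposition~\ref{prop:euler} bounds the $N$-power of each term by $N^{2-2g}$, so only genus zero contributes at leading order. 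Since $c_\lambda\to0$ and the covariances converge by the first part, induction on the total degree $|\lambda|$, using the asymptotically triangular and invertible structure of this linear system, gives $N^{\ell}\kappa_\ell\to0$ for $\ell\ge3$. Together with the covariance computation, this yields the Wick formula required by Definition~\ref{def:fixedtempclt}.
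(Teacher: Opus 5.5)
Your treatment of the higher cumulants (expanding $c_\lambda$ through Theorem~\ref{thm:keyexpansion} and joint cumulants, the genus-zero bound $C_{\mu,\nu}=O(N^{2-\ell(\nu)})$, a Leonov--Shiryaev remainder of lower total degree, induction on $|\lambda|$, and inversion of an asymptotically diagonal system) is the paper's argument. Your Leibniz reorganization, in which the terms where $\P_m$ never differentiates $\P_nG/G$ cancel exactly, amounts to Proposition~\ref{prop:covariance-weighted-expansion} with the two stages swapped.

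The gap is in the step after that, which carries the actual content of the covariance formula. You assert two things: that the terms at the covariance scale are exactly the configurations of $Conf_1(m,n)\cup Conf_2(m,n)$, and that the only surviving correction at a non-jump down step is $E(i-)-q(i-)$. (The covariance scale is order $N^{m+n}$ in your normalization, not $N^{m+n-2}$.) Neither claim holds termwise.

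After normalization, the zero-Type~V parts of cases (a)--(d) are of order $N$, not $O(1)$. Case (a) has exterior factor $N$ and no costly action. Cases (b), (c), (d) have exterior factor $N(N-1)$ and exactly one costly action: an auxiliary closing jump, or a first jump of the prescribed variable $x_2$. These terms cancel only at leading order within their matched pairs. The $O(1)$ remainder has two sources of the same size:
\begin{itemize}
\item the Type~V factors $E(t-)-q(t-)$;
\item the $1+O(N^{-1})$ corrections to the Type~III multiplicities $\theta(N-1-\#)$, where $\#$ counts the other variables whose current degree is at least that of the main variable.
\end{itemize}
The second source differs between the two members of a pair, for instance $N-1$ versus $N-1-\mathbf 1_{\{r(t)\le C\}}$ at down steps $m<t<\tau$. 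It leaves residues $\sum_{t\in D(E),\,m<t<\tau}\mathbf 1_{\{r(t)\le C\}}$ from pair (a)/(b) and from (c)/(d). These residues have the same sign and do not cancel inside $Conf_1$.

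In the paper they are removed only by bringing in configurations outside $Conf_1$ that are themselves $O(1)$. These are the single-index configuration (5.3), whose block crosses $m$ and closes at $\tau$, and configurations with a second auxiliary block closing on either side of $\tau$ (triples 12 and 14). There the signed height counts $-C$, then $C-r$ or $-(r-C-1)$, then $r-1$ sum to $-\mathbf 1_{\{r\le C\}}$. All other configurations with two costly actions are shown to cancel in sign-reversing pairs related by Rules~I--III.

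Your proposed tool, an induction in time keeping a leading-order invariant for each monomial, cannot see any of this. The cancellations sit one and two orders below the leading term. They compare different monomial sequences, with different exterior multiplicities ($N(N-1)$ versus $N$) and label counts ($N-2$ versus $N-1$), that share only the underlying Lukasiewicz path. What is missing is that path-by-path bookkeeping:
\begin{itemize}
\item fix $E$;
\item enumerate the cost-admissible block patterns;
\item group them into pairs and triples;
\item compute the residual multiplicities exactly.
\end{itemize}
Only then can you conclude that the one-Type~V differences of (a)/(b) and (c)/(d) contribute $C(E)$ at each of the $B(E)$ times in $(\{m+1,\ldots,\tau-1\}\cap D(E))\cup\{\tau\}$ for each of the $A(E)$ first-jump positions. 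The same reasoning gives the factor $C(E)B(E)$ that cancels $1/k_2$ in the $Conf_2$ term.
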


For $\theta=\frac{1}{2},1$, this result is classical. The covariance--cumulant formula in terms of non-crossing annuli and marked pairs of non-crossing partitions is due to Collins, Mingo, \'{S}niady, and Speicher \cite{CMSS07}; see also \cite[Chapter 5]{MS}. We will show in Section \ref{sec:comb} that our Eqn.~\eqref{eq_fixedtempcov} is equivalent to the classical formula when $\theta=\frac{1}{2},1$, as it should be. Since our formula depends on $\theta>0$ only through a constant, it identifies the second-order free convolution for all fixed-temperature $\theta$, for which both the classical formula and the alternative analytic characterization in \cite[Section 5.5]{MS} also apply.

Using a similar approach, we prove the CLT for the high-temperature regime and provide an analogous combinatorial covariance--cumulant formula.

\begin{theorem}\label{thm:hightempclt}
    In the high-temperature regime, if
    for all $d\in \Z_{\ge 1}$, $k_{1},k_{2}\in \Z_{\ge 1}$,
    \begin{equation}\label{eq_clthigh}
        \begin{split}
            d\cdot c_{(d)}&\rightarrow \kappa^{\gamma}_{d};\\
           Nk_{1}k_{2}(1+\mathbf{1}_{k_1=k_2})c_{(k_{1},k_{2})}&\rightarrow \kappa^{\gamma}_{k_{1},k_{2}};\\
          N^{\ell(\lambda)/2}  c_{\lambda}&\rightarrow 0\quad\quad \text{for}\ \ell(\lambda)\ge 3.
        \end{split}
    \end{equation}
    when $N\rightarrow\infty$, $\theta\rightarrow 0$, $N\theta\rightarrow\gamma>0$, then the CLT holds. Moreover, the limiting covariance $\alpha_{m,n}$ is given in terms of first-order free  $\gamma$-cumulants $\kappa^{\gamma}_{l}$'s and second-order free $\gamma$-cumulants $\kappa^{\gamma}_{k_{1},k_{2}}$'s defined as above, by the equalities
    \begin{equation}\label{eq_hightempcov}
\begin{split}
\alpha_{m,n}
={}&
\sum_{(E,\vec q)\in Conf_1(m,n)}
\sum_{k=1}^{|P(E,\vec q)|}
\gamma^{|D(E)|-k}
I_k^{(1)}(E,\vec q;\gamma)
\\
&+
\sum_{(E,\vec q)\in Conf_2(m,n)}
\sum_{k=1}^{|P(E,\vec q)|}
\gamma^{|D(E)|-k}
I_k^{(2)}(E,\vec q;\gamma).
\end{split}
\end{equation}
    where $I_{k}^{(1)}(E,\vec{q};\gamma)$, $I_{k}^{(2)}(E,\vec{q};\gamma)$ are defined in the same way as in Definitions~\ref{def:functional1} and~\ref{def:functional2}, except that we replace $\kappa_{l}$'s, $\kappa_{k_{1},k_{2}}$'s by $\kappa_{l}^{\gamma}$'s, $\kappa_{k_{1},k_{2}}^{\gamma}$'s respectively.
\end{theorem}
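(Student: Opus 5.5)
The plan is to follow the moment method through Dunkl operators, as in the fixed-temperature case (Theorem~\ref{thm:fixedtempclt}), but to track every power of $\gamma$ that the high-temperature scaling brings in. By Proposition~\ref{prop:action_dunkl}, write
\[
\E_{\mu_N}[p_mp_n]-\E_{\mu_N}[p_m]\E_{\mu_N}[p_n]
=\Bigl(\P_m\P_n G-\P_mG\cdot\P_nG\Bigr)\Big|_{\vec x=0},
\qquad G=\exp\Bigl(\sum_\lambda c_\lambda p_\lambda\Bigr).
\]
First I will expand $\P_n G$ near $0$, keeping both its value and its first-order dependence on $\vec x$. Then I will apply $\P_m$ and expand each $\D_i$ step by step. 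An up or flat step of size $k_1$ acts by $\partial_i$ on a term $\partial_i(c_{(k_1+1)}p_{k_1+1})$; this produces the block weight $\kappa^\gamma_{|B|}$. A down step comes from the difference part $\theta\sum_j(x_i-x_j)^{-1}(1-\sigma_{ij})$. When it acts on a monomial of the current height $E(t-)$ it contributes either $\theta N$ or $\theta\cdot(\text{index})$, that is, $\gamma+E(t-)-q(t-)$ in the limit.

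The factor $\prod_{t\in D}(\gamma+E(t-)-q(t-))$ has to be expanded. It equals
\[
\sum_k\gamma^{|D|-k}e_k\bigl(\{E(i-)-q(i-)\}\bigr).
\]
The terms surviving at order $N^{-1}$ after centering need at least one non-$\gamma$ factor chosen among the passive times $P(E,\vec q)$. This is what gives the sums $\sum_{k\ge1}\gamma^{|D(E)|-k}I_k^{(\cdot)}$, in contrast with fixed temperature, where only $I_1$ survives because $\theta N\to\infty$ dominates.

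The covariance then splits into two kinds of terms. Cross terms in which $\P_m$'s path interacts with the variable dependence created by $\P_n$ give the configurations (a)--(d). Here the block functions $q_i$ record which pieces of the monomial are pending symmetrization, and jumps of $q$ carry the signs $(-1)^{m(\vec q)}$. Terms where a single $\partial$ hits $c_{(k_1,k_2)}p_{k_1}p_{k_2}$ give (e)--(f), using $N(1+\mathbf 1)k_1k_2c_{(k_1,k_2)}\to\kappa^\gamma_{k_1,k_2}$.

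I would verify the $N$-degree bookkeeping in three steps:
\begin{enumerate}
\item Each down step costs $\theta\sim\gamma/N$ but gains a sum of size $N$ or $O(1)$.
\item The disconnected leading terms cancel exactly against $\P_mG\,\P_nG$.
\item Terms involving $c_\lambda$ with $\ell(\lambda)\ge3$, or higher-order corrections of $\kappa$, are $o(N^{-1})$ thanks to $N^{\ell/2}c_\lambda\to0$.
\end{enumerate}
After multiplication by $N$, this yields $\alpha_{m,n}$.

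Finally, the vanishing of the higher joint cumulants at scale $N^{s/2}$ is not part of the covariance computation. I would take it from the triangular-system argument of Section~\ref{sec:vanishing-higher-cumulants}. That argument uses Theorem~\ref{thm:keyexpansion} with $b\sim N/\gamma$, so that the terms with $\nu_\rho=2g$ appear at leading order, and proceeds by induction on total degree.

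The main obstacle is the combinatorial identification of the surviving terms. One has to show that the non-$\gamma$ factors in the down steps, combined with the symmetrization corrections encoded by the $q_i$, reduce exactly to the configurations $Conf_1(m,n)$ and $Conf_2(m,n)$ with the stated signs. All other block-function configurations must cancel in pairs; as noted in the remark after Definition~\ref{def:firstorderconf}, (a) is matched with (b) and (c) with (d), sharing the same path. At fixed temperature this check only needed to be done at the $e_1$ level. Here it must hold for every $e_k$ simultaneously. My first attempt would be to prove the cancellation pathwise for a fixed choice of the set of non-$\gamma$ times, which reduces the problem to the fixed-temperature argument with modified heights.
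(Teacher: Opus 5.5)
There is a genuine gap at exactly the step you call the main obstacle, and the route you propose for it aims at the wrong target. In the high-temperature normalization $N\Cov(p_m^N,p_n^N)$, the three exterior factors are $N-1$, $1$ and $N$, and $\D_i$ carries no factor $N^{-1}$. So the following actions are all $O(1)$: up and flat steps, canonical down steps ($\theta(N-O(1))\to\gamma$ after summing over labels), derivative actions, and a first jump to a freely chosen auxiliary label. Every other jump costs $\theta=O(N^{-1})$: a jump of the prescribed variable $x_2$ before $m$, a closing jump, or a further jump of an existing block. Because $E(m)>0$ forces at least one costly jump in the two-index term, that term affords exactly one such jump and the single-index term affords none.

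This power count alone leaves precisely the pairs (a)/(b) and (c)/(d) forming $Conf_1(m,n)$. With one two-row action it leaves the pair (e)/(f); that action has coefficient $O(N^{-1})$, and its power-sum factor must be hit by a derivative. Every other block pattern is simply $O(N^{-1})$ here, for every set of derivative times. This includes the patterns that needed delicate cancellations at fixed temperature: Group~B, (5.3), and triples 12 and 14.

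So no pathwise cancellation at each $e_k$ level is required. The fixed-temperature cancellation lemma is not the right tool anyway: it concerns zero-derivative terms and the $O(1)$ corrections in label counts $N-c$. At high temperature these corrections become $\theta c=O(N^{-1})$ and are invisible. The only cancellation needed is the $k=0$ one inside each matched pair, whose two members share the same path and the same $\gamma^{|D(E)|}$ times cumulants but have opposite signs. Summing the surviving weights over nonempty subsets of derivative times in $P(E,\vec q)$ then gives $\gamma^{|D(E)|-k}I_k$ directly.

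Your bookkeeping of down steps is also wrong in a way that matters. The factor $E(t-)-q(t-)$ is the exponent of the current main variable. It comes from the partial-derivative part $\partial_i$ of $\D_i$, which carries no $\theta$. The difference part only produces $\theta\,\#\{j:\deg x_j<\deg x_i\}\to\gamma$ or block jumps. Your step~1 says ``each down step costs $\theta$ but gains a sum of size $N$ or $O(1)$''. Under that rule a derivative step would be $O(N^{-1})$ and every $I_k$ would disappear, contradicting the formula you want.

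Likewise, jump times contribute a signed $\gamma$, coming from $\theta$ combined with the exterior factor or the label count, not $\gamma+E(t-)-q(t-)$. So the product to expand runs over $P(E,\vec q)$, with the jump times supplying $\gamma^{|J(E,\vec q)|}$, rather than over all of $D(E)$. In addition, expanding $\P_nG$ only to first order in $\vec x$ is insufficient, since $\P_m$ sees its Taylor part of degree $m$.

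Deferring the vanishing of higher cumulants to the triangular-system argument of Section~\ref{sec:vanishing-higher-cumulants} matches the paper.
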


\begin{remark}
    From the above two statements, one can see that $$\kappa_{k_{1},k_{2}}=\kappa_{k_{2},k_{1}}, \quad \kappa^{\gamma}_{k_{1},k_{2}}=\kappa^{\gamma}_{k_{2},k_{1}}.$$
    Hence it suffices to consider $k_{1}\ge k_{2}$. We leave the order of $k_{1},k_{2}$ free since this coordinates better with the later presentation.
\end{remark}

\section{Action of Dunkl operators}\label{sec:dunklaction}
We split the proofs of Theorems \ref{thm:fixedtempclt} and \ref{thm:hightempclt} into two parts. The corresponding laws of large numbers already give the first-cumulant limits. Thus, by the moment--cumulant relations and Definitions~\ref{def:fixedtempclt} and~\ref{Definition_CLT_sat}, it remains to establish two assertions for the power sums under the respective scalings: their second joint cumulants, equivalently their covariances, converge to the limits stated in the theorems, and every joint cumulant of order at least three converges to zero. We prove the former in this section and the latter in Section~\ref{sec:vanishing-higher-cumulants}. Throughout these two parts, $\mathfrak c_l(X_1,\ldots,X_l)$ denotes the classical joint cumulant of $X_1,\ldots,X_l$; this notation is distinct from $c_\lambda$, which continues to denote the BGF coefficient from Definition \ref{def:clambda}.

Recall the type A Dunkl operators
\begin{equation}
\D_i := \partial_{i}+\theta\cdot\sum_{j\colon j\ne i}\frac{1}{x_{i}-x_{j}}(1-\sigma_{ij}), \quad\text{for } i=1,2,...,N,
\end{equation}
defined in~\cite{D}.
They pairwise commute, i.e.~$\D_i\D_j=\D_j\D_i$, for any $1\le i,j\le N$.

Consider the operators
\[
\mathcal{P}_k := \sum_{i=1}^N(\D_i)^k,\qquad
\mathcal{\widetilde{P}}_k := \frac{\mathcal{P}_k}{N^k},\quad k\ge 1.
\]
The symbols with tildes will be used for the fixed-temperature regime and those without tildes for the high-temperature regime.

For $\mu_N$ satisfying the standing assumptions of Section~\ref{sec:bgf}, consider the polynomial observables
\[
p_k(\vec{a}) := \sum_{i=1}^N{(a_i)^k},\qquad
\widetilde{p}_k(\vec{a}) := \frac{p_k(\vec{a})}{N^k},\quad k\ge 1.
\]

Throughout this section we use
\[
F_{N,\theta}(\vec{x}):=\log G_{N,\theta}(\vec{x})
   =\sum_{\lambda\in\mathbb Y}c_\lambda p_\lambda(\vec{x}).
\]
Thus every coefficient below is the BGF coefficient $c_\lambda$ from Definition~\ref{def:clambda}; its dependence on $N$ and $\theta$ is suppressed.

For fixed-temperature scaling, we have
\begin{multline}\label{eq_covthroughdunkl}
\Cov\big(\widetilde{p}_m(\vec{a}), \widetilde{p}_n(\vec{a})\big) = \left[\mathcal{\widetilde{P}}_m\mathcal{\widetilde{P}}_n\exp(\ln G_{\theta,N})-\mathcal{\widetilde{P}}_m\exp(\ln G_{\theta,N})\cdot\mathcal{\widetilde{P}}_n\exp(\ln G_{\theta,N})\right]\Big|_{\vec{x}=0}\\
= N(N-1)\left(\frac{\D_2}{N}\right)^{n}\left(\frac{\D_1}{N}\right)^{m}\exp(\ln G_{\theta,N})\Big|_{\vec{x}=0}+N\left(\frac{\D_{1}}{N}\right)^{m+n}\exp(\ln G_{\theta,N})\Big|_{\vec{x}=0}\\
-N^{2}\left(\frac{\D_1}{N}\right)^m\exp(\ln G_{\theta,N})\Big|_{\vec{x}=0}\left(\frac{\D_1}{N}\right)^{n}\exp(\ln G_{\theta,N})\Big|_{\vec{x}=0}.
\end{multline}

For high-temperature scaling, we have
\begin{multline}\label{eq_covthroughdunkl2}
N^{-1}\Cov\big(p_m(\vec{a}), p_n(\vec{a})\big) =N^{-1} \left[\mathcal{P}_m\mathcal{P}_n\exp(\ln G_{\theta,N})-\mathcal{P}_m\exp(\ln G_{\theta,N}) \cdot\mathcal{P}_n\exp(\ln G_{\theta,N})\right]\Big|_{\vec{x}=0}\\
= (N-1)\D_2^n\D_1^m\exp(\ln G_{\theta,N})\Big|_{\vec{x}=0}+\D_1^{m+n}\exp(\ln G_{\theta,N})\Big|_{\vec{x}=0}\\
-N\D_1^m\exp(\ln G_{\theta,N})\Big|_{\vec{x}=0}\D_1^n\exp(\ln G_{\theta,N})\Big|_{\vec{x}=0}.
\end{multline}

Let $h(\vec{x})$ be a polynomial of $x_{1},\dots,x_{N}$, let $\kappa$ be any partition, and recall that $p_{\kappa}(\vec{x})=\prod_{i=1}^{\ell(\kappa)}p_{\kappa_{i}}(\vec{x})$. Straightforward calculations yield the following equalities:
\begin{align*}
\partial_i\left( h(\vec{x})\,p_{\kappa}(\vec{x})\, e^{F_{N,\theta}(\vec{x})} \right) =& \ \left[\partial_i h(\vec{x})p_\kappa(\vec{x})+h(\vec{x})\sum_{s=1}^{\ell(\kappa)}{ \kappa_s x_i^{\kappa_s-1}\prod_{j\ne s}{p_{\kappa_j}(\vec{x})} } \right]\cdot e^{F_{N,\theta}(\vec{x})}\\
&+ \bigg(\sum_{d=1}^\infty{dc_{(d)} x_i^{d-1}}\bigg) h(\vec{x})\,p_{\kappa}(\vec{x})\ e^{F_{N,\theta}(\vec{x})}\\
&+ \bigg(\sum_{\la\colon\ell(\la)\ge 2}{ c_\la \sum_{j=1}^{\ell(\la)}{ \la_j x_i^{\la_j-1}\prod_{k\ne j}{p_{\la_k}(\vec{x})} } }\bigg)h(\vec{x})\,p_{\kappa}(\vec{x}) e^{F_{N,\theta}(\vec{x})},
\end{align*}

\begin{equation*}
\frac{1-\sigma_{ij}}{x_i - x_j} \left( h(\vec{x})\,p_{\kappa}(\vec{x})\,e^{F_{N,\theta}(\vec{x})} \right) = \frac{1-\sigma_{ij}}{x_i - x_j} \big( h(\vec{x}) \big) p_{\kappa}(\vec{x})\,e^{F_{N,\theta}(\vec{x})},
\end{equation*}

\begin{equation}\label{eq_sigmaaction}
\frac{1-\sigma_{ij}}{x_i - x_j} \Big( x_i^{m_i} x_j^{m_j} \Big) = \begin{cases}
    -\sum_{a=m_i}^{m_j-1} {x_i^a x_j^{m_i+m_j-1-a}}, & \text{if }m_i<m_j,\\
    \\
    \sum_{a=m_j}^{m_i-1} {x_i^a x_j^{m_i+m_j-1-a}}, & \text{if }m_i\ge m_j.
\end{cases}
\end{equation}

\medskip

Based on the above calculation,
we then introduce the following linear operators on polynomials, uniquely defined by their actions on monomials $x_1^{m_1}\cdots x_N^{m_N}$, for any $m_1,...,m_N\in \Z_{\ge 0}$.
\begin{itemize}
    \item For $1\le i\le N$ and $d\ge1$, $$U^{(d)}_{i}(x_{1}^{m_{1}}\cdots x_{N}^{m_{N}}):=dc_{(d)}x_{i}^{d-1}\cdot x_{1}^{m_{1}}\cdots x_{N}^{m_{N}}.$$
    \item For $1\le i\le N$, and any partition $\la$ such that $\ell(\lambda)\ge 2$ and $|\la|\le |\mu|$,
    $$U_{i}^{(\lambda)}(x_{1}^{m_{1}}\cdots x_{N}^{m_{N}}) := c_\la \sum_{j=1}^{\ell(\la)}{ \la_j x_i^{\la_j-1}\prod_{k\ne j}{p_{\la_k}(\vec{x})} }\cdot x_{1}^{m_{1}}\cdots x_{N}^{m_{N}}.$$
    In particular, if $\la=(\la_1,\la_2)$, let \begin{align*}U_i^{(\lambda,1,k)}(x_{1}^{m_{1}}\cdots x_{N}^{m_{N}}) := c_\la  \la_1 x_i^{\la_1-1}x_k^{\la_2} \cdot x_{1}^{m_{1}}\cdots x_{N}^{m_{N}},\\
    U_i^{(\lambda,2,k)}(x_{1}^{m_{1}}\cdots x_{N}^{m_{N}}) := c_\la  \la_2 x_i^{\la_2-1}x_k^{\la_1} \cdot x_{1}^{m_{1}}\cdots x_{N}^{m_{N}},\end{align*}
    for $k=1,2,...,N$. Then $U^{(\la)}_i=\sum_{k=1}^{N}\bigl(U^{(\la,1,k)}_{i}+U^{(\la,2,k)}_i\bigr)$.
    \item For $1\le i\le N$, $1\le j\le N$, $j\ne i$,  $$d_{i,j}(x_{1}^{m_{1}}\cdots x_{N}^{m_{N}}):=\begin{cases}
        x_{1}^{m_{1}}\cdots x_{i-1}^{m_{i-1}}x_{i}^{m_{i}-1}x_{i+1}^{m_{i+1}}\cdots x_{N}^{m_{N}},& m_{i}>m_{j};\\
        0,& m_{i}\le m_{j}.
    \end{cases}$$
\item For $1\le i\le N$, $1\le j\le N$, $j\ne i$, $a\in \Z_{\ge 1}$,
\begin{equation}\label{eq_jumpaction}\tilde{d}^{(a)}_{i,j}(x_{1}^{n_{1}}\cdots x_{N}^{n_{N}}):=\begin{cases}
    x_{1}^{n_{1}}\cdots x_{N}^{n_{N}}\cdot {x_i^{-a} x_j^{a-1}}, & \text{if } n_i>n_j,\ 1< a\le n_{i}-n_{j}\\
    -x_{1}^{n_{1}}\cdots x_{N}^{n_{N}}\cdot x_i^{a-1} x_j^{-a}, & \text{if } n_i < n_j,\ 1\le a\le n_{j}-n_{i}\\
    0, & \text{otherwise}.
\end{cases}\end{equation}
\item For $1\le i\le N$, $$\partial_{i}(x_{1}^{m_{1}}\cdots x_{N}^{m_{N}})=m_{i}\cdot d_{i}(x_{1}^{m_{1}}\cdots x_{N}^{m_{N}}),$$
where $$d_{i}(x_{1}^{m_{1}}\cdots x_{N}^{m_{N}}):= \mathbf{1}_{\{m_i\ge 1\}}\cdot x_{1}^{m_{1}}\cdots x_{i-1}^{m_{i-1}}x_i^{m_i-1}x_{i+1}^{m_{i+1}}\cdots x_{N}^{m_{N}}.$$
\end{itemize}

\medskip

Then for a partition\footnote{In this text, we only need the case $\mu=(m,n)$ and $\mu=(k)$.} $\mu=(\mu_{1},...,\mu_{s})$, we have
\begin{equation}
\begin{split}
    &\Big[ \D_s^{\mu_s}\cdots\D_2^{\mu_2}\D_1^{\mu_1} \Big] e^{ F_{N,\theta}(\vec{x}) } \,\Big|_{\vec{x}=(0^N)}\\
    =&\left(\sum_{d}U^{(d)}_{s}+\sum_{\la=(\la_1,\la_2)}\sum_{k=1}^{N}(U^{(\la,1,k)}_{s}+U^{(\la,2,k)}_s)+\theta\sum_{j\ne s} d_{s,j}+\theta\sum_{j\ne s}\sum_{a\ge 1}\Tilde{d}^{(a)}_{s,j}+\partial_{s}+\sum_{\ell(\la)\ge 3}U^{(\la)}_s\right)^{\mu_{s}}\cdots \\
&\left(\sum_{d}U^{(d)}_{1}+\sum_{\la=(\la_1,\la_2)}\sum_{k=1}^{N}(U^{(\la,1,k)}_{1}+U^{(\la,2,k)}_1)+\theta\sum_{j\ne 1} d_{1,j}+\theta\sum_{j\ne 1}\sum_{a\ge 1}\Tilde{d}^{(a)}_{1,j}+\partial_{1}+\sum_{\ell(\la)\ge 3}U^{(\la)}_1\right)^{\mu_{1}} (1)\Big|_{\vec{x}=(0^N)}\\
=&\sum_{T(t):1\le t\le |\mu| }T(|\mu|)\cdots T(1)(1)\Big|_{\vec{x}=(0^N)},
\end{split}
\end{equation}
where the summation in the last line is over the finitely many choices of the operators $T(t)$. More precisely, $T(t)$ is chosen among $U^{(d)}_{i}$, $U^{(\la,j,k)}_{i}\ (\ell(\lambda)=2)$, $\theta\cdot d_{i,j}$, $\theta\cdot \tilde{d}^{(a)}_{i,j}$, $\partial_{i}$ and $U^{(\la)}_{i}\ (\ell(\la)\ge 3)$  for $\sum_{j=1}^{i-1}\mu_{j}+1\le t\le \sum_{j=1}^{i}\mu_{j}$. 

In the order displayed, these are the following six types:
\[
\begin{array}{c|c}
\mathrm{I} & U_i^{(d)}\\
\mathrm{II} & U_i^{(\lambda,j,k)},\ \ell(\lambda)=2\\
\mathrm{III} & \theta d_{i,j}\\
\mathrm{IV} & \theta\widetilde d_{i,j}^{(a)}\\
\mathrm{V} & \partial_i\\
\mathrm{VI} & U_i^{(\lambda)},\ \ell(\lambda)\ge3.
\end{array}
\]
We denote the corresponding classes by $J_{\mathrm I},\ldots,J_{\mathrm{VI}}$. Type~II is the monomial expansion of the grouped operator $U_i^{(\lambda)}$ with $\ell(\lambda)=2$.

In this way, we may view each term in the sum as a discrete-time process of polynomials in $x_1,\ldots,x_N$ starting at time zero from the monomial $P(0)=1$ with the polynomial at time $t>0$ given by $P(t)=T(t)\cdots T(1)(1),\ t=1,2,\ldots,|\mu|$. Moreover, if none of the operators $T(t)$ is of Type VI, each $P(t)$ is a monomial.

Next, we introduce a way to visualize each monomial sequence $P(t),t=1,2,...,|\mu|$, where each $T(t)$ is of one of Types I--V.

\begin{definition}
    For $N\in \Z_{\ge 1}$, let $P(\vec{x})=c\cdot x_{1}^{n_{1}}\cdots x_N^{n_{N}}$ be a monomial in $x_1,\ldots,x_N$. We denote
    $$\operatorname{Deg}(P):=\sum_{i=1}^{N}n_{i},\quad \operatorname{Deg}_{i}(P):=n_{i}\quad \text{for}\ i=1,2,\ldots,N.$$
\end{definition}

 Observe that after the action of $T(t)\ (t\in \llbracket 1,|\mu|\rrbracket)$, if $P(t)\ne 0$, the degree has either
\begin{itemize}
\item increased by $d-1$ if $T(t) = U^{(d)}_{i}$ or $U^{(\lambda)}_{i}$ for some $i$, $d\ge 1$ or some two-row partition $\lambda$ where $|\lambda|=d$, or
\item decreased by 1 otherwise.
\end{itemize}

\begin{definition}\label{def:walk}
     Fix $N\in \Z_{\ge 1}$ and a partition $\mu$ of length 1 or 2. For each sequence of monomials $P(t)=T(t)\cdots T(1)(1),\ t=1,2,\ldots,|\mu|$, such that $P(|\mu|)|_{\vec{x}=0}\ne 0$, let $E(t):[0,|\mu|]\rightarrow \Z_{\ge 0}$ be a c\`adl\`ag function defined by
        \begin{equation}
        E(t)=\operatorname{Deg}(P(\lfloor t\rfloor)).
        \end{equation}
        We refer to $E(t)$ as a walk. 
\end{definition}

If $\mu=(k)$, we regard the walk as an element of $L(k)$. If $\mu=(m,n)$, we regard it as an element of $\tilde{L}(m,n)$, with a distinguished splitting point at time $m$. If $\mu=(m+n)$, we regard it as an element of $L(m+n)$,  where no splitting point is part of the data.

While the walk records the total degree, we also need to keep track of the degree of each individual variable. Due to the actions of the operators $\tilde{d}^{(a)}_{i,j}$, each $\operatorname{Deg}_{i}(P(t))$ changes in certain nontrivial ways, which we record by a collection of \emph{blocks} $q_{i}(t)$. More precisely, we have the following. \begin{definition}\label{def:block}
        Fix $N\in \Z_{\ge 1}$ and a partition $\mu$ of length 1 or 2. For $\ell(\mu)=1$ we define $\mu_2=0$. For each sequence of monomials $P(t)=T(t)\cdots T(1)(1),\ t=1,2,\ldots,|\mu|$,  let $q_{j}(t):[0,|\mu|]\rightarrow \Z_{\ge 0}$ be a piecewise function, which is c\`adl\`ag on $[0,|\mu|]\setminus\{0,\mu_{1},...,|\mu|\}$, defined by
        \begin{equation}
            q_{j}(t)=\begin{cases}
            \operatorname{Deg}_{j}(P(\lfloor t\rfloor)),\quad &j\ge 3,\ t\in [0,|\mu|]\\
                \operatorname{Deg}_{j}(P(\lfloor t\rfloor)),\quad &j=1,\ t\in [0,|\mu|]\setminus [0,\mu_1]\\
                \operatorname{Deg}_{j}(P(\lfloor t\rfloor)),\quad &j=2,\ t\in [0,|\mu|]\setminus (\mu_{1},\mu_1+\mu_2]\\
                0,\quad &\text{otherwise}.
            \end{cases}
        \end{equation}
        Let $$q(t)=\sum_{j=1}^N q_{j}(t),\quad t\in [0,|\mu|].$$
         We call $\vec{q}(t)=(q_{1}(t),\ldots,q_{N}(t))$ the blocks of $\{P(t)\}_{t=0}^{|\mu|}$.
  \end{definition}

We call $(E,\vec q)$ the configuration of the monomial sequence. During the $i$th interval $\sum_{j<i}\mu_j<t\le\sum_{j\le i}\mu_j$, the main variable is $x_i$. The path records total degree, while the blocks record how this degree is distributed among variables that are not currently main. Recall that $P(E,\vec q)=D(E)\setminus J(E,\vec q)$ is the set of down-step times that are not block jumps. At each such time, the same decrease of total degree may be produced either by Type~III or by Type~V. Thus, after auxiliary labels are fixed, $(E,\vec q)$ together with the subset $t(\partial)\subset P(E,\vec q)$ of Type~V times determines the monomial branch; the complementary times in $P(E,\vec q)$ are Type~III times.

  The right discontinuities at the splitting point merely record the change of main variable. The left discontinuities caused by operator actions are of the following two types. If the discontinuity is given by an operator $\tilde{d}^{(a)}_{i,j}$, $q_{j}(t)$ increases by $a-1\ (a\ge 2)$ or decreases by $a\ (a\ge 1)$, and $E(t)$ decreases by 1. We call this a \emph{jump of $x_{j}$}. If the discontinuity is given by an operator $U^{(\lambda,1,k)}_{i}$ (or $U^{(\la,2,k)}_i$ resp.) for some $k\ne i$, we have $q_k(t)-q_k(t-)=\la_2$ (or $\la_1$ resp.). We call this a \emph{nontrivial up step}.

\begin{remark}
   For $i\ne 2$, the last jump of each $x_i$  must send $q_{i}(t)$ back to 0, and by Eqn.~(\ref{eq_jumpaction}), $E(t-)=q(t-)$.
\end{remark}

For a partition $\mu=(\mu_{1},\ldots,\mu_{\ell(\mu)})$ and indices
$\mathbf{i}=(i_{1},\ldots,i_{\ell(\mu)})$, let
$\mathscr{P}(\mu;\mathbf{i})$ denote the collection of all polynomial
sequences
\[
\vec{P}=\{P(t)\}_{t=0}^{|\mu|},
\qquad
P(0)=1,
\qquad
P(t)=T(t)\cdots T(1)(1),
\]
obtained from the operator expansion above, where the main variable is
$x_{i_{r}}$ for
\[
\mu_{1}+\cdots+\mu_{r-1}<t\le \mu_{1}+\cdots+\mu_{r}.
\]
In particular, introduce the three collections
\[
\mathscr{P}_{m,n}^{12}
    :=\mathscr{P}\big((m,n);(1,2)\big),
\qquad
\mathscr{P}_{m+n}^{1}
    :=\mathscr{P}\big((m+n);(1)\big),
\]
and
\[
\mathscr{P}_{m,n}^{\times}
    :=\mathscr{P}\big((m);(1)\big)
      \times\mathscr{P}\big((n);(1)\big).
\]
We regard these as disjoint collections. For an element in one of these
collections, define its signed covariance weight by
\[
\operatorname{wt}_{N}(\vec{P})
:=
\begin{cases}
\displaystyle
\frac{N(N-1)}{N^{m+n}}\,
P(m+n)\Big|_{\vec{x}=0},
& \vec{P}\in\mathscr{P}_{m,n}^{12},
\\[9pt]
\displaystyle
\frac{N}{N^{m+n}}\,
P(m+n)\Big|_{\vec{x}=0},
& \vec{P}\in\mathscr{P}_{m+n}^{1},
\\[9pt]
\displaystyle
-\frac{N^{2}}{N^{m+n}}\,
P^{(1)}(m)\Big|_{\vec{x}=0}\,
P^{(2)}(n)\Big|_{\vec{x}=0},
&
\vec{P}=(\vec{P}^{(1)},\vec{P}^{(2)})
\in\mathscr{P}_{m,n}^{\times}.
\end{cases}
\]

\begin{proposition}
\label{prop:covariance-weighted-expansion}
Let $\mathscr{P}_{m,n}^{>0}$ be the collection of all
$\vec{P}\in\mathscr{P}_{m,n}^{12}\sqcup\mathscr{P}_{m+n}^{1}$ such that

\begin{enumerate}[label=\textup{(\roman*)}]
    \item none of the operators occurring in $\vec{P}$ is of Type VI; and
    \item the associated walk-block configuration $(E,\vec{q})$ satisfies
    \[
    E(m)>0.
    \]
\end{enumerate}

Let $\mathscr{P}_{m,n}^{\mathrm{VI}}$ be the collection of all terms from
\[
\mathscr{P}_{m,n}^{12}
\sqcup
\mathscr{P}_{m+n}^{1}
\sqcup
\mathscr{P}_{m,n}^{\times}
\]
which contain at least one Type VI operator; in the product collection,
this means that at least one of the two operator sequences contains a
Type VI operator. Then
\begin{equation}\label{eq:covariance-weighted-expansion}
\Cov\big(\widetilde{p}_{m}(\vec{a}),
         \widetilde{p}_{n}(\vec{a})\big)
=
\sum_{\vec{P}\in\mathscr{P}_{m,n}^{>0}}
    \operatorname{wt}_{N}(\vec{P})
+
\sum_{\vec{P}\in\mathscr{P}_{m,n}^{\mathrm{VI}}}
    \operatorname{wt}_{N}(\vec{P}).
\end{equation}
\end{proposition}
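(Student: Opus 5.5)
The plan is to start from the exact identity \eqref{eq_covthroughdunkl}, expand each of its three terms into the operator sums over $\mathscr{P}_{m,n}^{12}$, $\mathscr{P}_{m+n}^{1}$ and $\mathscr{P}_{m,n}^{\times}$, and then show that all non-Type-VI terms with $E(m)=0$ cancel exactly against the non-Type-VI product terms.

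First, by the operator expansion displayed before the proposition (with $\mu=(m,n)$, indices $(1,2)$, and $\mu=(m+n)$, $\mu=(m)$, $\mu=(n)$), each of the three Dunkl expressions in \eqref{eq_covthroughdunkl} equals a finite sum of $P(|\mu|)|_{\vec x=0}$ over operator sequences. The same holds for the two factors of the product term, using $\widetilde{\mathcal P}_k=\sum_i(\D_i/N)^k$ and Proposition~\ref{prop:action_dunkl}. With the prefactors $N(N-1)$, $N$ and $-N^2$ and the $N^{-(m+n)}$ from the rescaling, this gives
\[
\Cov(\widetilde p_m,\widetilde p_n)=\sum_{\vec P\in \mathscr{P}_{m,n}^{12}\sqcup\mathscr{P}_{m+n}^{1}\sqcup\mathscr{P}_{m,n}^{\times}}\operatorname{wt}_N(\vec P).
\]
I split this sum into the Type VI terms, which form $\mathscr{P}_{m,n}^{\mathrm{VI}}$ and are kept as they are, and the remaining terms. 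In the latter every $P(t)$ is a monomial, so the walk $E$ is defined. Among the non-Type-VI terms of $\mathscr{P}_{m,n}^{12}\sqcup\mathscr{P}_{m+n}^{1}$, those with $E(m)>0$ form exactly $\mathscr{P}_{m,n}^{>0}$. Sequences with $P(m+n)|_{\vec x=0}=0$ contribute zero weight, so I may include or drop them freely.

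It remains to show that the non-Type-VI terms of $\mathscr{P}_{m,n}^{12}\sqcup\mathscr{P}_{m+n}^{1}$ with $E(m)=0$ cancel the non-Type-VI terms of $\mathscr{P}_{m,n}^{\times}$. If $E(m)=\operatorname{Deg}P(m)=0$, then $P(m)=c$ is a constant, namely $c=P(m)|_{\vec x=0}$. The available operators at each step depend only on the current main variable and act linearly, so the continuation $T(m+n)\cdots T(m+1)$ applied to $c$ equals $c$ times a sequence started from $1$.

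For $\mathscr{P}_{m+n}^{1}$ the continuation again has main variable $x_1$. Concatenation therefore gives a bijection between non-Type-VI sequences in $\mathscr{P}_{m+n}^{1}$ with $E(m)=0$ and pairs $(\vec P^{(1)},\vec P^{(2)})\in\mathscr{P}((m);(1))\times\mathscr{P}((n);(1))$ without Type VI. Under this bijection $P(m+n)|_0=P^{(1)}(m)|_0\,P^{(2)}(n)|_0$.

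For $\mathscr{P}_{m,n}^{12}$ the continuation has main variable $x_2$. Here I use the transposition $x_1\leftrightarrow x_2$. Since $F_{N,\theta}$ is symmetric, its coefficients $c_\lambda$ do not depend on the labeling. Conjugating by the transposition maps the operators $U_2^{(d)}$, $U_2^{(\lambda,j,k)}$, $\theta d_{2,j}$, $\theta\widetilde d^{(a)}_{2,j}$ and $\partial_2$ bijectively onto $U_1^{(d)}$, $U_1^{(\lambda,j,\sigma(k))}$, $\theta d_{1,\sigma(j)}$, $\theta\widetilde d^{(a)}_{1,\sigma(j)}$ and $\partial_1$. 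The evaluation at $\vec x=0$ is unchanged, and Type VI maps to Type VI. This yields the same kind of bijection with pairs in $\mathscr{P}((m);(1))\times\mathscr{P}((n);(1))$, again with multiplicative values.

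Adding the weights $N(N-1)/N^{m+n}$ and $N/N^{m+n}$ over these two images gives $N^2/N^{m+n}\cdot P^{(1)}(m)|_0P^{(2)}(n)|_0$. This is exactly minus the weight of the corresponding non-Type-VI pair in $\mathscr{P}_{m,n}^{\times}$. Every such pair occurs once in each image, since concatenation is invertible by cutting at time $m$, so the cancellation is complete and \eqref{eq:covariance-weighted-expansion} follows.

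The main obstacle is bookkeeping rather than analysis. One must check that concatenation really is a bijection onto all pairs, including pairs whose first factor already vanishes at $0$; those contribute zero on both sides, so they can be handled by including them uniformly. One must also check that the relabeling $x_1\leftrightarrow x_2$ is compatible with the grouped operators $U^{(\lambda,j,k)}$, whose index $k$ ranges over all variables including $1$ and $2$. I would verify both points directly on monomials, using the explicit formulas for the operators.
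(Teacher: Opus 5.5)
Your proposal is correct and is essentially the paper's argument: you expand the three terms of \eqref{eq_covthroughdunkl}, observe that a string with no Type~VI operator and $E(m)=0$ has constant $P(m)$ and therefore splits into two independent strings (using the $x_1\leftrightarrow x_2$ relabeling for the two-index term), and conclude from $N(N-1)+N-N^2=0$. One bookkeeping correction: concatenation is a bijection onto the pairs whose first string ends at a constant, not onto all pairs. The other pairs have zero product weight, and their concatenations lie in $\mathscr{P}_{m,n}^{>0}$, so they must be left out of the cancellation rather than ``included uniformly''.
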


\begin{proof}
Expanding each Dunkl action in Eqn.~\eqref{eq_covthroughdunkl} in terms of
the operators of Types I--VI gives
\begin{align*}
\Cov\big(\widetilde{p}_{m}(\vec{a}),
         \widetilde{p}_{n}(\vec{a})\big)
={}&
\frac{N(N-1)}{N^{m+n}}
\sum_{\vec{P}\in\mathscr{P}_{m,n}^{12}}
P(m+n)\Big|_{\vec{x}=0}
\\
&+
\frac{N}{N^{m+n}}
\sum_{\vec{P}\in\mathscr{P}_{m+n}^{1}}
P(m+n)\Big|_{\vec{x}=0}
-
\frac{N^{2}}{N^{m+n}}
\sum_{\substack{
\vec{P}^{(1)}\in\mathscr{P}((m);(1))\\
\vec{P}^{(2)}\in\mathscr{P}((n);(1))
}}
P^{(1)}(m)\Big|_{\vec{x}=0}\,
P^{(2)}(n)\Big|_{\vec{x}=0}.
\end{align*}

It remains to identify the cancellation of the terms without Type VI
operators whose associated walk satisfies $E(m)=0$. For such a term,
$P(m)$ is a monomial of total degree zero, and is therefore a constant.
Consequently, the operator sequence splits at time $m$ into two
independent sequences of lengths $m$ and $n$.

Conversely, every ordered pair of operator sequences of lengths $m$ and
$n$, neither of which contains a Type VI operator, gives both an element
of $\mathscr{P}_{m,n}^{12}$ with $E(m)=0$ and, after concatenation, an
element of $\mathscr{P}_{m+n}^{1}$ with $E(m)=0$. In the first case one
uses the symmetry-preserving relabeling of the main variable from
$x_{1}$ to $x_{2}$ in the second segment. The internal weights of these
three corresponding terms are identical, while their exterior
multiplicities satisfy
\[
N(N-1)+N-N^{2}=0.
\]
Hence all terms without Type VI operators and with $E(m)=0$ cancel
exactly in Eqn.~\eqref{eq_covthroughdunkl}. The remaining terms are
precisely the two collections appearing on the right-hand side of
Eqn.~\eqref{eq:covariance-weighted-expansion}.
\end{proof}

The first sum in Eqn.~\eqref{eq:covariance-weighted-expansion} is the
configuration contribution that will be analyzed below. The second sum
is retained for the moment as an error term. We will prove separately
that, under the assumptions of the corresponding fixed-temperature
analysis,
\[
\sum_{\vec{P}\in\mathscr{P}_{m,n}^{\mathrm{VI}}}
\operatorname{wt}_{N}(\vec{P})
\sim o(1),
\qquad N\to\infty.
\]
Thus, operator sequences containing a Type VI action do not contribute
to the limiting covariance.

\subsection{Fixed-temperature covariance in Theorem \ref{thm:fixedtempclt}}\label{sec:fixedtempcov}

We now analyze the three signed sums in Proposition~\ref{prop:covariance-weighted-expansion} under the fixed-temperature assumptions. Lemma~\ref{lem:typeII-typeVI-fixed} removes Type~VI and all but at most one grouped Type~II action. The contribution with no Type~II action yields the polynomial in first-order free cumulants; the contribution with one Type~II action yields the term involving one second-order free cumulant.

We assume that for all $d\in \Z_{\ge 1}$, $k_{1},k_{2}\in \Z_{\ge 1}$, as $N\rightarrow\infty$,
    \begin{equation}
        \begin{split}
            \frac{d}{N}c_{(d)}&\rightarrow \theta^{1-d}\kappa_{d};\\
            k_{1}k_{2}(1+\mathbf{1}_{k_1=k_2})c_{(k_{1},k_{2})}&\rightarrow \theta^{1-k_{1}-k_{2}}\kappa_{k_{1},k_{2}};\\
            c_{\lambda}&\rightarrow 0\quad\quad \text{for}\ \ell(\lambda)\ge 3.
        \end{split}
    \end{equation}

The first observation concerns scaling. In the fixed-temperature regime $\theta$ is a constant, so we only keep track of the degree of $N$. Each action of $\frac{\D_i}{N}$ gives $N^{-1}$. 

\begin{lemma}\label{lem:typeII-typeVI-fixed}
In the fixed-temperature regime, the following statements hold for the
operator expansion of the covariance in Eqn.~\eqref{eq_covthroughdunkl}.

\begin{enumerate}[label=\textup{(\alph*)}]
    \item The total weight of all operator strings containing at least one
    Type VI operator is \(o(1)\) as \(N\to\infty\).

    \item After excluding operator strings containing Type VI operators, the
    total weight of all strings containing at least two Type II operators is
    \(o(1)\) as \(N\to\infty\). Consequently, an operator string contributing
    to the limiting covariance contains at most one Type II operator. If such
    an operator occurs, then, at leading order, it is paired with exactly one
    Type V operator.
\end{enumerate}
\end{lemma}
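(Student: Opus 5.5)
The plan is an $N$-degree count for each operator string in the expansion of Proposition~\ref{prop:covariance-weighted-expansion}. Each string gets an upper bound on its $N$-degree, obtained by multiplying the contributions of its individual steps. The number of strings of fixed length $m+n$ grows at most polynomially in $N$, through the choices of auxiliary indices $j,k$ in Types II--IV. So the count must pair every source of $N$ with a step that produces it.

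\textbf{Bookkeeping.} The exterior prefactor is at most $N^{2-m-n}$. By \eqref{eq_cltfixed}, a Type I step with $U^{(d)}_i$ contributes $O(N)$, since $c_{(d)}=O(N)$. A Type II step contributes $O(1)$ from $c_{(\lambda_1,\lambda_2)}$. A Type VI step contributes $o(1)$ from $c_\lambda$. Types III and IV each carry $\theta$ and a free choice of $j$, but they give a nonzero contribution only if $x_j$ already carries positive degree. Type V contributes only the bounded factor $\mathrm{Deg}_i$.

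Summing over $j$ therefore costs at most the number of distinct variables currently present. A new variable can be created only by a Type II step $U^{(\lambda,r,k)}_i$ with $k\ne i$, which gives one free index $k$ (a factor $N$), or by a Type VI step, which gives up to $\ell(\lambda)-1$ such indices.

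\textbf{Matching up and down steps.} The walk $E$ has length $m+n$ and returns to $0$. Each up or flat step is a Type I, II or VI step. In a string with only Type I steps and no extra variables, the number of Type I steps minus the number of down steps compensates the prefactor $N^{2-m-n}$. This is the LLN-type computation of Theorem~\ref{thm:lln}, and there the leading order is $N^{2}\cdot N^{-1}$ after the $E(m)>0$ cancellation of Proposition~\ref{prop:covariance-weighted-expansion}.

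Replacing a Type I step by a Type II step loses a factor $N$ from the coefficient and gains at most one factor $N$ from the index $k$. However, the new variable $x_k$ must eventually have its degree removed. This happens either by a Type V step acting on it after it becomes main, which is only possible if $k\in\{1,2\}$ and hence kills the free sum over $k$, or by Type III/IV steps acting on $x_k$.

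I would show that the net effect of each Type II step is at most $O(1)$ relative to the first-order string. I would also show that a second Type II step costs an extra $N^{-1}$: the covariance already uses up the one ``free'' power that comes from the $N(N-1)+N-N^2$ cancellation, and two Type II steps would each need to be absorbed by the same variable $x_2$. This gives (b). For Type VI, the same count gives at most the same order as a Type II string times $c_\lambda\cdot N^{\ell(\lambda)-2}\cdot N^{-(\ell(\lambda)-2)}$, which is $o(1)$ because $c_\lambda\to0$. This gives (a).

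\textbf{Pairing with Type V and the main obstacle.} For the last clause, I would track the $k$-variable. At leading order we must have $k=2$ (or $k=1$) with no free index, so that the $O(1)$ coefficient survives. The degree placed on $x_2$ is then removed when $x_2$ becomes main, and the unique leading possibility is a single Type V step that uses the $\lambda_2$ factor. This is the configuration in cases (e) and (f).

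The main obstacle is making the heuristic ``one free index per created variable, each killed by the need to remove its degree'' rigorous. Type IV jumps can transfer degree between variables, so I would prove an inductive bound on the number of distinct variables with positive degree at each time. The bound to aim for is that the $N$-degree of any string is at most $\#\mathrm{I}-\#D+\#\{\text{variables created}\}-\#\{\text{variables whose degree is removed by Type III/IV without an $N$ return}\}$. I would try first to establish this bound in the form \[N\text{-degree}\le 0-\#\mathrm{II}+1,\] valid after the $E(m)>0$ reduction.
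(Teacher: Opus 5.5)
There is a genuine gap. You count string by string in the monomial expansion, summing only over the free labels. At that level the bounds you aim for are false: the smallness of Type II and Type VI contributions comes from cancellation between different choices of the index $k$ in $U_i^{(\lambda,r,k)}$, and no per-string count can see it.

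Two examples show this.
\begin{itemize}
\item Take $m=n=1$ and $\lambda=(1,1)$. The strings $U_1^{(\lambda,1,j)}$, $j\ge3$, followed by the closing jump $\theta\widetilde d^{(1)}_{2,j}$, have total weight $-N^{-2}N(N-1)(N-2)\,\theta c_{(1,1)}\sim-\theta c_{(1,1)}N$. This is cancelled only by the $k=2$ strings in which $x_2$ is removed by a Type III step, with $N-1$ choices. The sum over all $k$ is $O(1)$.
\item Take $m=n=2$. Two Type II steps that create fresh $x_j,x_{j'}$ (with $j\ne j'$, both $\ge3$) and are closed by two Type IV jumps after time $m$ form a family whose weight tends to $2\theta^2\lim c_{(1,1)}^2$. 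This is of order one, not $o(1)$, whenever $\kappa_{1,1}\ne0$.
\end{itemize}
So your target bound, $N$-degree at most $1-\#\mathrm{II}$, fails for individual strings with one and with two Type II steps. The heuristic that two Type II steps ``must be absorbed by the same variable $x_2$'' is not the mechanism.

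Your bookkeeping of Types III and IV is also incorrect. The operator $d_{i,j}$ is nonzero exactly when $m_i>m_j$, so typically when $x_j$ has degree $0$; this gives $N-O(1)$ choices. A first Type IV jump also creates a new variable with a free label. Hence Type II and VI are not the only sources of new variables, and the sum over $j$ is not bounded by the number of variables present. Under your rules, a string of Type I and Type III steps would get $N$-degree $2-|D(E)|$, whereas it actually has $N$-degree $2$.

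The missing idea is to keep $U_i^{(\lambda)}$ grouped, as the paper does. It introduces the $\ell(\lambda)-1$ factors $p_{\lambda_k}(\vec x)$. These are symmetric, so they pass unchanged through every $\frac{1-\sigma_{a,b}}{x_a-x_b}$, and they vanish at $\vec x=0$. Every nonzero term must therefore hit each of them with a partial derivative, that is, a Type V action, which costs $N^{-1}$ under the normalization $\D_i/N$. This is exactly where the cancellation over $k$ is encoded.

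The count is then:
\begin{itemize}
\item $N^{-1}c_\lambda$ per grouped action;
\item $N^{-1}$ per forced derivative;
\item $O(1)$ for every other normalized action, after summing over its auxiliary indices;
\item the exterior factor $O(N^2)$.
\end{itemize}
This gives $o(N^{2-\ell(\lambda)})$ for a Type VI action and $O(N^{2-2s})$ for $s$ Type II actions, with exactly one forced Type V action at leading order when $s=1$.

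This also corrects your last paragraph. The Type V action is forced by the power sum, not by $x_2$ becoming main. After expanding into monomials, both $k=2$ and a fresh $k\ge3$ survive at leading order (cases (e) and (f)). The case $k=1$ is subleading, because it needs an extra auxiliary jump.
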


\begin{proof}
The argument is carried out after grouping together all the monomial
terms arising from the same operator \(U_i^{(\lambda)}\). In particular,
when \(\ell(\lambda)=2\), we first write
\[
U_i^{(\lambda)}
=
\sum_{k=1}^{N}
\left(
U_i^{(\lambda,1,k)}
+
U_i^{(\lambda,2,k)}
\right).
\]
Thus, the assertions below concern the total weight of the corresponding
grouped terms, rather than each individual monomial sequence.

Let \(\lambda\) be a partition with
\[
r:=\ell(\lambda)\ge 2.
\]
Each summand in \(U_i^{(\lambda)}\) has the form
\[
c_{\lambda}\lambda_j x_i^{\lambda_j-1}
\prod_{\substack{1\le k\le r\\k\ne j}}
p_{\lambda_k}(\vec{x}),
\]
and hence introduces \(r-1\) power-sum factors.

After the action of \(U_i^{(\lambda)}\), split each subsequent Dunkl
operator into its partial-derivative part and its difference part. If
\(S(\vec{x})\) is any product of power sums, then \(S\) is symmetric and
therefore
\[
\frac{1-\sigma_{a,b}}{x_a-x_b}
\bigl(h(\vec{x})S(\vec{x})\bigr)
=
S(\vec{x})
\frac{1-\sigma_{a,b}}{x_a-x_b}h(\vec{x}).
\]
Consequently, the difference part of a subsequent Dunkl operator passes
all the power-sum factors through unchanged. On the other hand,
\[
\partial_a\bigl(h(\vec{x})S(\vec{x})\bigr)
=
\partial_a h(\vec{x})\,S(\vec{x})
+
h(\vec{x})\,\partial_a S(\vec{x}),
\]
and, in each term of the Leibniz expansion of
\(\partial_a S(\vec{x})\), the derivative acts on exactly one power-sum
factor.

Any power-sum factor that is never acted on by a partial derivative
remains present until the end and makes the resulting expression vanish
after setting \(\vec{x}=0\). Hence, for any branch of the monomial
expansion that gives a nonzero contribution at \(\vec{x}=0\), at least
\(r-1\) subsequent Dunkl operators must use their partial-derivative
parts to act on the \(r-1\) power-sum factors introduced by
\(U_i^{(\lambda)}\). After expanding into monomials, these derivative
actions are precisely Type V actions.

More generally, if an operator string contains
\[
U_{i_1}^{(\lambda^{(1)})},
\ldots,
U_{i_s}^{(\lambda^{(s)})},
\]
then every nonzero contribution contains at least
\[
\sum_{a=1}^{s}
\bigl(\ell(\lambda^{(a)})-1\bigr)
\]
Type V actions.

We now combine this observation with the fixed-temperature scaling.
Each occurrence of \(U_i^{(\lambda)}\) comes from an action of
\(\D_i/N\), and therefore carries an explicit factor \(N^{-1}\).
For \(\ell(\lambda)=2\), its coefficient is \(O(1)\), whereas for
\(\ell(\lambda)\ge 3\), its coefficient is \(o(1)\) by the assumptions
of Eqn.~\eqref{eq_cltfixed}. Each required Type V action contributes one
further factor \(O(N^{-1})\). All the remaining operator actions are of
order \(O(1)\) or smaller after the sums over their auxiliary indices
are taken into account, as described in the scaling discussion below.
Finally, the exterior multiplicative factors in
Eqn.~\eqref{eq_covthroughdunkl} are of order at most \(N^2\).

Suppose first that an operator string contains a Type VI operator
\(U_i^{(\lambda)}\), where \(r=\ell(\lambda)\ge 3\). This action,
together with the \(r-1\) Type V actions forced by the preceding
argument, contributes
\[
o(N^{-1})\,O(N^{-(r-1)})\sim o(N^{-r}).
\]
After multiplication by the exterior \(O(N^2)\) factor, its contribution
is
\[
o(N^{2-r})\sim o(1).
\]
Additional Type II, Type V, or Type VI actions can only decrease the
order. This proves part~\textup{(a)}.

Next, suppose that an operator string contains \(s\) Type II operators
and no Type VI operator. Each Type II operator introduces one power-sum
factor, so every nonzero contribution contains at least \(s\) Type V
actions. The Type II and the forced Type V actions therefore contribute
at most
\[
O(N^{-s})\,O(N^{-s})\sim O(N^{-2s}).
\]
After multiplication by the exterior \(O(N^2)\) factor, this is
\[
O(N^{2-2s}),
\]
which tends to zero whenever \(s\ge 2\). Thus, an operator string
contributing to the limiting covariance contains at most one Type II
operator.

If there is exactly one Type II operator, the preceding argument forces
at least one Type V action. One Type II action together with one Type V
action is of order \(O(N^{-2})\), and can therefore survive after
multiplication by the exterior \(N^2\) factor. Any additional Type V
action produces an extra factor \(N^{-1}\) and has vanishing
contribution. Hence, at leading order, the unique Type II operator is
paired with exactly one Type V operator. This proves
part~\textup{(b)}.
\end{proof}

We now record the scaling of the remaining operator types more explicitly.
\begin{itemize}
    \item By Eqn.~\eqref{eq_cltfixed}, the Type I action, which corresponds to a nonnegative increment of $E$, is of order $O(1)$.
    \item The Type III action corresponds to a ``canonical'' down step of $E$, that only decreases the height of the path by 1. There are $N-O(1)$ choices of the variable $x_j$ in $\frac{1-\sigma_{ij}}{x_i-x_j}$. So this is also of order $O(1)$.
   A Type~IV action itself carries $N^{-1}$. If it creates a block with a previously unused auxiliary label $x_j$, there are $N-O(1)$ choices of $j$; this free choice compensates for $N^{-1}$, and we call it a \emph{first jump}. Every later jump of the same block has only $O(1)$ possible labels and heights and therefore costs $O(N^{-1})$; we call it an \emph{auxiliary jump}. The same $O(N^{-1})$ cost applies to a first jump of the prescribed variable $x_2$ before time $m$, because its label is fixed rather than freely chosen.
    \item The Type V action by $\partial_i$ is of order $O(N^{-1})$.
\end{itemize}

By Lemma \ref{lem:typeII-typeVI-fixed}, in the remainder of Section \ref{sec:fixedtempcov}, we will only consider $\vec{P}\in\mathscr{P}_{m,n}^{\mathrm{>0}}$, defined in Proposition \ref{prop:covariance-weighted-expansion}.

\subsubsection{First-order asymptotics}\label{sec:firstorderasymp}

We first treat strings with no Type~II action. Every auxiliary jump and every Type~V action costs $N^{-1}$, whereas a first jump with a freely chosen label has net order $O(1)$. The exterior factor is $O(N^2)$ for the two-index term and $O(N)$ for the single-index term. Hence a leading two-index configuration has at most two costly actions, while a leading single-index configuration has at most one. Because Proposition~\ref{prop:covariance-weighted-expansion} retains only $E(m)>0$, a two-index configuration must contain an auxiliary jump that removes the positive $x_1$-degree after the splitting point; a single-index configuration has no analogous requirement.

From the discussion above, we classify the configurations that might contribute to the limiting covariance into the following groups.
Group~A consists of two-index configurations with one auxiliary jump and no Type~V action, together with single-index configurations with no costly action and no Type~V action. 

Group~B consists of two-index configurations with two auxiliary jumps and no Type~V action, together with single-index configurations with one auxiliary jump and no Type~V action.

Group~C consists of two-index configurations with one auxiliary jump and one Type~V action, together with single-index configurations with one Type~V action and no auxiliary jump. 

Groups~A and B cancel completely. In Group~C, some terms cancel with each other, while the remaining configurations give $I^{(1)}_{1}(E,\vec{q})$.

\subsubsection{Enumeration of first-order configurations}\label{sec:enumeration}
We now enumerate the configurations allowed by this cost bound. In the list, \emph{jump} means every discontinuity of a block, including its first jump; only the jumps classified above as auxiliary cost $N^{-1}$. Up to relabeling variables $x_i$ with $i\ge3$, the following twelve block patterns are the possibilities.

(i) $E\in L(m+n)$, no jump;

(ii) $E\in L(m,n)$, and there is one jump of $x_1$;

(iii) $E\in L(m,n)$, and there is one jump of $x_2$;

(iv) $E\in L(m,n)$, and there are two jumps of $x_i\ (i\ge 3)$.

It will be shown that the above are all the cases that contribute to the limiting covariance. The contributions of the following cases vanish in the limit after certain weight cancellations.

(v) $E\in L(m+n)$, and there are two jumps of $x_i\ (i\ge 2)$;

(vi) $E\in L(m,n)$, and there is one jump of $x_1$ and one jump of $x_2$;

(vii) $E\in L(m,n)$, and there is one jump of $x_1$ and two jumps of $x_i\ (i\ge 3)$;

(viii) $E\in L(m,n)$, and there is one jump of $x_2$ and two jumps of $x_i\ (i\ge 3)$;

(ix) $E\in L(m,n)$, and there are two jumps of $x_1$;

(x) $E\in L(m,n)$, and there are two jumps of $x_2$;

(xi) $E\in L(m,n)$, and there are three jumps of $x_i\ (i\ge 3)$;

(xii) $E\in L(m,n)$, and there are two jumps of $x_i$ and two jumps of $x_j\ (3\le i<j)$.

\begin{definition}\label{def:equiv}
For $E_1,E_2\in L(m,n)\cup L(m+n)$, write $E_1\sim E_2$ when they coincide as functions on $[0,m+n]$. Thus the equivalence forgets whether the common path is produced by the two-index action with a splitting point at $m$ or by the single-index action.
\end{definition}

{Next, we further classify the configurations in the above 12 groups as a collection of disjoint pairs and three triples, such that the Lukasiewicz paths in the same pair/triple are equivalent.}

For brevity, we say a jump is before time $m$, if its time of occurrence is among $t=1,2,...,m$, and a jump is after time $m$ if its time of occurrence is among $m+1,m+2,...,m+n$.
For each pair/triple, there is at least one configuration $(E,\vec{q})$ in it such that $E\in L(m,n)$, and $\vec{q}$ has a jump of $x_1$ or $x_2$. We start with this configuration and transform it into the other elements in the pair/triple, under the following rules:

\medskip
\noindent{\textbf{I.}} If $(E,\vec{q})$ has a unique jump of $x_1$ after time $m$, then replace $E$ by the equivalent path in $L(m+n)$, and remove that jump.

\medskip
\noindent{\textbf{II.}} If $(E,\vec{q})$ has a jump of $x_2$ before time $m$, then replace all the jumps of $x_2$ by the jumps of $x_j$ for some $j\ge 3$. If $q_{j}(t)\equiv 0$ in $(E,\vec{q})$, add a last jump of $x_j$ that sends $\deg(x_{j})$ back to 0.

\medskip
\noindent{\textbf{III.}} If $(E,\vec{q})$ has two jumps of $x_1$ after time $m$, then replace $E$ by the equivalent path in $L(m+n)$, and replace these two jumps by jumps of $x_i$ for some $i\ge 3$.

Each rule is reversible after the common path and the jump times and heights are fixed. Rule~I changes whether the block crossing the splitting point is carried by $x_1$ or absorbed into the single-index realization. Rule~II replaces the prescribed variable $x_2$ before the splitting point by a freely labeled auxiliary block. Rule~III performs the analogous replacement for a two-jump $x_1$-block after the splitting point. These are the only path-preserving changes that alter the exterior multiplicity or the sign, so they organize the cancellations below.

\medskip

More precisely, the following pairs use Rule~\textbf{I}.

(1.1) $E_{1}\in L(m,n)$, and $q_{1}(t)=C$ on $(m,\tau)$.

(1.2) $E_{2}\in L(m+n)$, no jump.

\WBPair
{%
    \WBBlock{\WBxM}{\WBxTau}{0}{\WBC}{x_1}
}
{%
    \def\WBRightDomainLabel{\mathcal D_1}
}
{\textup{(1.1)}}
{\textup{(1.2)}}

\vspace{10pt}

(2.1) $E_{1}\in L(m,n)$. There is one jump of $x_{i}$ before time $m$, another jump of $x_{i}$ at time $t_{1}>\tau$, for some $i\ge 3$, and one jump of $x_{1}$ at time $t_{2}\in [m+1,t_{1}-1]$.

(2.2) $E_{2}\in L(m+n)$. There is one jump of $x_{i}$ before time $m$, and another jump of $x_{i}$ at time $t_{1}>\tau$, for some $i\ge 2$.

Moreover\footnote{This implies that $E(m)-q(m)>0$, in contrast to (5.2).}, $q^{(1)}_{i}(t)=q^{(2)}_{i}(t)$ for $t\in [0,m+n]$.

\WBPair
{%
    \WBBlockXiRight{\WBxA}{\WBxPost}{0}{\WBLow}

    \WBBlock{\WBxM}{\WBxTau}{\WBLow}{\WBC}{x_1}
}
{%
    \def\WBRightDomainLabel{\mathcal D_1}
    \WBBlockXiRight{\WBxA}{\WBxPost}{0}{\WBLow}
}
{\textup{(2.1)}}
{\textup{(2.2)}}

\vspace{10pt}

(3.1) $E_{1}\in L(m,n)$. There are two jumps of $x_{i}$ before time $m$, for some $i\ge 2$, and one jump of $x_{1}$ at time $\tau$.

(3.2) $E_{2}\in L(m+n)$. There are two jumps of $x_{i}$ before time $m$, for some $i\ge 2$.

Moreover, $q^{(1)}_{i}(t)=q^{(2)}_{i}(t)$ for $t\in [0,m+n]$.

\WBPair
{%
    \WBBlock{\WBxA}{\WBxB}{0}{\WBPreTop}{x_i}

    \WBBlock{\WBxM}{\WBxTau}{0}{\WBC}{x_1}
}
{%
    \def\WBRightDomainLabel{\mathcal D_1}
    \WBBlock{\WBxA}{\WBxB}{0}{\WBPreTop}{x_i}
}
{\textup{(3.1)}}
{\textup{(3.2)}}

\vspace{10pt}

(4.1) $E_{1}\in L(m,n)$. There are two jumps of $x_{i}$ at times $t_1,t_2$ satisfying either $m<t_1<t_2<\tau$ or $\tau<t_1<t_2\le m+n$, for some $i\ge 3$, and one jump of $x_{1}$ at time $\tau$. Moreover,
$q^{(1)}(t)=q^{(2)}(t)$ on $[t_{1},t_{2})$.

(4.2) $E_{2}\in L(m+n)$. There are two jumps of $x_{i}$ at times $t_1,t_2$ satisfying either $m<t_1<t_2<\tau$ or $\tau<t_1<t_2\le m+n$, for some $i\ge 2$.

\WBPair
{%
    \WBBlock{\WBxM}{\WBxTau}
        {0}{\WBC}{x_1}

    \WBBlock{\WBxTOne}{\WBxTTwo}
        {\WBC}{\WBMidTop}{x_i}
}
{%
    \def\WBRightDomainLabel{\mathcal D_1}
    \WBBlock{\WBxTOne}{\WBxTTwo}
        {0}{\WBMidTop}{x_i}
}
{\textup{(4.1)}}
{\textup{(4.2)}}

\medskip

{The following configurations use Rule~\textbf{II}.}

(5.1) $E_{1}\in L(m,n)$, there is one jump of $x_{2}$ before time $m$.

(5.2) $E_{2}\in L(m,n)$, there is one jump of $x_{i}$ before time $m$, and another jump of $x_{i}$ after time $m$, for some $i\ge 3$.

{(5.3) $E_{3}\in L(m+n)$. For some $i\ge2$, there is one jump of $x_i$ before time $m$, and its closing jump occurs at time $\tau$. The block has height $C=E(m)$ between these two jumps.

More precisely, the block functions in all three configurations agree before the change of main variable:
\[
q_2^{(1)}(t)=q_i^{(2)}(t)=q_i^{(3)}(t),\qquad 0\le t\le m.
\]
They vanish before the common first-jump time and equal $C$ thereafter until $m$. In (5.2) and (5.3), the auxiliary block continues with height $C$ until it closes at $\tau$.}

{
\WBTriple
{%
    \WBBlock{\WBxA}{\WBxM}{0}{\WBC}{x_2}
}
{%
    \WBBlockXiRight{\WBxA}{\WBxTau}{0}{\WBC}
}
{%
    \def\WBRightDomainLabel{\mathcal D_1}
    \WBBlockXiRight{\WBxA}{\WBxTau}{0}{\WBC}
}
{\textup{(5.1)}}
{\textup{(5.2)}}
{\textup{(5.3)}}
}

\vspace{10pt}

(6.1) $E_{1}\in L(m,n)$. There is one jump of $x_{2}$ before time $m$ and one jump of $x_{1}$ after time $m$.

(6.2) $E_{2}\in L(m,n)$. There is one jump of $x_{i}$ before time $m$, another jump of $x_{i}$ after time $t_{1}>m$, for some $i\ge 3$, and one jump of $x_1$ at time $t_{2}>t_{1}$.

Moreover, $q^{(1)}_{2}(t)=q^{(2)}_{i}(t)$ for $t\in [0,m)$.

\WBPair
{%
    \WBBlock{\WBxA}{\WBxM}
        {0}{\WBComp}{x_2}

    \WBBlock{\WBxM}{\WBxPost}
        {0}{\WBLow}{x_1}
}
{%
    \WBBlock{\WBxA}{\WBxM}
        {0}{\WBComp}{x_i}

    \WBBlock{\WBxM}{\WBxPost}
        {0}{\WBLow}{x_1}

    \WBBlock{\WBxM}{\WBxTau}
        {\WBLow}{\WBC}{x_i}
}
{\textup{(6.1)}}
{\textup{(6.2)}}

\vspace{10pt}

(7.1) $E_{1}\in L(m,n)$. There are two jumps of $x_{i}$ at times $t_1,t_2$ satisfying either $1\le t_1<t_2\le m$ or $\tau<t_1<t_2\le m+n$, for some $i\ge 3$, and one jump of $x_{2}$ before time $m$.

(7.2) $E_{2}\in L(m,n)$. There are two jumps of $x_{i}$ at times $t_1,t_2$ satisfying either $1\le t_1<t_2\le m$ or $\tau<t_1<t_2\le m+n$, for some $i\ge 3$, one jump of $x_{j}$ before time $m$, and another jump of $x_{j}$ after time $m$, for some $j\ne 1,2,i$.

Moreover, $q^{(1)}_{i}(t)=q^{(2)}_{i}(t)$ for $t\in [0,m+n]$.

\WBPair
{%
    \WBBlock{\WBxA}{\WBxM}
        {0}{\WBC}{x_2}

    \WBBlock{\WBxAfterTau}{\WBxPost}
        {0}{\WBLow}{x_i}
}
{%
    \WBBlock{\WBxA}{\WBxTau}
        {0}{\WBC}{x_j}

    \WBBlock{\WBxAfterTau}{\WBxPost}
        {0}{\WBLow}{x_i}
}
{\textup{(7.1)}}
{\textup{(7.2)}}

\vspace{10pt}

(8.1) $E_{1}\in L(m,n)$. There is one jump of $x_{2}$ before time $m$, and two jumps of $x_{i}$ after time $m$ for some $i\ge 3$, where the first jump of $x_{i}$ occurs at time $\tau$.

(8.2) $E_{2}\in L(m,n)$. There is one jump of $x_{i}$ before time $m$, and another two jumps of $x_{i}$ after time $m$ for some $i\ge 3$, where the second jump occurs at time $\tau$.

Moreover, $q^{(1)}_{2}(t)=q^{(2)}_{i}(t)$ for $t\in [0,m)$, and $q^{(1)}_{i}(t)=q^{(2)}_{i}(t)$ for $t\in [\tau,m+n]$.

\WBPair
{%
    \WBBlock{\WBxA}{\WBxM}
        {0}{\WBC}{x_2}

    \WBBlock{\WBxTau}{\WBxPost}
        {0}{\WBLow}{x_i}
}
{%
    \WBPolyBlockText
    {(%
      \WBxA,0) --
      (\WBxA,\WBC) --
      (\WBxTau,\WBC) --
      (\WBxTau,\WBLow) --
      (\WBxPost,\WBLow) --
      (\WBxPost,0) -- cycle}
    {4.15}{1.04}{x_i}
}
{\textup{(8.1)}}
{\textup{(8.2)}}

\vspace{10pt}

(9.1) $E_{1}\in L(m,n)$. There are two jumps of $x_{2}$ before time $m$.

(9.2) $E_{2}\in L(m,n)$. There are two jumps of $x_{i}$ before time $m$, and another jump of $x_{i}$ after time $m$, for some $i\ge 3$.

Moreover, $q^{(1)}_{2}(t)=q_{i}^{(2)}(t)$ on $[0,m)$.

\WBPair
{%
    \WBPolyBlockText
    {(%
      \WBxA,0) --
      (\WBxA,\WBLow) --
      (\WBxB,\WBLow) --
      (\WBxB,\WBC) --
      (\WBxM,\WBC) --
      (\WBxM,0) -- cycle}
    {2.95}{1.04}{x_2}
}
{%
    \WBPolyBlockText
    {(%
      \WBxA,0) --
      (\WBxA,\WBLow) --
      (\WBxB,\WBLow) --
      (\WBxB,\WBC) --
      (\WBxTau,\WBC) --
      (\WBxTau,0) -- cycle}
    {4.15}{1.04}{x_i}
}
{\textup{(9.1)}}
{\textup{(9.2)}}

\vspace{10pt}

(10.1) $E_{1}\in L(m,n)$. There is one jump of $x_{2}$ before time $m$, one jump of $x_{i}$ before time $m$, and another jump of $x_{i}$ after time $m$, for some $i\ge 3$.

(10.2) $E_{2}\in L(m,n)$. There is one jump of $x_{j}$ before time $m$, another jump of $x_{j}$ after time $m$, for some $j\ge 3$, and one jump of $x_{i}$ before time $m$, another jump of $x_{i}$ after time $m$, for some $i\ne 1,2,j$.

Moreover, $q^{(1)}(t)=q^{(2)}(t)$ for $t\in [0,m)$.

\WBPair
{%
    \WBBlock{\WBxA}{\WBxM}
        {0}{\WBComp}{x_2}

    \WBBlock{\WBxB}{\WBxM}
        {\WBComp}{\WBC}{x_i}

    \WBBlock{\WBxM}{\WBxPost}
        {0}{\WBLow}{x_i}
}
{%
    \WBBlock{\WBxA}{\WBxTau}
        {0}{\WBComp}{x_j}

    \WBPolyBlockText
    {(%
      \WBxB,\WBComp) --
      (\WBxB,\WBC) --
      (\WBxTau,\WBC) --
      (\WBxTau,\WBLow) --
      (\WBxPost,\WBLow) --
      (\WBxPost,0) --
      (\WBxTau,0) --
      (\WBxTau,\WBComp) -- cycle}
    {4.15}{1.46}{x_i}
}
{\textup{(10.1)}}
{\textup{(10.2)}}

\vspace{10pt}

(11.1) $E_{1}\in L(m,n)$. There is one jump of $x_{2}$ before time $m$, and two jumps of $x_{i}$ at times $t_{1},t_{2}\in [m+1,\tau-1]$, for some $i\ge 3$.

(11.2) $E_{2}\in L(m,n)$. There is one jump of $x_{j}$ before time $m$, two jumps of $x_{i}$ at times $t_{1},t_{2}\in [m+1,\tau-1]$, for some $i\ge 3$, and another jump of $x_{j}$ at time $\tau$, for some $j\ne 1,2,i$.

Moreover, $q^{(1)}(t)=q^{(2)}(t)$ on $[0,m)\cup[t_{1},t_{2})$.

\WBPair
{%
    \WBBlock{\WBxA}{\WBxM}
        {0}{\WBC}{x_2}

    \WBBlock{\WBxTOne}{\WBxTTwo}
        {0}{\WBMidTop}{x_i}
}
{%
    \WBBlock{\WBxA}{\WBxTau}
        {0}{\WBC}{x_j}

    \WBBlock{\WBxTOne}{\WBxTTwo}
        {\WBC}{\WBMidTop}{x_i}
}
{\textup{(11.1)}}
{\textup{(11.2)}}

\vspace{10pt}

{(12.1) $E_{1}\in L(m,n)$. There is one jump of $x_{2}$ before time $m$, one jump of $x_{i}$ at time $t_{1}\in [m+1,\tau-1]$, and another jump of $x_{i}$ at time $t_{2}\in [\tau,m+n]$, for some $i\ge 3$.}

{(12.2) $E_{2}\in L(m,n)$. There is one jump of $x_{i}$ before time $m$, one jump of $x_{i}$ at time $t_{1}\in [m+1,\tau-1]$, and its closing jump at time $t_{2}$, where $t_1<t_2\le m+n$, for some $i\ge 3$.}

{(12.3) $E_{3}\in L(m,n)$. There is one jump of $x_{i}$ before time $m$, its closing jump at time $s_{1}$, and one jump of $x_{j}$ at time $t_{1}\in [m+1,\tau-1]$, followed by its closing jump at time $t_{2}$, where $m<t_{1}<s_{1}<t_{2}\le m+n$, for some $i\ge 3$, $j\ne 1,2,i$.}

{In (12.2) and (12.3), the last closing time may lie on either side of $\tau$. The restriction $t_2\ge\tau$ in (12.1) avoids overlap with (11.1).}

The index in the third configuration is $i$, not $j$:
\[
q^{(1)}_2(t)=q^{(2)}_i(t)=q^{(3)}_i(t),\qquad 0\le t<m.
\]
The $x_j$-block in (12.3) is created only after the splitting point, at time $t_1$.

\WBTriple
{%
    \WBBlock{\WBxA}{\WBxM}
        {0}{\WBC}{x_2}

    \WBBlock{\WBxTOne}{\WBxTail}
        {0}{\WBSmall}{x_i}

    \WBMark{\WBxTOne}{t_1}
    \WBMark{\WBxTail}{t_2}
}
{%
    \WBPolyBlockText
    {(%
      \WBxA,0) --
      (\WBxA,\WBC) --
      (\WBxTOne,\WBC) --
      (\WBxTOne,\WBSmall) --
      (\WBxTail,\WBSmall) --
      (\WBxTail,0) -- cycle}
    {3.55}{1.10}{x_i}

    \WBMark{\WBxTOne}{t_1}
    \WBMark{\WBxTail}{t_2}
}
{%
    \WBBlock{\WBxA}{\WBxTTwo}
        {0}{\WBC}{x_i}

    \WBBlock{\WBxTOne}{\WBxTTwo}
        {\WBC}{\WBMidTop}{x_j}

    \WBBlock{\WBxTTwo}{\WBxTail}
        {0}{\WBSmall}{x_j}

    \WBMark{\WBxTOne}{t_1}
    \WBMark{\WBxTTwo}{s_1}
    \WBMark{\WBxTail}{t_2}
}
{\textup{(12.1)}}
{\textup{(12.2)}}
{\textup{(12.3)}}

\medskip

The following pair uses Rule~\textbf{III}.

(13.1) $E_{1}\in L(m,n)$. There are two jumps of $x_{1}$ after time $m$, and the first one  occurs at time $\tau$.

(13.2) $E_{2}\in L(m+n)$. There are two jumps of $x_{i}$ after time $m$ for some $i\ge 2$, and the first one occurs at time $\tau$.

Moreover, $q_{1}^{(1)}(t)=q^{(2)}_{i}(t)$ for $t\in [\tau,m+n]$.

\WBPair
{%
    \WBPolyBlockText
    {(%
      \WBxM,0) --
      (\WBxM,\WBC) --
      (\WBxTau,\WBC) --
      (\WBxTau,\WBLow) --
      (\WBxPost,\WBLow) --
      (\WBxPost,0) -- cycle}
    {4.95}{1.10}{x_1}
}
{%
    \def\WBRightDomainLabel{\mathcal D_1}
    \WBBlock{\WBxTau}{\WBxPost}
        {0}{\WBLow}{x_i}
}
{\textup{(13.1)}}
{\textup{(13.2)}}

\vspace{10pt}

Finally, the following triple combines Rules~\textbf{I} and~\textbf{III}.

{(14.1) $E_{1}\in L(m,n)$. There is one jump of $x_{1}$ at time $s_{1}$, and one jump of $x_{i}$ at time $t_{1}\in [m+1,\tau-1]$, followed by its closing jump at time $s_{2}$, where $m<t_{1}<s_{1}<s_{2}\le m+n$, for some $i\ge 3$.}

{(14.2) $E_{2}\in L(m+n)$. There is one jump of $x_{i}$ at time $t_{1}\in [m+1,\tau-1]$, and its closing jump at time $t_{2}\in [\tau,m+n]$, for some $i\ge 2$.}

{(14.3) $E_{3}\in L(m,n)$. There is one jump of $x_{1}$ at time $t_{1}\in [m+1,\tau-1]$, and its closing jump at time $t_{2}$, where $t_1<t_2\le m+n$.}

{The last closing time is unrestricted relative to $\tau$ in (14.1) and (14.3). In (14.2), the restriction $t_2\ge\tau$ avoids overlap with (4.2).}

No block is present before the splitting point in any of the three configurations:
\[
q^{(1)}(t)=q^{(2)}(t)=q^{(3)}(t)=0,
\qquad 0\le t<m.
\]

\WBTriple
{%
    \WBBlock{\WBxM}{\WBxTTwo}
        {0}{\WBC}{x_1}

    \WBBlock{\WBxTOne}{\WBxTTwo}
        {\WBC}{\WBMidTop}{x_i}

    \WBBlock{\WBxTTwo}{\WBxTail}
        {0}{\WBSmall}{x_i}

    \WBMark{\WBxTOne}{t_1}
    \WBMark{\WBxTTwo}{s_1}
    \WBMark{\WBxTail}{s_2}
}
{%
    \def\WBRightDomainLabel{\mathcal D_1}
    \WBBlock{\WBxTOne}{\WBxTail}
        {0}{\WBSmall}{x_i}

    \WBMark{\WBxTOne}{t_1}
    \WBMark{\WBxTail}{t_2}
}
{%
    \WBPolyBlockText
    {(%
      \WBxM,0) --
      (\WBxM,\WBC) --
      (\WBxTOne,\WBC) --
      (\WBxTOne,\WBSmall) --
      (\WBxTail,\WBSmall) --
      (\WBxTail,0) -- cycle}
    {4.15}{1.04}{x_1}

    \WBMark{\WBxTOne}{t_1}
    \WBMark{\WBxTail}{t_2}
}
{\textup{(14.1)}}
{\textup{(14.2)}}
{\textup{(14.3)}}

\vspace{10pt}
{These eleven pairs and three triples exhaust the cost-admissible configurations. First record whether a prescribed variable $x_1$ or $x_2$ carries a block across the splitting point, and then whether there are zero, one, or two freely labeled auxiliary blocks. For each active block, nonnegativity and the requirement that the terminal monomial be constant force its closing jump and height. Pair~1 realizes cases~(i)--(ii). Triple~5 contains cases~(iii)--(iv), together with the boundary case of (v) whose block crosses $m$ and closes at $\tau$. The remaining entries subdivide cases~(v)--(xii) according to their jump order, including the closing times on either side of $\tau$ specified in triples~12 and~14.}

\subsubsection{Cancellation of first-order weights}\label{sec:cancellation1}
In this subsection, we prove the following lemma.

\begin{lemma}
\begin{enumerate}[label=\textup{(\alph*)}]
\item The total weight of the configurations in pairs~2--4, 6--11, and~13 is of order $o(1)$.
\item {The total weight of the zero-Type~V configurations in triples~5 and~12 is of order $o(1)$.}
\item The total weight of the zero-Type~V configurations in triple~14 and pair~1 is of order $o(1)$.
\end{enumerate}
\end{lemma}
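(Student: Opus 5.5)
The plan is to compare, for each pair or triple, the weights of the configurations term by term. Within a pair or triple the path $E$, the jump times and the jump heights are held fixed, and so is the subset $t(\partial)\subset P(E,\vec q)$ of Type~V times; in parts (b) and (c) this subset is empty. Each weight $\operatorname{wt}_N(\vec P)$ then factors into three pieces:
\begin{enumerate}[label=\textup{(\arabic*)}]
\item a common \emph{internal weight}, namely $\prod\kappa_{|B_i|}$ coming from the Type~I steps together with the factors $\theta(N-O(1))$ from the Type~III steps;
\item the $\theta$'s and the signs attached to the Type~IV jumps; by Eqn.~\eqref{eq_jumpaction}, each negative jump contributes $-1$;
\item an \emph{exterior multiplicity}: $N(N-1)$ or $N$, times the number of admissible labels for each freely labeled auxiliary block, divided by $N^{m+n}$.
\end{enumerate}
Rules~I--III are reversible on this fixed data, so summing over a pair or triple reduces to summing these exterior factors with signs. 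The claim is that this signed sum vanishes at the leading order $N^{2}\cdot N^{-(\#\text{costly actions})}$, which makes the total $o(1)$.

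\textbf{Key steps.}
\begin{enumerate}[label=\textup{(\arabic*)}]
\item \emph{Rule~I and the Type~III down-steps.} Check that Rule~I (pairs 1--4 and triple 14) removes exactly one Type~IV jump of $x_1$ with height $-C'$. In the two-index realization this jump carries the factor $-\theta$, while the single-index realization has one fewer factor of $N$ but treats that same time as a Type~III down-step. That step carries $\theta(N-O(1))$, and $O(1)$ of its labels coincide with active blocks.

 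Comparing $N(N-1)\cdot(-\theta)\cdot N^{-1}$-type factors against $N\cdot\theta\cdot(N-O(1))\cdot N^{-1}$-type factors, the leading terms cancel. What survives is $O(N^{-1})$ relative to the leading order. The essential point here is the sign convention $h<0\Rightarrow$ factor $-1$ in $\tilde d^{(a)}$.
\item \emph{Rule~II.} For pairs 6--11 and triples 5, 12, a prescribed $x_2$ block before $m$ (label fixed, cost $N^{-1}$) is traded for a free $x_j$ block with $N-O(1)$ labels plus an extra closing jump after $m$. That extra jump is negative, costs $N^{-1}$, and comes with a sign flip.

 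The free label gives $N-O(1)$, and its $-O(1)$ correction comes from excluded labels. Hence $\text{(fixed)}+\text{(free)}=N^{-1}\bigl(1-(N-O(1))N^{-1}\bigr)\cdot(\dots)=O(N^{-2})(\dots)$.

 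For the triples, the third member (5.3), resp.\ (12.3), has the closing jump at $\tau$ or $s_1$ realized in $L(m+n)$, resp.\ with a reordered closing. The leading terms of the three members satisfy the same identity $N(N-1)+N-N^2=0$ as in Proposition~\ref{prop:covariance-weighted-expansion}.
\item \emph{Rule~III.} For pair 13 and the Rule~III part of 14, argue as in Step~(1), with two $x_1$ jumps replaced by a free two-jump block.
\item \emph{Leftover terms.} In every pair, the leftover after cancellation has one extra factor $N^{-1}$ beyond the admissible cost budget (at most two costly actions against $N^2$). Each family has $O(1)$ members for fixed $m,n$, so summing over them gives $o(1)$.
\end{enumerate}

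The main obstacle is verifying that the admissible jump heights match exactly between the members of each pair or triple. The constraints $|h|\le|E(t-)-q(t-)-q_i(t-)-\tfrac12(\operatorname{sgn}h+1)|$ involve $q(t-)$, which differs between realizations when a block is relabeled or absorbed. This is delicate at the splitting time $m$ and at $\tau$, where the closing heights are forced; this is why triples 5, 12 and 14 need three members and the boundary restrictions $t_2\ge\tau$.

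I would first handle pair 1 and triple 5 completely, tracking $q(t-)$ at $\tau$. The footnote to (2.1), which notes $E(m)-q(m)>0$, suggests that the mismatched boundary case is precisely the one absorbed into the third member. I would then reduce the remaining pairs to these two by noting that the additional auxiliary blocks in pairs 2--4 and 6--12 are disjoint in time from the relabeled block, or sit strictly above it. In that situation their heights and labels contribute identical factors on both sides.
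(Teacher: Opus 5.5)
\textbf{Part (a).} Your argument for part (a) is essentially the paper's: pairs 2--4, 6--11 and 13 consist of Group~B terms of normalized order $O(1)$ with opposite signs, and their index products agree up to a relative $O(N^{-1})$.

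\textbf{Parts (b) and (c): the gap.} The problem is the orders. Pair~1 and the members (5.1)--(5.2) are Group~A, not Group~B. A two-index term with a single costly action against $N(N-1)$, or a single-index term with none against $N$, has weight of order $N$. Cancelling the leading order inside such a family therefore leaves an $O(1)$ remainder, not the $o(1)$ your step~(4) asserts, and this remainder is genuinely nonzero. On $(m,\tau)$ the main variable in (1.2) and (5.1) carries the whole degree, so each canonical down step has $N-1$ Type~III labels. In (1.1), (5.2) and (5.3), a block of height $C=E(m)$ coexists with main degree $r(t)=E(t-1)-C$, and the count drops to $N-1-\mathbf 1_{\{r(t)\le C\}}$. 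Consequently pair~1 and triple~5 each leave $\sum_{t\in D(E),\,m<t<\tau}\mathbf 1_{\{r(t)\le C\}}$ times the common bounded factor. For triple~5 the exterior factors do match exactly, but through $N(N-1)(N-2)+N(N-1)=N(N-1)^2$ against the factor $N(N-1)\cdot(N-1)$ of (5.1), which includes its canonical step at $\tau$. The identity $N(N-1)+N-N^2=0$ plays no role, since no product term occurs when $E(m)=C>0$.

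\textbf{The missing idea.} This order-one defect is cancelled across families, which is exactly why the lemma groups triple~5 with triple~12 and pair~1 with triple~14. Fix the first-jump time $t\in(m,\tau)$ of the block born after the splitting point, and count the admissible heights with their signs in each member. You get $-C$ for (12.1) and (14.2). You get $C-r$ if $r\le C$, or $-(r-C-1)$ if $r>C$, for (12.2) and (14.3). You get $r-1$ for (12.3) and (14.1). These sum to $-\mathbf 1_{\{r\le C\}}$, which after summing over $t$ exactly offsets the Group~A remainder.

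\textbf{Where your plan breaks.} You assume the jump heights are held fixed and match across the members of a triple, and you plan to reduce triples 12 and 14 to pair~1 and triple~5 by letting extra blocks contribute identical factors. Both steps fail at precisely this point: the members have different numbers of admissible heights, and that mismatch is the whole content of parts (b) and (c).
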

\begin{remark}
{The Group~A patterns are (1.1)--(1.2) and (5.1)--(5.2); with one Type~V action they form Group~C. Configuration (5.3), the entries in part~\textup{(a)}, and triples~12 and~14 use the full Group~B cost, so an additional Type~V action makes their normalized contribution $o(1)$. Thus (5.3) is needed for the zero-Type~V cancellation in part~\textup{(b)}, but does not change the asymptotic contribution evaluated in Section~\ref{sec:firstordercumulants}.}
\end{remark}
\begin{proof}
For part~\textup{(a)}, fix a common path and a matched pair of block patterns, with the correspondence specified in Section~\ref{sec:enumeration}. The potentially nonvanishing terms belong to Group B, so each side has normalized weight $O(1)$. The up and flat steps give exactly the same product of $c_{(d)}$-factors. The numbers of negative block jumps have opposite parity, so the signs are opposite.

The auxiliary-label and the canonical down steps give multiplicities of the form $N-c$, with $c=O(1)$ for fixed $m,n$. Rules~I--III balance their leading powers of $N$ together with the exterior factors. After these powers are extracted, the two index products are both $1+O(N^{-1})$. Keeping the common bounded, normalized coefficient product unchanged, the paired sum is therefore $O(N^{-1})$. Terms with a further costly action are already $o(1)$. There are only finitely many paths and unlabeled block patterns, which proves~\textup{(a)}.

For part~\textup{(b)}, fix $E$ and the common first-jump time before $m$ in groups~5 and~12. Put $C=E(m)$ and, at a down step $m<t<\tau$, write $r=r(t):=E(t-1)-C\ge1$. We keep the common $N$-dependent factors, including the coefficients and powers of $\theta$, until after cancellation; their normalized product is bounded and tends to $\prod_{B_i\in E}\kappa_{|B_i|}$.

First combine (5.2) and (5.3). Apart from the choices of auxiliary variable $x_i$ and the exterior factor, their zero-Type~V monomial weights agree, including their negative sign. There are $N-2$ auxiliary labels in (5.2) and $N-1$ in (5.3). Consequently their combined exterior and label factor is
\[
\frac{N(N-1)(N-2)+N(N-1)}{N^{m+n}}
=\frac{N(N-1)^2}{N^{m+n}}.
\]
This matches the exterior factor of (5.1) and its $N-1$ canonical choices at $\tau$. Before $m$ and after $\tau$, the remaining index factors agree. At a down step $m<t<\tau$, the canonical multiplicity is $N-1$ in (5.1), but $N-1-\mathbf1_{\{r(t)\le C\}}$ in (5.2) and (5.3). Hence, after extracting the common bounded normalized factors, the total weight of triple 5 has leading order term
\[
\sum_{\substack{t\in D(E)\\m<t<\tau}}
\mathbf1_{\{r(t)\le C\}}.
\]

We now compute the total weight of triple~12. Fix its first jump time $t_1=t\in D(E)$ after $m$, so that $m<t<\tau$. In (12.1), the new block height $r'$ ranges over $1,\ldots,C$: its closing time is at least $\tau$, with equality exactly when $r'=C$. Including the closing sign gives signed multiplicity $-C$. In (12.2), the block height changes from $C$ to a value in $\{r,\ldots,C-1\}$ when $r\le C$, or in $\{C+1,\ldots,r-1\}$ when $r>C$. The signed multiplicity, including the final closing jump, is respectively $C-r$ or $-(r-C-1)$. In (12.3), the new block height ranges over $1,\ldots,r-1$; both subsequent closing jumps are negative, so the signed multiplicity is $r-1$. For each allowed height, the closing times are forced by the common path and the specified closing order.

All three configurations have normalized weight $O(1)$, so their remaining index products can be replaced by their common leading value with an $O(N^{-1})$ error. Their signed multiplicities sum to
\[
\begin{cases}
-C+(C-r)+(r-1)=-1,&r\le C,\\
-C-(r-C-1)+(r-1)=0,&r>C.
\end{cases}
\]
Summing over the choices of $t_1$ gives the leading order term \[-\sum_{\substack{t\in D(E)\\m<t<\tau}}
\mathbf1_{\{r(t)\le C\}}.\]
This proves~\textup{(b)}.

For part~\textup{(c)}, fix $E$ and use the same $C$ and $r(t)$. In pair~1, the $N-1$ canonical choices at $\tau$ in (1.2) match the extra exterior factor $N-1$ in (1.1), whose closing jump at $\tau$ has negative sign. At each down step $m<t<\tau$, the respective canonical counts are $N-1$ and $N-1-\mathbf1_{\{r(t)\le C\}}$. The same  calculation therefore gives a leading order term
\[
\sum_{\substack{t\in D(E)\\m<t<\tau}}\mathbf1_{\{r(t)\le C\}}.
\]

At a fixed first jump time $t_1=t$ in triple~14, (14.2) has signed multiplicity $-C$, since there are $C$ choices of the height of block $x_i$. In (14.3), the change of the $x_1$-block from height $C$ gives signed multiplicity $C-r$ for $r\le C$ and $-(r-C-1)$ for $r>C$. Finally, (14.1) has $r-1$ possible heights for its new block, with positive sign because of its two closing jumps. These are exactly the three multiplicities just computed for triple~12, so their sum is $-\mathbf1_{\{r(t)\le C\}}$. They cancel the contribution of pair~1 after summing over $t$, which proves (c).
\end{proof}

\subsubsection{First-order free cumulants in covariance}\label{sec:firstordercumulants}

{It remains to evaluate the one-Type~V parts of the pairs (1.1)--(1.2) and (5.1)--(5.2), which belong to Group~C. The two-index terms have one costly auxiliary jump and one Type~V action, compensated by the exterior factor $N(N-1)\sim N^2$, while (1.2) has one Type~V action and exterior factor $N$. The block patterns give the following correspondence with Definition~\ref{def:firstorderconf}:}
\[
(1.2)\leftrightarrow\textup{(a)},\qquad
(1.1)\leftrightarrow\textup{(b)},\qquad
(5.1)\leftrightarrow\textup{(c)},\qquad
(5.2)\leftrightarrow\textup{(d)}.
\]
Match the two entries of each pair by their common path $E$. Their up and flat steps give $\prod_{B_i\in E}\kappa_{|B_i|}$, and the parity of the negative block jumps gives the sign in Definition~\ref{def:functional1}.

A Type~V action at $t\in P(E,\vec q)$ contributes $E(t-)-q(t-)$. Derivative times admissible for both configurations in a matched pair cancel with opposite signs. The two block functions differ by a block of height $C(E)$ on $(m,\tau)$, so the unmatched contribution is $C(E)$ at each time in
\[
\bigl(\{m+1,\ldots,\tau-1\}\cap D(E)\bigr)\cup\{\tau\}.
\]

{There are $B(E)$ such times. Pair~1 gives the configurations with no jump before the splitting point, while (5.1)--(5.2) permit the first jump at any time in $\{\bar{\tau}+1,\ldots,m\}\cap D(E)$. Together these are exactly the $A(E)$ choices. Hence the surviving contribution for a fixed path is}
\[
\frac1\theta A(E)B(E)C(E)
\prod_{B_i\in E}\kappa_{|B_i|}.
\]
Summing over $E\in L(m,n)$ gives
\begin{equation}
\frac{1}{\theta}\sum_{(E,\vec q)\in Conf_1(m,n)}I_1^{(1)}(E,\vec q)
=
\frac{1}{\theta}\sum_{E\in L(m,n)}
A(E)B(E)C(E)\prod_{B_i\in E}\kappa_{|B_i|}.
\end{equation}
This is the contribution of first-order free cumulants to $\alpha_{m,n}$ in Eqn.~\eqref{eq_fixedtempcov}.

\subsubsection{Reduction of the second-order contribution}
\label{sec:secondorderasmp}

We now consider the contribution involving the coefficients
$c_{\lambda}$ with $\ell(\lambda)=2$. By
Lemma~\ref{lem:typeII-typeVI-fixed}, after discarding an $o(1)$
contribution, every operator string contributing to this part contains
exactly one Type II operator and exactly one Type V operator.

At the level where the Type II operator is kept in the grouped form
$U_i^{(\lambda)}$, the unique Type V operator is forced by the
power-sum factor introduced by $U_i^{(\lambda)}$. After expanding
$U_i^{(\lambda)}$ again into its monomial summands
$U_i^{(\lambda,r,k)}$, some Type IV monomial actions may appear in the
difference parts of the subsequent Dunkl operators. We will see below
that no additional Type IV block can contribute at leading order. The
only possible Type IV action is the unique closing jump forced when the
Type II action introduces a fresh variable $x_j$, $j\ge 3$.

We first observe that the single-index term
\[
N\left(\frac{\D_1}{N}\right)^{m+n}
\exp(F_{N,\theta})\Big|_{\vec{x}=0}
\]
has vanishing second-order contribution. Indeed, a Type II action and a
Type V action together contribute a factor $O(N^{-2})$. If the monomial
selected from the power sum uses $x_1$, there is no summation over an
additional index, and the exterior factor $N$ leaves an $O(N^{-1})$
contribution. If it uses some $x_j$, $j\ne 1$, there are $O(N)$ choices
of $j$, but a nonzero terminal value requires an additional closing
Type IV action, which contributes another factor $O(N^{-1})$. The total
contribution is again $O(N^{-1})$. Thus, only the term
\[
N(N-1)
\left(\frac{\D_2}{N}\right)^n
\left(\frac{\D_1}{N}\right)^m
\exp(F_{N,\theta})\Big|_{\vec{x}=0}
\]
needs to be considered.

Let $s$ be the time of the unique Type II action, and write its selected
monomial in the form
\begin{equation}\label{eq:typeII-selected-monomial}
c_{\lambda}\lambda_r
x_1^{\lambda_r-1}x_k^{\lambda_{3-r}},
\qquad
r\in\{1,2\}.
\end{equation}
We claim that, at leading order,
\[
s\le m
\qquad\text{and}\qquad
k\ne 1.
\]

Indeed, if $s>m$, then before the Type II action there is no 
block carrying the positive degree $E(m)>0$. Hence some positive power
of $x_1$ is present at time $m$. Since the main variable after time
$m$ is $x_2$, removing this power of $x_1$ requires an additional
auxiliary Type IV action. By the fixed-temperature scaling discussion,
this produces an extra factor $O(N^{-1})$, and the resulting
contribution vanishes.

Similarly, if $k=1$, the Type II action creates no block.
Consequently, in the absence of an additional auxiliary Type IV action,
a monomial sequence ending at a constant must satisfy $E(m)=0$. Such
terms have already been removed by
Proposition~\ref{prop:covariance-weighted-expansion}. Adding the
auxiliary Type IV action needed when $E(m)>0$ again makes the
contribution $o(1)$.

It follows that the unique Type II action occurs at a time
$s\le m$ and selects either $k=2$ or a fresh index $k=j\ge 3$.

Fix the associated path $E\in L(m,n)$ and set
\[
C=C(E):=E(m)>0.
\]
At time $m$, the block created by the unique Type~II action is the only auxiliary block for configurations with $O(1)$ weight. A second block would require either a second Type~II action or an additional auxiliary Type~IV action, both already shown to be $o(1)$. The $x_1$-degree must also be zero at the splitting point: after time $m$ the main variable is $x_2$, so removing the remaining $x_1$-degree would require another Type~IV action. Thus the entire height $C(E)=E(m)$ is carried by the auxiliary variable selected in Eqn.~\eqref{eq:typeII-selected-monomial}.
Therefore, the power assigned to the auxiliary variable
in Eqn.~\eqref{eq:typeII-selected-monomial} must equal $C$:
\[
\lambda_{3-r}=C.
\]
Since the Type II action gives an increment
$\lambda_r+C-1$ to the path, we define
\begin{equation}\label{eq:k1k2-from-path}
k_1(E,s):=E(s)-E(s-)-C(E)+1,
\qquad
k_2(E):=C(E).
\end{equation}
Thus $\lambda_r=k_1(E,s)$ and $\lambda_{3-r}=k_2(E)$.

The Type II action is an up or flat step, so $s\in U(E)$. In addition,
after the block of height $C$ is created, the inequality
$q(t)\le E(t)$ forces $E(t)\ge C$ up to time $m$. Consequently,
\[
s\in\{\bar{\tau},\ldots,m\}\cap U(E).
\]
As before, we use the convention that
$\kappa_{k_1(E,s),k_2(E)}=0$ whenever $k_1(E,s)\le 0$.

\subsubsection{The two monomial realizations}
\label{sec:enumeration-secondorder}

Fix $E\in L(m,n)$ and
\[
s\in\{\bar{\tau},\ldots,m\}\cap U(E).
\]
There are precisely two leading-order monomial realizations of the
power-sum factor introduced by the Type II action.

\medskip

\noindent\textbf{Case I: the monomial indexed by \(k=2\).}

In this case the Type II action introduces the factor $x_2^C$. Before
time $m$, this is recorded by the block function
\[
q_2(t)=
\begin{cases}
C,&t\in[s,m],\\
0,&\text{otherwise},
\end{cases}
\qquad
q_i(t)\equiv 0,\quad i\ne 2.
\]
This is exactly the limiting second-order configuration in
case~\textup{(e)} of Definition~\ref{def:secondorderconf}.

\medskip

\noindent\textbf{Case II: the monomial indexed by \(k=j\ge 3\).}

Here the Type II action introduces $x_j^C$ for some 
$j\ge 3$. The corresponding block initially satisfies
\[
q_j(t)=C,\qquad t\in[s,m].
\]
Since $x_j$ never becomes a main variable, this block must eventually
be removed by a Type IV action.
Thus
\[
q^{\,j}_j(t)=
\begin{cases}
C,&t\in[s,\tau),\\
0,&\text{otherwise},
\end{cases}
\qquad
q^{\,j}_i(t)\equiv 0,\quad i\ne j.
\]
A minus sign enters the weight at the closing Type~IV action at time $\tau$: this action decreases $q_j$ from $C$ to zero and is the unique negative jump of the configuration.

There are $N-2$ possible choices of $j$. After summing over them, all
the resulting block configurations are identical up to a relabeling of
the fresh variable. We therefore use $j=3$ as a canonical
representative. This is exactly case~\textup{(f)} of
Definition~\ref{def:secondorderconf}.

Any further Type IV action creates an additional auxiliary-jump factor
$O(N^{-1})$ and hence gives an $o(1)$ contribution. Therefore the two
classes above exhaust all the monomial sequences contributing to the
second-order limit.

\subsubsection{Second-order asymptotics}
\label{sec:cancellation2}
\label{secondordercumulants}

Fix $E\in L(m,n)$ and
\[
s\in\{\bar{\tau},\ldots,m\}\cap U(E),
\]
and write
\[
k_1=k_1(E,s):=E(s)-E(s-)-C(E)+1,
\qquad
k_2=k_2(E):=C(E).
\]
Let $\vec q^{(2)}$ and $\vec q^{(3)}$ denote the two configurations
described above, corresponding respectively to the choices $k=2$ and
$k=j\ge 3$ in the monomial expansion of the unique Type II action.

Both classes have finite limits. Indeed, by
Eqn.~\eqref{eq_cltfixed},
\[
k_1(1+\mathbf{1}_{k_1=k_2})c_{(k_1,k_2)}
\longrightarrow
\theta^{1-k_1-k_2}\frac{\kappa_{k_1,k_2}}{k_2},
\]
while every other up or flat step contributes the corresponding
first-order free cumulant. In the $k=2$ case, the normalization of the unique
Type II and Type V actions is
\[
\frac{N(N-1)}{N^2}\longrightarrow 1.
\]
In the $k=j\ge 3$ case, the $N-2$ choices of the fresh variable
compensate for the extra factor $N^{-1}$ contributed by the forced
closing Type IV action at time $\tau$, and
\[
\frac{N(N-1)(N-2)}{N^3}\longrightarrow 1.
\]
The latter class has the opposite sign because its closing jump has
negative jump height. Consequently, after summing over all fresh
indices $j\ge 3$, the combined contribution of the two classes
converges to
\begin{align}
\frac{1}{\theta}
\Big(
I^{(2)}_1(E,\vec q^{(2)})
+
I^{(2)}_1(E,\vec q^{(3)})
\Big)
={}&
\frac{1}{\theta}
\frac{\kappa_{k_1,k_2}}{k_2}
\prod_{\substack{B_i\in E\\t(B_i)\ne s}}
\kappa_{|B_i|}
\nonumber\\
&\times
\left[
\sum_{t\in D(E)}
\bigl(E(t-)-q^{(2)}(t-)\bigr)
-
\sum_{t\in D(E)\setminus\{\tau\}}
\bigl(E(t-)-q^{(3)}(t-)\bigr)
\right].
\label{eq:combined-secondorder-limit}
\end{align}

It remains to evaluate the difference in brackets. The two block
functions agree before time $m$:
\[
q^{(2)}(t)=q^{(3)}(t),
\qquad t<m,
\]
and both vanish after time $\tau$. On the interval $(m,\tau)$, however,
\[
q^{(2)}(t)=0,
\qquad
q^{(3)}(t)=C(E).
\]

Moreover, the time $\tau$ is available for the Type V action in the
configuration $\vec q^{(2)}$, whereas it is occupied by the closing
Type IV action in $\vec q^{(3)}$, and
\[
E(\tau-)=C(E).
\]
Therefore,
\begin{align*}
&\sum_{t\in D(E)}
\bigl(E(t-)-q^{(2)}(t-)\bigr)
-
\sum_{t\in D(E)\setminus\{\tau\}}
\bigl(E(t-)-q^{(3)}(t-)\bigr)
\\
&\qquad
=
C(E)
\left(
\bigl|\{m+1,\ldots,\tau-1\}\cap D(E)\bigr|+1
\right)
=
C(E)B(E).
\end{align*}
Since $k_2=C(E)$, the factor $C(E)$ cancels the denominator $k_2$ in
Eqn.~\eqref{eq:combined-secondorder-limit}. Thus, for every fixed pair
$(E,s)$, the two monomial classes together converge to
\[
\frac{1}{\theta}
B(E)\,
\kappa_{k_1(E,s),k_2(E)}
\prod_{\substack{B_i\in E\\t(B_i)\ne s}}
\kappa_{|B_i|}.
\]

Finally, summing over all admissible paths and all possible positions
of the Type II action gives
\begin{align}
\frac{1}{\theta}
\sum_{(E,\vec q)\in Conf_2(m,n)}
I^{(2)}_1(E,\vec q)
=
\frac{1}{\theta}
\sum_{E\in L(m,n)}
\sum_{s\in\{\bar{\tau},\ldots,m\}\cap U(E)}
B(E)\,
\kappa_{k_1(E,s),k_2(E)}
\prod_{\substack{B_i\in E\\t(B_i)\ne s}}
\kappa_{|B_i|}.
\label{eq:secondorder-final-fixed}
\end{align}
This is precisely the second-order contribution in
Eqn.~\eqref{eq_fixedtempcov}.

\subsection{High-temperature covariance in Theorem
\ref{thm:hightempclt}}

Recall that
\[
p_k^N=\frac{1}{N}p_k(\vec a)
=\frac{1}{N}\sum_{i=1}^N a_i^k.
\]
Therefore, by Eqn.~\eqref{eq_covthroughdunkl2}, the covariance appearing
in the high-temperature CLT is
\begin{align}
N\Cov\bigl(p_m^N,p_n^N\bigr)
={}&
(N-1)\D_2^n\D_1^m
\exp(F_{N,\theta})\Big|_{\vec{x}=0}
+
\D_1^{m+n}
\exp(F_{N,\theta})\Big|_{\vec{x}=0}
\nonumber\\
&-
N\,
\D_1^m\exp(F_{N,\theta})\Big|_{\vec{x}=0}\,
\D_1^n\exp(F_{N,\theta})\Big|_{\vec{x}=0}.
\label{eq:hightemp-dunkl-covariance}
\end{align}
Thus, the exterior factors in the three terms are respectively of
orders \(N\), \(1\), and \(N\).

We assume throughout this subsection that
\[
d\,c_{(d)}\longrightarrow\kappa_d^\gamma,
\qquad
Nk_1k_2(1+\mathbf{1}_{k_1=k_2})c_{(k_1,k_2)}
\longrightarrow\kappa_{k_1,k_2}^\gamma,
\qquad
N^{\ell(\lambda)/2}c_\lambda\longrightarrow0
\quad\text{for }\ell(\lambda)\ge3.
\]

The terms with $E(m)=0$ cancel as in Proposition~\ref{prop:covariance-weighted-expansion}. We first record the high-temperature orders of the remaining actions. Types~I and~V are $O(1)$, and a Type~III action contributes $\theta(N-O(1))\to\gamma$ after summing over its available indices. Each Type~IV action carries $\pm\theta\sim O(N^{-1})$: only a first jump to a freely chosen auxiliary index has $N-O(1)$ choices compensating for this factor. So a costly auxiliary jump still costs $O(N^{-1})$.

To control Types~II and~VI, retain their power-sum factors as in the proof of Lemma~\ref{lem:typeII-typeVI-fixed}. The difference part passes every such symmetric factor through unchanged, so each factor must be hit by a Type~V action before evaluation at the origin. These derivatives introduce no free-index multiplicity, and the remaining grouped actions are $O(1)$. Thus $r$ Type~II actions contribute $O(N^{-r})$ before the exterior factor, whereas a Type~VI action indexed by $\lambda$ contributes $o(N^{-\ell(\lambda)/2})$. Since the exterior factors are at most $O(N)$, only zero or one Type~II action and no Type~VI action can survive. Unlike at fixed temperature, further Type~V actions do not lower the order.

{Without Type~II, the two-index term allows at most one costly auxiliary jump, and the one-index term allows none. Hence Group~B, including (5.3), is $o(1)$; the remaining block patterns are the pairs (1.1)--(1.2) and (5.1)--(5.2) in Group~A, namely $Conf_1(m,n)$. The two members of each pair have opposite signs and equal zero-Type~V leading weights, which cancel. With Type~V actions, we retain the full signed weight difference of each pair, allowing any nonempty set of derivative times; their derivative factors need not agree.}

Now suppose there is exactly one Type~II action. Its single-index contribution is $O(N^{-1})$, so vanishes in the limit. In the two-index term, a leading contribution cannot afford an additional costly jump beyond those needed for the Type~II factor. Consequently the Type~II time satisfies $s\le m$: otherwise the positive degree at time $m$ would require an extra costly jump after the splitting point. For either summand of this Type~II action, expand
\[
k_1c_{(k_1,k_2)}x_1^{k_1-1}p_{k_2}
=k_1c_{(k_1,k_2)}\sum_{i=1}^N x_1^{k_1-1}x_i^{k_2}.
\]
If $i=1$, the remaining $x_1$-degree must be removed after time $m$ by a Type~IV action, so the contribution is $O(N^{-1})$. If $i=2$, no closing jump is needed and the contribution is $O(1)$. If $i\ge3$, one closing Type~IV action is necessary, but its $N^{-1}$ factor is compensated by the $N-2$ choices of $i$, again giving $O(1)$. These orders include both the exterior $O(N)$ factor and the Type~II coefficient $O(N^{-1})$; any further costly jump makes the contribution $o(1)$.

In the last two cases the $x_1$-degree at time $m$ must therefore be zero. Thus
\[
k_2=C=E(m),\qquad k_1=E(s)-E(s-)-C+1.
\]
 The $i=2$ block occupies $[s,m]$, whereas the $i\ge3$ block occupies $[s,\tau)$ and closes by a negative Type~IV action at $\tau$, which also introduces a minus sign to the weight. These are exactly cases~\textup{(e)} and~\textup{(f)} of Definition~\ref{def:secondorderconf}, with the same admissible Type~II times as in Section~\ref{sec:enumeration-secondorder}. Pair them by their common path and Type~II time, retaining both Type~II summands with their multiplicities. With no Type~V action, the closing factor $\theta(N-2)$ on the $i\ge 3$ side matches the canonical-down factor $\theta(N-O(1))$ on the $i=2$ side at $\tau$. All other leading factors agree, with opposite signs, so the paired contribution is $o(1)$. For one or more Type~V actions, we instead sum the actual signed derivative weights of the two configurations. This yields the $Conf_2(m,n)$ contribution evaluated below.

Fix one such configuration \((E,\vec q)\), and suppose that Type V
actions occur at
\[
t_1<\cdots<t_r,
\qquad
t_1,\ldots,t_r\in P(E,\vec q).
\]
The \(r\) Type V actions contribute
\[
\prod_{j=1}^r
\bigl(E(t_j-)-q(t_j-)\bigr).
\]
The remaining $|D(E)|-r$ down steps use the Type III actions. After summing over free auxiliary indices and incorporating the exterior factor $\theta$, their combined limiting factor, apart from the jump signs, is
\[
\gamma^{|D(E)|-r}.
\]
The up and flat steps contribute the first-order free
\(\gamma\)-cumulants, and, in the presence of a Type II action, the
distinguished step contributes the corresponding second-order free
\(\gamma\)-cumulant. Therefore, after summing over all $r$-element subsets
\[
\{t_1,\ldots,t_r\}\subset P(E,\vec q),
\]
(the subset sum is exactly the one built into Definitions~\ref{def:functional1} and~\ref{def:functional2}), the limiting contribution is
\[
\gamma^{|D(E)|-r}I_r^{(1)}(E,\vec q;\gamma)
\]
for \((E,\vec q)\in Conf_1(m,n)\), and
\[
\gamma^{|D(E)|-r}I_r^{(2)}(E,\vec q;\gamma)
\]
for \((E,\vec q)\in Conf_2(m,n)\).

The $r=0$ terms cancel at leading order after the configurations are paired, as shown above. Hence only nonempty subsets of $P(E,\vec q)$ remain.

Consequently,
\begin{equation}\label{eq:hightemp-covariance-asymptotics}
\begin{split}
\alpha_{m,n}
={}&
\sum_{(E,\vec q)\in Conf_1(m,n)}
\sum_{r=1}^{|P(E,\vec q)|}
\gamma^{|D(E)|-r}
I_r^{(1)}(E,\vec q;\gamma)
\\
&+
\sum_{(E,\vec q)\in Conf_2(m,n)}
\sum_{r=1}^{|P(E,\vec q)|}
\gamma^{|D(E)|-r}
I_r^{(2)}(E,\vec q;\gamma).
\end{split}
\end{equation}

This is exactly the covariance formula in
Eqn.~\eqref{eq_hightempcov}. The configuration classes are the same as
in the fixed-temperature regime. The difference is that, at fixed
temperature, every Type V action contributes an additional factor
\(N^{-1}\), so only one Type V action survives, whereas in the
high-temperature regime Type V actions are of order \(O(1)\), and all
nonempty subsets of \(P(E,\vec q)\) contribute. 

\section{The ``only if'' direction of Theorems~\ref{thm:intro-fixed-iff} and~\ref{thm:intro-high-iff}}\label{sec:only-if}
We start with the following combinatorial result on the coefficients of $\ln B_a(\vec{x};\theta)$. Recall that $\ln B_a(\vec{x};\theta)$ is analytic in $a$ and $\vec{x}$. Expand it in the power sums $\frac{p_{\pmu_1}(a/N)}{N^{\ell(\pmu_{1})}}p_{\pmu_2}(\vec{x})$ ($|\pmu_1|=|\pmu_2|=d$ for some $d\in \Z_{\ge 1}$), and let $C_{\pmu_1,\pmu_2}$ denote the corresponding coefficient.
\begin{lemma}\label{lem:onlyiffixedtemp}
    In the fixed-temperature regime, for $\pmu_1,\pmu_2\vdash d$, the $N$-degree of $C_{\pmu_1,\pmu_2}$ is at most $2-\ell(\pmu_2)$, and $N^{\ell(\pmu_2)-2}C_{\pmu_1,\pmu_2}$ converges as $N\to\infty$.

\end{lemma}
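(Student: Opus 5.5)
The plan is to read off $C_{\pmu_1,\pmu_2}$ directly from Theorem~\ref{thm:keyexpansion}. Fix $d$ and take $n=d$. For $N$ large we have $N>\max(\alpha,1)d$, since $\theta$ is fixed. Each rooted, connected, normal infinite constellation $\MM$ of size $d$ contributes a monomial proportional to
\[
\prod_{f\in F(\MM)}\frac{p_{\deg f}(a/N)}{N}\prod_{v\in V_0(\MM)}\frac{p_{\deg v}(\vec x)}{N}.
\]
Collecting the terms with $\pmu_1(\MM)=\pmu_1$ and $\pmu_2(\MM)=\pmu_2$ gives
\[
C_{\pmu_1,\pmu_2}=\frac1d\sum_{g}\sum_{\MM}\alpha^{d-1}(\alpha-1)^{\nu_\rho(\MM,c)}\,N^{2-2g-\ell(\pmu_2)}\,f_{\eta(\MM)}\bigl(\{(-1)^k\}\bigr).
\]
Here the $N^{-\ell(\pmu_2)}$ comes from the vertex factors. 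I will check the normalization against the basis $\frac{p_{\pmu_1}(a/N)}{N^{\ell(\pmu_1)}}p_{\pmu_2}(\vec x)$, in which the face factors $1/N$ are absorbed and the $p_{\pmu_2}(\vec x)$ carries no power of $N$. The key point is that $\alpha$, $\alpha-1$ and $f_\eta(\{(-1)^k\})$ are all independent of $N$. Hence every term has exact $N$-degree $2-2g-\ell(\pmu_2)$, at most $2-\ell(\pmu_2)$, with equality only for $g=0$.

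Multiplying by $N^{\ell(\pmu_2)-2}$ therefore gives
\[
N^{\ell(\pmu_2)-2}C_{\pmu_1,\pmu_2}=\sum_{g\in\frac12\Z_{\ge0}}N^{-2g}\,a_g(\pmu_1,\pmu_2),
\]
where $a_g(\pmu_1,\pmu_2)$ is an $N$-independent constant. It is the weighted count at genus $g$, and it is finite because, by Proposition~\ref{prop:euler}, $|\eta|=2g-2+\ell(\pmu_1)+\ell(\pmu_2)$ is fixed, so only finitely many normal constellations occur at each genus. The limit should then be $a_0$: the planar count, with $\nu_\rho=0$ since genus-zero surfaces are orientable.

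The main obstacle is the infinite sum over $g$. Theorem~\ref{thm:keyexpansion} gives absolute convergence coefficientwise, but only for each $N$ separately, and I need the tail $\sum_{g\ge1/2}N^{-2g}a_g$ to be $o(1)$ uniformly. I would first try to bound $|a_g|$ by $K^{g}$ for a constant $K=K(d,\alpha)$. This should come from counting normal constellations of size $d$ with $|\eta|=2g-2+\ell(\pmu_1)+\ell(\pmu_2)$ and bounding $|f_\eta(\{(-1)^k\})|$ exponentially in $|\eta|$. Then dominated convergence, valid for $N^2>K$, gives the limit $a_0$.

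If a direct count is awkward, I would use the convergence at one fixed large $N_0$ as a majorant. Since $N^{-2g}\le N_0^{-2g}$ for $N\ge N_0$, absolute convergence at $N_0$ dominates all larger $N$, provided absolute convergence at $N_0$ refers to $\sum_g|\,\text{genus-}g\text{ sum}|$, which is how the theorem's statement reads. One subtlety must be checked here: the constants $a_g$ must be the same for all $N$, in particular the range of admissible constellations must not depend on $N$. This holds because the theorem's sum ranges over all normal constellations of size $d$, independently of $N$, once $N>\max(\alpha,1)d$. With that, the tail is $O(N^{-1})$, and both claims of the lemma follow.
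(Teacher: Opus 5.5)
Your proposal is correct and follows the paper's route: the paper's proof is the single remark that the lemma follows from Theorem~\ref{thm:keyexpansion}, and you extract $C_{\pmu_1,\pmu_2}$ from that expansion in the same way, with $N$-power $2-2g-\ell(\pmu_2)$ at genus $g$. Your majorant argument at a fixed $N_0$, which uses the absolute convergence of the genus series and the $N$-independence of the genus coefficients, supplies the uniform $O(N^{-1})$ control of the $g\ge 1/2$ tail that the paper leaves implicit.
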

\begin{proof}
    This follows directly from Theorem \ref{thm:keyexpansion}.
\end{proof}

We shall use the following product formula for cumulants, usually called the
Leonov--Shiryaev formula; see \cite{LeonovShiryaev1959,Speed1983}.

\begin{lemma}\label{lem:leonov}
Let \(\mathcal I\) be a finite set of labels, and let
\((\xi_i)_{i\in\mathcal I}\) be commuting observables of a normalized moment functional, so that all moments appearing below are defined. Let
\[
\tau=\{I_1,\ldots,I_r\}
\]
be a partition of \(\mathcal I\), and define
\[
Y_n=\prod_{i\in I_n}\xi_i,\qquad n=1,\ldots,r.
\]
Then
\[
\mathfrak c_r(Y_1,\ldots,Y_r)
=
\sum_{\substack{\pi\in\Pi(\mathcal I)\\ \pi\vee\tau=\hat 1}}
\prod_{B\in\pi}
\mathfrak c_{|B|}\bigl(\xi_i:i\in B\bigr).
\]
Here \(\Pi(\mathcal I)\) denotes the set of all set partitions of
\(\mathcal I\), \(\hat 1\) is the one-block partition of \(\mathcal I\), and
\(\pi\vee\tau\) is the join of the two partitions in the partition lattice,
i.e. the finest partition which is coarser than both \(\pi\) and \(\tau\).

\end{lemma}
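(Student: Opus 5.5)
The plan is to use the standard route through the moment--cumulant relations, with the cumulants written in terms of moments via Möbius inversion on the partition lattice. By multilinearity it suffices to treat the case where every $\xi_i$ is a single observable, so both sides are polynomial expressions in the joint moments $\E[\prod_{i\in S}\xi_i]$, $S\subseteq\mathcal I$. No positivity or probabilistic structure is needed, since everything below is an identity of formal Möbius inversions in the normalized moment functional.

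First step. For any $\sigma\in\Pi(\mathcal I)$, write $m_\sigma=\prod_{B\in\sigma}\E[\prod_{i\in B}\xi_i]$ and $\mathfrak c_\sigma=\prod_{B\in\sigma}\mathfrak c_{|B|}(\xi_i:i\in B)$. The defining relation is then $m_\sigma=\sum_{\pi\le\sigma}\mathfrak c_\pi$. Next, identify partitions of $\{1,\ldots,r\}$ with partitions $\sigma\ge\tau$ of $\mathcal I$ via the interval isomorphism $[\tau,\hat1]\cong\Pi(r)$. Under this identification, the joint moments of $Y_1,\ldots,Y_r$ indexed by a partition $\rho$ of $\{1,\ldots,r\}$ are exactly $m_{\sigma(\rho)}$. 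Hence the Möbius formula on $\Pi(r)$ gives
\[
\mathfrak c_r(Y_1,\ldots,Y_r)=\sum_{\sigma\ge\tau}\mu(\sigma,\hat1)\,m_\sigma ,
\]
where $\mu$ is the Möbius function of $\Pi(\mathcal I)$. This uses the fact that the Möbius function of the interval $[\tau,\hat1]$ coincides with that of $\Pi(r)$.

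Second step. Substitute $m_\sigma=\sum_{\pi\le\sigma}\mathfrak c_\pi$ and exchange the sums:
\[
\mathfrak c_r(Y_1,\ldots,Y_r)=\sum_{\pi}\mathfrak c_\pi\sum_{\sigma\ge\pi\vee\tau}\mu(\sigma,\hat1).
\]
Here we use that $\sigma\ge\pi$ and $\sigma\ge\tau$ together are equivalent to $\sigma\ge\pi\vee\tau$. By the defining property of the Möbius function, $\sum_{\rho\le\sigma\le\hat1}\mu(\sigma,\hat1)=\delta_{\rho,\hat1}$. The inner sum is therefore $1$ exactly when $\pi\vee\tau=\hat1$ and $0$ otherwise, which yields the claimed formula.

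The only point needing care is the first step. There one must check that the moments of the products $Y_n$ correspond exactly to $m_\sigma$ for $\sigma\ge\tau$, and that the lattice interval isomorphism transports the Möbius function correctly. Both are standard facts about the partition lattice. Alternatively, we may simply cite \cite{LeonovShiryaev1959,Speed1983}, since the argument there is purely algebraic and applies verbatim to any normalized moment functional.
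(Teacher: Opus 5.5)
Your proof is correct. The three ingredients are the moment--cumulant relation $m_\sigma=\sum_{\pi\le\sigma}\mathfrak c_\pi$, the interval isomorphism $[\tau,\hat 1]\cong\Pi(r)$, and the identity $\sum_{\rho\le\sigma\le\hat 1}\mu(\sigma,\hat 1)=\delta_{\rho,\hat 1}$. The isomorphism transports both the joint moments of the $Y_n$ and the M\"obius function. Together these give exactly the stated formula.

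The paper gives no proof of this lemma; it only cites \cite{LeonovShiryaev1959,Speed1983}. Your argument is the standard partition-lattice proof found in the second reference.

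It is useful in this paper's setting because it makes explicit that only the algebraic definition of cumulants from a unital linear functional is used. The lemma is later applied to normalized pairings with generalized functions, where no positivity is available.

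The opening reduction ``by multilinearity'' is vacuous, since the $\xi_i$ are already single observables, and can be dropped.
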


\begin{remark}\label{rem:leonov-connectedness}
The condition
\[
\pi\vee\tau=\hat 1
\]
means that $\pi$ connects the product blocks $I_1,\ldots,I_r$. In particular, for $r\ge2$, every product block contains a label that belongs to a non-singleton block of $\pi$, and therefore
\[
\sum_{B\in\pi:\,|B|\ge2}|B|\ge r.
\]
Equality can hold only when $\pi$ has a unique non-singleton block, this block meets every $I_j$ exactly once, and all remaining blocks of $\pi$ are singletons. This observation will be used below.
\end{remark}

\begin{theorem}\label{thm:fixedtemponlyif}
    In the fixed-temperature regime, if the CLT of Definition~\ref{def:fixedtempclt} holds, then
    for all $d\in \Z_{\ge 1}$, $k_{1},k_{2}\in \Z_{\ge 1}$,
    \begin{equation}\label{eq_fixedtempcoefficients}
        \begin{split}
            \frac{d}{N}c_{(d)}&\rightarrow \theta^{1-d}\kappa_{d};\\
            k_{1}k_{2}(1+\mathbf{1}_{k_1=k_2})c_{(k_{1},k_{2})}&\rightarrow \theta^{1-k_{1}-k_{2}}\kappa_{k_{1},k_{2}};\\
            c_{\lambda}&\rightarrow 0\quad\quad \text{for}\ \ell(\lambda)\ge 3,
        \end{split}
    \end{equation}
    for certain constants $\kappa_{d},\kappa_{k_{1},k_{2}}$ when $N\rightarrow\infty$.
    \end{theorem}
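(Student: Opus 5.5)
The plan is to express each BGF coefficient $c_\lambda$ through joint cumulants of the power sums $p_k(a/N)$. We then read off the required asymptotics from the CLT hypotheses, using the $N$-degree bounds of Lemma~\ref{lem:onlyiffixedtemp}.

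\emph{Step 1: the cumulant expansion.} Write $\log G_{N,\theta}(\vec x)=\log\E_{\mu_N}[\exp(\ln B_a(\vec x;\theta))]$. By Theorem~\ref{thm:keyexpansion}, $\ln B_a=\sum_{\pmu_1,\pmu_2}C_{\pmu_1,\pmu_2}\,N^{-\ell(\pmu_1)}p_{\pmu_1}(a/N)\,p_{\pmu_2}(\vec x)$ through any fixed degree, for $N$ large. Expanding $\log\E[e^{X}]=\sum_{r\ge1}\frac1{r!}\mathfrak c_r(X,\dots,X)$ and collecting the coefficient of $p_\lambda(\vec x)$ gives a finite expression. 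It is a sum over multisets $\{(\pmu_1^{(j)},\pmu_2^{(j)})\}_{j=1}^r$ whose $\pmu_2^{(j)}$ together form $\lambda$. Each term is a product of $\prod_j C_{\pmu_1^{(j)},\pmu_2^{(j)}}N^{-\ell(\pmu_1^{(j)})}$ and the joint cumulant $\mathfrak c_r$ of the monomials $p_{\pmu_1^{(j)}}(a/N)$. Lemma~\ref{lem:leonov} rewrites each such cumulant of products as a sum of products of joint cumulants $\mathfrak c_{|B|}$ of single observables $\widetilde p^N_k=N^{-1}p_k(a/N)$, indexed by partitions $\pi$ connecting the product blocks.

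\emph{Step 2: power counting.} Under the CLT (Definition~\ref{def:fixedtempclt} with Remark~\ref{rem:moment-clt-positivity}), the orders are as follows: $\mathfrak c_1(\widetilde p_k^N)=O(1)$, $N^2\mathfrak c_2\to\alpha$, and $\mathfrak c_s=o(N^{-s})$ for $s\ge3$. Lemma~\ref{lem:onlyiffixedtemp} gives $C_{\pmu_1,\pmu_2}=O(N^{2-\ell(\pmu_2)})$, where $C$ is now coupled to the normalized factors $(\widetilde p^N)^{\ell(\pmu_1)}$. A term with $r$ factors therefore has $N$-degree at most $\sum_j(2-\ell(\pmu_2^{(j)}))-\sum_{B\in\pi,|B|\ge2}|B|$. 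By Remark~\ref{rem:leonov-connectedness}, this is at most $2r-\ell(\lambda)-r=r-\ell(\lambda)$ for $r\ge2$; at $r\ge 2$ the bound uses $\ge r$ non-singleton labels, each costing $N^{-1}$, and higher cumulants are $o$ of their scale. For $r=1$ the degree is $2-\ell(\lambda)$, the genus-zero term.

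\emph{Step 3: reading off each case.} For $\lambda=(d)$ only $r=1$ appears. This gives $c_{(d)}=N\sum C^{(0)}\prod\E\widetilde p+o(N)$, which converges after dividing by $N$ (this is the LLN direction of \cite{CX}, Theorem~\ref{thm:lln}), and we name the limit $\theta^{1-d}\kappa_d/d$. For $\ell(\lambda)=2$, the $r=1$ terms are $O(1)$ and converge by the LLN. The $r=2$ terms have degree $\le0$; equality forces a single pair block of $\pi$ linking the two factors, which is a covariance multiplied by $N^2$ and converges to a combination of $\alpha$'s. All other terms are $o(1)$. Hence $c_{(r,s)}$ converges, which defines $\kappa_{r,s}$, and symmetry is automatic. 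For $\ell(\lambda)\ge3$, we need every term to be $o(1)$. The $r=1$ terms are $O(N^{2-\ell})=o(1)$. For $r\ge2$, the degree $r-\ell(\lambda)$ is negative unless $r=\ell(\lambda)$, that is, unless every factor has $\ell(\pmu_2^{(j)})=1$.

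\emph{Main obstacle.} The delicate case is $r=\ell(\lambda)\ge3$ with all $\ell(\pmu_2^{(j)})=1$. There the Step 2 bound only gives degree $0$, with the equality case being a single non-singleton block of size exactly $r\ge3$. That block is an $\mathfrak c_r$ with $r\ge3$, which is $o(N^{-r})$ by hypothesis, so such terms are $o(1)$. I would handle this by refining the count. A non-singleton block $B$ contributes $o(N^{-|B|})$ when $|B|\ge3$ and $O(N^{-2})$ when $|B|=2$. When $r\ge3$ and every block has size $2$, the blocks cannot connect $r$ product factors with only $r$ labels, so $\sum|B|\ge r+1$ and we gain an extra $N^{-1}$. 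The coefficientwise absolute convergence in Theorem~\ref{thm:keyexpansion} ensures that summing over genera preserves these bounds, since at fixed temperature the genus-$g$ terms carry $N^{-2g}$.
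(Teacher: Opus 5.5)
Your proposal is correct and follows essentially the same route as the paper. You expand $\log\E[\exp(\ln B_a)]$ in joint cumulants of the product observables, bound the coefficients with Lemma~\ref{lem:onlyiffixedtemp}, and control cumulants of products through Lemma~\ref{lem:leonov} and the CLT scaling; the critical case $r=\ell(\lambda)\ge3$ is killed because its unique non-singleton block of size $r$ is $o(N^{-r})$. Your combined exponent $2r-\ell(\lambda)-S(\pi)$ simply packages the paper's case split by cumulant order $l\in\{1,2,\ge3\}$ into one count, and you derive the first line directly from the lemma and the LLN rather than citing \cite{CX}.
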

\begin{proof}
    The first line of Eqn.~\eqref{eq_fixedtempcoefficients} follows directly from \cite[Theorem 3.4]{CX}, since CLT implies LLN.

   For the third line of Eqn.~\eqref{eq_fixedtempcoefficients}, recall that \begin{equation}
   \begin{split}&\ln G_{\theta, N}(\vec{x})=\ln \E[\exp(\ln (B_a(\vec{x};\theta)))]
   =\ln \E\left[\exp\left(\sum_{\pmu_{2}}\left(\sum_{\pmu_{1}}C_{\pmu_{1},\pmu_{2}}\frac{p_{\pmu_{1}}(a/N)}{N^{\ell(\pmu_1)}}\right)p_{\pmu_{2}}(\vec{x})\right)\right].
   \end{split}\end{equation} Note that for $d\in \Z_{\ge 1}$, there are finitely many summands in $\pmu_{1},\pmu_{2}$ of size $d$ above.

   Then, by definition of the joint cumulants, each $c_{\lambda}$ is a finite sum of terms of the form \begin{align*}&\mathfrak c_{l}(C_{\pmu_{1}(1),\pmu_{2}(1)}\frac{p_{\pmu_{1}(1)}(a/N)}{N^{\ell(\pmu_1(1))}},...,C_{\pmu_{1}(l),\pmu_{2}(l)}\frac{p_{\pmu_{1}(l)}(a/N)}{N^{\ell(\pmu_{1}(l))}})= N^{-l}\prod_{j=1}^{l}C_{\pmu_{1}(j),\pmu_{2}(j)}\cdot N^{l}\mathfrak c_{l}(\frac{p_{\pmu_{1}(1)}(a/N)}{N^{\ell(\pmu_{1}(1))}},...,\frac{p_{\pmu_{1}(l)}(a/N)}{N^{\ell(\pmu_{1}(l))}}),\end{align*}
   where $\lambda=\pmu_{2}(1)\sqcup\cdots \sqcup \pmu_{2}(l)$.

The remaining task is to estimate the order of the joint cumulants of products using Lemma \ref{lem:leonov}.  More precisely, if $l\ge 3$,
   by Lemma \ref{lem:onlyiffixedtemp}, $N^{-l}\prod_{j=1}^{l}C_{\pmu_{1}(j),\pmu_{2}(j)}\lesssim O(1)$. Moreover, by Lemma \ref{lem:leonov} and the CLT assumption, $ N^{l}\mathfrak c_{l}(\frac{p_{\pmu_{1}(1)}(a/N)}{N^{\ell(\pmu_{1}(1))}},...,\frac{p_{\pmu_{1}(l)}(a/N)}{N^{\ell(\pmu_{1}(l))}})\sim o(1)$.

   If $l=2$, by Lemma \ref{lem:onlyiffixedtemp} $N^{-l}\prod_{j=1}^{l}C_{\pmu_{1}(j),\pmu_{2}(j)}\sim o(1)$ since at least one of the $\pmu_{2}(j)$ cannot be single-row, and by Lemma \ref{lem:leonov} and the CLT assumption, $ N^{2}\mathfrak c_{2}(\frac{p_{\pmu_{1}(1)}(a/N)}{N^{\ell(\pmu_{1}(1))}},\frac{p_{\pmu_{1}(2)}(a/N)}{N^{\ell(\pmu_{1}(2))}})\lesssim O(1)$.

   If $l=1$, $\ell(\pmu_{2})\ge 3$. By Lemma \ref{lem:onlyiffixedtemp} $C_{\pmu_{1},\pmu_{2}}\sim o(1)$, and by the CLT assumption, $\mathfrak c_{1}(p_{\pmu_{1}}(a/N)/N^{\ell(\pmu_1)})\lesssim O(1)$. This completes the proof of the third line of Eqn.~\eqref{eq_fixedtempcoefficients}.

    The second line of Eqn.~\eqref{eq_fixedtempcoefficients} follows from a similar argument. Note that $l$ can only be 1 or 2. If $l=1$, $\ell(\pmu_{2})=2$ and by Lemma \ref{lem:onlyiffixedtemp} $C_{\pmu_1,\pmu_2}$ converges. By the CLT assumption,  $\mathfrak c_{1}(p_{\pmu_{1}}(a/N)/N^{\ell(\pmu_1)})$ converges as $N\to\infty$. If $l=2$, the terms $N^{-2}\prod_{j=1}^{2}C_{\pmu_1(j),(d_j)}$ converge by Lemma \ref{lem:onlyiffixedtemp}, and the terms $N^{2}\mathfrak c_{2}(\frac{p_{\pmu_{1}(1)}(a/N)}{N^{\ell(\pmu_{1}(1))}},\frac{p_{\pmu_{1}(2)}(a/N)}{N^{\ell(\pmu_{1}(2))}})$ converge by Lemma \ref{lem:leonov} and the CLT assumptions.

\end{proof}

Next, we move on to the high-temperature regime. In this regime, let $C_{\pmu_1,\pmu_2}$ denote the coefficient of $\ln B_a(\vec{x};\theta)$ of $\frac{p_{\pmu_{1}}(a)}{N^{\ell(\pmu_1)}}p_{\pmu_{2}}(\vec{x})$.
We abbreviate $\nu_\rho(\MM,c)$ by $\nu(\MM)$ in the proof below, leaving the distinguished root implicit.

\begin{lemma}\label{lem:onlyifhightemp}
In the high-temperature regime, for \(\pmu_1,\pmu_2\vdash d\), the
\(N\)-degree of \(C_{\pmu_1,\pmu_2}\) is at most
\[
1-\ell(\pmu_2),
\]
and $N^{\ell(\pmu_2)-1}C_{\pmu_1,\pmu_2}$ converges as $N\to\infty$.
\end{lemma}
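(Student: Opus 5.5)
We read off the coefficient $C_{\pmu_1,\pmu_2}$ directly from Theorem~\ref{thm:keyexpansion}, after converting the basis $\prod_f p_{\deg(f)}(a/N)/N$ to $p_{\pmu_1}(a)/N^{\ell(\pmu_1)}$. Since $p_k(a/N)=N^{-k}p_k(a)$ and $|\pmu_1|=d$, each constellation $\MM$ of size $d$ with $\pmu_1(\MM)=\pmu_1$ and $\pmu_2(\MM)=\pmu_2$ contributes
\[
\frac1d\,\alpha^{d-1}(\alpha-1)^{\nu(\MM)}\,N^{-d}\,N^{-\ell(\pmu_2)}\,N^{2-2g(\MM)}\,f_{\eta(\MM)}\bigl(\{(-1)^k\}\bigr)
\]
to $C_{\pmu_1,\pmu_2}$. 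Here $\alpha=\theta^{-1}$. In the high-temperature regime $\alpha\sim N/\gamma$ and $\alpha-1\sim N/\gamma$, so $\alpha^{d-1}N^{-d}\sim\gamma^{1-d}N^{-1}$ and $(\alpha-1)^{\nu}\sim (N/\gamma)^{\nu}$. The contribution of $\MM$ therefore has $N$-degree
\[
1-\ell(\pmu_2)+\nu(\MM)-2g(\MM)\le 1-\ell(\pmu_2),
\]
using $0\le\nu(\MM)\le 2g(\MM)$. Equality holds exactly for the constellations with $\nu(\MM)=2g(\MM)$. After multiplying by $N^{\ell(\pmu_2)-1}$, each such term converges to $\frac1d\gamma^{1-d-\nu}f_{\eta}(\{(-1)^k\})$ times $\lim (N^{-1}(\alpha-1))^{\nu}N\,\alpha^{d-1}N^{-d}\cdots$; every other term tends to $0$.

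The difficulty is that, unlike at fixed temperature, the genus is not bounded. For fixed $d$ there are infinitely many normal constellations, since $\eta$ can be arbitrarily long and $g$ grows through Proposition~\ref{prop:euler}: $2g-2=|\eta|-\ell(\pmu_1)-\ell(\pmu_2)$. So termwise convergence does not suffice, and we need a dominated-convergence argument. The plan is as follows.

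\textbf{Grouping.} Group terms by $g$. For fixed $g$, Proposition~\ref{prop:euler} fixes $|\eta|$, and hence there are finitely many $\eta$ and finitely many constellations. Each genus-$g$ block, multiplied by $N^{\ell(\pmu_2)-1}$, equals a polynomial in $N^{-1}(\alpha-1)$ and $N^{-1}\alpha$ of degree at most $2g+d-1$, times $N^{-2g}\cdot N^{\nu}$.

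\textbf{Uniform bound.} We want to bound the genus-$g$ block uniformly in $N$ by a summable sequence $A_g$. Write $x=N^{-1}(\alpha-1)$, which is bounded by $2/\gamma$ for large $N$. Then the block is bounded by $\frac1d (2/\gamma)^{d-1}\sum_{\MM:\,g(\MM)=g}(2/\gamma)^{\nu}N^{\nu-2g}|f_{\eta}|\le \sum_{\MM}(\max(1,2/\gamma))^{2g+d}|f_\eta|$.

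\textbf{Summability.} The key input is then the absolute convergence asserted in Theorem~\ref{thm:keyexpansion}, applied not at the given $\theta_N$ but at a fixed auxiliary parameter $\theta_0$ chosen with $\alpha_0-1$ and $\alpha_0$ larger than $2/\gamma$ (and $N$ large, so that $N\theta_0>d$). Positivity-free domination is obtained by comparing weights termwise: $|(\alpha-1)^\nu N^{-2g}|\le |\alpha_0-1|^{\nu}$ after absorbing the $N$ powers. I expect this comparison to be the main obstacle, because Theorem~\ref{thm:keyexpansion} asserts absolute convergence at each $(N,\theta)$ but not uniformly.

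If direct domination fails, the fallback is to show from the structure of $f_\eta(\{(-1)^k\})$ that the genus-$g$ sum is bounded by $K^{g}$ times a count of constellations growing at most exponentially in $g$ at fixed $d$. Combined with an $N^{-2g+\nu}$ factor, this would make the tail $\sum_{g>G}$ small uniformly in $N$ once $N\ge N_0$ (using that the effective weight at $\nu=2g$ is $\gamma^{-2g}$ but with the constraint $N\theta>d$).

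\textbf{Conclusion.} Once uniform tail control is established, Tannery's theorem (dominated convergence for series) gives both the degree bound $1-\ell(\pmu_2)$ and the convergence of $N^{\ell(\pmu_2)-1}C_{\pmu_1,\pmu_2}$. The limit is the sum over all genera of the contributions of the constellations with $\nu=2g$.
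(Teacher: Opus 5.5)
\textbf{Verdict.} Your termwise bookkeeping is correct and agrees with the paper. Each normal constellation contributes at $N$-degree $1-\ell(\pmu_2)+\nu(\MM)-2g(\MM)$, and this is maximal exactly when $\nu(\MM)=2g(\MM)$. The gap is in passing from the individual terms to the full sum, and no sharper domination estimate will close it.

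\textbf{Why the sum over genera cannot be controlled for small $\gamma$.} First, Theorem~\ref{thm:keyexpansion} needs $N\theta>d$ to reach degree $d$. Since $N\theta_N\to\gamma$, for $\gamma<d$ you are not entitled to read $C_{\pmu_1,\pmu_2}$ off the map expansion at $\theta=\theta_N$ at all. Second, the leading terms $\nu=2g$ carry weight $\approx u^{-2g}$, with $u=N\theta$, and they do not decay in $N$. So at leading order the genus expansion is a power series in $u^{-1}$, and it has a finite radius of convergence. Concretely, for $d=2$ the Jack terms $\lambda=(2),(1,1)$ in \eqref{eq_BesselJack} give
\[
C_{(2),(2)}=\frac{N}{2(N\theta+1)(N-1)}=\frac{1}{2(u+1)}\bigl(1+O(N^{-1})\bigr).
\]
Matching this with the map expansion shows that the genus block with $2g=k$ has limit $\frac{1}{2\gamma}(-\gamma)^{-k}$. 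These limits are not summable for $\gamma\le1$. Consequently:
\begin{itemize}
\item no dominating sequence exists, so Tannery's theorem cannot be applied;
\item the limit you state, the sum over all genera of the $\nu=2g$ contributions, is a divergent series for small $\gamma$;
\item your bound $\sum_{\MM}(\max(1,2/\gamma))^{2g+d}|f_\eta|$ cannot be summable;
\item the auxiliary-$\theta_0$ comparison fails, because dominating $\gamma^{-\nu}N^{\nu-2g}$ by $|\alpha_0-1|^{\nu}N^{-2g}$ requires $N\theta_0\lesssim\gamma$, which is incompatible with $N\theta_0>d$ when $\gamma$ is small.
\end{itemize}

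\textbf{The missing idea: analytic continuation in $u=N\theta$.} The paper uses the map expansion only for large real $u$, e.g.\ $u>8ed^{2d}$. There the bounds $|f_\eta|\le(8e)^{|\eta|}$ and $d^{4d(d+g)}$ on the number of genus-$g$ constellations give absolute convergence and the $O(N^{1-\ell(\pmu_2)})$ estimate. This is essentially your plan, and it works only in that range. To reach every $\gamma>0$, the paper argues as follows.
\begin{itemize}
\item For fixed $d$, $C_{\pmu_1,\pmu_2}$ involves only finitely many Jack terms. After substituting $\theta=u/N$ it is a rational function of $N^{-1}$ and $u$, with Laurent expansion $C_{\pmu_1,\pmu_2}=\sum_{r\le R}N^r h_r(u)$ whose coefficients $h_r$ are holomorphic on $\{\Re u>0\}$.
\item Uniqueness of Laurent coefficients, together with the large-$u$ map estimate, forces $h_r(u)=0$ for $r>1-\ell(\pmu_2)$ at every large real $u$.
\item The identity theorem then gives $h_r\equiv0$ for all $u>0$.
\item Convergence of $N^{\ell(\pmu_2)-1}C_{\pmu_1,\pmu_2}$ follows from continuity of $h_{1-\ell(\pmu_2)}$ at $u=\gamma$.
\end{itemize}
The limit is therefore the analytic continuation of your genus sum to $u=\gamma$, not the genus sum itself.
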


\begin{proof}
Set
\[
u=N\theta,
\qquad
\alpha=\theta^{-1}=\frac Nu.
\]
First, consider $u>d$ and $N>d$, so that the hypotheses of Theorem~\ref{thm:keyexpansion} hold with $n=d$ and $\theta=u/N$. We extract the coefficient of
$
\frac{p_{\pmu_1}(a)}{N^{\ell(\pmu_1)}}p_{\pmu_2}(\vec x)$ from Theorem \ref{thm:keyexpansion}, which reads as
\begin{equation}\label{eq_hightempcoefficientexpansion}
\begin{split}
\sum_{\MM\in C^{(\infty)}_{d,\bullet}\colon \MM\text{ normal},\ \pmu_1(\MM)=\pmu_1,\ \pmu_2(\MM)=\pmu_2}&\frac1d
\left(\frac Nu\right)^{d-1}
N^{2-2g(\MM)}\\
&\quad\quad\quad\left(\frac Nu-1\right)^{\nu(\MM)}
N^{-\ell(\pmu_2)}N^{-d}
f_{\eta(\MM)}\bigl(\{(-1)^k\}_{k\ge1}\bigr).
\end{split}
\end{equation}
We claim that this is a formal power series in the variables $u^{-1}$ and $N^{-1}$. Indeed, for each constellation $\MM$, the degree of $u$ is clearly nonpositive, and the degree of $N$ is at most $1-2g(\MM)+\nu(\MM)-\ell(\pmu_2)$. One can verify from \cite[Definitions 3.6\,--\,3.8]{CD} that $0\le\nu(\MM)\le 2g(\MM)$, so this is also at most 0.

Moreover, for real $u>0$ sufficiently large and $N\ge u$ sufficiently large, the sum over individual maps converges absolutely. To see this, group the constellations $\MM$ in the sum by their statistics $\nu(\MM)\in \Z_{\ge 0}$ and $2g(\MM)-\nu(\MM)\in \Z_{\ge 0}$. Then we have the following two facts:

(1).  \[
    \bigg| f_{\eta}\Big(\big\{(-1)^{-k}\big\}_{k\ge 1}\Big) \bigg|  \le (8e)^{|\eta|}\le (8e)^{2g+2d},\quad \text{and}
    \]

(2). The total number of constellations in the above sum with genus $g$ is at most $d^{4d(d+g)}$.

We refer the reader to Proposition \ref{prop:euler} and \cite[Remark 5.7]{CX} for the proofs of these two facts. Based on (1) and (2), we can upper bound the absolute value of the inner sum of $\MM$ for fixed genus and get a series \[\frac{1}{d}u^{1-d}N^{1-\ell(\pmu_2)}d^{4d^{2}}(8e)^{2d}\sum_{\nu(\MM)=0}^{\infty}d^{2d\nu(\MM)}(u/8e)^{-\nu(\MM)}\sum_{2g(\MM)-\nu(\MM)=0}^{\infty}d^{2d(2g-\nu(\MM))}(N/8e)^{-(2g-\nu(\MM))},\]
which converges when $N$ is large enough and $u>8ed^{2d}$.

It remains to extend the coefficient bound to arbitrary $u$ with $\Re u>0$. For fixed $d$, the coefficient $C_{\pmu_1,\pmu_2}$ depends only on the finite-degree part of the Jack expansion \eqref{eq_BesselJack}. After substituting $\theta=u/N$, this is a rational function of $z=N^{-1}$ and $u$. Its Laurent expansion at $z=0$ has a finite principal part:
\[
C_{\pmu_1,\pmu_2}=\sum_{r\le R}N^r h_r(u)
\]
for some finite integer $R$. The finite Jack formula shows that each $h_r$ is holomorphic on $\{\Re u>0\}$, and the expansion converges locally uniformly there for sufficiently large $N$. For each real $u>8ed^{2d}$, the preceding map estimate is $O(N^{1-\ell(\pmu_2)})$, so uniqueness of the Laurent expansion forces $h_r(u)=0$ whenever $r>1-\ell(\pmu_2)$. The identity theorem for holomorphic functions then gives
\[
h_r(u)=0
\qquad
\text{for all } u>0,\ r>1-\ell(\pmu_2).
\]
In the high-temperature regime, $N\to\infty$ and $u=u_{N}\to\gamma>0$.
Hence
\[
\deg_N C_{\pmu_1,\pmu_2}
\le
1-\ell(\pmu_2).
\]
The convergence as $N\to\infty$ follows immediately from the analyticity of $h_r(u)$ and the fact that $u\to\gamma$.
\end{proof}

\begin{theorem}\label{thm:hightemponlyif}
    In the high-temperature regime, if the CLT of Definition~\ref{Definition_CLT_sat} holds, then
    for all $d\in \Z_{\ge 1}$, $k_{1},k_{2}\in \Z_{\ge 1}$,
    \begin{equation}\label{eq_hightempcumulant}
        \begin{split}
            d\cdot c_{(d)}&\rightarrow \kappa^{\gamma}_{d};\\
           Nk_{1}k_{2}(1+\mathbf{1}_{k_1=k_2})c_{(k_{1},k_{2})}&\rightarrow \kappa^{\gamma}_{k_{1},k_{2}};\\
          N^{\ell(\lambda)/2}  c_{\lambda}&\rightarrow 0\quad\quad \text{for}\ \ell(\lambda)\ge 3.
        \end{split}
    \end{equation}
    for certain constants $\kappa_{d}^{\gamma}, \kappa_{k_{1},k_{2}}^{\gamma}$ when $N\rightarrow\infty$.
    \end{theorem}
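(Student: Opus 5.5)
The plan is to mirror the proof of Theorem~\ref{thm:fixedtemponlyif}, replacing Lemma~\ref{lem:onlyiffixedtemp} by Lemma~\ref{lem:onlyifhightemp} and the fixed-temperature fluctuation scale by the $N^{-1/2}$ scale. The first line of Eqn.~\eqref{eq_hightempcumulant} follows from the high-temperature LLN criterion of \cite{BCG} (Theorem 3.8 and 3.10 there), since the CLT implies the LLN. For the remaining lines, write
\[
\ln G_{\theta,N}(\vec x)=\ln\E\left[\exp\left(\sum_{\pmu_2}\Big(\sum_{\pmu_1}C_{\pmu_1,\pmu_2}\frac{p_{\pmu_1}(a)}{N^{\ell(\pmu_1)}}\Big)p_{\pmu_2}(\vec x)\right)\right].
\]
Expand this in joint cumulants. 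Then $c_\lambda$ is a finite sum, over decompositions $\lambda=\pmu_2(1)\sqcup\cdots\sqcup\pmu_2(l)$ and choices of $\pmu_1(j)\vdash|\pmu_2(j)|$, of terms
\[
\prod_{j=1}^l C_{\pmu_1(j),\pmu_2(j)}\cdot\mathfrak c_l\Big(\frac{p_{\pmu_1(1)}(a)}{N^{\ell(\pmu_1(1))}},\ldots,\frac{p_{\pmu_1(l)}(a)}{N^{\ell(\pmu_1(l))}}\Big),
\]
up to combinatorial constants. Each argument $p_{\pmu_1(j)}(a)/N^{\ell(\pmu_1(j))}$ is a product of the normalized power sums $p_k^N$.

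By Lemma~\ref{lem:onlyifhightemp}, the coefficient product has $N$-degree at most $\sum_j(1-\ell(\pmu_2(j)))=l-\ell(\lambda)$, and after rescaling by $N^{\ell(\lambda)-l}$ it converges. For the cumulant factor, apply Lemma~\ref{lem:leonov} to the labels given by all the individual power-sum factors $p_k^N$. The CLT hypothesis (Definition~\ref{Definition_CLT_sat} together with Remark~\ref{rem:moment-clt-positivity}) gives the following:
\begin{itemize}
\item first cumulants are $O(1)$ and converge;
\item second cumulants are $N^{-1}(\alpha+o(1))$;
\item cumulants of order $r\ge3$ are $o(N^{-r/2})$.
\end{itemize}
For a connecting partition $\pi$ with $\pi\vee\tau=\hat1$, Remark~\ref{rem:leonov-connectedness} says that the non-singleton blocks have total size at least $l$. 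Since each block of size $r\ge2$ costs $N^{-r/2}$, the product is $O(N^{-l/2})$. It is $o(N^{-l/2})$ unless $\pi$ consists of a single pair plus singletons, which requires $l\le2$. Hence $\mathfrak c_l(\cdots)=O(N^{-(l-1)})$ when $l\ge2$. More precisely, I would use a refined count: the non-singleton blocks must form a connected hypergraph on the $l$ product blocks. Each block of size $r$ contributes at most $r-1$ connections and costs $N^{-(r-1)}$ in the sense that a pair costs $N^{-1}$ and a triple costs $o(N^{-3/2})\le O(N^{-1})$ per connection. This gives $\mathfrak c_l=O(N^{1-l})$, converging after rescaling when only pairs occur, and $o(N^{1-l})$ otherwise only for specific patterns. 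I expect this to be the delicate step.

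Combining the two estimates, each term of $c_\lambda$ is $O(N^{l-\ell(\lambda)}\cdot N^{1-l})=O(N^{1-\ell(\lambda)})$. For $\ell(\lambda)\ge3$ this gives $N^{\ell(\lambda)/2}c_\lambda=O(N^{1-\ell(\lambda)/2})=o(1)$, since $1-\ell(\lambda)/2<0$. This is the third line, with room to spare, so the crude bound suffices. For $\ell(\lambda)=2$, the same bound gives $N c_\lambda=O(1)$, and convergence is needed. There are two cases:
\begin{itemize}
\item If $l=1$, then $\pmu_2=\lambda$. Here $NC_{\pmu_1,\lambda}$ converges by Lemma~\ref{lem:onlyifhightemp}, and the mean of a product of $p_k^N$ converges by the LLN part of the CLT (the expectation of a product tends to the product of limits).
\item If $l=2$, then $\pmu_2(j)=(k_j)$ are single rows. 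The factors $C_{\pmu_1(j),(k_j)}$ converge, and $N\mathfrak c_2$ of two products of power sums converges by Lemma~\ref{lem:leonov}. Only the single-pair partitions survive, each giving a limit of the form $\alpha\cdot\prod m$.
\end{itemize}
Defining $\kappa^\gamma_{k_1,k_2}$ as the resulting limit of $Nk_1k_2(1+\mathbf1_{k_1=k_2})c_{(k_1,k_2)}$ gives the second line. Its symmetry is immediate from the definition.

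The main obstacle is justifying the bound $\mathfrak c_l(\text{products})=O(N^{1-l})$ uniformly. Note that only the weaker bound $O(N^{-l/2})$ from Remark~\ref{rem:leonov-connectedness} is actually needed for $\ell(\lambda)\ge3$: it gives $N^{\ell(\lambda)/2}c_\lambda=O(N^{\ell(\lambda)/2+l-\ell(\lambda)-l/2})=O(N^{(l-\ell(\lambda))/2})$. This is $o(1)$ when $l<\ell(\lambda)$. When $l=\ell(\lambda)$, all $\pmu_2(j)$ are single rows, and the bound is $o(1)$ unless $\pi$ is a single pair, which is impossible for $l\ge3$. I would therefore use this simpler route, reserving the finer analysis only for the convergence in the $\ell(\lambda)=2$ case.
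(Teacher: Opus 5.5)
Your final route is essentially the paper's proof: Lemma~\ref{lem:onlyifhightemp} bounds the coefficient product, Lemma~\ref{lem:leonov} with Remark~\ref{rem:leonov-connectedness} gives $\mathfrak c_l=O(N^{-1})$ for $l=2$ and $\mathfrak c_l=o(N^{-l/2})$ for $l\ge3$, and the $l\in\{1,2\}$ terms are analysed separately for convergence when $\ell(\lambda)=2$; the paper runs the same estimates, split into the cases $l\ge3$, $l=2$, $l=1$. Two corrections are needed. First, the ``refined count'' $\mathfrak c_l=O(N^{1-l})$ is not just delicate but unavailable for $l\ge3$: an order-$r$ cumulant is only known to be $o(N^{-r/2})$, and $r/2<r-1$ for $r\ge3$, so the paragraph asserting $c_\lambda=O(N^{1-\ell(\lambda)})$ should be deleted. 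Second, in your simpler route the $l=1$ term must use $\mathfrak c_1=O(1)$ rather than $O(N^{-1/2})$; this gives $N^{\ell(\lambda)/2}\cdot O(N^{1-\ell(\lambda)})=O(N^{1-\ell(\lambda)/2})=o(1)$, which is exactly the paper's $l=1$ case.
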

    \begin{proof}
        The proof is similar to the one in the fixed-temperature regime, but one must keep track of the different scaling. The first line of Eqn.~\eqref{eq_hightempcumulant} again follows directly. For the third line, by exactly the same argument as in the proof of Theorem \ref{thm:fixedtemponlyif}, it remains to estimate the asymptotic order of the terms of the form \[N^{-l}\prod_{j=1}^{l}C_{\pmu_{1}(j),\pmu_{2}(j)}\cdot N^{l}\mathfrak c_{l}(\frac{p_{\pmu_{1}(1)}(a)}{N^{\ell(\pmu_{1}(1))}},...,\frac{p_{\pmu_{1}(l)}(a)}{N^{\ell(\pmu_{1}(l))}}).\]

         If $l\ge 3$,
   by Lemma \ref{lem:onlyifhightemp},  $N^{-l}\prod_{j=1}^{l}C_{\pmu_{1}(j),\pmu_{2}(j)}\lesssim $  $N^{-l+\sum_{j=1}^{l}(1-\ell(\pmu_2(j)))}$. Moreover, by Lemma \ref{lem:leonov} and the 
CLT assumption,  $N^{l}\mathfrak c_{l}(\frac{p_{\pmu_{1}(1)}(a)}{N^{\ell(\pmu_{1}(1))}},...,\frac{p_{\pmu_{1}(l)}(a)}{N^{\ell(\pmu_{1}(l))}})=o(N^{l/2})$. Then the product of these two parts is $\lesssim N^{-\sum_{j=1}^{l}\ell(\pmu_{2}(j))+l/2}\le N^{-\frac{\ell(\lambda)}{2}}$.

   If $l=2$, by the same argument as above,  the whole term is at most $N^{-\sum_{j=1}^{l}\ell(\pmu_{2}(j))+l/2}$. But since at least one of the $\pmu_{2}(j)$ cannot be single-row, $-\sum_{j=1}^{l}\ell(\pmu_{2}(j))+l/2<-\frac{\sum_{j=1}^{l}\ell(\pmu_{2}(j))}{2}=-\frac{\ell(\lambda)}{2}$.

   If $l=1$, then $\lambda=\pmu_2$ and $\ell(\pmu_2)\ge 3$. By Lemma \ref{lem:onlyifhightemp} $ C_{\pmu_{1},\pmu_{2}}\sim O(N^{1-\ell(\pmu_2)})$, and by the CLT assumption, $\mathfrak c_{1}(p_{\pmu_{1}}(a)/N^{\ell(\pmu_1)})\lesssim O(1)$. So the product of these two terms and $N^{\ell(\pmu_2)/2}$ is at most $O(N^{1-\ell(\pmu_2)/2})=o(1)$. This completes the proof of the third line of Eqn.~\eqref{eq_hightempcumulant}.

   The second line of Eqn.~\eqref{eq_hightempcumulant} also follows similarly from Lemma \ref{lem:onlyifhightemp}, the CLT assumption and the above argument.
    \end{proof}
\section{Vanishing of the higher joint cumulants}\label{sec:vanishing-higher-cumulants}
In this section we complete the proofs of Theorems \ref{thm:fixedtempclt} and \ref{thm:hightempclt} by proving that, under the corresponding fixed- and high-temperature scalings, every classical joint cumulant of order at least three formed from the power sums converges to zero as $N\to\infty$.

\begin{theorem}\label{thm:vanishinghighercumulants}
Assume the fixed-temperature coefficient conditions in Eqn.~\eqref{eq_cltfixed}. Then, for every $s\ge 3$ and $k_1,\ldots,k_s\ge 1$,
\[
 N^s\mathfrak c_s\bigl(\widetilde p_{k_1}^N,\ldots,
 \widetilde p_{k_s}^N\bigr)\longrightarrow 0.
\]
Assume instead the high-temperature coefficient conditions in Eqn.~\eqref{eq_clthigh}. Then
\[
 N^{s/2}\mathfrak c_s\bigl(p_{k_1}^N,\ldots,p_{k_s}^N\bigr)
 \longrightarrow 0.
\]
Consequently, the moment--cumulant relations, together with the first- and second-cumulant limits proved earlier, give the moment convergence in Definitions~\ref{def:fixedtempclt} and~\ref{Definition_CLT_sat}. 
\end{theorem}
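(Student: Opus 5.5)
The plan is to run the relation used in the proof of Theorem~\ref{thm:fixedtemponlyif} backwards, as sketched in Section~\ref{sec:methodology}. Write
\[
\log G_{N,\theta}(\vec x)=\log\E\bigl[\exp(\ln B_a(\vec x;\theta))\bigr]
=\sum_{l\ge1}\frac1{l!}\,\mathfrak c_l\bigl(\ln B_a,\ldots,\ln B_a\bigr).
\]
Then expand $\ln B_a$ through Theorem~\ref{thm:keyexpansion} in the basis $\frac{p_{\pmu_1}(a/N)}{N^{\ell(\pmu_1)}}p_{\pmu_2}(\vec x)$, with coefficients $C_{\pmu_1,\pmu_2}$. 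This expresses each $c_\lambda$, for $|\lambda|=d$, as a finite sum over decompositions $\lambda=\pmu_2(1)\sqcup\cdots\sqcup\pmu_2(l)$. Each term is $\prod_j C_{\pmu_1(j),\pmu_2(j)}$ times the joint cumulant $\mathfrak c_l$ of the products $p_{\pmu_1(j)}(a/N)/N^{\ell(\pmu_1(j))}$.

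The key observation concerns the terms with $l=\ell(\lambda)$ and all $\pmu_2(j)=(\lambda_j)$ single rows. The leading coefficient $C_{\pmu_1,(d)}$ with $\pmu_1=(d)$ is of order $N$ at fixed temperature, and of order $1$ at high temperature, by Lemmas~\ref{lem:onlyiffixedtemp} and~\ref{lem:onlyifhightemp}. It corresponds to a planar constellation with one face and one color-$0$ vertex, and I expect its limit to be explicit and nonzero, a multiple of $\theta^{d-1}/d$. These terms give
\[
\text{const}\cdot N^{-\ell(\lambda)}\cdot N^{\ell(\lambda)}\,\mathfrak c_{\ell(\lambda)}\bigl(\widetilde p^N_{\lambda_1},\ldots,\widetilde p^N_{\lambda_{\ell(\lambda)}}\bigr)
\]
in the fixed-temperature case, after noting $p_k(a/N)/N=\widetilde p^N_k$. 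The analogue at high temperature is weighted by $N^{\ell(\lambda)/2}$. So, for fixed total degree $d$ and fixed length $s\ge3$, the family $\{c_\lambda\}_{\ell(\lambda)=s,\,|\lambda|=d}$ is a linear image of the normalized cumulants $\{N^s\mathfrak c_s(\widetilde p^N_{\lambda_1},\ldots)\}$. There are additional contributions from other $\pmu_1(j)$ with the same $\pmu_2$'s; these involve cumulants of products, so they mix different $\lambda'$ of the same degree. The matrix is indexed by partitions, and I expect it to be asymptotically triangular, hence invertible, with nonzero diagonal limits.

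The proof then proceeds by induction on $d$, and within each $d$ by decreasing or increasing length as the triangular structure dictates. Every remaining term is of one of three kinds. The first kind has some $\pmu_2(j)$ with $\ell\ge2$, which costs an extra power of $N$ by the lemmas. The second kind is a cumulant of products; Lemma~\ref{lem:leonov} turns it into sums of products of cumulants of the $\widetilde p$'s. By Remark~\ref{rem:leonov-connectedness}, such a term is either the single-block term counted above or involves cumulants of strictly smaller total degree or order. The third kind involves first and second cumulants, which are controlled by the LLN and by the covariance limits of Section~\ref{sec:dunklaction}. Using the induction hypothesis that lower-degree higher cumulants vanish at their scale, and the assumption $c_\lambda\to0$ (respectively $N^{\ell(\lambda)/2}c_\lambda\to0$), solving the linear system gives that the normalized $s$-th cumulants tend to zero. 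The final consequence follows from moment--cumulant inversion, as in Remark~\ref{rem:moment-clt-positivity}.

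I expect the main obstacle to be the power counting in the Leonov--Shiryaev expansion. One must check that every term other than the diagonal one is genuinely $o(1)$ at the relevant scale: $N^{-s}$ at fixed temperature, and the $N^{-s/2}$ mismatch at high temperature, where $N^{l/2}$ versus $N^{l}$ leaves room. This is delicate at high temperature, where products of covariances of order $N^{-1}$ can combine with coefficients of order $N^{1-\ell}$. I would first try to prove the bound $\lesssim N^{-\sum_j\ell(\pmu_2(j))}\cdot N^{\#\text{pair blocks}}$ uniformly, and reduce to the count $\sum_{|B|\ge2}|B|\ge l$ of Remark~\ref{rem:leonov-connectedness}. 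Verifying nondegeneracy of the diagonal (the limiting $C_{(k),(k)}$) is the other point to settle, via the genus-zero, one-face term of \eqref{eq_keyexpansion}.
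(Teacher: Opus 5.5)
Your architecture is the paper's: expand $c_\lambda$ through $\log\E[\exp(\ln B_a)]$ in joint cumulants with coefficients $C_{\mu,\nu}$, apply Lemma~\ref{lem:leonov}, and treat the one-block terms with $\ell(\rho)\ge3$ as unknowns of a square system indexed by $\{\rho\vdash d:\ell(\rho)\ge3\}$. The multi-block terms are then controlled by induction on the total degree using $\sum_{|B|\ge2}|B|\ge r$, and the system is inverted.

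\textbf{The gap: nondegeneracy of the high-temperature diagonal.} Your planar one-face, one-vertex argument only works at fixed temperature. There $b^{\nu_\rho}N^{-2g}=O(N^{-2g})$, so only genus zero survives and $C^{\mathrm F}_{(k),(k)}/N\to\theta^{1-k}/k$ (note the sign of the exponent). When $N\theta\to\gamma$, however, $b\sim N/\gamma$, so every constellation with $\nu_\rho=2g$ contributes to $C^{\mathrm H}_{(k),(k)}$ at order one. The limit is then the analytic continuation in Lemma~\ref{lem:onlyifhightemp} of an all-genus series that converges only for large $\gamma$. Concretely, the Jack expansion \eqref{eq_BesselJack} gives $C^{\mathrm H}_{(2),(2)}=\frac{N}{2(N\theta+1)(N-1)}\to\frac{1}{2(\gamma+1)}$, whereas the planar term alone gives $\frac{1}{2\gamma}$. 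So the planar term neither computes the limit nor certifies that it is nonzero.

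The paper instead gets nondegeneracy from the first-order theory. Letting $N\to\infty$ in the $r=1$ expansion of $c_{(k)}$ gives $\kappa^\gamma_k=k\sum_{\mu\vdash k}\lim C^{\mathrm H}_{\mu,(k)}\prod_i m_{\mu_i}$. In the triangular moment--cumulant relation of \cite{BCG}, $\kappa^\gamma_k$ enters $m_k$ with coefficient $\prod_{j=1}^{k-1}(\gamma+j)\neq0$. This forces $k\lim C^{\mathrm H}_{(k),(k)}=\prod_{j=1}^{k-1}(\gamma+j)^{-1}\neq0$, consistent with the $k=2$ computation above.

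\textbf{Two smaller corrections.}

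First, the system is asymptotically diagonal, not merely triangular. The coefficient of $K_\rho$ in the equation for $c_\lambda$ is $O(N^{2r-\ell(\lambda)-\ell(\rho)})$ at fixed temperature, and $O(N^{(2r-\ell(\lambda)-\ell(\rho))/2})$ after the $N^{\ell(\lambda)/2}$ normalization at high temperature. The exponent is zero only when $r=\ell(\lambda)=\ell(\rho)$, which forces $\rho=\lambda$. At finite $N$, off-diagonal entries occur with both $\ell(\rho)<\ell(\lambda)$ and $\ell(\rho)>\ell(\lambda)$, and the $K_\rho$ have no a priori bound. So you cannot eliminate length by length while treating the other unknowns as errors. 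You must invert the whole finite matrix at once, using that it converges to an invertible diagonal matrix and therefore has uniformly bounded inverse.

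Second, your tentative count via pair blocks is not the right bookkeeping for the multi-block terms. Use the exponent $2r-\ell(\lambda)-S(\pi)$ (resp.\ $r-\ell(\lambda)/2-S(\pi)/2$), with $S(\pi)=\sum_{|B|\ge2}|B|$. The terms with $r=1$, where $S(\pi)$ can vanish, must be handled separately using $\ell(\lambda)\ge3$. The equality case $S(\pi)=r=\ell(\lambda)$ is rescued only by the little-$o$ from the induction hypothesis.
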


\begin{proof}
The proof has four steps. We first expand each $c_\lambda$  in classical joint cumulants. We then isolate the linear terms of total degree $d$ and identify their coefficient matrix. Next we show, by induction on $d$, that all products of smaller-total-degree cumulants form an $o(1)$ remainder. Finally we invert the resulting finite-dimensional linear system.

\medskip
\noindent\textbf{Step 1. Cumulant expansion and coefficient estimates.}
We use the same expansion that appeared in the proofs of Theorems
\ref{thm:fixedtemponlyif} and \ref{thm:hightemponlyif}, but invert its
highest-total-degree linear part. Write
\[
 Z_k^{\mathrm F,N}:=\widetilde p_k^N,
 \qquad Z_k^{\mathrm H,N}:=p_k^N,
 \qquad
 Z_\mu^{\diamond,N}:=\prod_{i=1}^{\ell(\mu)}Z_{\mu_i}^{\diamond,N},
 \quad \diamond\in\{\mathrm F,\mathrm H\}.
\]
Thus
\[
 Z_\mu^{\mathrm F,N}=\frac{p_\mu(a/N)}{N^{\ell(\mu)}},
 \qquad
 Z_\mu^{\mathrm H,N}=\frac{p_\mu(a)}{N^{\ell(\mu)}}.
\]
For the two regimes, respectively, write directly
\[
 \log B_a(\vec x;\theta)
 =\sum_{\nu}\sum_{\mu\vdash |\nu|}
 C_{\mu,\nu}^{\diamond,N}Z_\mu^{\diamond,N}p_\nu(\vec x).
\]
For a fixed total degree only finitely many terms occur. Since
$G_{N,\theta}=\mathbb E[\exp(\log B_a)]$, the classical cumulant expansion gives, for $\lambda\vdash d$,
\begin{equation}\label{eq:clambda-cumulant-expansion}
 c_\lambda
 =\sum_{r=1}^{\ell(\lambda)}\frac1{r!}
 \sum_{\substack{\nu^{(1)},\ldots,\nu^{(r)}\ne\varnothing\\
 \nu^{(1)}\sqcup\cdots\sqcup\nu^{(r)}=\lambda}}
 \ \sum_{\substack{\mu^{(j)}\vdash |\nu^{(j)}|\\1\le j\le r}}
 \left(\prod_{j=1}^r C_{\mu^{(j)},\nu^{(j)}}^{\diamond,N}\right)
 \mathfrak c_r\bigl(
 Z_{\mu^{(1)}}^{\diamond,N},\ldots,
 Z_{\mu^{(r)}}^{\diamond,N}\bigr).
\end{equation}
Here $\sqcup$ denotes the multiset union of the parts. The sums over
$\nu^{(1)},\ldots,\nu^{(r)}$ are ordered; the factor $1/r!$ is therefore important when $\lambda$ has repeated parts.

We now separate the two kinds of terms that arise from Eqn.~\eqref{eq:clambda-cumulant-expansion}. Apply the Leonov--Shiryaev formula, Lemma \ref{lem:leonov}, to each classical cumulant in that equation. For a fixed summand, write
\[
 \rho=\mu^{(1)}\sqcup\cdots\sqcup\mu^{(r)},
\]
and let $\tau$ be the partition of its $\ell(\rho)$ atomic power sums into the $r$ product blocks corresponding to $\mu^{(1)},\ldots,\mu^{(r)}$. The Leonov--Shiryaev expansion is indexed by set partitions $\pi$ satisfying $\pi\vee\tau=\hat1$.

We call the contribution with $\pi=\hat1$ the \emph{linear part}, because it contains one joint cumulant of all $\ell(\rho)$ atomic power sums. When $\ell(\rho)\ge3$, these are precisely the unknown higher joint cumulants that will form the square linear system in Step 2. When $\ell(\rho)\le2$, they involve only the first cumulants and covariance and will be shown to be $o(1)$ directly.

We call the contributions with more than one block in $\pi$ the \emph{nonlinear part}. By the Leonov--Shiryaev formula, each such contribution is a product of joint cumulants, and every factor has total degree strictly smaller than $d$. Step 3 will show by induction on $d$ that the whole nonlinear part is $o(1)$. Step 4 will then invert the linear system obtained from the first part.

We need two facts about the coefficients in Eqn.~\eqref{eq:clambda-cumulant-expansion}. Lemmas \ref{lem:onlyiffixedtemp} and \ref{lem:onlyifhightemp} give
\begin{equation}\label{eq:C-bounds-higher-cumulants}
 C_{\mu,\nu}^{\mathrm F,N}\sim O\bigl(N^{2-\ell(\nu)}\bigr),
 \qquad
 C_{\mu,\nu}^{\mathrm H,N}\sim O\bigl(N^{1-\ell(\nu)}\bigr).
\end{equation}
The triangular moment--cumulant relations in Theorem \ref{thm:lln} and \cite[Theorem 3.10]{BCG}, together with the first lines of Eqns.~\eqref{eq_llnfixed} and \eqref{eq_llnhigh}, imply that the normalized one-row diagonal coefficients converge to nonzero limits:
\begin{equation}\label{eq:C-diagonal-higher-cumulants}
 \frac{C_{(k),(k)}^{\mathrm F,N}}{N}
 \longrightarrow \text{a nonzero limit},
 \qquad
 C_{(k),(k)}^{\mathrm H,N}
 \longrightarrow \text{a nonzero limit}.
\end{equation}

\medskip
\noindent\textbf{Step 2. The linear system in total degree $d$.}
Fix $d\ge3$. We now construct, simultaneously for all partitions of total degree $d$, the finite-dimensional linear system whose unknowns are the scaled higher joint cumulants. Let
\[
 \mathcal P_{\ge3}(d):=\{\rho\vdash d:\ell(\rho)\ge3\}.
\]
For $\rho\in\mathcal P_{\ge3}(d)$ define
\[
 K_\rho^{\mathrm F,N}
 :=N^{\ell(\rho)}\mathfrak c_{\ell(\rho)}
 \bigl(Z_{\rho_1}^{\mathrm F,N},\ldots,Z_{\rho_{\ell(\rho)}}^{\mathrm F,N}\bigr),
\]
\[
 K_\rho^{\mathrm H,N}
 :=N^{\ell(\rho)/2}\mathfrak c_{\ell(\rho)}
 \bigl(Z_{\rho_1}^{\mathrm H,N},\ldots,Z_{\rho_{\ell(\rho)}}^{\mathrm H,N}\bigr).
\]
We shall obtain a square linear system for the vectors
$K_d^{\diamond,N}:=(K_\rho^{\diamond,N})_{\rho\in\mathcal P_{\ge3}(d)}$.

We first treat the linear terms with $\ell(\rho)\ge3$. Such a term is proportional to $K_\rho^{\diamond,N}$. In the fixed-temperature regime, its coefficient after extracting
$K_\rho^{\mathrm F,N}$ is, by Eqn.~\eqref{eq:C-bounds-higher-cumulants},
\[
 O\bigl(N^{2r-\ell(\lambda)-\ell(\rho)}\bigr).
\]
Indeed, the product of the $C$-coefficients is
$O(N^{2r-\ell(\lambda)})$, while the unscaled joint cumulant is
$N^{-\ell(\rho)}K_\rho^{\mathrm F,N}$. Since $r\le\ell(\lambda)$ and $r\le\ell(\rho)$, the exponent is nonpositive, and it is zero only if
$r=\ell(\lambda)=\ell(\rho)$. In that equality case all the $\nu^{(j)}$ and $\mu^{(j)}$ are
one-row partitions, so necessarily $\rho=\lambda$. Hence every
non-diagonal coefficient is $o(1)$.

The same argument in the high-temperature regime is applied after multiplying both sides of Eqn.~\eqref{eq:clambda-cumulant-expansion} by $N^{\ell(\lambda)/2}$, since, by assumption, $N^{\ell(\lambda)/2}c_\lambda\to 0$. The coefficient of
$K_\rho^{\mathrm H,N}$ is
\[
 O\bigl(N^{r-\ell(\lambda)/2-\ell(\rho)/2}\bigr)
 \sim O\bigl(N^{(2r-\ell(\lambda)-\ell(\rho))/2}\bigr),
\]
and equality in the exponent again occurs only for the diagonal term
$r=\ell(\lambda)=\ell(\rho)$ and $\rho=\lambda$.

It remains only to note that the diagonal limit is nonzero. The ordered single-row decompositions of $\lambda$, together with the factor $1/\ell(\lambda)!$ in Eqn.~\eqref{eq:clambda-cumulant-expansion}, contribute a fixed nonzero combinatorial factor depending on the multiplicities of the parts of $\lambda$. Apart from this factor, the fixed-temperature diagonal entry is
\[
 \prod_{i=1}^{\ell(\lambda)}
 \frac{C_{(\lambda_i),(\lambda_i)}^{\mathrm F,N}}{N},
\]
and the high-temperature diagonal entry is
\[
 \prod_{i=1}^{\ell(\lambda)}
 C_{(\lambda_i),(\lambda_i)}^{\mathrm H,N}.
\]
Both converge to nonzero limits by Eqn.~\eqref{eq:C-diagonal-higher-cumulants}.

We finally treat the linear terms with $\ell(\rho)\le2$. These terms are not coordinates of $K_d^{\diamond,N}$, since that vector is indexed by $\rho\in\mathcal P_{\ge3}(d)$. The case $\ell(\rho)=1$ consists of first cumulants, while $\ell(\rho)=2$ consists of covariances. For $\ell(\rho)=2$, the covariance convergence gives $K_\rho^{\diamond,N}\sim O(1)$. By the same argument as in case $\ell(\rho)\ge 3$, the coefficient after extracting
$K_\rho^{\mathrm \diamond,N}$ is $O(N^{2r-\ell(\lambda)-\ell(\rho)})$ in the fixed-temperature regime and $O(N^{r-\ell(\lambda)/2-\ell(\rho)/2})$ in the high-temperature regime, both of which are $o(1)$ since $r\le \ell(\rho)$ and $\ell(\lambda)\ge 3$. Combining the two factors above shows that the contribution of linear terms with $\ell(\rho)=2$ is $o(1)$. For $\ell(\rho)=1$, $r=1$ in Eqn.~\eqref{eq:clambda-cumulant-expansion}, and the corresponding terms in $c_\lambda$ ($N^{\ell(\lambda)/2}c_\lambda$ resp.) are of the form $C^{F,N}_{(d),\lambda}\cdot \E[Z^{\diamond,N}_{(d)}]$ ($N^{\ell(\lambda)/2}C^{H,N}_{(d),\lambda}\cdot \E[Z^{\diamond,N}_{(d)}]$ resp.). The LLN implies $\E[Z^{\diamond,N}_d]\sim O(1)$, and $C^{F,N}_{(d),\lambda}$ ($N^{\ell(\lambda)/2}C^{H,N}_{(d),\lambda}$ resp.) is $o(1)$ by Eqn.~\eqref{eq:C-bounds-higher-cumulants}. Thus the contribution of linear terms with $\ell(\rho)=1$ is also $o(1)$.

\medskip
\noindent\textbf{Step 3. The nonlinear remainder and the induction step.}
The goal of this step is to show that the nonlinear part defined after Eqn.~\eqref{eq:clambda-cumulant-expansion} is $o(1)$ at the relevant scale. Thus throughout this step $\pi$ has more than one block. We first remove the case $r=1$, then prove the claim for $r\ge2$ by induction on the total degree.

We first treat the nonlinear terms with $r=1$. The LLN implies that all joint moments, and hence all classical cumulants, of the normalized power sums are bounded. Fix a partition $\pi$ with more than one block in the Leonov--Shiryaev expansion of one such term. The product of cumulants indexed by the blocks of $\pi$ is therefore $O(1)$. Since $\nu^{(1)}=\lambda$, the corresponding complete summand in Eqn.~\eqref{eq:clambda-cumulant-expansion}, including its coefficient $C_{\mu^{(1)},\lambda}^{\mathrm F,N}$, is $O(N^{2-\ell(\lambda)})\sim o(1)$ in the fixed-temperature regime. In the high-temperature regime, the corresponding complete summand, including $C_{\mu^{(1)},\lambda}^{\mathrm H,N}$ and multiplied by the prescribed scale $N^{\ell(\lambda)/2}$, is $O(N^{1-\ell(\lambda)/2})\sim o(1)$. This disposes of every nonlinear term with one product block.

We next set up the estimate for $r\ge2$. Define
\[
 S(\pi):=\sum_{B\in\pi:\,|B|\ge2}|B|.
\]
Recall from Remark \ref{rem:leonov-connectedness} that $S(\pi)\ge r$. Moreover, because $\pi$ has more than one block and all parts of $\rho$ are positive, every $B\in\pi$ satisfies
\begin{equation}\label{eq:smaller-total-degree-block}
 \sum_{i\in B}\rho_i\le d-1.
\end{equation}
This is the point that makes induction on the total degree available: every cumulant factor in the nonlinear term has strictly smaller total degree than the cumulant currently under consideration.

We now carry out the induction on $d$, beginning with the base case $d=3$. If $|B|\ge3$, then the left-hand side of Eqn.~\eqref{eq:smaller-total-degree-block} is at least $3$, contradicting that it is at most $2$. Hence every block of $\pi$ has size one or two. The first-cumulant factors are $O(1)$ by the LLN; the second-cumulant factors are $O(N^{-2})$ in the fixed-temperature regime and $O(N^{-1})$ in the high-temperature regime. Their product is therefore $O(N^{-S(\pi)})$ in the fixed-temperature regime and $O(N^{-S(\pi)/2})$ in the high-temperature regime.

We next combine these base-case bounds with the coefficients of Eqn.~\eqref{eq:clambda-cumulant-expansion}. The whole fixed-temperature term is
\[
 O\bigl(N^{2r-\ell(\lambda)-S(\pi)}\bigr),
\]
and, after the high-temperature normalization, the whole high-temperature term is
\[
 O\bigl(N^{r-\ell(\lambda)/2-S(\pi)/2}\bigr).
\]
Both exponents are nonpositive because $\ell(\lambda)\ge r$ and $S(\pi)\ge r$. Equality would force $\ell(\lambda)=S(\pi)=r$. By Remark \ref{rem:leonov-connectedness}, this would require a non-singleton block of size $r=\ell(\lambda)\ge3$, which is impossible in the base case. Thus both exponents are strictly negative, and the nonlinear part is $o(1)$ for $d=3$.

For the induction step, let $d\ge4$ and assume that every higher joint cumulant of total degree at most $d-1$ vanishes at the claimed scale. Equation~\eqref{eq:smaller-total-degree-block}, together with $\rho_i\ge1$, shows that every block corresponding to a higher joint cumulant satisfies
\[
 3\le |B|\le \sum_{i\in B}\rho_i\le d-1.
\]
We may therefore apply the induction hypothesis separately to each block $B$ with $3\le |B|\le d-1$. Blocks with one element are controlled by the LLN, and blocks with two elements are controlled by the already proved covariance convergence. Consequently,
\[
 \mathfrak c_{|B|}\bigl(Z_{\rho_i}^{\mathrm F,N}:i\in B\bigr)
 =\begin{cases}
 O(1),&|B|=1,\\
 O(N^{-2}),&|B|=2,\\
 o(N^{-|B|}),&3\le |B|\le d-1,
 \end{cases}
\]
and
\[
 \mathfrak c_{|B|}\bigl(Z_{\rho_i}^{\mathrm H,N}:i\in B\bigr)
 =\begin{cases}
 O(1),&|B|=1,\\
 O(N^{-1}),&|B|=2,\\
 o(N^{-|B|/2}),&3\le |B|\le d-1.
 \end{cases}
\]
These are precisely the induction estimates needed for the individual factors of the nonlinear term.

We then multiply the estimates over all blocks of $\pi$. Ignoring for the moment the little-$o$ improvement, their product is $O(N^{-S(\pi)})$ in the fixed-temperature regime and $O(N^{-S(\pi)/2})$ in the high-temperature regime. Combining this with Eqn.~\eqref{eq:C-bounds-higher-cumulants}, the complete terms have the same two bounds as in the base case:
\[
 O\bigl(N^{2r-\ell(\lambda)-S(\pi)}\bigr)
 \quad\text{and}\quad
 O\bigl(N^{r-\ell(\lambda)/2-S(\pi)/2}\bigr),
\]
respectively. Again, both exponents are nonpositive.

It remains to handle the critical case where one of these exponents is zero. This forces $\ell(\lambda)=S(\pi)=r$. By the equality statement in Remark \ref{rem:leonov-connectedness}, $\pi$ has a unique non-singleton block, this block meets each product block once, and every other block is a singleton. Since $\pi$ has more than one block, the non-singleton block has total degree at most $d-1$; its size satisfies $3\le r=\ell(\lambda)\le d-1$. The induction hypothesis gives a little-$o$ estimate for this factor, so the full critical term is still $o(1)$. When the exponent is strictly negative, the displayed big-$O$ bound already gives the same conclusion.

We have now treated every nonlinear term. For fixed $d$ the sums in Eqn.~\eqref{eq:clambda-cumulant-expansion} are finite, so the sum of these terms is $o(1)$. This completes both the base case and the induction step from total degrees at most $d-1$ to total degree $d$.

\medskip
\noindent\textbf{Step 4. Inversion of the coefficient matrix.}
We have proved, for the two regimes respectively,
\[
 (c_\lambda)_{\lambda\in\mathcal P_{\ge3}(d)}
 =A_d^{\mathrm F,N}K_d^{\mathrm F,N}+o(1),
\]
\[
 (N^{\ell(\lambda)/2}c_\lambda)_{\lambda\in\mathcal P_{\ge3}(d)}
 =A_d^{\mathrm H,N}K_d^{\mathrm H,N}+o(1),
\]
where Step 2 shows that the two coefficient matrices converge entrywise to diagonal matrices with nonzero diagonal entries. For fixed $d$ these
are finite square matrices and their diagonal entries are nonzero, so
$A_d^{\diamond,N}$ is invertible for all sufficiently large $N$ and its
inverse is uniformly bounded. The left-hand sides tend to zero by
Eqns.~\eqref{eq_cltfixed} and \eqref{eq_clthigh}; hence
$K_d^{\mathrm F,N}\to0$ and $K_d^{\mathrm H,N}\to0$. This completes the
induction and the proof.
\end{proof}

\section{Combinatorial identification of classical second-order free cumulants}\label{sec:comb}
In this section, we show that in the fixed-temperature regime, up to the constant factor $\frac{1}{\theta}$, our covariance--cumulant formula is equivalent to the classical expressions in terms of non-crossing annular permutations and second-order non-crossing annular partitions, originating in \cite{CMSS07}; see also \cite[Chapter 5]{MS}. In particular, our proof gives a direct correspondence between these combinatorial objects and the walk-block configurations of Lukasiewicz paths.

For simplicity, for the rest of this section we set $\theta=1$, which corresponds to the case where we have complex Hermitian matrices.

We first recall the following classical notions. For $n\in \Z_{\ge 1}$, let $S_{n}$ denote the symmetric group of $n$ elements. Let $m,n\in \Z_{\ge 1}$ as before.

\begin{definition}
    For $n\in \Z_{\ge 1}$, a partition $\pi=\sqcup_{i} B_{i}$ of $\{1,2,...,n\}$ is \emph{non-crossing} if there are no $i_{1},i_{2}\in B_{i}$, $j_{1},j_{2}\in B_{j}$ from two distinct blocks such that $i_{1}<j_{1}<i_{2}<j_{2}$. We call $B_{i}$ the blocks of $\pi$.

    There is a canonical way to draw each partition as a graph on a horizontal line or circle, where there are $n$ points indexed by $1,2,...,n$, and the points with indices in the same block are connected by the so-called \emph{legs} and a \emph{roof}. We call the leg connected to $\min(B_i)$ of some $B_i$ a \emph{first leg}, and all the other legs \emph{auxiliary legs}. When $\pi$ is non-crossing, the legs from distinct blocks do not intersect. We denote the set of non-crossing partitions of $\{1,\ldots,n\}$ by $NC(n)$.
\end{definition}

\begin{definition}\cite[Section 5.1]{MS}\label{def:noncrossingpermutation}
    The $(m,n)$-annulus is the annulus with the integers 1 to $m$ arranged clockwise on the outer circle and $m+1$ to $m+n$ arranged counterclockwise on the inner circle. A permutation $\pi$ in $S_{m+n}$ is a \emph{non-crossing permutation} on the $(m,n)$-annulus if we can draw the cycles of $\pi$ between the circles of the annulus so that

    (i). the cycles do not cross,

    (ii). each cycle encloses a region between the circles homeomorphic to a disk with boundary oriented \textbf{clockwise}, and

    (iii). at least one cycle connects the two circles.

    We denote the set of non-crossing permutations on the $(m,n)$-annulus by $S_{NC}(m,n)$. In particular, if each cycle in the permutation is of size 2, we call it a non-crossing pairing, and denote the set of all non-crossing pairings on the $(m,n)$-annulus by $NC_{2}(m,n)$.
\end{definition}

\begin{proposition}\cite[Section 5.1]{MS}\label{prop:throughcycle}
    Consider $\pi\in S_{NC}(m,n)$. If $\pi$ has more than one through-cycle (a cycle containing elements from both circles), then one can identify $\pi$ with its corresponding partition of $\{1,2,...,m+n\}$. However, there might be more than one element in $S_{NC}(m,n)$ corresponding to the same partition if the partition has only one block with elements from both $\{1,2,...,m\}$ and $\{m+1,...,m+n\}$.
\end{proposition}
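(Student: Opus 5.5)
We treat this as a statement about planar representability of the cycle structure. The plan is to separate the two assertions: injectivity of the map $\pi\mapsto\ker\pi$ (the partition of $\{1,\ldots,m+n\}$ into the supports of the cycles) on permutations with at least two through-cycles, and failure of injectivity when there is exactly one through-cycle.

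For the first assertion, fix a drawing of $\pi$ satisfying (i)--(iii) of Definition~\ref{def:noncrossingpermutation}. Each cycle $c$ bounds a disk $D_c$ whose boundary is oriented clockwise. The cyclic order of $c$ is therefore read off by traversing the points of $c$ in the clockwise direction along $\partial D_c$. The task is to show that this boundary order is determined by the block alone.

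\begin{enumerate}
\item \emph{Cycles on one circle.} If $c$ lies on a single circle, $D_c$ does not separate the two circles. The boundary traversal must then follow the ambient orientation of that circle: clockwise on the outer circle, counterclockwise (i.e.\ the inner labelling order) on the inner one. So $c$ is determined by its support.
\item \emph{Through-cycles.} Suppose there are at least two through-cycles $c,c'$. The disk $D_c$ meets both circles and leaves room for $D_{c'}$ on either side. Its boundary consists of arcs running alternately from an outer block of consecutive points to an inner block.
\item \emph{Where the through-cycle may enter.} The second through-cycle $c'$ cuts the annulus into a simply connected region. This fixes at which point of the outer part of $c$ the boundary passes to the inner circle, and where it returns.
\item \emph{Reducing to the disk.} Cutting along a path inside $D_{c'}$ makes the annulus a disk with all points on its boundary. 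On a disk, non-crossing permutations with clockwise cycles correspond bijectively to non-crossing partitions (standard; cf.\ \cite{NS}). Hence $\pi$ is recovered from $\ker\pi$.
\end{enumerate}

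For the second assertion, it suffices to exhibit non-uniqueness when there is a single through-block. Take $m=n=1$ and the block $\{1,2\}$: the unique permutation $(1\,2)$ works, so uniqueness can hold here and a larger example is needed. Take $m=2$, $n=1$ and the block $\{1,2,3\}$. The cycles $(1\,2\,3)$ and $(1\,3\,2)$ can both be drawn. Each is obtained from the other by passing the connecting arc around the other side of the inner circle, which is possible because no other through-cycle blocks the way. In general, with a single through-cycle containing $a$ outer and $b$ inner points, one gets one permutation for each choice of where the inner sequence is spliced into the outer one ($\gcd$-type rotations, generally several). All of these share the same kernel.

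The main obstacle is step~3. One must rigorously show that a second through-cycle pins down the splice point. My plan is to cut the annulus along a radial path inside $D_{c'}$ and invoke the disk case. The delicate part is checking that the clockwise orientation of boundaries is preserved under the cut, and that the non-crossing condition transfers to the disk. Since the paper cites \cite[Section 5.1]{MS}, I would ultimately defer these topological details to that reference.
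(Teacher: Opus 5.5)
The paper gives no proof of its own here. It cites \cite{MS} and illustrates the second assertion by Example~\ref{ex:1234}. Your $m=2$, $n=1$ example is correct and serves the same purpose: the two half-annulus sectors realize $(1\,2\,3)$ and $(1\,3\,2)$. Your count is off, though. A lone through-cycle with $a$ outer and $b$ inner points can place its seam after any of its outer points and before any of its inner points, which gives $ab$ permutations, not a $\gcd$-type count; Example~\ref{ex:1234} has $2\cdot2=4$. This is harmless, since only ``more than one'' is claimed.

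The real gap is in the uniqueness half, at steps 3--4, and it is not the orientation issue you flag. Slitting the annulus along an arc $\gamma$ from the outer to the inner circle does give a disk. A cycle whose disk misses $\gamma$ then has cyclic order equal to the restriction of the slit disk's boundary order, by the same argument as your step 1. But that boundary order lists the outer points clockwise starting after the outer end of $\gamma$, then the inner points counterclockwise starting after the inner end of $\gamma$. It therefore depends on where $\gamma$ meets the two circles, which is a feature of the drawing, not of $\ker\pi$. So ``hence $\pi$ is recovered from $\ker\pi$'' does not follow. In fact, the same reasoning, with $\gamma$ drawn in the complement of the disk of a \emph{lone} through-cycle, would ``prove'' uniqueness exactly where your second paragraph shows it fails. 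In addition, a slit inside $D_{c'}$ cuts $c'$ in two, so this step does not recover $c'$ itself.

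What is missing is a non-interleaving lemma. If two cycles have disjoint disks, their points on the outer circle do not interleave: paths inside the two disks joining alternating points on the boundary of the disk bounded by the outer circle would have to cross. The same holds on the inner circle, arguing in the complementary disk on the Riemann sphere. Hence all outer (resp.\ inner) points of $c'$ lie in a single gap between cyclically consecutive outer (resp.\ inner) points of $c$, and a slit running inside $D_{c'}$ ends in these two gaps. This pins the seam of $c$. Write $c=(o_1,\dots,o_a,i_1,\dots,i_b)$ with the $o$'s and the $i$'s in cyclically increasing label order. Then $o_1$ and $i_1$ must be the first outer and first inner points of $c$ that follow the points of $c'$ in cyclic label order, and this is read off from the blocks of $c$ and $c'$ alone. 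Exchanging the roles of $c$ and $c'$ (slitting through $D_c$) recovers $c'$, and your step 1 handles the remaining cycles. With only one through-cycle there is no second block to locate the slit, which is precisely why the seam is free.
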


\begin{example}\label{ex:1234}
    For $m=n=2$, there are four elements in $S_{NC}(2,2)$ that correspond to the same partition $B_{1}=\{1,2,3,4\}$:
    $$(1,2,3,4),\quad (1,2,4,3),\quad (1,3,4,2),\quad (1,4,3,2).$$
\end{example}
The four annular drawings are shown in Figure~\ref{fig:snc22}.

\begin{figure}[htbp]
\centering
\begin{tikzpicture}[
    >={Stealth[length=1.6mm,width=1.15mm]},
    annular edge/.style={blue,thick,shorten <=1.2pt,shorten >=1.2pt,postaction={decorate},decoration={markings,mark=at position 0.55 with {\arrow{>}}}},
    every node/.style={font=\small}
]
\newcommand{\SecSevenAnnulusBase}{%
    \draw[thin] (0,0) circle (1.25);
    \draw[thin] (0,0) circle (0.55);
    \fill (0,1.25) circle (1.4pt) node[above=2pt] {$1$};
    \fill (0,-1.25) circle (1.4pt) node[below=2pt] {$2$};
    \fill (0.55,0) circle (1.4pt) node[left=2pt] {$3$};
    \fill (-0.55,0) circle (1.4pt) node[right=2pt] {$4$};
}

\begin{scope}[xshift=0cm]
    \SecSevenAnnulusBase
    \draw[annular edge] (0,1.25) .. controls (1.02,0.86) and (1.02,-0.86) .. (0,-1.25);
    \draw[annular edge] (0,-1.25) .. controls (0.18,-1.08) and (0.72,-0.54) .. (0.55,0);
    \draw[annular edge] (0.55,0) .. controls (0.55,0.90) and (-0.55,0.90) .. (-0.55,0);
    \draw[annular edge] (-0.55,0) .. controls (-0.72,0.54) and (-0.18,1.08) .. (0,1.25);
    \node at (0,-2.02) {$(1,2,3,4)$};
\end{scope}

\begin{scope}[xshift=4.0cm]
    \SecSevenAnnulusBase
    \draw[annular edge] (0,1.25) .. controls (1.02,0.86) and (1.02,-0.86) .. (0,-1.25);
    \draw[annular edge] (0,-1.25) .. controls (-0.18,-1.08) and (-0.72,-0.54) .. (-0.55,0);
    \draw[annular edge] (-0.55,0) .. controls (-0.55,-0.90) and (0.55,-0.90) .. (0.55,0);
    \draw[annular edge] (0.55,0) .. controls (0.72,0.54) and (0.18,1.08) .. (0,1.25);
    \node at (0,-2.02) {$(1,2,4,3)$};
\end{scope}

\begin{scope}[xshift=8.0cm]
    \SecSevenAnnulusBase
    \draw[annular edge] (0,1.25) .. controls (0.18,1.08) and (0.72,0.54) .. (0.55,0);
    \draw[annular edge] (0.55,0) .. controls (0.55,0.90) and (-0.55,0.90) .. (-0.55,0);
    \draw[annular edge] (-0.55,0) .. controls (-0.72,-0.54) and (-0.18,-1.08) .. (0,-1.25);
    \draw[annular edge] (0,-1.25) .. controls (-1.02,-0.86) and (-1.02,0.86) .. (0,1.25);
    \node at (0,-2.02) {$(1,3,4,2)$};
\end{scope}

\begin{scope}[xshift=12.0cm]
    \SecSevenAnnulusBase
    \draw[annular edge] (0,1.25) .. controls (-0.18,1.08) and (-0.72,0.54) .. (-0.55,0);
    \draw[annular edge] (-0.55,0) .. controls (-0.55,-0.90) and (0.55,-0.90) .. (0.55,0);
    \draw[annular edge] (0.55,0) .. controls (0.72,-0.54) and (0.18,-1.08) .. (0,-1.25);
    \draw[annular edge] (0,-1.25) .. controls (-1.02,-0.86) and (-1.02,0.86) .. (0,1.25);
    \node at (0,-2.02) {$(1,4,3,2)$};
\end{scope}
\end{tikzpicture}
\caption{The four elements of $S_{NC}(2,2)$ in Example~\ref{ex:1234}. The points $1,2$ are ordered clockwise on the outer circle and $3,4$ counterclockwise on the inner circle. Each cycle bounds a disk in the annulus and follows its boundary clockwise; arrows in the middle of the arcs record the cycle order.}
\label{fig:snc22}
\end{figure}

\begin{definition}\cite[Section 5.4]{MS}
    The second-order non-crossing annular partitions, $[NC(m)\times NC(n)]$, consist of elements $\sigma=(\pi_{1},M_{1})\times (\pi_{2},M_{2})$, where $\pi_{1}\times \pi_{2}\in NC(m)\times NC(n)$ and where $M_{1}\in \pi_{1}$ and $M_{2}\in \pi_{2}$. The blocks $M_{1}$ and $M_{2}$ are designated as marked blocks of $\pi_{1}$ and $\pi_{2}$, respectively.
\end{definition}

\begin{theorem}
    The classical covariance--cumulant formula is due to Collins, Mingo,
    \'{S}niady, and Speicher \cite{CMSS07}; see also \cite[Section 5.4]{MS}.
    In the fixed-temperature regime, for $\theta=1$, when the sequence of measures $\{\mu_{N}\}_{N\ge 1}$ satisfies a CLT, its limiting covariance $\alpha_{m,n}$ can be expressed in terms of the first-order free cumulants $\{\kappa_{l}\}_{l\ge 1}$ and second-order free cumulants $\{\kappa_{k_{1},k_{2}}\}_{k_{1},k_{2}\ge 1}$, by the following formula:
    \begin{equation}\label{eq_classical}
        \alpha_{m,n}=\sum_{\pi\in S_{NC}(m,n)}\prod_{B_{i}\in \pi}\kappa_{|B_{i}|}+\sum_{\sigma\in [NC(m)\times NC(n)]}\kappa_{\sigma},
    \end{equation}
    where for each $\sigma\in [NC(m)\times NC(n)]$ that consists of unmarked blocks $B_{1,i}$'s in $NC(m)$ and a marked block $M_{1}$ with size $k_{1}$, unmarked blocks $B_{2,j}$'s in $NC(n)$ and a marked block $M_{2}$ with size $k_{2}$, we let
    \begin{equation}
        \kappa_{\sigma}=\prod_{i}\kappa_{|B_{1,i}|}\prod_{j}\kappa_{|B_{2,j}|}\cdot \kappa_{k_{1},k_{2}}.
    \end{equation}
\end{theorem}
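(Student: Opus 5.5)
Since the displayed formula \eqref{eq_classical} is the theorem of \cite{CMSS07}, the plan is to prove that it agrees with our formula \eqref{eq_fixedtempcov} at $\theta=1$. Both formulas hold under the same hypotheses, and the second-order free cumulants of \cite{CMSS07} are the limits $\kappa_{k_1,k_2}$ in \eqref{eq_cltfixed}, so this identification gives the statement for the fixed-temperature CLT. The identification splits into two independent, weight-preserving bijections. The first identifies the part of \eqref{eq_fixedtempcov} with no second-order cumulant with the sum over $S_{NC}(m,n)$. The second identifies the part with one second-order cumulant with the sum over $[NC(m)\times NC(n)]$. Throughout, I use the bijection between Lukasiewicz paths and non-crossing partitions (\cite[Proposition~9.8]{NS}). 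Under it, an up or flat step at time $t$ of increment $|B|-1$ opens a block of size $|B|$ with first leg at $t$. Each down step is an auxiliary leg attached to the innermost open block that still needs legs. The height $E(t)$ counts the auxiliary legs still owed.

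\textbf{First-order part.} Fix $E\in L(m,n)$ and cut its linear non-crossing partition $\pi_E$ of $\{1,\ldots,m+n\}$ at $m$. The quantity $C(E)=E(m)$ counts the legs owed across the cut, and these legs belong to the through-blocks. The down steps in $\{\bar\tau+1,\ldots,m\}$, together with the extra position, give the $A(E)$ admissible places at which to close the outer circle. The down steps in $\{m+1,\ldots,\tau-1\}$, together with $\tau$, give the $B(E)$ admissible places at which to close the inner circle. I would define a map
\[
\Phi:\{(E,a,b,c):E\in L(m,n),\ a\le A(E),\ b\le B(E),\ c\le C(E)\}\longrightarrow S_{NC}(m,n).
\]
The map is built in three steps. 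First, wrap $\{1,\ldots,m\}$ clockwise and $\{m+1,\ldots,m+n\}$ counterclockwise. Next, rotate the outer labels so that the root sits at the $a$-th admissible position and the inner labels so that the root sits at the $b$-th one. Finally, use $c$ to select which of the $C$ owed legs is reconnected to the inner circle, which fixes the cyclic order inside the through-cycles and resolves the ambiguity of Proposition~\ref{prop:throughcycle} (compare Example~\ref{ex:1234}). The map $\Phi$ preserves block sizes, so $\prod_{B_i}\kappa_{|B_i|}$ is unchanged. I would prove that $\Phi$ is a bijection by building its inverse. Given $\pi\in S_{NC}(m,n)$, cut the outer circle just before $1$ and the inner circle just before $m+1$, and read the Lukasiewicz path of the resulting linear partition. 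The rotation offsets and the chosen through-leg are then recovered as $(a,b,c)$. As a check, I would verify the case $m=n=1$ and compare with the four permutations of Example~\ref{ex:1234} for $m=n=2$.

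\textbf{Second-order part.} For a pair $(E,s)$ with $s\in\{\bar\tau,\ldots,m\}\cap U(E)$, I would split the step at $s$ into a marked block of size $k_1(E,s)$ in $NC(m)$ and a marked block of size $k_2=C(E)$ in $NC(n)$. The remaining steps of $E$ restricted to $[0,m]$ and to $[m,m+n]$ then give two ordinary non-crossing partitions. The factor $B(E)$ should account for the rotations of the inner circle needed to place the marked block $M_2$, since the down steps in $(m,\tau]$ are exactly the legs of $M_2$ together with the choice of cut. Weights match because the product over $t(B_i)\ne s$ becomes the product over unmarked blocks, and the marked pair contributes $\kappa_{k_1,k_2}$. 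The convention $\kappa_{k_1,k_2}=0$ for $k_1\le0$ discards exactly the steps that cannot host $M_1$.

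\textbf{Main obstacle.} The hardest step will be proving that $\Phi$ is injective and surjective: the multiplicity $A(E)B(E)C(E)$ must reproduce exactly the non-uniqueness of annular permutations with a single through-cycle. I would first settle the case where $\pi$ has at least two through-cycles, where Proposition~\ref{prop:throughcycle} lets me work with partitions, and then handle the single through-cycle case separately by counting cyclic orders.
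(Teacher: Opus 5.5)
Your overall reduction is the one the paper uses in Section~\ref{sec:comb}: prove \eqref{eq_equiv1} and \eqref{eq_equiv2} via the Lukasiewicz-path/non-crossing-partition bijection, then combine them with \eqref{eq_fixedtempcov}. One correction to the logic: go through Theorem~\ref{thm:fixedtemponlyif} to obtain the coefficient limits, then Theorem~\ref{thm:fixedtempclt}. You cannot assume that the cumulants of \cite{CMSS07} are the BGF limits, because once second-order cumulants are defined through \eqref{eq_classical}, that identification is exactly the content of the statement.

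The genuine gap is the map $\Phi$. Rotating the outer or inner labels moves first legs, so $\Phi(E,a,b,c)$ generally lies over a path other than $E$. Your inverse reads $E$ directly off $\pi$, so the two are incompatible, and injectivity is never justified. Concretely, take $m=3$, $n=1$, $\pi_E=\{1\},\{2,3,4\}$. Then $E=(0,0,2,1,0)$, $C(E)=1$, $\bar\tau=2$, $\tau=4$, $A(E)=2$, $B(E)=1$. The annular permutations with the same first legs and block sizes are $(1)(2,3,4)$ and $(1)(2,4,3)$. The outer rotations of $(1)(2,3,4)$ are $(1,4,3)(2)$ and $(1,2,4)(3)$, which lie over other paths, and the inner rotation is trivial, so no rotation produces the second element.

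What is needed is a fiberwise count: with every first leg and block size fixed, exactly $A(E)B(E)C(E)$ annular permutations occur. The paper gets this by reassigning only auxiliary legs:
\begin{enumerate}
\item an inner reconnection along the enclosing chain of an auxiliary position in $\{m+1,\ldots,\tau-1\}$, giving $B(E)$ choices, recovered from the least inner through-position;
\item an outer pull-off along the chain of $e\in\{\bar\tau+1,\ldots,m\}\cap D(E)$, giving $A(E)$ choices, recovered as the first outer auxiliary position whose predecessor changes;
\item a cyclic gluing fixed by where the default predecessor of $e$ (or of $\tau$) lands in $V$, giving $C(E)$ choices.
\end{enumerate}
In the example above, $(1)(2,4,3)$ is the pull-off at $e=3$. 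One still needs the exhaustiveness argument, which your plan does not supply: positions at or before $\bar\tau$ keep their default predecessors, and the least inner through-position is at most $\tau$.

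Your diagnosis of the obstacle is also off. The multiplicity $A(E)B(E)C(E)$ is not just the cyclic-order ambiguity of a single through-cycle. For $m=n=2$ and $\pi_E=\{1,4\},\{2,3\}$ one has $A(E)=B(E)=1$ and $C(E)=2$. The fiber consists of the two distinct pairings $\{1,4\},\{2,3\}$ and $\{1,3\},\{2,4\}$, each with two through-cycles. A single fiber can even mix the two cases: for $m=4$, $n=2$, $\pi_E=\{1,2,6\},\{3,4,5\}$, the fiber contains both $(1,2,6)(3,4,5)$ and $(1,2,4)(3,5,6)$. So splitting by the number of through-cycles does not organize the count over $E$.

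The second-order part has the same flaw. $\pi_1$ and $M_1$ are forced by $E|_{[0,m]}$ and $s$. In $NC(n)$ the first legs of the unmarked inner blocks are fixed by $E$, so rotating the inner circle is not allowed. The factor $B(E)$ counts the admissible values of $\min M_2$: either $\tau$, or one of the auxiliary positions before $\tau$ in the default inner partition. After that choice all legs are forced (Example~\ref{ex:second-order-walk}). Also, in the default the down steps in $(m,\tau)$ are legs of unmarked blocks, not legs of $M_2$.
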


The main result of this section is the following.
\begin{theorem}
    For any $m,n\in \Z_{\ge 1}$, given the same collections of first-order free cumulants and second-order free cumulants, the two covariance--cumulant formulas given in Eqn.~\eqref{eq_fixedtempcov} and (\ref{eq_classical}) are equivalent, when $\theta=1$. More precisely, we have
    \begin{equation}\label{eq_equiv1}
        \sum_{E\in L(m,n)}A(E)B(E)C(E)\prod_{B_i\in E}\kappa_{|B_{i}|}=\sum_{\pi\in S_{NC}(m,n)}\prod_{B_{i}\in \pi}\kappa_{|B_{i}|},
                  \end{equation}
                  and
    \begin{equation}\label{eq_equiv2}
        \sum_{E\in L(m,n)}\sum_{s\in \{\bar{\tau},...,m\}\cap U(E)}B(E)\kappa_{k_{1}(E,s),k_{2}(E)}\prod_{B_i\in E,\ t(B_{i})\ne s}\kappa_{|B_{i}|}=\sum_{\sigma\in [NC(m)\times NC(n)]}\kappa_{\sigma}.
    \end{equation}
\end{theorem}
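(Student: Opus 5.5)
We prove the two identities separately, each by a weight-preserving bijection (or finite-to-one correspondence with multiplicities) between walk--block data and the classical annular objects. The basic tool is the standard bijection between Lukasiewicz paths and non-crossing partitions used for Theorem~\ref{thm:lln} (cf. \cite[Proposition~9.8]{NS}). It reads the positions $1,\dots,k$ left to right. It opens a block of size $h+1$ at an up/flat step of increment $h$ and closes one pending leg at each down step. The height $E(t)$ counts the open legs still waiting to be attached. Under this dictionary $\prod_{B_i\in E}\kappa_{|B_i|}$ is exactly $\prod_{B\in\pi}\kappa_{|B|}$. For $E\in L(m,n)$ we read positions $1,\dots,m$ as the outer circle and $m+1,\dots,m+n$ as the inner circle. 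The height $C=E(m)>0$ at the splitting point then counts the legs opened on the outer circle that must be completed on the inner circle, that is, the strands of the through-cycles.

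For \eqref{eq_equiv1}, we first fix a non-crossing permutation $\pi\in S_{NC}(m,n)$. We cut the annulus along a radial segment placed just before position $1$ on the outer circle and just after position $m+n$ on the inner one. Reading the resulting linear order $1,\dots,m+n$ gives a non-crossing partition of $[m+n]$ whose Lukasiewicz path $E$ has $E(m)>0$; this holds because at least one cycle connects the two circles. The converse direction is not injective, and the factor $A(E)B(E)C(E)$ should count the ways to recover $\pi$ from $E$. We will show that the $C$ open strands at time $m$ can be arranged cyclically around the annulus in $C$ rotational positions relative to the cut. On the outer side, the choice of where the cut sits within the last excursion above level $C$ before $m$ gives the $A(E)$ choices: these are indexed by $\{\bar\tau+1,\dots,m\}\cap D(E)$ together with one extra choice, matching the definition of $A(E)$. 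The inner side similarly gives $B(E)$ choices, indexed by $\{m+1,\dots,\tau-1\}\cap D(E)$ plus one. Concretely, we will build a bijection between $S_{NC}(m,n)$ and triples $(E,a,b,c)$ with $E\in L(m,n)$, $a\in[A(E)]$, $b\in[B(E)]$, $c\in[C(E)]$, preserving cycle sizes. We check it on Example~\ref{ex:1234}: there, $m=n=2$, and the single block $\{1,2,3,4\}$ corresponds to the path $0,0,3,\dots$ with $C=3$, $A=B=1$; the remaining multiplicity should come from the partitions with two through-blocks. The $C$ rotations account for the fact that a single through-cycle yields several permutations over one partition, as in Proposition~\ref{prop:throughcycle}.

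For \eqref{eq_equiv2}, we take a marked pair $(\pi_1,M_1)\times(\pi_2,M_2)$. We encode it by concatenating the paths of $\pi_1$ and $\pi_2$, but we modify the step of $\min M_1$: its increment is enlarged by $|M_2|$ so that $C=|M_2|$ legs stay open across the splitting point and are closed on the inner side at the legs of $M_2$. This creates a path in $L(m,n)$ with a distinguished up step $s\in\{\bar\tau,\dots,m\}\cap U(E)$, satisfying $k_1=|M_1|$ and $k_2=C=|M_2|$. We then verify that $\min M_2$ is recovered by a choice among the $B(E)$ positions, by the same argument as in the inner-side count of \eqref{eq_equiv1}. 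The remaining blocks carry the weights $\kappa_{|B_i|}$, giving $\kappa_\sigma$.

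The main obstacle is the first identity: the multiplicity $A(E)B(E)C(E)$ has to be realized exactly, not merely asymptotically. We would first prove the cases with exactly one through-cycle, where the $C$ rotations are transparent, and then induct on the number of through-cycles by peeling off the outermost through-block. We would check small cases such as $(m,n)=(1,1),(2,1),(2,2)$ against Example~\ref{ex:1234} to fix the conventions.
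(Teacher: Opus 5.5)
Your skeleton matches the paper's. You send each annular object to the Lukasiewicz path recording first-leg positions and block sizes. You count the fibre over a fixed $E$ as (outer choices)$\times$(inner choices)$\times$(cyclic gluings), indexed by $\{\bar\tau+1,\ldots,m\}\cap D(E)$ plus a default, $\{m+1,\ldots,\tau-1\}\cap D(E)$ plus a default, and $C(E)$ rotations. For \eqref{eq_equiv2}, you enlarge the increment at $\min M_1$ by $|M_2|$.

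\emph{The gap.} For \eqref{eq_equiv1} the fibre count is only asserted (``we will build a bijection''), and that count is the whole content of the identity. You need three things: a concrete default permutation $\tilde\pi_0$ over $E$; explicit moves realising every triple of choices without crossings; and an exhaustion argument showing nothing else lies over $E$. In the paper these are as follows.
\begin{itemize}
\item The inner choice is the least inner through-position $u_1\in\{\tau\}\cup(\{m+1,\ldots,\tau-1\}\cap D(E))$. It is realised by shifting the auxiliary legs of a whole nested chain of enclosing blocks past the through-positions.
\item The outer choice is the first outer auxiliary position in $(\bar\tau,m]$ whose predecessor differs from its default one. It is realised by a cyclic pull-off along the nested outer chain.
\item The gluing is recorded by where that redirected predecessor lands in the inner through-set.
\end{itemize}
Exhaustion then needs three facts: outer auxiliary legs at or before $\bar\tau$ keep their default predecessors; the redirected predecessor can only point into the reconnected inner set; and $u_1\le\tau$, by a size count. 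Your heuristic ``the choice of where the cut sits'' describes none of this, since the positions are labelled and your cut was fixed. Your proposed induction, peeling off the outermost through-block, comes with no mechanism. A single choice moves legs of whole nested chains of blocks, through-blocks and ordinary blocks alike, so it is unclear how $A(E)$, $B(E)$, $C(E)$ would behave under peeling.

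\emph{Errors to fix.}
\begin{itemize}
\item Cutting the annulus and reading $1,\ldots,m+n$ linearly does not give a non-crossing partition. For example, $(1,3)(2,4)\in NC_2(2,2)$ is a crossing partition of $[4]$. Define $E$ directly from first legs and sizes, as the paper does; this is a valid Lukasiewicz path for every partition, since $\sum_{\min B\le t}|B|\ge t$.
\item Your check of Example~\ref{ex:1234} is miscomputed. The block $\{1,2,3,4\}$ gives $E=(0,3,2,1,0)$, with $C(E)=2$, $B(E)=1$ and $A(E)=2$ (from the down step at time $2$). So $A(E)B(E)C(E)=4$ accounts exactly for the four permutations. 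The two-through-block pairings $(1,3)(2,4)$ and $(1,4)(2,3)$ lie over the different path $(0,1,2,1,0)$, of weight $2$. Attributing part of the multiplicity to them would contradict the fibrewise bijection you are aiming for.
\item For \eqref{eq_equiv2}, you cannot delegate the count $B(E)$ to the missing inner count above, but a short argument suffices. Once $u=\min M_2$ is fixed, all first legs and sizes of $\pi_2$ are fixed, so $\pi_2$ is unique if it exists. Its path equals $E-C(E)$ on $[m,u)$ and $E$ on $[u,m+n]$. It is therefore admissible if and only if $u\in D(E)$ and $u\le\tau$, which gives exactly $B(E)$ choices.
\end{itemize}
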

\begin{proof}
    The equivalence between Eqn.~\eqref{eq_fixedtempcov} and (\ref{eq_classical}) follows directly from Eqn.~\eqref{eq_equiv1} and (\ref{eq_equiv2}), and we start by proving Eqn.~\eqref{eq_equiv1}.
    First, it is classical (see, e.g., \cite[Proposition 9.8]{NS}) that one can associate any $\pi\in NC(m+n)$ with a unique element $E\in L(m+n)$, according to the following relation:
    \begin{itemize}
        \item For each block $B_{i}\in \pi$, let $t_{i}\in \{1,2,...,m+n\}$ be the minimal element in $B_{i}$. Then the corresponding $E$ of $\pi$ has an up or flat step at $t_{i}$ of height $|B_{i}|-1$. At every time not among the $t_i$'s, $E$ has a down step of height $-1$.
        \item One can check directly that the above map is bijective.
    \end{itemize}
    Therefore we identify $E$ with a partition $\pi\in NC(m+n)$. Furthermore, we say that $\tilde{\pi}\in S_{NC}(m,n)$ corresponds to $\pi\in NC(m+n)$ if there is a one-to-one correspondence of each cycle in $\tilde{\pi}$ with each block in $\pi$, where the two sides have the same minimal element in $\{1,2,...,m+n\}$ and cardinality. Clearly, by definition any $\tilde{\pi}$ corresponds to exactly one $\pi$.

    After all these preparations, it suffices to check that for a fixed $\pi\in NC(m+n)$, the number of elements $\tilde{\pi}$ in $S_{NC}(m,n)$ corresponding to $\pi$ (or its corresponding $E$) is given by $A(E)B(E)C(E)$.

    The trivial case is when $C(E)=0$: this corresponds to the non-crossing partitions on $(m,n)$-annulus, where there is no through-cycle, hence outside of $S_{NC}(m,n)$ by Definition \ref{def:noncrossingpermutation}.

    For $\pi$ where $C(E)\ge 1$, we set a ``default'' element $\tilde{\pi}_{0}$ in $S_{NC}(m,n)$ corresponding to $\pi$, such that as a non-crossing partition of $\{1,2,...,m+n\}$, $\tilde{\pi}_{0}$ is the same as $\pi$. In particular, when drawn on the annulus, the blocks/cycles\footnote{These two are identical when drawn purely on the inner or outer circle.} on $\{m+1,...,\tau-1\}$ (which might be empty) form a complete non-crossing partition on the inner circle, and at position $\tau$ on the inner circle there must be a path connecting the inner and outer circles.

This default drawing can be obtained directly from the disk drawing of $\pi$: cut the disk between the positions $m$ and $m+1$, and bend the boundary segment $m+1,\ldots,m+n$ into the inner circle, reversing its boundary orientation. The path height $E(m)=C(E)$ counts the legs open across this splitting point. Thus $C(E)>0$ guarantees at least one through-cycle, and the construction preserves every first-leg position and block cardinality.
We fix the cycle order of $\tilde\pi_0$ by listing the elements of each block of $\pi$ in increasing order. Throughout the reconnections below, position labels and first-leg positions stay fixed; only the attachments of auxiliary legs are reassigned. An enclosing block means a block whose span contains the block under consideration in the default disk partition.

\emph{Inner reconnection.}
Let $V_0$ be the inner positions belonging to through-cycles of $\tilde\pi_0$. Then $|V_0|=C(E)$ and $\min V_0=\tau$. The available auxiliary positions are those before $\tau$ together with $V_0$; auxiliary positions in purely inner blocks after $\tau$ do not participate.

Besides the default choice, choose an auxiliary position $e_i\in\{m+1,\ldots,\tau-1\}\cap D(E)$. Select $e_i$ and all auxiliary positions to its right but before $\tau$ belonging to its block or to an enclosing block. If there are $r$ selected positions, list them together with $V_0$ as
\[
    e_i=u_1<\cdots<u_{r+C(E)}.
\]
Reassign the selected attachments at $u_j$ to $u_{C(E)+j}$, $1\le j\le r$, preserving their block membership and order. Reassign the through-cycle attachments, in their previous order, to
\[
    V=\{u_1,\ldots,u_{C(E)}\}.
\]
All other attachments stay fixed, and overlap between the source and receiving sets is allowed. For the default choice, set $V=V_0$. In particular, $|V|=C(E)$ in every case.

The selected blocks form a nested chain. Their attachments retain their order, while every untouched block, together with its interior blocks, remains in a gap between the reconnected legs. Thus the reconnection is non-crossing and preserves all first legs and block sizes. Its choice is recovered from the least inner position belonging to a through-cycle: this is $\tau$ in the default case and $e_i$ otherwise. There are therefore
\[
    1+\bigl|\{m+1,\ldots,\tau-1\}\cap D(E)\bigr|=B(E)
\]
inner choices. They determine all purely inner cycles and the set $V$, without changing the outer-circle attachments.

\emph{Outer reconnection.}
Recall that $\bar\tau$ is the largest first-leg position among the default through-cycles. Besides the default choice, choose
\[
    e\in\{\bar\tau+1,\ldots,m\}\cap D(E).
\]
Select $e$ and all auxiliary positions to its right on the outer circle belonging to its block or to an enclosing block. Thus several nested blocks may participate. The pull-off cuts the attachments along this chain and reconnects them in cyclic order with the positions in $V$. The unchanged blocks remain in the intervening gaps, so no crossing is introduced.

For precision, the cyclic reassignment can be specified as follows. List the $r$ selected outer positions, followed by the $C(E)$ positions of $V$, in increasing order as $u_1,\ldots,u_{r+C(E)}$, with $u_1=e$. In every cycle of the intermediate configuration, reassign these auxiliary entries by $u_j\mapsto u_{j+r}$, reading subscripts modulo $r+C(E)$, and leave all other entries fixed. This preserves cycle sizes and first legs. The additional cyclic choice among the positions of $V$ is left for the final gluing below. In particular, the edge redirected into the inner circle starts at $\tilde\pi_0^{-1}(e)$, the \emph{default predecessor of $e$}. With no pull-off, the corresponding reference edge starts at $\tilde\pi_0^{-1}(\tau)$. There are
\[
    1+\bigl|\{\bar\tau+1,\ldots,m\}\cap D(E)\bigr|=A(E)
\]
outer choices, each leaving $C(E)$ inner positions for the final cyclic gluing. Figure~\ref{fig:flip-pulloff} illustrates the special case in which the selected attachments belong to a single block.

\begin{figure}[htbp]
\centering
\begin{tikzpicture}[
    >={Stealth[length=1.3mm,width=0.95mm]},
    perm edge/.style={blue,thick,line join=round,shorten <=1.1pt,shorten >=1.1pt,postaction={decorate},decoration={markings,mark=at position 0.55 with {\arrow{>}}}},
    moved edge/.style={perm edge},
    fixed point/.style={circle,fill=black,inner sep=1.35pt},
    available point/.style={circle,draw=blue!70!black,fill=white,inner sep=1.35pt},
    freed point/.style={circle,draw=gray,fill=white,inner sep=1.35pt},
    every node/.style={font=\small}
]
\newcommand{\ReconnectionAnnulus}{%
    \draw[thin] (0,0) circle (1.10);
    \draw[thin] (0,0) circle (0.52);
}

\begin{scope}[xshift=0cm,yshift=2.05cm]
    \ReconnectionAnnulus
    \node[fixed point,label={[font=\scriptsize,blue]below right:$a$}] (fa) at (35:0.52) {};
    \node[fixed point,label={[font=\scriptsize,blue]below:$e_1$}] (feone) at (75:0.52) {};
    \node[fixed point,label={[font=\scriptsize,blue]below:$e_2$}] (fetwo) at (115:0.52) {};
    \node[available point,label={[font=\scriptsize,blue]left:$v_1$}] (fvone) at (155:0.52) {};
    \node[available point,label={[font=\scriptsize,blue]below:$v_2$}] (fvtwo) at (195:0.52) {};
    \draw[perm edge] (fa) -- (45:0.66) arc[start angle=45,end angle=65,radius=0.66] -- (feone);
    \draw[perm edge] (feone) -- (85:0.66) arc[start angle=85,end angle=105,radius=0.66] -- (fetwo);
    \draw[perm edge] (fetwo) -- (125:0.94) arc[start angle=125,end angle=25,radius=0.94] -- (fa);
    \node[below=1.38cm,blue] at (0,0) {default};
\end{scope}

\begin{scope}[xshift=5.60cm,yshift=2.05cm]
    \ReconnectionAnnulus
    \node[fixed point,label={[font=\scriptsize,blue]below left:$a$}] (fba) at (35:0.52) {};
    \node[freed point] at (75:0.52) {};
    \node[freed point] at (115:0.52) {};
    \node[available point,label={[font=\scriptsize,blue]right:$v_1$}] (fbvone) at (155:0.52) {};
    \node[available point,label={[font=\scriptsize,blue]below right:$v_2$}] (fbvtwo) at (195:0.52) {};
    \draw[moved edge] (fba) -- (45:0.94) arc[start angle=45,end angle=-215,radius=0.94] -- (fbvone);
    \draw[moved edge] (fbvone) -- (165:0.66) arc[start angle=165,end angle=185,radius=0.66] -- (fbvtwo);
    \draw[moved edge] (fbvtwo) -- (205:0.66) arc[start angle=205,end angle=385,radius=0.66] -- (fba);
    \node[below=1.38cm] at (0,0) {after flipping $e_1,e_2$};
\end{scope}
\draw[blue!70!black,-{Stealth[length=1.8mm]}] (1.50,2.05) -- (4.10,2.05)
    node[midway,above,font=\scriptsize] {preserve cyclic order};
\node[anchor=east,font=\small] at (-1.45,2.05) {\emph{flip}};

\node[font=\scriptsize,blue] at (-2,-0.35) {$a,e,f > \bar\tau$};
\begin{scope}[xshift=0cm,yshift=-1.75cm]
    \ReconnectionAnnulus
    \node[fixed point,label={[font=\scriptsize]above left:$a$}] (pa) at (145:1.10) {};
    \node[fixed point,label={[font=\scriptsize]above:$e$}] (peone) at (95:1.10) {};
    \node[fixed point,label={[font=\scriptsize,blue]above right:$f$}] (petwo) at (45:1.10) {};
    \node[available point,label={[font=\scriptsize,blue]below right:$v_1$}] at (60:0.52) {};
    \node[available point,label={[font=\scriptsize,blue]below left:$v_2$}] at (120:0.52) {};
    \draw[perm edge] (pa) .. controls (123:0.84) and (111:0.84) .. (peone);
    \draw[perm edge] (peone) .. controls (78:0.84) and (63:0.84) .. (petwo);
    \draw[perm edge] (petwo) .. controls (75:0.74) and (118:0.74) .. (pa);
    \node[fixed point,label={[font=\scriptsize]below left:$\bar\tau$}] (pt) at (225:1.10) {};
    \node[below=1.38cm] at (0,0) {default};
\end{scope}

\begin{scope}[xshift=5.60cm,yshift=-1.75cm]
    \ReconnectionAnnulus
    \node[fixed point,label={[font=\scriptsize]above left:$a$}] (pba) at (145:1.10) {};
    \node[freed point] at (95:1.10) {};
    \node[freed point] at (45:1.10) {};
    \node[available point,label={[font=\scriptsize,blue]below right:$v_1=e'$}] (pbvone) at (60:0.52) {};
    \node[available point,label={[font=\scriptsize,blue]below left:$v_2$}] (pbvtwo) at (120:0.52) {};
    \draw[moved edge] (pba) -- (135:0.86) arc[start angle=135,end angle=50,radius=0.86] -- (pbvone);
    \draw[moved edge] (pbvone) -- (70:0.66) arc[start angle=70,end angle=110,radius=0.66] -- (pbvtwo);
    \draw[moved edge] (pbvtwo) .. controls (140:0.75) and (165:1.02) .. (pba);
    \node[fixed point,label={[font=\scriptsize]below left:$\bar\tau$}] (pbt) at (225:1.10) {};
    \node[below=1.38cm,blue] at (0,0) {after reconnecting $e,f$};
\end{scope}
\draw[blue!70!black,-{Stealth[length=1.8mm]}] (1.50,-1.75) -- (4.10,-1.75)
    node[midway,above,font=\scriptsize,blue] {reconnect in cyclic order};
\node[anchor=east,font=\small] at (-1.45,-1.75) {\emph{pull-off}};
\end{tikzpicture}
\caption{Single-block examples of the reconnections; every displayed cycle is oriented clockwise. Top: the attachments at $e_1,e_2$ are reassigned to $v_1,v_2$. Bottom: the selected outer positions are $e,f$, with $a,e,f>\bar\tau$. Here $a=\tilde\pi_0^{-1}(e)$ and the chosen inner endpoint is $e'=v_1$, so the new cycle contains the edge $a\to e'$. Position labels stay fixed. Hollow blue and gray points indicate reassigned attachments, not empty vertices; all complementary cycles are omitted. In general, the selected positions may belong to several nested blocks as described in the proof.}
\label{fig:flip-pulloff}
\end{figure}

\emph{Cyclic gluing and inverse recovery.}
For fixed inner and outer choices, choose $e'\in V$ and require
\[
    e'=\begin{cases}
        \tilde\pi\bigl(\tilde\pi_0^{-1}(e)\bigr),&\text{for the outer choice }e,\\
        \tilde\pi\bigl(\tilde\pi_0^{-1}(\tau)\bigr),&\text{for the default outer choice}.
    \end{cases}
\]
The remaining connections are then made in cyclic order.  Thus there are exactly $C(E)$ non-crossing gluings, each preserving every first leg and cycle size.

To recover the choices from $\tilde\pi$, first read its least inner position $u_1$ in a through-cycle. As above, this determines the inner choice and hence $V$. Next compare predecessors in $\tilde\pi$ and $\tilde\pi_0$: the outer choice is
\[
    e=\min\bigl\{j\in\{\bar\tau+1,\ldots,m\}\cap D(E):
       \tilde\pi^{-1}(j)\ne\tilde\pi_0^{-1}(j)\bigr\},
\]
with an empty set indicating the default choice. Indeed, every smaller outer auxiliary position retains its default predecessor, whereas the default predecessor of $e$ now points into $V$. Finally, the displayed formula recovers $e'$. Thus the data $(u_1,e,e')$ recover the permutation, including its cycle order.

To verify that these rules exhaust all annular permutations corresponding to $\pi$, first note two size constraints. Some through-cycle has its first leg in $[\bar\tau,m]$. Also, the least inner through-position is at most $\tau$, since the pure inner blocks beginning in $[m+1,\tau)$ have total size $\tau-m-1$.

First, outer auxiliary positions at or before $\bar\tau$ cannot change their default cycle, so the data $e$ of an annular permutation can be realized from the default permutation. Suppose not, let $j$ be the first position before $\bar\tau$ that changes its predecessor from a first position $b$ to position $a$. If $a$ is an earlier outer first position $a<b$,  the position $b$ still needs an auxiliary position $k$ after $j$ to attain its prescribed size. However, the connection from $a$ to $j$ blocks either the connection of $b$ and $k$ or the cycle beginning in $[\bar\tau,m]$: their first legs lie on opposite outer boundary arcs between $a$ and $j$. If $a$ belongs to the inner circle, the cycle containing $a$ and $j$ would insert a through cycle, which blocks the later through-cycle beginning in $[\bar\tau,m]$. Hence every outer auxiliary position at or before $\bar\tau$ retains its default predecessor.

Second, at the first changed outer auxiliary position $e$, its default predecessor must now point to the inner circle. By minimality of $e$ and the fixed first legs and cycle sizes, suppose the predecessor is connected to an outer position after $e$; this would cross the cycle containing $e$, or violate the outer cyclic order if it is the same cycle. Moreover, this inner image cannot be an auxiliary position $u>\tau$ of a default pure inner cycle with first leg $b$. Indeed, $\tau<b<u$, and there is already a through-connection at an inner position at most $\tau$. To avoid these two through-connections, all pure inner cycles beginning in $[b,u)$ would have to remain within $[b,u)$. Their prescribed total size exceeds $u-b$, however: in the default partition they fill this interval and also occupy $u$. This is impossible. Therefore, the data $e'$ of any annular permutation is in $V_0\cup [\{m+1,...,\tau-1\}\cap D(E)]$. The inner part of the permutation directly gives the reconnection from the default inner circle, so $e'\in V$ and it can be realized from the default permutation.

Once the least inner through-position is fixed, non-crossing forces the inner reconnection along its enclosing chain: a block off that chain stays in an intervening gap, while skipping a participating enclosing block gives alternating endpoints. The same argument, starting at $e$, forces the outer chain. The remaining cyclic gluing is uniquely determined by $e'$ as above. Reversing these steps returns $\tilde\pi_0$, so the choices give a bijection, with $B(E)$ inner choices, $A(E)$ outer choices for each of them, and $C(E)$ gluings for each pair. Their product proves Eqn.~\eqref{eq_equiv1}.

\vspace{0.5cm}

    To prove Eqn.~\eqref{eq_equiv2}, we fix a Lukasiewicz path $E\in L(m,n)$. On the left-hand side, for each choice of $s\in \{\bar{\tau},...,m\}\cap U(E)$ such that $k_1(E,s)\ge 1$ (otherwise the term contributes zero), we obtain a collection of non-crossing annular partitions $\sigma$ that correspond to $(E,s)$, by the following rules:
    \begin{itemize}
        \item The marked block $M_{1}\in\pi_1$ is of size $k_{1}(E,s)\ge1$, and its first leg is at position $s$. The marked block $M_{2}\in\pi_2$ is of size $k_{2}(E)=C(E)$.
        \item As before, the first-leg positions and sizes of the unmarked blocks $B_{1,i}$'s ($B_{2,i}$'s resp.) are determined by the occurrence times and heights of the up or flat steps in $E$ before (after resp.) time $m$, except for the one at time $s$.
    \end{itemize}

Conversely, every $\sigma=(\pi_1,M_1)\times(\pi_2,M_2)$ determines a unique pair $(E,s)$. Set $s=\min M_1$. At the first leg of each unmarked block $B$, give $E$ an increment $|B|-1$; at time $s$, give it the increment $|M_1|+|M_2|-1$; and give it a down step at every remaining position. Before time $m$, this is the Lukasiewicz path of $\pi_1$ with $|M_2|$ additional open legs from time $s$ onward. Hence it stays nonnegative, satisfies $E(m)=|M_2|$, and has $s\in\{\bar\tau,\ldots,m\}\cap U(E)$. After time $m$, comparison with the Lukasiewicz path of $\pi_2$ shows that these $|M_2|$ open legs are closed precisely at the positions of $M_2$; the path again stays nonnegative and ends at zero. Consequently $E\in L(m,n)$,
\[
    k_1(E,s)=|M_1|,\qquad k_2(E)=|M_2|.
\]
This construction is inverse to the two rules above. Thus the pairs $(E,s)$ partition all terms on the right-hand side of Eqn.~\eqref{eq_equiv2}, and it remains only to count the partitions corresponding to one fixed pair.

For fixed $(E,s)$, the outer partition of its corresponding elements in $NC(m)\times NC(n)$ is uniquely determined, so we only need to count the inner partitions. We construct a default partition by scanning the inner positions in increasing order and assigning each auxiliary leg to the most recently opened unmarked block still needing a leg. The remaining $C(E)$ positions form $M_2^{(0)}$, with $\min M_2^{(0)}=\tau$; the unmarked blocks before $\tau$ form a complete non-crossing partition.

Number these unmarked blocks by increasing first-leg position. Choose an auxiliary position $e<\tau$ in the default partition. Select $e$ and all auxiliary legs to its right but before $\tau$ whose block index is no larger than that of the block containing $e$. List the $r$ selected positions together with $M_2^{(0)}$ as
\[
    u_1<\cdots<u_{r+C(E)}.
\]
Move the selected leg at $u_i$ to $u_{C(E)+i}$, retaining its block membership, and assign $u_1,\ldots,u_{C(E)}$ to $M_2$. All other legs stay fixed. Note that overlap is allowed, and $\min M_2=e$.

\begin{example}\label{ex:second-order-walk}
Let $m\ge1$ and take the outer partition to consist of the single marked block $M_1=[m]$. Let $n=9$. Relabel the inner positions $m+j$ by $j$. Consider the default inner partition
\[
 B_{2,1}^{(0)}=\{1,2,5\},\qquad
 B_{2,2}^{(0)}=\{3,4\},\qquad
 M_2^{(0)}=\{6,7,8,9\}.
\]
Then $C(E)=4$, $\tau=m+6$, and $B(E)=4$. The three non-default choices start at the locally labeled legs $2,4,5$, selecting respectively $\{2,5\}$, $\{4,5\}$, and $\{5\}$. In particular, choosing $4$ moves legs from \emph{both} nested unmarked blocks: $4$ moves to $8$ and $5$ to $9$, giving
\[
 B_{2,1}=\{1,2,9\},\qquad B_{2,2}=\{3,8\},\qquad M_2=\{4,5,6,7\}.
\]
Figure~\ref{fig:walk-second-order-configurations} shows all four completions. Their total weight is $4\kappa_{m,4}\kappa_3\kappa_2$, as required.

\end{example}

By viewing such an operation as a map from the default partition to partitions of $n$, we have the following properties of this operation: 
\begin{itemize}
    \item It is injective. 
    \item The first-leg position of every unmarked block is unchanged.
    \item Any moved unmarked block has its last fixed leg to the left of $u_1$ and its first moved leg to the right of $u_{C(E)}$, so $M_2$ lies between two consecutive legs of that block. By the selection rule, every unmoved block and all blocks nested inside it stay fixed. Its span therefore contains neither a selected position nor a leg of $M_2^{(0)}$, and hence no leg of $M_2$. Thus it lies to the left or right of $M_2$, or entirely in a gap between consecutive marked legs. Finally, the moved blocks are nested in the default partition, and the shift preserves the order of their selected legs. Together with the fixed interiors just described, this also prevents crossings between unmarked blocks. Hence the output partition is non-crossing.
    \item Conversely, in any partition corresponding to $(E,s)$, the first leg of $M_2$  cannot exceed $\tau$, since the unmarked blocks starting before $\tau$ have total size $\tau-m-1$ and cannot also accommodate position $\tau$. Once this first leg is fixed, all block sizes and first-leg positions are fixed and they determine a non-crossing partition uniquely. Thus every target partition is either the default partition or the output of the corresponding choice $e$.
\end{itemize}

By definition, the range of the map above has $B(E)-1$ elements. This together with the input partition gives the factor $B(E)$, proving Eqn.~\eqref{eq_equiv2}.
\end{proof}


\begin{figure}[htbp]
\centering
\begin{tikzpicture}[
    every node/.style={font=\small},
    unmarked/.style={draw=blue!75!black,thick},
    marked/.style={draw=blue,thick,densely dashed}
]
\newcommand{\SecSevenNinePoints}{%
    \draw[blue!25] (-0.12,0) -- (4.76,0);
    \foreach \j in {1,...,9}{
        \coordinate (v\j) at ({0.58*(\j-1)},0);
        \node[below=4pt] at (v\j) {$\j$};
    }
    \draw[blue!45,dotted] (2.61,-0.08) -- (2.61,-0.47);
}
\newcommand{\SecSevenArc}[3]{%
    \draw[#1] ({0.58*(#2-1)},0)
        arc[start angle=180,end angle=0,radius={0.29*(#3-#2)}];
}
\newcommand{\SecSevenVertices}[1]{%
    \foreach \j in {1,...,9}{\fill[blue!75!black] (v\j) circle (1.5pt);}
    \foreach \j in {1,3}{\fill[blue!75!black] (v\j) ++(-2pt,-2pt) rectangle ++(4pt,4pt);}
    \ifnum#1>0\relax
        \draw[blue,thick] (v#1) circle (3.5pt);
    \fi
}

\begin{scope}[xshift=0cm,yshift=0cm]
    \SecSevenNinePoints
    \SecSevenArc{unmarked}{1}{2}
    \SecSevenArc{unmarked}{2}{5}
    \SecSevenArc{unmarked}{3}{4}
    \SecSevenArc{marked}{6}{7}
    \SecSevenArc{marked}{7}{8}
    \SecSevenArc{marked}{8}{9}
    \SecSevenVertices{0}
    \node at (2.32,2.48) {default};
    \node at (2.32,-0.76) {$M_2=\{6,7,8,9\}$};
    \node[font=\scriptsize] at (2.32,-1.16) {no legs moved};
\end{scope}

\begin{scope}[xshift=6.4cm,yshift=0cm]
    \SecSevenNinePoints
    \SecSevenArc{unmarked}{1}{8}
    \SecSevenArc{unmarked}{8}{9}
    \SecSevenArc{unmarked}{3}{4}
    \SecSevenArc{marked}{2}{5}
    \SecSevenArc{marked}{5}{6}
    \SecSevenArc{marked}{6}{7}
    \SecSevenVertices{2}
    \node at (2.32,2.48) {choose $2$: select $\{2,5\}$};
    \node at (2.32,-0.76) {$M_2=\{2,5,6,7\}$};
    \node[font=\scriptsize] at (2.32,-1.16) {$2\mapsto8,\quad5\mapsto9$};
\end{scope}

\begin{scope}[xshift=0cm,yshift=-4.5cm]
    \SecSevenNinePoints
    \SecSevenArc{unmarked}{1}{2}
    \SecSevenArc{unmarked}{2}{9}
    \SecSevenArc{unmarked}{3}{8}
    \SecSevenArc{marked}{4}{5}
    \SecSevenArc{marked}{5}{6}
    \SecSevenArc{marked}{6}{7}
    \SecSevenVertices{4}
    \node at (2.32,2.48) {choose $4$: select $\{4,5\}$};
    \node at (2.32,-0.76) {$M_2=\{4,5,6,7\}$};
    \node[font=\scriptsize] at (2.32,-1.16) {$4\mapsto8,\quad5\mapsto9$};
\end{scope}

\begin{scope}[xshift=6.4cm,yshift=-4.5cm]
    \SecSevenNinePoints
    \SecSevenArc{unmarked}{1}{2}
    \SecSevenArc{unmarked}{2}{9}
    \SecSevenArc{unmarked}{3}{4}
    \SecSevenArc{marked}{5}{6}
    \SecSevenArc{marked}{6}{7}
    \SecSevenArc{marked}{7}{8}
    \SecSevenVertices{5}
    \node at (2.32,2.48) {choose $5$: select $\{5\}$};
    \node at (2.32,-0.76) {$M_2=\{5,6,7,8\}$};
    \node[font=\scriptsize] at (2.32,-1.16) {$5\mapsto9$};
\end{scope}
\end{tikzpicture}
\caption{The four inner partitions in Example~\ref{ex:second-order-walk}, with local labels increasing from left to right. Arcs join consecutive legs of each block: solid arcs give the unmarked blocks, and dashed arcs give $M_2$. Squares mark the unchanged first legs $1,3$; a ring marks the chosen auxiliary position. The chosen auxiliary positions lie before $6$, the position of $\tau$. The outer partition is unchanged and omitted.}
\label{fig:walk-second-order-configurations}
\end{figure}

\section{Second-order free probability}
\subsection{High-temperature CLTs for Gaussian and Laguerre ensembles}\label{sec:gaussian-laguerre-high-temp}

Let $\theta_N\to0$ with $N\theta_N\to\gamma>0$. The $N$-particle Gaussian $\beta$-ensemble, with $\beta=2\theta_N$, is the probability measure on $a_1\le\cdots\le a_N$ with density
\begin{equation}\label{eq:gaussian-beta-density}
    \frac{1}{Z^{\mathrm G}_{N,\theta_N}}
    \prod_{1\le i<j\le N}(a_j-a_i)^{2\theta_N}
    \prod_{i=1}^N e^{-a_i^2/2}\,d\vec a.
\end{equation}
Its Bessel generating function is
\begin{equation}\label{eq:gaussian-beta-bgf}
    G^{\mathrm G}_{N,\theta_N}(\vec x)
    =\exp\left(\frac12\sum_{i=1}^N x_i^2\right)
    =\exp\left(\frac12p_2(\vec x)\right).
\end{equation}

For the Laguerre ensemble, let $M_N\ge N$ and assume that $M_N/N\to\rho\ge1$. On $0<a_1\le\cdots\le a_N$, consider the density
\begin{equation}\label{eq:laguerre-beta-density}
    \frac{1}{Z^{\mathrm L}_{N,M_N,\theta_N}}
    \prod_{1\le i<j\le N}(a_j-a_i)^{2\theta_N}
    \prod_{i=1}^N
       a_i^{\theta_N(M_N-N+1)-1}e^{-a_i}\,d\vec a.
\end{equation}
The corresponding Bessel generating function, for $\vec x$ in a neighborhood of the origin, is
\begin{equation}\label{eq:laguerre-beta-bgf}
    G^{\mathrm L}_{N,M_N,\theta_N}(\vec x)
    =\prod_{i=1}^N(1-x_i)^{-M_N\theta_N}
    =\exp\left(
       M_N\theta_N\sum_{d\ge1}\frac{p_d(\vec x)}{d}
      \right).
\end{equation}
These Bessel generating functions are standard; see \cite[Section~4.1 and Remark~4.11]{BCG}.

\begin{theorem}\label{thm:gaussian-laguerre-high-temp-clt}
Under the scaling above, both the Gaussian ensemble \eqref{eq:gaussian-beta-density} and the Laguerre ensemble \eqref{eq:laguerre-beta-density} satisfy the high-temperature CLT. Their first-order free $\gamma$-cumulants are
\[
    \kappa_d^{\mathrm G,\gamma}
    =\begin{cases}
       1,&d=2,\\
       0,&d\ne2,
     \end{cases}
    \qquad
    \kappa_d^{\mathrm L,\gamma}=\lambda,
    \qquad \lambda:=\gamma\rho,
\]
and all their second-order free $\gamma$-cumulants vanish:
\[
    \kappa_{r,s}^{\mathrm G,\gamma}
    =\kappa_{r,s}^{\mathrm L,\gamma}=0,
    \qquad r,s\ge1.
\]
Consequently, their limiting covariance matrices are obtained from \eqref{eq_hightempcov} by substituting the respective first-order free cumulants above; in both cases the $Conf_2(m,n)$ contribution is zero.
\end{theorem}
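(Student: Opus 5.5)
The plan is to apply the high-temperature characterization in its \emph{if} direction, Theorem~\ref{thm:hightempclt}, and therefore to read off the coefficients $c_\lambda$ of $\log G_{N,\theta_N}$ directly from the explicit Bessel generating functions \eqref{eq:gaussian-beta-bgf} and \eqref{eq:laguerre-beta-bgf}. In both cases $\log G$ is an explicit linear combination of one-row power sums $p_d(\vec x)$. Consequently $c_\lambda=0$ identically for every partition with $\ell(\lambda)\ge2$, so the second and third lines of \eqref{eq_clthigh} hold trivially with $\kappa^\gamma_{r,s}=0$.

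For the first line, the Gaussian case gives $c_{(2)}=\tfrac12$ and $c_{(d)}=0$ for $d\ne2$. Hence $d\,c_{(d)}\to\mathbf 1_{d=2}$, which is $\kappa^{\mathrm G,\gamma}_d$. In the Laguerre case $c_{(d)}=M_N\theta_N/d$, so
\[
  d\,c_{(d)}=M_N\theta_N=\frac{M_N}{N}\,N\theta_N\longrightarrow\rho\gamma=\lambda .
\]
This verifies the first line with $\kappa^{\mathrm L,\gamma}_d=\lambda$.

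Two technical points need checking before Theorem~\ref{thm:hightempclt} can be invoked. First, both measures must lie in the class of the standing assumptions of Section~\ref{sec:bgf}, namely they must be exponentially decaying. The Gaussian density decays like $e^{-a^2/2}$, and the Laguerre density decays like $e^{-a}$ on $(0,\infty)$. Condition \eqref{eq:technical} needs some $R>0$ with $NR<1$ in the Laguerre case, and such an $R$ exists for each fixed $N$. The Laguerre exponent $\theta_N(M_N-N+1)-1>-1$ is positive-integrable near $0$, so the density is a genuine probability density. Second, the BGF identities themselves are taken from \cite{BCG} as quoted. They are valid in a neighborhood of the origin (for Laguerre, $|x_i|<1$), which is all that Definition~\ref{def:clambda} requires.

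Given these inputs, Theorem~\ref{thm:hightempclt} yields the high-temperature CLT together with the covariance formula \eqref{eq_hightempcov}. Every term of the $Conf_2(m,n)$ sum in \eqref{eq_hightempcov} carries a factor $\kappa^\gamma_{k_1,k_2}$ through Definition~\ref{def:functional2}. Since all these vanish, that contribution is zero, and the covariance reduces to the $Conf_1$ sum with the stated first-order cumulants substituted.

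The only step with any potential subtlety is the domain question, namely ensuring that the exponential-decay hypothesis and the local validity of the BGF formula match what Theorem~\ref{thm:hightempclt} uses. Everything else is direct coefficient reading.
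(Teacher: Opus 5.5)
Your proposal is correct and follows essentially the same route as the paper. You read off $c_{(d)}$ from the logarithms of the explicit Bessel generating functions, note that $c_\lambda\equiv0$ for $\ell(\lambda)\ge2$, and invoke Theorem~\ref{thm:hightempclt}; the $Conf_2(m,n)$ term vanishes because every $I^{(2)}_k$ carries a factor $\kappa^\gamma_{k_1,k_2}=0$. Your extra checks are correct details the paper leaves implicit: both measures are exponentially decaying (with $R<1/N$ for Laguerre), and the exponent $\theta_N(M_N-N+1)-1>-1$ makes the density integrable.
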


\begin{proof}
Taking logarithms in \eqref{eq:gaussian-beta-bgf} and \eqref{eq:laguerre-beta-bgf} gives
\[
    \log G^{\mathrm G}_{N,\theta_N}=\frac12p_2,
    \qquad
    \log G^{\mathrm L}_{N,M_N,\theta_N}
       =M_N\theta_N\sum_{d\ge1}\frac{p_d}{d}.
\]
Hence $c^{\mathrm G}_{(2)}=1/2$ and $c^{\mathrm G}_{(d)}=0$ for $d\ne2$, while
\[
    c^{\mathrm L}_{(d)}=\frac{M_N\theta_N}{d},
    \qquad d\ge1.
\]
For both ensembles, $c_\nu=0$ whenever $\ell(\nu)\ge2$. Since $M_N\theta_N\to\rho\gamma=\lambda$, we obtain
\[
    d\,c^{\mathrm G}_{(d)}\longrightarrow
       \mathbf 1_{\{d=2\}},
    \qquad
    d\,c^{\mathrm L}_{(d)}\longrightarrow\lambda,
\]
together with
\[
    Nr s\,c^{\mathrm G}_{(r,s)}
    =Nr s\,c^{\mathrm L}_{(r,s)}=0,
    \qquad
    N^{\ell(\nu)/2}c^{\mathrm G}_{\nu}
    =N^{\ell(\nu)/2}c^{\mathrm L}_{\nu}=0
    \quad\text{for }\ell(\nu)\ge3.
\]
Thus all assumptions of the high-temperature ``if'' theorem, Theorem~\ref{thm:hightempclt}, hold. The stated CLTs and cumulants follow immediately, and substituting them into \eqref{eq_hightempcov} gives the covariance assertion.
\end{proof}

\subsection{CLT for fixed- and high-temperature \texorpdfstring{$\beta$}{beta}-additions}

We use the notation $a+_{\theta}b$ for the $\theta$-addition of \cite[Section~6.1]{CX}, so that $\beta=2\theta$. Following \cite[Section~2.4]{BCG}, we restrict its inputs to compactly supported real symmetric generalized functions, with the normalization of Section~\ref{sec:bgf}. The addition is characterized by the multiplicativity of Bessel generating functions: if $c=a+_{\theta}b$, then
\begin{equation}\label{eq:betaadd-bgf}
    G^{c}_{N,\theta}(\vec{x})=G^{a}_{N,\theta}(\vec{x})G^{b}_{N,\theta}(\vec{x}).
\end{equation}

\begin{theorem}\label{thm:betaadditionclt}
Let $a(N)$ and $b(N)$ be two sequences of $N$-particle systems whose laws are compactly supported real symmetric generalized functions on $\mathbb R^N$, each of total mass $N!$ as in Section~\ref{sec:bgf}.

\begin{enumerate}[label=\textup{(\roman*)}]
    \item \textbf{Fixed temperature.} Fix $\theta>0$ and suppose that both $a(N)$ and $b(N)$ satisfy a fixed-temperature CLT. Denote their fixed-temperature first- and second-order free cumulants by
    \[
        \{\kappa^{a}_{d},\kappa^{a}_{r,s}\}_{d,r,s\ge 1}
        \quad\text{and}\quad
        \{\kappa^{b}_{d},\kappa^{b}_{r,s}\}_{d,r,s\ge 1},
    \]
    respectively, and set
    \[
        \kappa^c_d:=\kappa^a_d+\kappa^b_d,
        \qquad
        \kappa^c_{r,s}:=\kappa^a_{r,s}+\kappa^b_{r,s}.
    \]
    Then $c(N):=a(N)+_{\theta}b(N)$ satisfies a fixed-temperature CLT, with first- and second-order free cumulants $\kappa^c_d$ and $\kappa^c_{r,s}$. In particular, its limiting covariance is
    \begin{equation}\label{eq:betaadd-fixed-cov}
    \begin{split}
        \alpha^c_{m,n}
        ={}&\frac{1}{\theta}\sum_{E\in L(m,n)}A(E)B(E)C(E)
        \prod_{B_i\in E}\kappa^c_{|B_i|}\\
        &+\frac{1}{\theta}\sum_{E\in L(m,n)}
        \sum_{s\in\{\bar\tau,\ldots,m\}\cap U(E)}
        B(E)\kappa^c_{k_1(E),k_2(E)}
        \prod_{\substack{B_i\in E\\ t(B_i)\ne s}}
        \kappa^c_{|B_i|},
    \end{split}
    \end{equation}
    where $k_1(E)$ and $k_2(E)$ are as in Theorem~\ref{thm:fixedtempclt}.

    \item \textbf{High temperature.} Let $\theta=\theta_N\to0$ with $N\theta_N\to\gamma>0$, and suppose that both $a(N)$ and $b(N)$ satisfy a high-temperature CLT. Denote their first- and second-order free $\gamma$-cumulants by
    \[
        \{\kappa^{a,\gamma}_{d},\kappa^{a,\gamma}_{r,s}\}_{d,r,s\ge 1}
        \quad\text{and}\quad
        \{\kappa^{b,\gamma}_{d},\kappa^{b,\gamma}_{r,s}\}_{d,r,s\ge 1},
    \]
    and set
    \[
        \kappa^{c,\gamma}_d:=\kappa^{a,\gamma}_d+\kappa^{b,\gamma}_d,
        \qquad
        \kappa^{c,\gamma}_{r,s}:=\kappa^{a,\gamma}_{r,s}+\kappa^{b,\gamma}_{r,s}.
    \]
    Then $c(N):=a(N)+_{\theta_N}b(N)$ satisfies a high-temperature CLT, with first- and second-order free $\gamma$-cumulants $\kappa^{c,\gamma}_d$ and $\kappa^{c,\gamma}_{r,s}$. For $j=1,2$, let $I_k^{(j),c}(E,\vec q;\gamma)$ denote the functional $I_k^{(j)}(E,\vec q;\gamma)$ evaluated at these summed cumulants. Its limiting covariance is therefore
    \begin{equation}\label{eq:betaadd-high-cov}
    \begin{split}
        \alpha^{c,\gamma}_{m,n}
        ={}&\sum_{(E,\vec q)\in Conf_1(m,n)}
        \sum_{k=1}^{|P(E,\vec q)|}
        \gamma^{|D(E)|-k}I_k^{(1),c}(E,\vec q;\gamma)\\
        &+\sum_{(E,\vec q)\in Conf_2(m,n)}
        \sum_{k=1}^{|P(E,\vec q)|}
        \gamma^{|D(E)|-k}I_k^{(2),c}(E,\vec q;\gamma).
    \end{split}
    \end{equation}
\end{enumerate}
\end{theorem}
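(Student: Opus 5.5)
The plan is to reduce everything to the two iff characterizations, Theorems~\ref{thm:intro-fixed-iff} and~\ref{thm:intro-high-iff}, using the additivity of the logarithmic BGF. By \eqref{eq:betaadd-bgf}, near the origin we have $\log G^c_{N,\theta}=\log G^a_{N,\theta}+\log G^b_{N,\theta}$. All three functions are holomorphic near $0^N$ and equal $1$ there, since for compactly supported generalized functions the BGFs are entire with value $1$ at the origin. Comparing Taylor coefficients in the power-sum basis, as in Definition~\ref{def:clambda}, then gives
\[
c^c_\lambda(N,\theta)=c^a_\lambda(N,\theta)+c^b_\lambda(N,\theta)\qquad\text{for every partition }\lambda\text{ with }|\lambda|\le N.
\]
Since the power sums $p_\lambda$ with $|\lambda|\le N$ are linearly independent in $N$ variables, this identification is exact for each fixed $\lambda$ once $N$ is large. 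We also need that $c(N)$ is an admissible object for the theorems, namely a compactly supported real symmetric generalized function of mass $N!$. This is the defining property of $\beta$-addition from \cite[Section~2.4]{BCG}, and I would cite it there.

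For part~(i), first apply the ``only if'' direction, Theorem~\ref{thm:fixedtemponlyif}, to $a(N)$ and to $b(N)$ separately. This gives
\[
\frac dN c^{a}_{(d)}\to\theta^{1-d}\kappa^a_d,\qquad (1+\mathbf 1_{r=s})rs\,c^a_{(r,s)}\to\theta^{1-r-s}\kappa^a_{r,s},\qquad c^a_\lambda\to0\ \text{ for }\ell(\lambda)\ge3,
\]
and the analogous statements for $b$. These are linear conditions with the same $\theta$-dependent normalizations on both sides. Adding them therefore shows that the coefficients $c^c_\lambda$ satisfy \eqref{eq_cltfixed} with cumulants $\kappa^c_d=\kappa^a_d+\kappa^b_d$ and $\kappa^c_{r,s}=\kappa^a_{r,s}+\kappa^b_{r,s}$. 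Theorem~\ref{thm:fixedtempclt}, together with Theorem~\ref{thm:vanishinghighercumulants}, then yields the fixed-temperature CLT for $c(N)$. The covariance is \eqref{eq_fixedtempcov} with these summed cumulants, which is exactly \eqref{eq:betaadd-fixed-cov}.

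Part~(ii) follows the same route. Use Theorem~\ref{thm:hightemponlyif} for each input. Add the limits $d\,c_{(d)}\to\kappa^\gamma_d$, $N(1+\mathbf 1_{r=s})rs\,c_{(r,s)}\to\kappa^\gamma_{r,s}$ and $N^{\ell(\lambda)/2}c_\lambda\to0$. Then invoke Theorem~\ref{thm:hightempclt} with Theorem~\ref{thm:vanishinghighercumulants}, and substitute into \eqref{eq_hightempcov} to get \eqref{eq:betaadd-high-cov}.

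One point needs care. The identification of $\kappa^a_d,\kappa^a_{r,s}$ as the input's free cumulants is only meaningful if the constants in the ``only if'' theorems are uniquely determined by the CLT data. They are, because they are defined as limits of the coefficients $c_\lambda$, which are determined by $\mu_N$. Beyond that, the main obstacle is purely one of hypotheses: the general theorems are stated under the standing assumptions, which explicitly include compactly supported generalized functions. So no positivity is needed, and the argument goes through even though $c(N)$ is not known to be a probability measure.
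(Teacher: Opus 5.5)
Your proposal is correct and follows the paper's proof: you take logarithms in \eqref{eq:betaadd-bgf} to get $c^{c,N}_\lambda=c^{a,N}_\lambda+c^{b,N}_\lambda$, apply the ``only if'' directions to each input, add the linear coefficient asymptotics, and then apply the ``if'' directions to $c(N)$, substituting the summed cumulants into \eqref{eq_fixedtempcov} and \eqref{eq_hightempcov}. The two extra points you raise are left implicit in the paper's sketch, which likewise defers to \cite{BCG}: that $c(N)$ is an admissible compactly supported generalized function, and that the cumulants are well defined as limits of the coefficients $c_\lambda$.
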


\begin{proof}
An analogous argument appeared in \cite{BCG} and \cite{CX}, so we only give a sketch.
For $u\in\{a,b,c\}$, let $c^{u,N}_{\lambda}$ be the coefficient of $p_{\lambda}(\vec{x})$ in $\ln G^u_{N,\theta}(\vec{x})$. Taking logarithms in Eqn.~\eqref{eq:betaadd-bgf} gives
\[
    c^{c,N}_{\lambda}=c^{a,N}_{\lambda}+c^{b,N}_{\lambda}
    \qquad\text{for every partition }\lambda.
\]
The only-if statements in Theorems~\ref{thm:intro-fixed-iff} and \ref{thm:intro-high-iff} give the required coefficient asymptotics for the two inputs. The displayed identity shows that the same asymptotics hold for $c(N)$, with the first- and second-order free cumulants replaced by their sums above; the coefficients indexed by partitions of length at least three still vanish under the required scaling. The fixed and high-temperature CLT then follow directly from the if statements in Theorems~\ref{thm:intro-fixed-iff} and \ref{thm:intro-high-iff} respectively. Substituting the summed cumulants into Eqns.~\eqref{eq_fixedtempcov} and \eqref{eq_hightempcov} yields Eqns.~\eqref{eq:betaadd-fixed-cov} and \eqref{eq:betaadd-high-cov}, respectively.
\end{proof}

\subsection{\texorpdfstring{$\gamma\rightarrow\infty$}{gamma to infinity} transition}

We conclude this section by giving a limit transition from the high-temperature covariance formula to its fixed-temperature counterpart.

\begin{theorem}[High-to-fixed-temperature transition]\label{thm:gamma-to-fixed-transition}
Fix $\theta>0$. For each $\gamma>0$, let
\[
    \{\kappa_d^\gamma\}_{d\ge1}
    \qquad\text{and}\qquad
    \{\kappa_{r,s}^\gamma\}_{r,s\ge1}
\]
be first- and second-order free $\gamma$-cumulants, respectively, and let $\alpha_{m,n}^\gamma$ be the covariance defined by the high-temperature formula \eqref{eq_hightempcov}. Similarly, let
\[
    \{\kappa_d\}_{d\ge1}
    \qquad\text{and}\qquad
    \{\kappa_{r,s}\}_{r,s\ge1}
\]
be fixed-temperature free cumulants, and denote by $\alpha_{m,n}^{\mathrm{fix},\theta}$ the covariance in \eqref{eq_fixedtempcov} evaluated at these cumulants.

Assume that, for every $d,r,s\ge1$,
\begin{equation}\label{eq:gamma-to-fixed-cumulants}
    \gamma^{d-1}\kappa_d^\gamma\longrightarrow\kappa_d,
    \qquad
    \gamma^{r+s-1}\kappa_{r,s}^\gamma\longrightarrow\kappa_{r,s}
    \qquad (\gamma\to\infty).
\end{equation}
Then, for every $m,n\ge1$,
\begin{equation}\label{eq:gamma-to-fixed-covariance}
    \lim_{\gamma\to\infty}\gamma\alpha_{m,n}^\gamma
    =\theta\alpha_{m,n}^{\mathrm{fix},\theta}.
\end{equation}
Equivalently,
\[
    \alpha_{m,n}^{\mathrm{fix},\theta}
    =\frac{1}{\theta}\lim_{\gamma\to\infty}
      \gamma\alpha_{m,n}^\gamma.
\]
\end{theorem}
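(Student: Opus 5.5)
The plan is a direct power counting in $\gamma$ inside formula \eqref{eq_hightempcov}, term by term. The configuration sets $Conf_1(m,n)$ and $Conf_2(m,n)$ are finite for fixed $m,n$ and do not depend on $\gamma$. The same holds for each $P(E,\vec q)$ and for the combinatorial factors $\sum_{i_1<\dots<i_k}\prod_j[E(i_j-)-q(i_j-)]$ and the signs $(-1)^{m(\vec q)}$. So $\gamma$ enters $\alpha^\gamma_{m,n}$ only through the explicit powers $\gamma^{|D(E)|-k}$ and through the cumulants $\kappa^\gamma_{\bullet}$. We will write $\kappa^\gamma_d=\gamma^{1-d}(\kappa_d+o(1))$ and $\kappa^\gamma_{r,s}=\gamma^{1-r-s}(\kappa_{r,s}+o(1))$, as allowed by \eqref{eq:gamma-to-fixed-cumulants}. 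We then extract the total power of $\gamma$ carried by each summand.

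The key step is an exact identity for the exponent. For a Lukasiewicz path $E$ of length $m+n$ with $E(0)=E(m+n)=0$, the total increment over the up and flat steps equals the number of down steps. With $|B_i|$ equal to one plus the increment at the $i$th step of $U(E)$, this reads $\sum_{B_i\in E}(|B_i|-1)=|D(E)|$.

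For $(E,\vec q)\in Conf_1(m,n)$, the product $\prod_{B_i}\kappa^\gamma_{|B_i|}$ therefore scales as $\gamma^{-|D(E)|}\prod_{B_i}\kappa_{|B_i|}$.

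For $(E,\vec q)\in Conf_2(m,n)$, the distinguished step $s$ has increment $E(s)-E(s-)=k_1+C(E)-1=k_1+k_2-1$, using $k_2=C(E)$ and the definition of $k_1$. Its factor $\kappa^\gamma_{k_1,k_2}$ then contributes $\gamma^{1-k_1-k_2}=\gamma^{-(E(s)-E(s-))}$. This is exactly what a one-row cumulant at that step would give, so again the total scaling is $\gamma^{-|D(E)|}$. The prefactor $1/k_2$ is $\gamma$-independent. When $k_1\le 0$ both sides vanish by convention.

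Combining these, $\gamma^{|D(E)|-k}I^{(j)}_k(E,\vec q;\gamma)=\gamma^{-k}\bigl(I^{(j)}_k(E,\vec q)+o(1)\bigr)$, where $I^{(j)}_k(E,\vec q)$ is evaluated at the fixed-temperature cumulants. Multiplying by $\gamma$ gives $\gamma^{1-k}$. Since every index satisfies $k\ge1$ and all sums are finite, the limit $\gamma\to\infty$ kills every term with $k\ge2$ and keeps exactly the $k=1$ terms:
\[
\lim_{\gamma\to\infty}\gamma\alpha^\gamma_{m,n}=\sum_{Conf_1(m,n)}I^{(1)}_1(E,\vec q)+\sum_{Conf_2(m,n)}I^{(2)}_1(E,\vec q).
\]
By the first line of \eqref{eq_fixedtempcov}, this equals $\theta\,\alpha^{\mathrm{fix},\theta}_{m,n}$.

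The only delicate point is the exponent identity in the second-order case. We must check that the $\kappa_{r,s}$ scaling $\gamma^{1-r-s}$ exactly matches the increment of the Type II step, and that the step at $s$ is excluded from the product of one-row cumulants in Definition~\ref{def:functional2}. We also note that $P(E,\vec q)$ is nonempty whenever a term is present, so there is no $k=0$ term that could survive at order $\gamma$.
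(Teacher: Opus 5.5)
Your proposal is correct and follows the paper's proof. The paper uses the same exponent identity $|D(E)|=\sum_{B_i}(|B_i|-1)$, with the distinguished increment $k_1+k_2-1$ at $s$ in the $Conf_2$ case, to get $\gamma^{|D(E)|-k}I^{(j)}_k(E,\vec q;\gamma)=\gamma^{-k}\bigl(I^{(j)}_k(E,\vec q)+o(1)\bigr)$, so that only the $k=1$ terms survive in $\gamma\alpha^\gamma_{m,n}$, and these give $\theta\alpha^{\mathrm{fix},\theta}_{m,n}$ by the first line of \eqref{eq_fixedtempcov}.
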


\begin{proof}
The first relation in \eqref{eq:gamma-to-fixed-cumulants} is the first-order transition of \cite[Section~8.2]{BCG}; the second relation is its second-order analogue. We show that these two relations give termwise convergence of the covariance formula.

Fix $m,n$. Recall from Definition~\ref{def:walk-blocks-configuration} that
\[
    D(E)=\{t:E(t)-E(t-)=-1\}
\]
is the set of down-step times of $E$, while $U(E)$ is the set of up- and flat-step times. As in Definitions~\ref{def:functional1} and~\ref{def:functional2}, $B_i$ denotes the block attached to the nonnegative increment at time $t(B_i)\in U(E)$; in particular,
\[
    E(t(B_i))-E(t(B_i)-)=|B_i|-1.
\]
Since a Lukasiewicz path starts and ends at zero, the total height of its nonnegative increments equals the number of its down steps. Hence, for $(E,\vec q)\in Conf_1(m,n)$,
\[
    |D(E)|=\sum_{B_i\in E}(|B_i|-1).
\]
By \eqref{eq:gamma-to-fixed-cumulants}, the cumulant attached to $B_i$ satisfies
\[
    \kappa_{|B_i|}^\gamma
    =\gamma^{-(|B_i|-1)}
      \bigl(\kappa_{|B_i|}+o(1)\bigr).
\]
Thus a nonnegative increment of height $|B_i|-1$ contributes exactly $\gamma^{-(|B_i|-1)}$ through its associated cumulant, and the product over all blocks contributes $\gamma^{-|D(E)|}$.

For $(E,\vec q)\in Conf_2(m,n)$, let $s=s(E,\vec q)$ be the distinguished time. Then
\[
    |D(E)|
    =\sum_{t(B_i)\ne s}(|B_i|-1)
     +k_1(E,\vec q)+k_2(E)-1.
\]
Indeed, $k_1(E,\vec q)+k_2(E)-1=E(s)-E(s-)$ is the height of the distinguished nonnegative increment at $s$. The second-order free cumulant at $s$ contributes $\gamma^{-(k_1+k_2-1)}$, while each remaining block contributes $\gamma^{-(|B_i|-1)}$ as above; together they again contribute $\gamma^{-|D(E)|}$. Thus, for $j=1,2$,
\[
    I_k^{(j)}(E,\vec q;\gamma)
    =\gamma^{-|D(E)|}
      \bigl(I_k^{(j)}(E,\vec q)+o(1)\bigr),
\]
where the functional on the right is evaluated at the limiting fixed-temperature free cumulants.

The explicit factor $\gamma^{|D(E)|-k}$ in \eqref{eq_hightempcov} has the complementary down-step interpretation: all jump times are realized by difference operators, and choosing $k$ derivative times from $P(E,\vec q)=D(E)\setminus J(E,\vec q)$ leaves $|D(E)|-k$ difference-operator down steps, each of which supplies one factor of $\gamma$. Thus the block factors cancel $\gamma^{|D(E)|}$, and multiplication of the covariance by $\gamma$ leaves $\gamma^{1-k}$.

For fixed $m,n$, both configuration sets and all inner sums are finite. Substituting the preceding asymptotics into \eqref{eq_hightempcov} gives
\begin{align*}
    \gamma\alpha_{m,n}^\gamma
    ={}&\sum_{(E,\vec q)\in Conf_1(m,n)}
       \sum_{k=1}^{|P(E,\vec q)|}
       \gamma^{1-k}\bigl(I_k^{(1)}(E,\vec q)+o(1)\bigr)\\
    &+\sum_{(E,\vec q)\in Conf_2(m,n)}
       \sum_{k=1}^{|P(E,\vec q)|}
       \gamma^{1-k}\bigl(I_k^{(2)}(E,\vec q)+o(1)\bigr).
\end{align*}
Every term with $k\ge2$ vanishes, whereas the terms with $k=1$ converge. Consequently,
\[
    \lim_{\gamma\to\infty}\gamma\alpha_{m,n}^\gamma
    =\sum_{(E,\vec q)\in Conf_1(m,n)} I_1^{(1)}(E,\vec q)
     +\sum_{(E,\vec q)\in Conf_2(m,n)} I_1^{(2)}(E,\vec q)
    =\theta\alpha_{m,n}^{\mathrm{fix},\theta},
\]
where the last equality is \eqref{eq_fixedtempcov}. This proves \eqref{eq:gamma-to-fixed-covariance}.

Together with the additivity of first- and second-order free cumulants, this gives the covariance-level transition from second-order $\gamma$-convolution to second-order free convolution. For $\theta=1$ the transition is direct; general fixed $\theta$ differs by the factor $1/\theta$.
\end{proof}

\appendix
\begingroup

\section{Proof of the topological expansion}\label{app:topological-expansion}

For completeness, we reproduce the proof of \cite[Theorem~5.6]{CX} here, following the notation in Section \ref{sec:topexpansion}.
Consider the sequences of variables $\mathbf p=(p_1,p_2,\ldots)$,
$\mathbf q=(q_1,q_2,\ldots)$, and $(g_1,g_2,\ldots)$. Let
$J_\lambda(\mathbf p;\theta)$ be the Jack symmetric function, with the
same monic normalization as in \eqref{eq_BesselJack}, expressed in the
power-sum variables $\mathbf p$. It is a polynomial of degree
$|\lambda|$ when $p_k$ has degree $k$. Define
$J_\lambda(\mathbf q;\theta)$ analogously. Further, consider the
generating series $G(z)=1+\sum_{k\geq1}g_kz^k$. Following \cite{CD},
define the formal power series
\begin{equation}\label{eq:app-cx-tau}
\begin{split}
\widehat F^G(t,\mathbf p,\mathbf q,g_1,g_2,\ldots;\theta)
&:=\sum_{d\geq0}t^d\sum_{|\lambda|=d}
J_\lambda(\mathbf p;\theta)J_\lambda(\mathbf q;\theta)\\
&\quad\times\prod_{(i,j)\in\lambda}
\frac{(\lambda_i-j)+\theta(\lambda'_j-i)+\theta}
{(\lambda_i-j)+\theta(\lambda'_j-i)+1}
\prod_{(i,j)\in\lambda}G(c_\theta(i,j)),
\end{split}
\end{equation}
where $\lambda'_j$ is the number of boxes in column $j$ of $\lambda$
and $c_\theta(i,j):=\theta^{-1}(j-1)-(i-1)$.

The identity of Chapuy--Do\l{}\k{e}ga
\cite[Theorem~6.2]{CD}, recalled in \cite[Theorem~5.5]{CX}, states that
there exists a non-orientability statistic $\nu_\rho(\MM,c)$, taking
nonnegative integer values and vanishing precisely on orientable
constellations, such that
\begin{equation}\label{eq:app-cx-formal}
\begin{split}
\ln\widehat F^G
={}&\theta\sum_{d\geq1}\frac{t^d}{d}
\sum_{\substack{(\MM,c)\in C_{d,\bullet}^{(\infty)}\\
                 \MM\ \mathrm{normal}}}
(\theta^{-1}-1)^{\nu_\rho(\MM,c)}
p_{\pmb{\mu_1}(\MM)}q_{\pmb{\mu_2}(\MM)}
f_{\eta(\MM)}(g_1,g_2,\ldots).
\end{split}
\end{equation}
Here $p_\mu:=\prod_jp_{\mu_j}$ and $q_\mu:=\prod_jq_{\mu_j}$,
and $f_\eta$ is the polynomial specified in
Theorem~\ref{thm:keyexpansion}. Equation~\eqref{eq:app-cx-formal} is an
identity of formal power series; the analytic convergence needed below
is a separate part of the proof.

\begin{proof}[Proof of Theorem~\ref{thm:keyexpansion}]
The multivariate Bessel functions $B_a(\vec x;\theta)$ are analytic
functions of $(a,\vec x)\in\C^N\times\C^N$ and can be expanded in
terms of Jack polynomials as follows (see \cite[Sec.~4]{Ok-Ol}):
\begin{equation}\label{eq:app-cx-jack}
\begin{split}
B_a(\vec x;\theta)
={}&\sum_{\ell(\lambda)\leq N}
\frac{\prod_{(i,j)\in\lambda}
\big((\lambda_i-j)+\theta(\lambda'_j-i)+\theta\big)}
{\prod_{(i,j)\in\lambda}
\big((\lambda_i-j)+\theta(\lambda'_j-i)+1\big)
\big(N\theta+j-1-\theta(i-1)\big)}\\
&\hspace{35mm}\times J_\lambda(\vec x;\theta)J_\lambda(a;\theta).
\end{split}
\end{equation}
On the other hand, consider the power series $\widehat F^G$ obtained
from specializing to the sequence $g_k=(-N)^{-k}$, or equivalently,
from $G(z)=(1+z/N)^{-1}$, and setting $t=(N\theta)^{-1}$.
Furthermore, for $\vec x=(x_1,\ldots,x_N)\in\C^N$ and
$a=(a_1,\ldots,a_N)\in\C^N$, specify
$p_k=p_k(a)=\sum_{i=1}^Na_i^k$ and
$q_k=p_k(\vec x)=\sum_{i=1}^Nx_i^k$.
By comparing \eqref{eq:app-cx-tau} and \eqref{eq:app-cx-jack}, we have
\begin{equation}\label{eq:app-cx-specialization}
\left[
\widehat F^{(1+z/N)^{-1}}\left(
\frac{1}{N\theta},\mathbf p(a),\mathbf q(\vec x),
\{(-N)^{-k}\}_{k\geq1};\theta\right)
\right]_{\leq n}
=\big[B_a(\vec x;\theta)\big]_{\leq n}.
\end{equation}
The brackets denote the terms of total degree at most $n$ in $\vec x$.
This degree-by-degree interpretation suffices: $n<N$, so every partition
occurring here has length at most $N$, and all the specialized factors
are well-defined. Since $B_a(0^N;\theta)=1$, its logarithm has a unique
analytic branch near $\vec x=0$ with value $0$ there.

By \eqref{eq:app-cx-formal}, we have the formal expansion
\begin{equation}\label{eq:app-cx-before-euler}
\begin{split}
\ln B_a(\vec x;\theta)
={}&\sum_{d=1}^n\frac{\alpha^{d-1}}{d}
\sum_{\substack{(\MM,c)\in C_{d,\bullet}^{(\infty)}\\
                 \MM\ \mathrm{normal}}}
(\alpha-1)^{\nu_\rho(\MM,c)}
\prod_{f\in F(\MM)}\frac{p_{\deg(f)}(a/N)}{N}
\prod_{v\in V_0(\MM)}\frac{p_{\deg(v)}(\vec x)}{N}\\
&\quad\times
N^{\ell(\pmb{\mu_1}(\MM))+\ell(\pmb{\mu_2}(\MM))-|\eta(\MM)|}
f_{\eta(\MM)}\big(\{(-1)^{-k}\}_{k\geq1}\big)
+O(\|x\|^{n+1}).
\end{split}
\end{equation}
Here we have used weighted homogeneity:
$f_\eta(\{(-N)^{-k}\}_{k\geq1})
=N^{-|\eta|}f_\eta(\{(-1)^k\}_{k\geq1})$.
By Proposition~\ref{prop:euler}, the exponent of $N$ in the second
line of \eqref{eq:app-cx-before-euler} is $2-2g(\MM)$.
For a fixed $g(\MM)=g$, we have
\[
\begin{split}
\ell(\eta(\MM))\leq|\eta(\MM)|
&=\ell(\pmb{\mu_1}(\MM))+\ell(\pmb{\mu_2}(\MM))+2g-2\\
&<|\pmb{\mu_1}(\MM)|+|\pmb{\mu_2}(\MM)|+2g
=2d+2g\leq2n+2g.
\end{split}
\]
As a result, if we set $k:=2n+2g$, then $\MM$ can be regarded as a
$k$-constellation. In addition, by \cite[Theorem~3.1]{BD22}, the total
number of $k$-constellations of size $d$ is finite. We conclude that
\eqref{eq:app-cx-before-euler} can be rewritten as
\eqref{eq_keyexpansion}, with a finite inner sum for every fixed
$g\in\frac12\Z_{\geq0}$.

It remains to check that when $N>\max(\alpha,1)n$, the sum over
$g\in\frac12\Z_{\geq0}$ in \eqref{eq_keyexpansion} converges absolutely.
Using \eqref{eq:app-cx-jack} and the fact that
$B_a(0^N;\theta)=1$, one can expand $\ln B_a(\vec x;\theta)$ in
terms of products of the form
$\prod_{j=1}^mJ_{\lambda_j}(a;\theta)J_{\lambda_j}(\vec x;\theta)$,
for partitions $\lambda_1,\ldots,\lambda_m$, and subsequently in terms
of products $p_{\mu_1}(a)p_{\mu_2}(\vec x)$, where
$|\mu_1|=|\mu_2|$. Through degree $n$, these are finite expansions.
Their coefficients are rational functions of $N$ that admit a Taylor
expansion at $N=\infty$ when $N>\max(\alpha,1)n$.
Indeed, the $N$-dependent denominator factors, up to the constant
$\theta$, are $N+c_\theta(i,j)$, and for $|\lambda|\leq n$,
$|c_\theta(i,j)|<\max(\alpha,1)n$.
By \eqref{eq:app-cx-specialization}, the expansion of the coefficient of
$p_{\mu_1}(a)p_{\mu_2}(\vec x)$, for $|\mu_1|=|\mu_2|=d$, agrees with
\begin{equation}\label{eq:app-cx-coefficient}
\frac{\alpha^{d-1}}{d}
\sum_{g\in\frac12\Z_{\geq0}}
\sum_{\substack{(\MM,c)\in C_{d,\bullet}^{(\infty)},\ \MM\ \mathrm{normal}\\
g(\MM)=g,\ \pmb{\mu_1}(\MM)=\mu_1,\ \pmb{\mu_2}(\MM)=\mu_2}}
(\alpha-1)^{\nu_\rho(\MM,c)}
N^{2-2g-\ell(\mu_1)-\ell(\mu_2)-d}
f_{\eta(\MM)}\big(\{(-1)^{-k}\}_{k\geq1}\big).
\end{equation}
Hence this genus expansion converges absolutely as well, with the
finite sum at each fixed genus performed first. Since there are only
finitely many pairs of partitions of degrees at most $n$, this proves
the asserted expansion.
\end{proof}
\endgroup

\end{document}